\def\blfootnote{\xdef\@thefnmark{}\@footnotetext}
\newtheorem{thm}{Theorem}[section]
\newtheorem{cor}[thm]{Corollary}
\newtheorem{lem}[thm]{Lemma}
\newtheorem{prop}[thm]{Proposition}
\newtheorem{ques}[thm]{Question}
\newtheorem{defn}[thm]{Definition}
\newtheorem{observation}[thm]{Observation}
\newtheorem{sa}[thm]{Standing Assumption}
\newtheorem{rem}[thm]{Remark}
\newtheorem{ex}[thm]{Example}
\newtheorem{convention}[thm]{Convention}
\newfont{\eufm}{eufm10}
\newcommand{\e}{\varepsilon }
\renewcommand{\phi}{\varphi}
\renewcommand{\l}{{\ell}}
\newcommand{\N}{\mathbb N}
\newcommand{\Z}{\mathbb Z}
\newcommand{\R}{\mathbb R}
\newcommand{\llangle }{\langle\hspace{-.7mm}\langle }
\newcommand{\rrangle }{\rangle\hspace{-.7mm}\rangle }
\newcommand{\Lab }{{\bf Lab}}
\newcommand{\Aut} {\mathrm{Aut}}
\def\Mod {\mathrm{Mod}}
\def\AJSJ {\mathbb A_{\mathrm{JSJ}}}
\def\AMOD {\mathbb A_{\mathrm{M}}}
\def\Isom {\mathrm{Isom}}
\def\Hom {\mathrm{Hom}}
\def\Area {\mathrm{Area}}
\def\rel {\mathrm{rel}}
\def\Cay {\mathrm{Cay}}
\def\Fix {\mathrm{Fix}}
\def\Stab {\mathrm{Stab}}
\def\ufin {\mathrm{ufin}}
\def\mc {\mathcal}
\def\onto {\twoheadrightarrow}
\def\co{\colon\thinspace} 
\def\QHAH {QH$_{(\mc{A},\mc{H})}$}
\newtheorem*{rep@theorem}{\rep@title}
\newcommand{\newreptheorem}[2]{%
\newenvironment{rep#1}[1]{%
 \def\rep@title{#2 \ref{##1}}%
 \begin{rep@theorem}\it}%
 {\end{rep@theorem}}}
\newtheorem{thmA}{Theorem}
\newtheorem{corA}[thmA]{Corollary}
\newtheorem{defA}[thmA]{Definition}
\begin{document}

\title[Homomorphisms to acylindrically hyperbolic groups, I]{Homomorphisms to acylindrically hyperbolic groups I: Equationally noetherian groups and families}
\author{D. Groves}
\address{Department of Mathematics, Statistics and Computer Science, University of Illinois at Chicago, 322 Science and Engineering Offices (M/C 249), 851 S. Morgan St., Chicago IL 60607, USA}
\email{groves@math.uic.edu}
\thanks{The first author was partially supported by a grant from the Simons Foundation (\#342049 to Daniel Groves) and by NSF grant DMS-1507076.}

\author{M. Hull}
\address{Department of Mathematics, University of Florida, 358 Little Hall, Gainesville, FL 32611, USA}
\email{mbhull@ufl.edu}
\date{}

\begin{abstract}
We study the set of homomorphisms from a fixed finitely generated group $G$ into a family of groups $\mc{G}$ which are `uniformly acylindrically hyperbolic'.  Our main results reduce this study to sets of homomorphisms which do not diverge in an appropriate sense.  As an application, we prove that any relatively hyperbolic group with equationally noetherian peripheral subgroups is itself equationally noetherian.
\end{abstract}

\maketitle

\setcounter{tocdepth}{1}
\tableofcontents

\section{Introduction}

In this paper, we are interested in the following problem.  Suppose that $\mc{G}$ is a family of groups, and $G$ is a finitely generated group.  What is the structure of the set $\Hom(G,\mc{G})$ of all homomorphisms from $G$ to elements of $\mc{G}$? We are concerned with the situation where $\mc{G}$ consists of a {\em uniformly acylindrically hyperbolic} family of groups (see Definition \ref{def:UAH} below).  A specific example is where $\mc{G} = \{ \Gamma \}$ is a single acylindrically hyperbolic group, though there are many other interesting cases, as we explain below.  The study in this paper builds on many previous authors' work, for example \cite{Ali,Gro,groves_rh,JalSel,KM1, KM2,ReiWei,sela:dio1,sela:DioX}.  With the exception of \cite{JalSel}, all of these previous works consider a fixed group to be the target, rather than a family of groups.

{\em Acylindrically hyperbolic groups} (defined by Osin in \cite{Osi13}, see Definition \ref{def:AHG} below) are a large class of groups that have recently been the study of intense activity in geometric group theory.  On the one hand, the definition is powerful enough that there is a rich theory of these groups, while on the other hand it is flexible enough that there any many examples of acylindrically hyperbolic groups.  Examples include (see \cite[Section 8]{Osi13} for more discussion):
\begin{enumerate}
\item Any non-elementary hyperbolic group;
\item Any non-elementary relatively hyperbolic group;
\item The mapping class group of almost any surface of finite type;
\item The outer automorphism of a finitely generated free group of rank at least $2$;
\item Directly indecomposable right-angled Artin groups, and more generally any ${\mathrm{CAT}}(0)$ group which contains a rank 1 isometry;
\item Any group admitting a non-elementary action on a simplicial tree which is $k$--acylindrical for some $k$.
More generally, a group admitting a non-elementary $(k,C)$--acylindrical action on a simplicial tree for some $k$ and $C$. In particular, this includes the fundamental group of any irreducible, compact $3$--manifold with a non-trivial JSJ-decomposition.
\end{enumerate}

In this paper, we are concerned with {\em families} of acylindrically hyperbolic groups which are `uniform' in the sense given in Definition \ref{def:UAH} below.  The first example of such a `family' is a single acylindrically hyperbolic group, and this is an important case.  Perhaps the most natural example of a uniform family is the set of groups acting $k$--acylindrically on simplicial trees for some fixed $k$.  A particularly important class is the set of fundamental groups of compact $3$--manifolds, since a closed, orientable, irreducible $3$--manifold is either finitely covered by a torus bundle over the circle or else the JSJ decomposition is $4$--acylindrical by \cite[Lemma 2.4]{WiltonZalesskii}. We expect the results in this paper to have many applications to the study of $\Hom(G,\mc{M}_3)$ where $G$ is an arbitrary finitely generated group and $\mc{M}_3$ is the set of fundamental groups of (compact) $3$--manifolds.

One motivation for studying sets of homomorphisms is topological, since homotopy classes of maps between aspherical spaces are in bijective correspondence with homomorphisms between their fundamental groups. $3$--manifold groups are of particular interest here, since many $3$--manifolds are aspherical. We expect that studying sets of homomorphisms to either a fixed $3$--manifold group or to the family of all $3$--manifold groups can be used to better understand (homotopy classes of) maps between $3$--manifolds.

Another motivation which is more closely related to logic comes from the study of equations in groups or \emph{algebraic geometry over groups} \cite{BMR1}. Understanding sets of solutions to systems of equations in a group $\Gamma$ is the first step towards understanding the elementary theory of $\Gamma$, and these sets of solutions naturally correspond with sets of homomorphisms to $\Gamma$. We explain this connection below in the more general context of a family of groups.

Let $\mc{G}$ be a family of groups and let $F_n$ be the free group of rank $n$. An \emph{equation} is an element $W\in F_n$ which we identify with a freely reduced word in the generators $x_1^{\pm 1},..., x_n^{\pm 1}$. For $\Gamma\in \mc{G}$ and $(g_1,...,g_n)\in\Gamma^n$,  let $W(g_1,...,g_n)$ denote the element of $\Gamma$ obtained by replacing each $x_i$ with $g_i$ and each $x_i^{-1}$ with $g_i^{-1}$ in the word $W$. $(g_1,...,g_n)$ is called a \emph{solution} to the equation $W$ if $W(g_1,...,g_n)=1$. For a \emph{system of equations} $S\subseteq F_n$, we define

\[
V_\mc{G}(S)=\{(g_1,...,g_n)\in\Gamma^n\;|\; \Gamma\in\mc{G} \text{ and } W(g_1,...,g_n)=1 \;\forall W\in S\}.
\]

That is, $V_{\mc{G}}(S)$ is the \emph{algebraic set} of solutions to the system of equations $S$. Observe that $(g_1,..., g_n)\in\Gamma^n$ belongs to $S$ if and only if the map which sends each $x_i$ to $g_i$ extends to a homomorphism $F_n/\llangle S\rrangle\to \Gamma$. Hence, there is a natural bijective correspondence between $V_{\mc{G}}(S)$ and $\Hom(F_n/\llangle S\rrangle, \mc{G})$.

The most important property about a family $\mc{G}$ of groups for the purposes of this paper is that of the family $\mc{G}$ being {\em equationally noetherian}, which can be thought of as a group-theoretic version of the conclusion of the Hilbert Basis Theorem.

\begin{defA} \label{d:en family}
We say that a family $\mc{G}$ is an {\em equationally noetherian family of groups} if for any $n$ and any $S\subseteq F_n$, there exists a finite $S_0\subseteq S$ such that $V_\mc{G}(S_0)=V_\mc{G}(S)$.
\end{defA}

This definition generalizes the well-studied notion of what it means for a group to be equationally noetherian (the case where $\mc{G}$ consists of a single group). This definition can also be translated to a statement in terms of sets of homomorphisms as follows. The family $\mc{G}$ is equationally noetherian if and only if for any finitely generated group $G$, there is a finitely presented group $P$ and an epimorphism $P\onto G$ which induces a bijection between $\Hom(G, \mc{G})$ and $\Hom(P, \mc{G})$. Another characterization of the equationally noetherian property in terms of sequences of homomorphisms is given by Theorem \ref{eqnfamily}.

 The equationally noetherian property has been very important in the study of equations over groups, model theory of groups, and many other questions: see, for example, \cite{BMR1,  Gro, GrovesWilton09, JalSel, KM1, OuldHoucine, sela:dio1, sela:dio7, sela:DioX}. Similarly, the question of whether a family of groups $\mc{G}$ is equationally noetherian is of fundamental importance for the study of the set $\Hom(G,\mc{G})$. In particular, this property is more or less essential in order to have a meaningful description of sets of the homomorphisms to a group or family of groups.

We note that to say that a family $\mc{G}$ is equationally noetherian is {\em much} stronger than to say that 
each element of $\mc{G}$ is equationally noetherian.  The following collections all have the property that they are not equationally noetherian as a family while each individual member is equationally noetherian:  (i) Finite groups; (ii) hyperbolic groups; and (iii) linear groups (see Example \ref{l:finite not en}).  There are many other such examples.

One important consequence of a family $\mc{G}$ being equationally noetherian is that any sequence of surjective maps
\[	\Gamma_1 \onto \Gamma_2 \onto \dots	\onto \Gamma_n \onto \dots \]
where each $\Gamma_i \in \mc{G}$ eventually consists of isomorphisms (see Theorem \ref{thm:eqnhopf}).  This can be thought of as an extension of the Hopfian property to families of groups; In particular, it implies that each element of $\mc{G}$ is Hopfian (see Corollary \ref{c:limithopfian}).

Let $\mc{G}$ be a uniformly acylindrically hyperbolic collection of groups (see Definition \ref{def:UAH}).  Since the subgroups of $\mc{G}$ which act with bounded orbits on the associated $\delta$--hyperbolic spaces could be completely arbitrary, it is impossible to say anything about $\Hom(G,\mc{G})$ in general since this is as hard as understanding the set of homomorphisms from $G$ to {\em all} groups.  Therefore, we first restrict attention to the case that a sequence of homomorphisms behaves in an interesting way with respect to the $\delta$--hyperbolic space.  The right notion is for a sequence to {\em diverge}, as defined in Definition \ref{def:divergent}.  The following is our main technical result; in light of Theorem \ref{eqnfamily}, it says that if $\mc{G}$ fails to be equationally noetherian then that failure is witnessed by some non-divergent sequence of homomorphisms.  In the following, $\omega$ is a non-principal ultrafilter which is fixed once and for all throughout this paper (see Section \ref{ss:EN and limit}).

\begin{thmA}\label{t:EN up to non-div}
 Suppose that $\mc{G}$ is uniformly acylindrically hyperbolic family. Furthermore, suppose that whenever $G$ is a finitely generated group and $\eta_i \co G \to \mc{G}$ is a non-divergent sequence of homomorphisms, $\eta_i$ $\omega$--almost surely factors through the limit map $\eta_\infty$.  Then $\mc{G}$ is equationally noetherian.
\end{thmA}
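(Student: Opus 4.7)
The plan is to argue by contradiction using the characterization of the equationally noetherian property given in Theorem \ref{eqnfamily}. Suppose $\mc{G}$ is not equationally noetherian. Then there exist a finitely generated group $G$ and a sequence of homomorphisms $\eta_i \co G \to \mc{G}$ whose $\omega$-limit $\eta_\infty$ fails to factor $\eta_i$ for $\omega$-almost every $i$; equivalently, some $g \in G$ lies in $\ker \eta_\infty$ while $\eta_i(g) \neq 1$ $\omega$-almost surely. The goal is to produce, from this ``bad'' sequence, a non-divergent sequence still exhibiting this non-factoring pathology, contradicting the hypothesis of the theorem.

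If $\{\eta_i\}$ is already non-divergent, we are immediately done. Otherwise, the first step is a Bestvina--Paulin--Sela style rescaling: for each $i$, choose a basepoint $x_i$ in the $\delta$-hyperbolic space $X_i$ coming from the uniform acylindrical structure, so as to minimize the maximum displacement of a fixed finite generating set of $G$; rescale $X_i$ by the reciprocal of this minimum, and pass to the $\omega$-ultralimit. Divergence guarantees that this produces a nontrivial isometric action of $G$ on an $\R$-tree $T$, and the uniform acylindricity constants from Definition \ref{def:UAH} pass to the limit to give uniformly controlled (in particular small) arc stabilizers of this action.

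The second step is to extract a group of modular automorphisms of $G$ from the $T$-action and run a shortening argument. A Rips-machine analysis of the action on $T$ produces a graph-of-actions decomposition of $G$, and hence a subgroup $\Mod(T) \leq \Aut(G)$ of automorphisms which are equivariant with respect to the limit data and, in particular, satisfy $\eta_\infty \circ \phi = \eta_\infty$ for $\phi \in \Mod(T)$. One then replaces each $\eta_i$ by $\eta_i \circ \phi_i$, where $\phi_i$ is chosen (via elements realizing $\Mod(T)$ at finite stages) to minimize the maximum translation length of the generators of $G$ under $\eta_i$. Because $\eta_\infty$ is preserved, the modified sequence has the same limit and still fails to factor through $\eta_\infty$, but the minimization, combined with the structure of $T$, forces the new sequence to be non-divergent --- contradicting the hypothesis and completing the argument.

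The main obstacle will be executing the Rips machine and the accompanying shortening argument in the uniformly acylindrically hyperbolic family setting, rather than the classical hyperbolic or relatively hyperbolic setting used in \cite{ReiWei, groves_rh}. The spaces $X_i$ and target groups $\Gamma_i$ may vary, so the only uniformity is the acylindricity constants of Definition \ref{def:UAH}, and one must verify that the arc-stabilizer analysis of $T$, the definition of $\Mod(T)$, and the invariance $\eta_\infty \circ \phi = \eta_\infty$ all go through using only this uniform data. The termination of the shortening (i.e.\ that iterating the procedure eventually yields a non-divergent sequence) will rest on a complexity/length-function descent argument in the spirit of those cited works, suitably adapted so that the descent is controlled by parameters depending only on $\mc{G}$ and the generating set of $G$, not on the individual $\Gamma_i$.
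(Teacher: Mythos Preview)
Your outline has the right ingredients --- limiting $\R$--tree, Rips machine, shortening --- but there is a genuine structural gap in how you assemble them, and one of your key claims is false.

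The central problem is that the modular automorphisms you want to use are automorphisms of the limit group $L = G/\ker^\omega(\eta_i)$, not of $G$. The $\R$--tree $T$ is acted on faithfully by $L$, and the JSJ decomposition and $\Mod(T)$ are defined for $L$. But the whole point of the contradiction is that the $\eta_i$ do \emph{not} factor through $L$, so there is no way to precompose $\eta_i$ with an element of $\Aut(L)$. The paper handles this via a nontrivial approximation step (Lemma~\ref{l:ugly hack}): one builds finitely presented groups $W_i$ sitting between $G$ and $L$, each carrying a splitting $\mathbb A_i$ that approximates $\AMOD$, so that $\phi_i$ factors through $W_i$ and one can shorten using $\Mod_{\mathbb A_i}(W_i)$.

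Second, your assertion that ``$\eta_\infty \circ \phi = \eta_\infty$ for $\phi \in \Mod(T)$'' and hence ``the modified sequence has the same limit'' is incorrect. Modular automorphisms do not fix the limit map; they act by conjugation only on rigid vertex groups. Once you shorten (correctly, via the $W_i$), the $\omega$--kernel of the new sequence strictly contains $\ker^\omega(\eta_i)$ whenever the shortened sequence remains divergent (this is Lemma~\ref{lem:shortq1}), so the new limit group is a \emph{proper} quotient of $L$ --- the shortening quotient $Q$ --- not $L$ itself. Consequently a single pass does not give a non-divergent sequence; you must iterate, producing a chain $L \twoheadrightarrow Q_1 \twoheadrightarrow Q_2 \twoheadrightarrow \cdots$, and termination is not a length-function descent but a separate descending chain condition (Theorem~\ref{dcc}) proved by a diagonal argument that itself uses the non-divergent hypothesis.

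Finally, even after reaching a non-divergent shortening quotient at the bottom, you still need to pull the factoring property back up the chain to the original $(\phi_i)$. This is Lemma~\ref{lem:shortq2}, which requires that $L$ be finitely presented relative to its rigid vertex groups --- in turn requiring that non-absolutely-elliptic virtually abelian subgroups of $\mc{G}$--limit groups are finitely generated (Lemma~\ref{l:infgen subgroup}, again using Theorem~\ref{dcc}). Your sketch omits this entire step.
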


This result allows a reduction of the study of the set of $\Hom(G,\mc{G})$ to the study of non-divergent sequences of homomorphisms.  With extra assumptions, it is possible to say more.

Answering a question from \cite{CK}, in \cite[Theorem 9.1]{sela:DioX} Sela proved that if $A$ and $B$ are equationally noetherian then their free product $A \ast B$ is also equationally noetherian.  In Corollary \ref{t:G^ast EN} we provide a generalization of this:  if $\mc{G}$ is an equationally noetherian family of groups then the set of all free products of elements of $\mc{G}$, denoted by $\mc{G}^\ast$, is itself an equationally noetherian family.

\begin{corA} \label{t:G^ast EN}
If $\mc{G}$ is an equationally noetherian family of groups, then $\mc{G}^{\ast}$ is an equationally noetherian family of groups.
\end{corA}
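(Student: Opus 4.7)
The strategy is to apply Theorem \ref{t:EN up to non-div} with $\mc{G}^\ast$ in place of $\mc{G}$. This requires verifying (i) that $\mc{G}^\ast$ is uniformly acylindrically hyperbolic in the sense of Definition \ref{def:UAH}, and (ii) that any non-divergent sequence of homomorphisms from a finitely generated group into $\mc{G}^\ast$ $\omega$--almost surely factors through its limit map.

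For (i), each nontrivial free product $\Gamma = \ast_{j \in J} A_j \in \mc{G}^\ast$ with $|J|\geq 2$ acts on its Bass--Serre tree $T_\Gamma$. Since $T_\Gamma$ is $0$--hyperbolic and edge stabilizers of the action are trivial, this action is $1$--acylindrical with constants independent of $\Gamma$. The action is non-elementary outside of the virtually cyclic case (essentially $\Z/2 \ast \Z/2$ together with the single-factor degeneracies that place a copy of $\mc{G}$ inside $\mc{G}^\ast$); these exceptional groups form a subfamily whose EN property follows directly from the EN hypothesis on $\mc{G}$ and can be handled separately. Translating the tree action into the framework of Definition \ref{def:UAH} then yields uniform acylindrical hyperbolicity for the remainder of the family.

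For (ii), let $\eta_i \co G \to \Gamma_i$ with $\Gamma_i = \ast_{j \in J_i} A_{i,j} \in \mc{G}^\ast$ be a non-divergent sequence. Non-divergence on $T_{\Gamma_i}$ forces the orbits of $\eta_i(G)$ to be bounded $\omega$--almost surely, and any group acting on a tree with bounded orbits fixes a vertex. Thus, $\omega$--almost surely there exist $g_i \in \Gamma_i$ and an index $j_i \in J_i$ with $\eta_i(G) \subseteq g_i A_{i, j_i} g_i^{-1}$; conjugating yields a sequence $\eta_i' \co G \to A_{i, j_i} \in \mc{G}$. Because inner automorphisms do not affect the ultralimit construction, the limit map $\eta_\infty$ of $(\eta_i)$ coincides with that of $(\eta_i')$. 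Applying Theorem \ref{eqnfamily} to the sequence $(\eta_i')$, which takes values in the equationally noetherian family $\mc{G}$, we conclude that $\eta_i'$ $\omega$--almost surely factors through $\eta_\infty$, and hence so does $\eta_i$. Invoking Theorem \ref{t:EN up to non-div} completes the proof.

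The principal obstacle is (i): carefully aligning the Bass--Serre tree action with the Cayley-graph-based UAH framework and checking that the hyperbolicity and acylindricity constants can be chosen uniformly across all possible free product decompositions of varying rank. Once this bookkeeping is done, the essential group-theoretic input -- that non-divergence for a free product is equivalent to the image being conjugate into a factor -- reduces (ii) to the EN property already assumed for $\mc{G}$.
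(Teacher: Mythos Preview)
Your approach in part (i) matches the paper's (this is exactly Lemma \ref{lem:G^ast UAH}). The gap is in part (ii): non-divergence does \emph{not} force bounded orbits. Non-divergent means $\lim^\omega\|\eta_i\|<\infty$, which only says that some point is moved a bounded amount by each generator; the image can still contain loxodromic elements. For a concrete example, take $G=F_2=\langle a,b\rangle$ and let $\eta\colon G\to A\ast B$ send $a$ to a nontrivial element of $A$ and $b$ to a nontrivial element of $B$. At the midpoint of the edge between the base vertices $A$ and $B$ in the Bass--Serre tree each generator moves the point distance $1$, so $\|\eta\|=1$; yet $\eta(ab)$ is hyperbolic with translation length $2$, so $\eta(G)$ is not conjugate into any factor. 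Thus your reduction ``non-divergent $\Rightarrow$ image conjugate into a single factor $\Rightarrow$ apply EN of $\mc{G}$ directly'' fails.

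What the paper does instead: for a non-divergent sequence one still passes to a limiting $L$--tree (no rescaling is needed since $\|\eta_i\|$ is bounded), and because edge stabilizers in each Bass--Serre tree are trivial the limiting action is simplicial with trivial edge stabilizers. This yields a free product decomposition of $L$ whose finitely many vertex groups are finitely generated $\mc{G}$--limit groups. Now the EN hypothesis on $\mc{G}$ (via Theorem \ref{eqnfamily}) handles each vertex group, and since $L$ is finitely presented relative to these vertex groups one concludes (e.g.\ via Lemma \ref{lem:relfpen}) that $\eta_i$ $\omega$--almost surely factors through $\eta_\infty$. Your argument can be repaired by replacing the incorrect ``bounded orbits'' step with this limiting-tree/free-splitting analysis of $L$.
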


We remark that in \cite{JalSel}, Jaligot and Sela undertake much of the same work that we do in the case of free products.  They suggested that their work should be able to be generalized, in particular to $k$--acylindrical actions on trees.  We believe that Theorem \ref{t:EN up to non-div} is the appropriate generalizing of (the beginning of) \cite{JalSel}.
In order to make further progress in this direction, one needs more control over the elliptic elements.  The reason for this is that when considering free products, it is clear that a non-divergent sequence has a limiting simplicial action on a tree with trivial stabilizers.  Even in the case of $k$--acylindrical actions on trees, the limiting action is $k$--acylindrical and simplicial in the non-divergent case, but it need not be any nicer than that.  For example, it seems very hard to retain finite generation of edge stabilizers under non-divergent limits.

In \cite{sela:DioX}, Sela proved that torsion-free hyperbolic groups are equationally noetherian.  In \cite[Theorem 5.16]{Gro}, the first author proved that a torsion-free group which is hyperbolic relative to abelian subgroups is equationally noetherian.  Subgroups of equationally noetherian groups are equationally noetherian, so the strongest possible result in this direction about relatively hyperbolic groups would be that if the peripheral subgroups of a relatively hyperbolic group are equationally noetherian then the whole group is.  Using  Theorem \ref{t:EN up to non-div}, we prove that this is indeed the case. 

\begin{thmA}\label{thm:rel hyp eqn}
If $\Gamma$ is hyperbolic relative to equationally noetherian subgroups, then $\Gamma$ is equationally noetherian. 
\end{thmA}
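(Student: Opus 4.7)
The plan is to deduce the theorem from Theorem \ref{t:EN up to non-div}. The relatively hyperbolic group $\Gamma$ acts acylindrically on its hyperbolic relative Cayley graph $\Gamma(\Gamma, X \cup \mathcal{H})$, so the singleton family $\{\Gamma\}$ is trivially a uniformly acylindrically hyperbolic family in the sense of Definition \ref{def:UAH}. By Theorem \ref{t:EN up to non-div}, it suffices to verify that whenever $G$ is a finitely generated group and $\eta_i\co G \to \Gamma$ is a non-divergent sequence of homomorphisms, the $\eta_i$ $\omega$--almost surely factor through the limit map $\eta_\infty$.

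Fix such a non-divergent sequence and a finite generating set $S$ of $G$. After conjugating each $\eta_i$ individually (an operation that preserves kernels, hence the factoring property), non-divergence in $\Gamma(\Gamma, X \cup \mathcal{H})$ provides a uniform bound on the length of $\eta_i(s)$ in the alphabet $X \cup \mathcal H$ for each $s \in S$. Selecting a bounded-length expression for $\eta_i(s)$ as a product of letters in $X \cup \bigsqcup_{\lambda \in \Lambda} H_{\lambda}$, pigeonhole together with the ultrafilter yield, $\omega$--almost surely, expressions of fixed length $n_s$ with a fixed ``syllable pattern'' (the sequence of positions occupied by $X$--letters versus elements of the various peripheral subgroups $H_{\lambda_{s,j}}$) and fixed $X$--letters; what remains varying with $i$ is a collection of peripheral entries $h^{(i)}_{s,j} \in H_{\lambda_{s,j}}$.

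Since the collection $\{H_\lambda\}_{\lambda \in \Lambda}$ is finite and each $H_\lambda$ is equationally noetherian, it is itself an equationally noetherian family (a finite union of equationally noetherian groups is an equationally noetherian family). Encode the peripheral entries into sequences of homomorphisms from an auxiliary finitely generated group $\widetilde G$ -- recording the generators $S$ together with one generator per peripheral slot, modulo the consistency relations forced by the syllable pattern -- into the peripherals. The limit characterization of equationally noetherian families (Theorem \ref{eqnfamily}) then shows that these peripheral sequences $\omega$--almost surely factor through their ultralimits, and combined with the fixed syllable pattern and fixed $X$--letters this forces $\eta_i$ itself $\omega$--almost surely to factor through $\eta_\infty$, verifying the hypothesis of Theorem \ref{t:EN up to non-div}. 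The main obstacle is the combinatorial setup extracting the syllable pattern: minimal expressions of $\eta_i(s)$ in the relative alphabet are not unique, and one must choose them coherently -- likely via normal-form results for relatively hyperbolic groups in the spirit of Osin's machinery -- so that the ultrafilter stabilizes a well-defined shape uniformly across all $s \in S$ and compatibly with the finitely many relations between them.
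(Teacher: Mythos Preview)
Your overall strategy is the right one and matches the paper's: reduce via Theorem \ref{t:EN up to non-div} to non-divergent sequences, conjugate to bound relative word length, and use the ultrafilter to fix a ``syllable pattern'' (fixed-length geodesic shape with fixed $X$--letters and varying peripheral entries). The paper carries this out using Osin's separating cosets (Lemma \ref{sepco}), which gives exactly the canonical decomposition you allude to.

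However, there is a genuine gap at the step where you write ``combined with the fixed syllable pattern and fixed $X$--letters this forces $\eta_i$ itself $\omega$--almost surely to factor through $\eta_\infty$.'' Knowing that the peripheral tuples $(h^{(i)}_{s,j})$ factor through their ultralimits as maps into peripherals is \emph{not} enough. The relations of $G$ (and more generally the elements of $\ker(\eta_\infty)$), when rewritten via the syllable decomposition, become words in $\Gamma$ that mix $X$--letters and peripheral letters; they are equations over $\Gamma$, not over any single $H_\lambda$. Since $G$ is only finitely generated, there may be infinitely many such relations, and it is not at all clear that satisfying the ``peripheral'' consequences suffices to satisfy them all. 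Your sentence about ``consistency relations forced by the syllable pattern'' hides exactly this difficulty.

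What the paper does to close this gap is substantial and uses relative hyperbolicity in an essential way, beyond merely providing normal forms. They build an auxiliary group $H\ast(\ast_t P_t)$ with maps $\hat\phi_i$ through which the $\phi_i$ factor, form the associated limit group $W$, and then prove that $W$ is itself relatively hyperbolic---in particular \emph{finitely presented relative to} peripheral limit groups $Q_t$. The proof of relative finite presentation is a van Kampen diagram argument using the linear relative isoperimetric inequality: any relation in $W$ can be filled with boundedly many $R$--cells plus cells whose boundary lies in a single $Q_t$, and the ultrafilter stabilizes the (finite) combinatorial type of the diagram. Only after this is established can one invoke Lemma \ref{lem:relfpen} (relative finite presentation plus factoring on the peripherals implies factoring globally). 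Your proposal needs an argument of this kind; without it, the implication from ``peripheral data factors'' to ``$\eta_i$ factors through $\eta_\infty$'' does not follow.
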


Note that Theorem \ref{thm:rel hyp eqn} uses the machinery developed in this paper for families of acylindrically hyperbolic groups to prove a theorem about a single relatively hyperbolic group which is much more general than previous results in this direction.

Since groups acting geometrically on a $\mathrm{CAT}(0)$ space with isolated flats are hyperbolic relative to virtually abelian subgroups \cite{hruskakleiner}, this theorem answers \cite[Question I.(8).(i)]{sela:diorp} and \cite[Question I.8.(ii)]{sela:diorp}. In the case where $\Gamma$ is torsion-free and the peripheral subgroups are abelian, these questions were answered by the first author in \cite{Gro}.

The following immediate consequence of Theorem \ref{thm:rel hyp eqn} has nothing {\it a priori} to do with equationally noetherian groups.
\begin{corA}
 Suppose that $\Gamma$ is hyperbolic relative to linear groups.  Then $\Gamma$ is Hopfian.
\end{corA}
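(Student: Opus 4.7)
The plan is to deduce this directly from Theorem \ref{thm:rel hyp eqn} together with two classical (or nearly classical) facts: first, that linear groups are equationally noetherian, and second, that equationally noetherian groups are Hopfian.

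First I would invoke the theorem of Bryant (and, independently, Guba), which states that any linear group over a commutative unital ring is equationally noetherian. In the formulation of the introduction, this says that each peripheral subgroup $P_\lambda$ of $\Gamma$ is equationally noetherian as a single group. Theorem \ref{thm:rel hyp eqn} then applies directly to conclude that $\Gamma$ itself is equationally noetherian.

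Next I would apply the Hopfian consequence of being equationally noetherian. The introduction records, as a consequence of the sequential characterization of equationally noetherian families (Theorem \ref{thm:eqnhopf} and Corollary \ref{c:limithopfian}), that if $\mc{G}$ is equationally noetherian then any descending sequence of surjections among groups in $\mc{G}$ eventually stabilizes, and in particular every element of $\mc{G}$ is Hopfian. Applying this to the singleton family $\mc{G} = \{\Gamma\}$, which is equationally noetherian by the previous step, yields that $\Gamma$ is Hopfian.

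There is essentially no obstacle: the content is entirely in Theorem \ref{thm:rel hyp eqn} (which reduces the problem to the peripheral subgroups), in Bryant's theorem (which handles the linear peripherals), and in the standard implication from equationally noetherian to Hopfian. The only minor point to verify is that a relatively hyperbolic group with finitely many peripheral subgroups, each equationally noetherian, satisfies the hypothesis of Theorem \ref{thm:rel hyp eqn} as stated — but this is immediate from the definition of a finite equationally noetherian family.
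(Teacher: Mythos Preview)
Your proposal is correct and follows exactly the route the paper intends: the paper states this corollary as ``an immediate consequence of Theorem~\ref{thm:rel hyp eqn}'' without further proof, and the steps you spell out---linear groups are equationally noetherian (the paper cites \cite[Theorem~B1]{BMR1} for this, via the Hilbert Basis Theorem), hence $\Gamma$ is equationally noetherian by Theorem~\ref{thm:rel hyp eqn}, hence Hopfian by Corollary~\ref{c:limithopfian}---are precisely what is meant. Your final paragraph about verifying the hypothesis is unnecessary: the paper's definition of relative hyperbolicity already assumes finitely many peripheral subgroups, and Theorem~\ref{thm:rel hyp eqn} requires only that each peripheral subgroup be equationally noetherian as a single group.
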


As stated above, we expect that the results in this paper will have many further applications, in particular to the study of groups acting $k$--acylindrically on simplicial trees. In a sequel to this paper, we show that when $\mc{G}$ is both uniformly acylindrically hyperbolic and equationally noetherian, then there is a Makanin--Razborov diagram which parameterizes $\Hom(G, \mc{G})$ modulo homomorphisms which factor through non-divergent $\mc{G}$--limit groups.

We now provide a brief outline of the contents of this paper.  In Section \ref{s:Prelim} we provide some preliminary definitions and results about (families of) acylindrically hyperbolic groups.  In Section \ref{ss:EN and limit} we introduce the notion of an equationally noetherian family of groups, relate this to the well-studied notion of equationally noetherian groups and also to different kinds of limit groups.  In Section \ref{s:limits} we explain how a {\em divergent} sequence of homomorphisms leads to a limiting action on an $\R$--tree.  We also explain that the {\em Rips machine} applies to such actions and thus yields a graph of groups decomposition.  In Section \ref{s:JSJ} we prove the existence of a {\em JSJ decomposition} of a divergent $\mc{G}$--limit group, which encodes all of the graph of groups decompositions that may arise through the limiting construction.  We define {\em modular} groups of automorphisms, which is used to develop an analogue of Sela's `Shortening Argument' in this setting.  In Section \ref{s:short quotient} we study {\em shortening quotients}, and prove a certain descending chain condition for shortening quotients (Theorem \ref{dcc}).  This allows us to prove Theorem \ref{t:EN up to non-div}.  In Section \ref{s:rel hyp} we apply the machinery developed in previous sections, and particularly Theorem \ref{t:EN up to non-div}, to prove Theorem \ref{thm:rel hyp eqn}.

\begin{rem}
Our approach in this paper is motivated by the methods of Sela from \cite{sela:dio1} (and many other papers, for example  \cite{Gro, groves_rh, ReiWei}).  As such, although our setting is quite different to these earlier works, many of the technical details remain the same. Rather than add a considerable amount to the length of this paper, we have not attempted to make this paper self-contained.  Instead, we occasionally refer to other papers for certain technical details, especially \cite{Guirardel:Rtrees}, \cite{GuiLev2} and \cite{ReiWei}, only indicating the changes required in the setting of this paper.  We apologize to the reader who is not already familiar with these earlier papers.
\end{rem}

\subsection{Acknowledgements}

We would like to thank Henry Wilton for providing the simplified proof for Example \ref{l:finite not en}.

\section{Preliminaries} \label{s:Prelim}
Throughout this paper, all actions of groups on metric spaces are by isometries and all metric spaces are geodesic metric spaces. 

\begin{defn}
A geodesic metric space is called \emph{$\delta$--hyperbolic} if for any geodesic triangle with sides $p$, $q$, and $r$, $p$ belongs to the closed $\delta$--neighborhood of $q\cup r$. In this case, we refer to $\delta$ as the \emph{hyperbolicity constant} of the space. A metric space is simply called \emph{hyperbolic} if it is $\delta$--hyperbolic for some $\delta\geq0$. 
\end{defn}
See \cite[III.H]{BH} for many of the basic properties about $\delta$--hyperbolic spaces.

For a group $\Gamma$ acting on a metric space $X$ and a constant $K$, let 
$$\Fix_K(\Gamma)=\{x\in X\;|\; d(x, gx)\leq K\;\forall\;g\in \Gamma \}.$$ 
The following is a standard fact about hyperbolic metric spaces.
\begin{lem}\label{lem:quasicenter}
Suppose a group $\Gamma$ acts with bounded orbits on a $\delta$--hyperbolic metric space $X$ and $\e>0$. Then $\Fix_{4\delta+2\e}(\Gamma)\neq\emptyset$.
\end{lem}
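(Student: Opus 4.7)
The plan is to use a standard quasi-center construction. Pick any basepoint $x_0 \in X$, and consider its orbit $\Gamma x_0$, which is bounded by hypothesis. Let
\[
 r \;=\; \inf_{y \in X} \sup_{g \in \Gamma} d(y, g x_0) \;<\; \infty,
\]
and choose a point $y \in X$ such that $\sup_{g \in \Gamma} d(y, g x_0) \leq r + \varepsilon$. I claim that $y$ already lies in $\mathrm{Fix}_{4\delta + 2\varepsilon}(\Gamma)$.

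The key observation is that $y$ being an $\varepsilon$-quasi-center of the orbit is a property preserved by $\Gamma$: for any $h \in \Gamma$, since $\Gamma x_0$ is $\Gamma$-invariant, we have
\[
\sup_{g \in \Gamma} d(hy, g x_0) \;=\; \sup_{g \in \Gamma} d(y, h^{-1} g x_0) \;=\; \sup_{g' \in \Gamma} d(y, g' x_0) \;\leq\; r + \varepsilon.
\]
So both $y$ and $hy$ are $\varepsilon$-quasi-centers of $\Gamma x_0$, and the whole task reduces to bounding the diameter of the set of $\varepsilon$-quasi-centers in a $\delta$-hyperbolic space.

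For this, I would run the standard midpoint argument. Pick a geodesic $[y, hy]$ and let $m$ be its midpoint, so $d(y,m) = d(hy,m) = L/2$ where $L = d(y,hy)$. For any $g \in \Gamma$, consider the geodesic triangle with vertices $y$, $hy$, $gx_0$; by $\delta$-thinness, $m$ is within $\delta$ of some point $p$ on one of the other two sides, say $p \in [y, gx_0]$ (the other case is symmetric). Then $d(y,p) \geq L/2 - \delta$, so $d(p, g x_0) \leq (r + \varepsilon) - (L/2 - \delta)$, and hence
\[
 d(m, g x_0) \;\leq\; d(m,p) + d(p, g x_0) \;\leq\; r + \varepsilon - L/2 + 2\delta.
\]
Taking the supremum over $g$ and using the definition of $r$ gives $r \leq r + \varepsilon - L/2 + 2\delta$, whence $L \leq 4\delta + 2\varepsilon$. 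Since this bound holds for every $h \in \Gamma$, we conclude $y \in \mathrm{Fix}_{4\delta + 2\varepsilon}(\Gamma)$.

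There is essentially no obstacle here; the only mildly subtle point is making sure the slack $\varepsilon$ in the choice of quasi-center propagates correctly through the midpoint inequality so that the final constant is exactly $4\delta + 2\varepsilon$ rather than $4\delta + 4\varepsilon$ or similar. Choosing $y$ with slack $\varepsilon$ (not $\varepsilon/2$) and then noting that the bound above produces $2\varepsilon$ from the two contributions (one for $y$, one for $hy$, via the symmetric case) keeps the constants tight.
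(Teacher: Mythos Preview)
Your proof is correct and follows the same approach as the paper: take an $\varepsilon$--quasi-center of a bounded orbit and use $\Gamma$--invariance of the set of quasi-centers together with the diameter bound $4\delta+2\varepsilon$. The only difference is that the paper simply cites Bridson--Haefliger \cite[III.$\Gamma$, Lemma~3.3]{BH} for the diameter bound, whereas you supply the midpoint argument explicitly. (One minor expository quibble: in your last paragraph, the factor $2\varepsilon$ arises from doubling $L/2 \le \varepsilon + 2\delta$, not from two separate $\varepsilon$--contributions of $y$ and $hy$; only one of them enters the estimate for a given $g$, depending on which side $p$ lies on.)
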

\begin{proof}
The {\em $\epsilon$--quasi-centers} for a bounded set $Y \in X$ are the points in the collection
\[	C_\epsilon(Y) = \left\{ y \in X \mid Y \subset B(y,r_Y+\epsilon) \right\}	,	\]
where $r_Y = \inf \left\{ \rho > 0 \mid \exists z \in X\co  \  Y \subseteq B(z,\rho) \right\}$.

Let $x\in X$. By assumption, the $\Gamma$--orbit $\Gamma\cdot 
x$ is bounded, hence for all $\e>0$ the set of $\e$--quasi-centers is non-empty and has diameter at most $4\delta+2\e$ by \cite[Chapter III.$\Gamma$, Lemma 3.3]{BH}. Since the set of $\e$--quasi-centers of $\Gamma\cdot x$ is clearly $\Gamma$--invariant, the result follows.
\end{proof}

The following notion was introduced by Bowditch \cite{Bow2}.
\begin{defn}\label{acy}
Let $\Gamma$ be a group acting on a metric space $(X, d)$. The action is called {\em acylindrical} if for all $\e$ there exist $N_{\e}$ and $R_{\e}$ such that for all $x, y\in X$ with $d(x, y)\geq R_{\e}$,
\[
\left| \{g\in \Gamma \mid d(x, gx)\leq \e, d(y, gy)\leq \e\}\right| \leq N_{\e}.
\]
\end{defn}
We refer to the functions $N_\e$ and $R_\e$ as the \emph{acylindricity constants} of the action.

\begin{rem}
 The idea of an acylindrical action is intended (see \cite{Bow2}) to be related to Sela's notion of a $k$--acylindrical action on a tree \cite{sela:acyl}.  However, unfortunately the notion in Definition \ref{acy} does not generalize this notion from trees, but rather the notion of a $(k,C)$--acylindrical action.
\end{rem}

Given a group $\Gamma$ acting on a hyperbolic metric space $X$, an element $g\in \Gamma$ is called \emph{loxodromic} if for some (equivalently, any) point $x\in X$, the map $\mathbb Z\to X$ defined by $n\to g^nx$ is a quasi-isometric embedding. Such an element $g$ admits a bi-infinite quasi-geodesic axis on which $g$ acts as a non-trivial translation. We denote the limit points on $\partial X$ of the axis of $g$ by $g^{+\infty}$ and $g^{-\infty}$. Given two loxodromic elements $g$ and $h$, we say they are \emph{independent} if $\{g^{+\infty}, g^{-\infty}\}\cap\{h^{+\infty}, h^{-\infty}\}=\emptyset$. The action of $\Gamma$ on $X$ is called \emph{non-elementary} if $\Gamma$ contains two independent loxodromic elements. By a result of Gromov \cite[8.2.F]{gromov:hyp}, this is equivalent to $\Gamma$ containing infinitely many pairwise independent loxodromic elements.
\begin{defn}\cite[Definition 1.3]{Osi13} \label{def:AHG}
A group $\Gamma$ is {\em acylindrically hyperbolic} if $\Gamma$ admits a non-elementary, acylindrical action on a $\delta$--hyperbolic metric space.
\end{defn}

A group $\Gamma$ acting on a metric space $X$ is called \emph{elliptic} if some (equivalently, any) $\Gamma$--orbit is bounded.
\begin{thm}\cite[Theorem 1.1]{Osi13}
Let $\Gamma$ be a group acting acylindrically on a $\delta$--hyperbolic metric space. Then $\Gamma$ satisfies exactly one of the following:
\begin{enumerate}
\item $\Gamma$ is elliptic.
\item $\Gamma$ is virtually cyclic and contains a loxodromic element.
\item $\Gamma$ contains infinitely many pairwise independent loxodromic elements.
\end{enumerate}
\end{thm}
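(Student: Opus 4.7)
My plan is to follow the standard Gromov-style classification of isometric group actions on a hyperbolic space (elliptic, parabolic, lineal, general type) and to show that acylindricity collapses it into the three listed cases.

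The first step is to establish that every individual element $g \in \Gamma$ is either elliptic or loxodromic. I would argue by contradiction: if $g$ has unbounded orbit but vanishing stable translation length $\tau(g) = \lim_n d(x, g^n x)/n$, then $d(x, g^n x) = o(n)$, so by $\delta$--hyperbolicity the orbit points $g^k x$ (for $0 \le k \le N$) cluster near a geodesic from $x$ to $g^N x$ for large $N$. Choosing any $\e > 0$ and $N$ large enough that $d(x, g^N x) \ge R_\e$, one can locate two orbit points $y = g^i x$ and $z = g^j x$ with $d(y,z) \ge R_\e$ that admit more than $N_\e$ distinct powers $g^m$ satisfying $d(y, g^m y) \le \e$ and $d(z, g^m z) \le \e$, contradicting Definition \ref{acy}.

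Next, assuming $\Gamma$ is not elliptic, I would show that $\Gamma$ must contain a loxodromic element. If every element were elliptic, each would have a nonempty quasi-fixed set by Lemma \ref{lem:quasicenter}; if all these quasi-fixed sets were contained in a bounded region then $\Gamma$ itself would be elliptic, so there must exist elliptic elements $a, b \in \Gamma$ whose quasi-fixed sets are arbitrarily far apart. A $\delta$--hyperbolic ping-pong computation then shows that $ab$ (or some bounded-length word in $a, b$) acts as a translation of large amplitude along the geodesic joining the two quasi-fixed sets, hence is loxodromic -- a contradiction. Fix such a loxodromic $g$ and let $E(g)$ be the setwise stabilizer of $\{g^{-\infty}, g^{+\infty}\}$; applying acylindricity at two points far apart along the axis of $g$ forces $E(g)$ to be virtually cyclic. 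If $\Gamma = E(g)$ we are in case (2).

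Otherwise, pick $h \in \Gamma \setminus E(g)$; then $hgh^{-1}$ is loxodromic with fixed-point pair $\{h \cdot g^{\pm \infty}\}$ disjoint from $\{g^{\pm \infty}\}$, so $g$ and $hgh^{-1}$ are independent, and by the result of Gromov recalled just before Definition \ref{def:AHG} we obtain infinitely many pairwise independent loxodromic elements, placing us in case (3). Mutual exclusivity is immediate: case (1) admits no loxodromic, while case (2) has a unique loxodromic fixed-point pair and therefore cannot contain two independent loxodromics. The main obstacle I expect is the step producing a loxodromic when $\Gamma$ is non-elliptic; the delicate point is the quantitative hyperbolic geometry needed to guarantee that two sufficiently separated approximate fixed points of elliptic isometries force their product to be loxodromic, and the careful bookkeeping of the acylindricity constants $N_\e$ and $R_\e$ in the elliptic-or-loxodromic dichotomy.
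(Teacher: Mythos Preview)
The paper does not prove this theorem at all: it is quoted verbatim as \cite[Theorem 1.1]{Osi13} and used as a black box, with no argument supplied. So there is nothing in the paper to compare your attempt against.

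That said, your outline is essentially the standard route (and close in spirit to Osin's own proof), but there is one genuine gap worth flagging. In your final step you take $h \notin E(g)$ and assert that $hgh^{-1}$ and $g$ are independent. By definition $h \notin E(g)$ only means $h$ does not preserve the pair $\{g^{+\infty}, g^{-\infty}\}$ \emph{setwise}; it does not immediately rule out that $h \cdot g^{+\infty} = g^{-\infty}$ while $h \cdot g^{-\infty}$ is a third point, in which case $g$ and $hgh^{-1}$ would share exactly one boundary fixed point and fail to be independent. Excluding this ``focal'' configuration requires a further use of acylindricity (two loxodromics sharing exactly one endpoint would produce, via products $g^n(hgh^{-1})^{-m}$ with matched translation, unboundedly many elements moving two far-apart axis points a bounded amount). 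You should either insert that argument or replace the conjugation step by a direct ping-pong between high powers of $g$ and $hgh^{-1}$, which yields a free subgroup and hence infinitely many pairwise independent loxodromics without needing the Gromov result you cite.
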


In particular, this theorem implies that if $\Gamma$ is acylindrically hyperbolic and $H\leq \Gamma$, then either $H$ is elliptic, virtually cyclic, or acylindrically hyperbolic. A special case of this theorem is that every element of $\Gamma$ is either elliptic or loxodromic, a result first proved by Bowditch \cite[Lemma 2.2]{Bow2}.  The following result is stated in \cite{DGO} in the more general context of loxodromic WPD elements.

\begin{lem}\cite[Lemma 6.5, Corollary 6.6]{DGO}\label{E(h)}
Let $\Gamma$ be a group acting acylindrically on a $\delta$--hyperbolic metric space and let $h\in \Gamma$ be a loxodromic element. Then $h$ is contained in a unique maximal virtually cyclic subgroup denoted $E_{\Gamma}(h)$. Furthermore, the following are equivalent for any $g\in \Gamma$:
\begin{enumerate}
\item $g\in E_{\Gamma}(h)$
\item $g^{-1}h^mg=h^k$ for some $m, k\in\Z\setminus\{0\}$.
\item $g^{-1}h^ng=h^{\pm n}$ for some $n\in\N$.
\end{enumerate}
In addition, for some $r\in\N$,
\[
E_{\Gamma}^+(h)=\{g\in \Gamma \mid \exists n\in \N \; g^{-1}h^ng=h^n\}=C_{\Gamma}(h^r).
\]
\end{lem}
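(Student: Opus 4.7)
The plan is to set $E_\Gamma(h)$ to be the setwise stabilizer in $\Gamma$ of the pair of boundary fixed points $\{h^{+\infty}, h^{-\infty}\} \subset \partial X$, and to deduce all the stated conclusions from this description together with acylindricity. Since every non-trivial power $h^m$ has the same pair of limit points as $h$, and every virtually cyclic subgroup containing $h$ must permute those two points, $E_\Gamma(h)$ is manifestly the largest subgroup containing $h$ that could possibly be virtually cyclic; the real content is to prove it actually is virtually cyclic, after which both uniqueness and maximality are immediate.

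Establishing virtual cyclicity is the main step and the only place acylindricity is essential. Fix a basepoint $x_0$ on a quasi-geodesic axis $\ell$ for $h$. Stability of quasi-geodesics in $\delta$--hyperbolic spaces implies that for every $g \in E_\Gamma(h)$ the image $g\ell$ lies in a uniformly bounded neighbourhood of $\ell$, so there is a well-defined signed translation function $\tau$ along $\ell$ on the index $\leq 2$ subgroup $E_\Gamma^+(h)$ that fixes each endpoint individually, and $\tau$ is a quasi-morphism $E_\Gamma^+(h) \to \R$. Now choose $\varepsilon$ larger than twice this stability constant and pick $N$ so large that $y := h^N x_0$ satisfies $d(x_0,y) \geq R_\varepsilon$. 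Any $g \in E_\Gamma^+(h)$ with $|\tau(g)|$ uniformly bounded moves both $x_0$ and $y$ by at most $\varepsilon$, so by Definition \ref{acy} there are at most $N_\varepsilon$ such elements. It follows that $\tau(E_\Gamma^+(h))$ is a discrete subset of $\R$ and that $\ker \tau$ is finite, so $E_\Gamma^+(h)$, and therefore $E_\Gamma(h)$, is virtually cyclic.

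The three equivalences are now essentially bookkeeping. The implication (3) $\Rightarrow$ (2) is trivial. For (2) $\Rightarrow$ (1), if $g^{-1} h^m g = h^k$ with $m,k \neq 0$ then conjugation by $g^{-1}$ sends the loxodromic element $h^m$ to the loxodromic element $h^k$; both have limit set $\{h^{\pm \infty}\}$, so $g$ stabilises this pair and lies in $E_\Gamma(h)$. For (1) $\Rightarrow$ (3), the previous paragraph shows that some power $h^n$ lies in the centre of the finite-index subgroup $E_\Gamma^+(h)$, so $g^{-1} h^n g = h^n$ for every $g \in E_\Gamma^+(h)$; any $g \in E_\Gamma(h) \setminus E_\Gamma^+(h)$ swaps the two endpoints, so conjugation by $g$ inverts $h^n$ up to an element of the finite kernel of $\tau$, and this error is killed by replacing $n$ with a suitable multiple.

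Finally, for the equality $E_\Gamma^+(h) = C_\Gamma(h^r)$, take $r$ such that $h^r$ is central in $E_\Gamma^+(h)$; then $E_\Gamma^+(h) \subseteq C_\Gamma(h^r)$ is immediate, while for the converse any $g \in C_\Gamma(h^r)$ preserves the fixed point set of $h^r$ setwise and hence lies in $E_\Gamma(h)$, and the equation $g h^r g^{-1} = h^r$ (rather than $h^{-r}$) forces $g$ to fix each endpoint individually, so $g \in E_\Gamma^+(h)$. The same two observations give the equality with $\{g \in \Gamma \mid \exists n \in \N\ g^{-1}h^n g = h^n\}$. The principal obstacle throughout is the discreteness step in the second paragraph: without acylindricity one cannot bound the size of the ``small translation'' subset of $E_\Gamma(h)$, and nothing prevents $E_\Gamma(h)$ from being much larger than a virtually cyclic group.
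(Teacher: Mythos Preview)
The paper does not prove this lemma; it is quoted verbatim from \cite[Lemma~6.5, Corollary~6.6]{DGO} and no argument is given. Your sketch is correct and is essentially the standard proof, very close in spirit to the one in the cited reference (there carried out in the more general setting of loxodromic WPD elements): one takes $E_\Gamma(h)$ to be the setwise stabiliser of $\{h^{+\infty},h^{-\infty}\}$, uses acylindricity to bound the number of elements moving two far-apart axis points a bounded amount, and deduces virtual cyclicity; the equivalences and the description of $E_\Gamma^+(h)$ then follow formally.

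Two places where your write-up is compressed but not wrong. First, the step from ``$\tau$ has discrete image and the small-$|\tau|$ set is finite'' to ``$E_\Gamma^+(h)$ is virtually cyclic'' is really the observation that every left coset of $\langle h\rangle$ in $E_\Gamma^+(h)$ has a representative $g$ with $|\tau(g)|$ bounded by roughly $\tau(h)$ plus the quasi-morphism defect, and acylindricity bounds the number of such elements, so $[E_\Gamma^+(h):\langle h\rangle]<\infty$. Second, for $g\in E_\Gamma(h)\setminus E_\Gamma^+(h)$ one should say explicitly that $g$ normalises $E_\Gamma^+(h)$ and induces $-1$ on the infinite cyclic quotient $E_\Gamma^+(h)/N$ (with $N$ the finite normal torsion subgroup), so $g^{-1}h^tg\equiv h^{-t}\pmod N$ for the $t$ with $h^t$ centralising $N$; raising to the $|N|$-th power then kills the error exactly as you indicate. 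Both points are routine to fill in.
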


In this article, we are interested in studying the set of homomorphisms from a finitely generated group $G$ to a family of groups. This perspective appeared  in the work of Jaligot--Sela \cite{JalSel} who studied homomorphisms to free products of groups.  The following definition introduces the main class(es) of groups which we study.

\begin{defn} \label{def:UAH}
Let $\mc{G}$ be a family of pairs $(\Gamma,X_\Gamma)$ where $\Gamma$ is a group and $X_\Gamma$ is a (geodesic) metric space upon which $\Gamma$ acts isometrically. We say that $\mc{G}$ is \emph{uniformly acylindrically hyperbolic} if there exists $\delta \ge 0$ and functions $R_\e$, $N_\e$ such that for each $(\Gamma, X_\Gamma)\in\mc{G}$, the space $X_\Gamma$ is a $\delta$--hyperbolic metric space on which $\Gamma$ acts acylindrically with acylindricity constants $R_\e$ and $N_\e$. Further, we require that the action of at least one $\Gamma$ on $X_\Gamma$ is non-elementary. We say that $\delta$ is the \emph{hyperbolicity constant} and $R_\e$, $N_\e$ are the \emph{acylindricity constants} for the family $\mc{G}$.
\end{defn}

\begin{rem}
We usually refer to elements of $\mc{G}$ just via the group $\Gamma$, leaving the metric space $X_\Gamma$ implicit.  However, it is possible to apply the techniques in this paper to a single group and a collection of spaces upon which this group acts in a uniformly acylindrically hyperbolic way.   An example of this approach appears in \cite{sela:acyl} where Sela studies the set of all $k$--acylindrical actions of a fixed finitely generated group on simplicial trees.  In this paper, however, we are mostly interested in the groups in $\mc{G}$, so we usually implicitly fix a single space $X_\Gamma$ for each group.  Therefore, we often use such expressions as `let $\Gamma \in \mc{G}$' and variants thereof.
\end{rem}

The requirement that some element of $\mc{G}$ admits a non-elementary action is included in Definition \ref{def:UAH} to ensure that $\Gamma$ is acylindrically hyperbolic (with an acylindrically hyperbolic action on $X_\Gamma$) if and only if $\{ (\Gamma, X_\Gamma) \}$ is uniformly acylindrically hyperbolic. Note that a uniformly acylindrically hyperbolic family $\mc{G}$ may contain groups which are not acylindrically hyperbolic. For example, $\mc{G}$ may consist of all subgroups of a fixed acylindrically hyperbolic group (all acting on the same space).

More examples of acylindrically hyperbolic families of groups include: any finite collection of acylindrically hyperbolic groups, the family of all groups which decompose as a non-trivial free product, more generally the class of all groups which admit a $k$--acylindrical action on a tree for a fixed $k$. This last example includes the family of all fundamental groups of compact $3$--manifolds. Indeed, if a compact $3$--manifold $M$ is reducible then $\pi_1(M)$ splits as a free product corresponding to the Kneser--Milnor decomposition of $M$ and we set $X_{\pi_1(M)}$ to be the Bass--Serre tree dual to this decomposition. If $M$ is irreducible, then we set $X_{\pi_1(M)}$ to be the Bass--Serre tree dual to the JSJ decomposition of $M$ (which may be a point if this decomposition is trivial). As noted in the introduction, the action of $\pi_1(M)$ on $X_{\pi_1(M)}$ is $4$--acylindrical. On the other hand, the family of all hyperbolic groups is not uniformly acylindrically hyperbolic because this would violate Lemma \ref{ufin} below.

The next result follows from the proof of \cite[Lemma 6.8]{Osi13}. In \cite{Osi13} it is stated for a single acylindrically hyperbolic group, but it is easy to see from the proof that the constant only depends on the hyperbolicity and acylindricity constants.
\begin{lem}\label{ufin}
Let $\mc{G}$ be a uniformly acylindrically hyperbolic family of groups. Then there exists $K$ such that for any $\Gamma\in\mc{G}$ and any loxodromic $h\in \Gamma$, every finite subgroup of $E_\Gamma(h)$ has order at most $K$.
\end{lem}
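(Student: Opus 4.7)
The plan is to adapt the proof of \cite[Lemma 6.8]{Osi13} while tracking the dependence of the bound on $\delta$ and the acylindricity constants $R_\e, N_\e$, which are uniform across $\mc{G}$ by the definition of a uniformly acylindrically hyperbolic family. The idea is to produce two points in $X_\Gamma$ at distance at least $R_{\e'}$ (for a suitable choice of $\e'$) such that every element of a given finite subgroup $F \le E_\Gamma(h)$ moves both of them by at most $\e'$; then acylindricity bounds $|F|$.

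First I would fix $\e > 0$ (say $\e = 1$) and set $\e' = 4\delta + 2\e$. Let $F \le E_\Gamma(h)$ be a finite subgroup; since $F$ is finite, it acts with bounded orbits on $X_\Gamma$, so by Lemma \ref{lem:quasicenter} there exists $x \in X_\Gamma$ with $\dxh(x, fx) \le \e'$ for every $f \in F$. (Here I'm using $d$ for the metric on $X_\Gamma$.) By Lemma \ref{E(h)}, the subgroup $E_\Gamma^+(h)$ has index at most $2$ in $E_\Gamma(h)$, and there exists $r \in \N$ with $E_\Gamma^+(h) = C_\Gamma(h^r)$. Consequently $F^+ := F \cap E_\Gamma^+(h)$ has index at most $2$ in $F$, and every element of $F^+$ commutes with $h^r$.

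Next I would use the fact that $h$ is loxodromic, so the orbit map $n \mapsto h^n x$ is a quasi-isometric embedding of $\Z$ into $X_\Gamma$. In particular, for $N$ a sufficiently large multiple of $r$, the point $y := h^N x$ satisfies $d(x, y) \ge R_{\e'}$. For any $f \in F^+$, since $f$ commutes with $h^r$ and $N$ is a multiple of $r$, we have $fh^N = h^N f$, so
\[
d(y, fy) = d(h^N x, f h^N x) = d(h^N x, h^N f x) = d(x, fx) \le \e'.
\]
Combined with $d(x, fx) \le \e'$, the acylindricity of the action of $\Gamma$ on $X_\Gamma$ yields $|F^+| \le N_{\e'}$. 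Therefore $|F| \le 2|F^+| \le 2 N_{\e'}$, and we may take $K = 2N_{\e'}$. Since $\delta$, $R_\e$, and $N_\e$ depend only on the family $\mc{G}$ and not on $\Gamma$ or $h$, this $K$ works uniformly.

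The main thing to check, which is the only mildly subtle point, is that one really can choose $N$ (a multiple of $r$) so that $d(x, h^N x) \ge R_{\e'}$; this is where the loxodromic hypothesis is essential and is immediate from the quasi-isometric embedding property of the orbit map. Everything else is routine once the setup is in place. No obstacle beyond this arises, since the argument uses only the uniform constants from Definition \ref{def:UAH} together with the already-established Lemmas \ref{lem:quasicenter} and \ref{E(h)}.
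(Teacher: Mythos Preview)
Your proposal is correct and is essentially the same approach as the paper's: the paper simply defers to the proof of \cite[Lemma 6.8]{Osi13} and observes that the bound depends only on $\delta$ and the acylindricity functions, while you have written out that argument explicitly (quasi-center plus translating by a power of $h$ commuting with $F^+$) and tracked the constants to get $K = 2N_{4\delta+2}$. The only minor remark is that the index bound $[E_\Gamma(h):E_\Gamma^+(h)]\le 2$ is not stated verbatim in Lemma \ref{E(h)} but follows immediately from condition (3) there via the sign homomorphism $E_\Gamma(h)\to\{\pm 1\}$.
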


The following is \cite[Lemma 3.6]{Osi13}; it is clear from the proof that all the constants depend only on the hyperbolicity and acylindricity constants for $\Gamma$, so the result holds for our uniformly acylindrically hyperbolic family $\mc{G}$. 
\begin{lem}\label{acy2}
Let $\mc{G}$ be a uniformly acylindrically hyperbolic family.  For all $\e > 0$ there exist $N$ and $R$ such that for any $(\Gamma, X_\Gamma) \in \mc{G}$ and for any $x, y\in X_\Gamma$ with $d(x, y)\geq R$,
\[
|\{g\in \Gamma\;|\; d(x, gx)\leq \e, d(y, gy)\leq d(x,y)+\e\}|\leq N.
\] 
\end{lem}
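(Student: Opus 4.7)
The plan is to reduce the statement to the standard uniform acylindricity of $\mc{G}$ by producing, for each $g$ in the set whose cardinality we wish to bound, a point $z$ on the geodesic from $x$ to $y$ — chosen independently of $g$ — whose displacement $d(z, gz)$ is controlled by a constant $\e'$ depending only on $\e$ and $\delta$. Concretely, I would set $\e' := \e + 4\delta$, fix a geodesic $\gamma \co [0,L] \to X_\Gamma$ from $x$ to $y$ where $L = d(x,y)$, and choose $t_0 \in [R_{\e'}, L/2 - 2\delta - \e)$ satisfying $t_0 > \e + 2\delta$; this is possible provided $R$ is taken larger than $2R_{\e'} + 4\delta + 2\e$ (enlarged slightly if necessary to ensure $t_0 > \e + 2\delta$). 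Setting $z = \gamma(t_0)$, the claim is that $d(z, gz) \leq \e'$ for every $g$ satisfying the hypotheses.

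For the claim I would consider the geodesic quadrilateral with vertices $x, y, gy, gx$ in cyclic order, using $g\gamma$ as the side from $gx$ to $gy$. Geodesic quadrilaterals in a $\delta$--hyperbolic space are $2\delta$--slim, so $z$ lies within $2\delta$ of a point $q$ on one of the other three sides. If $q \in [x, gx]$, then since that side has length at most $\e$, we obtain $d(x, z) \leq 2\delta + \e$, contradicting $t_0 > \e + 2\delta$. If $q \in [y, gy]$, writing $u := d(y, q)$, the bound $d(z, q) \leq 2\delta$ gives $u \geq L - t_0 - 2\delta$, while chaining $d(x, gy) \leq d(x, z) + d(z, q) + d(q, gy) \leq t_0 + 2\delta + (L + \e - u)$ with the reverse triangle inequality $d(x, gy) \geq d(gx, gy) - d(x, gx) = L - \e$ forces $u \leq t_0 + 2\delta + 2\e$, and together these inequalities give $t_0 \geq L/2 - 2\delta - \e$, again contradicting the choice of $t_0$. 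Thus $q = g\gamma(s)$ for some $s$, and applying $d(x, gx) \leq \e$ and $d(z, q) \leq 2\delta$ to the points $x, gx, z, q$ yields $|s - t_0| \leq \e + 2\delta$; since $gz = g\gamma(t_0)$ lies on the same geodesic as $q$, we conclude $d(z, gz) \leq d(z, q) + |s - t_0| \leq 4\delta + \e = \e'$.

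With the claim in hand, the final step is immediate: by construction $d(x, gx) \leq \e \leq \e'$, $d(z, gz) \leq \e'$, and $d(x, z) = t_0 \geq R_{\e'}$, so the uniform acylindricity of $\mc{G}$ applied at the pair $(x, z)$ with parameter $\e'$ bounds the cardinality of the set in question by $N_{\e'}$. Hence $N := N_{\e'}$ and $R$ as above work. I expect the main obstacle to be the $[y, gy]$ case in the quadrilateral analysis: because $d(y, gy)$ may be as large as $L + \e$, a naive fellow-travel argument between $[x,y]$ and $g\gamma$ near the $y$ end fails, and ruling out this case requires simultaneously exploiting the upper bound on $d(y, gy)$ and the lower bound $d(x, gy) \geq L - \e$ coming from $d(x, gx) \leq \e$.
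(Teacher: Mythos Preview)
Your argument is correct. The quadrilateral case analysis is clean: the choice $t_0 < L/2 - 2\delta - \e$ is exactly what is needed to rule out the $[y,gy]$ side, and your chaining of $d(x,gy) \ge L - \e$ against $d(x,gy) \le t_0 + 2\delta + (L+\e-u)$ handles the asymmetry coming from the large displacement of $y$. The bound $|s - t_0| \le \e + 2\delta$ in the $g\gamma$ case follows from the two triangle inequalities you indicate, and the final appeal to uniform acylindricity at $(x,z)$ is legitimate.

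By way of comparison: the paper does not prove this lemma at all, but simply cites \cite[Lemma~3.6]{Osi13} and observes that the constants appearing in Osin's proof depend only on $\delta$ and the acylindricity functions $N_\e, R_\e$, hence are uniform over the family $\mc{G}$. Your route is a self-contained reduction to the defining acylindricity inequality, which has the advantage of making the dependence of $N$ and $R$ on $(\e,\delta,N_\bullet,R_\bullet)$ fully explicit (namely $N = N_{\e+4\delta}$ and $R$ roughly $2R_{\e+4\delta} + O(\e+\delta)$). Osin's original argument proceeds along similar geometric lines, so the two are close in spirit; the paper's choice to cite rather than reprove is purely a matter of exposition.
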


\section{Equationally noetherian families and limit groups} \label{ss:EN and limit}

In this section we discuss what it means for a group to be equationally noetherian and some of the basic consequences of this property.  We then define what it means for a {\em family} $\mc{G}$ of groups to be equationally noetherian (see Definition \ref{d:en family} below).  We also introduce $\mc{G}$--limit groups, and relate properties of them to $\mc{G}$ being equationally noetherian.

Let $\Gamma$ be a group, $n$ a natural number and $F_n=F(x_1,...,x_n)$ the free group of rank $n$. Given $W\in\Gamma\ast F_n$ and $(g_1,...,g_n)\in\Gamma^n$, let $W(g_1,...,g_n)$ be the element of $\Gamma$ obtained by replacing each occurrence of $x_i$ with $g_i$ and each occurrence of $x_i^{-1}$ with $g_i^{-1}$.  For a subset $S\subseteq \Gamma\ast F_n$, define $V_\Gamma(S)$ by 
\[
V_\Gamma(S)=\{(g_1,...,g_n)\in\Gamma^n\;|\; W(g_1,...,g_n)=1 \;\forall W\in S\}.
\]

We think of $W\in F_n\ast\Gamma$ as defining an equation in the variables $x_1,..,x_n$, with constants from the group $\Gamma$, $S$ as a system of equations and $V_\Gamma(S)$ as the algebraic set of solutions to the system of equations.  There is a canonical copy of $F_n$ in $\Gamma \ast F_n$, and  an equation $W$ is {\em coefficient-free} if $W \in F_n$.

For any $(g_1,...,g_n)\in\Gamma^n$, there is an associated homomorphism $\eta\colon \Gamma\ast F_n\to \Gamma$ which is the identity on $\Gamma$ and which sends each $x_i$ to $g_i$. Then $(g_1,...,g_n)\in V_\Gamma(S)$ if and only if $S\subseteq \ker(\eta)$. In particular, for any homomorphism $\eta\colon \Gamma\ast F_n\to\Gamma$, $(\eta(x_1),...\eta(x_n))\in V_\Gamma(\ker(\eta))$. Note also that $(g_1,...,g_n)\in V_\Gamma(S)$ if and only if $x_i \mapsto g_i$ induces a homomorphism $(\Gamma\ast F_n)/\llangle S\rrangle\to \Gamma$. Hence there is a one to one correspondence between the algebraic set of solutions $V_\Gamma(S)$ to the system of equations $S$ and $\Hom(G, \Gamma)$ where $G=(\Gamma\ast F_n)/\llangle S\rrangle$. 

\begin{defn}
A group $\Gamma$ is \emph{equationally noetherian} if for any natural number $n$ and any $S\subseteq \Gamma\ast F_n$, there exists a finite $S_0\subseteq S$ such that $V_\Gamma(S_0)=V_\Gamma(S)$. 
\end{defn}

 The equationally noetherian property is a group-theoretic version of the conclusion of the Hilbert Basis Theorem.  The Hilbert Basis Theorem easily implies that all finitely generated linear groups are equationally noetherian (see, for example \cite{BMR1}). Using linearity, Guba showed that free groups are equationally noetherian \cite{Guba}. Sela showed that torsion-free hyperbolic groups are equationally noetherian \cite{sela:dio7}, and Sela's methods have been expanded to show that hyperbolic groups with torsion \cite{ReiWei}, toral relatively hyperbolic groups \cite{Gro}, and free products of equationally noetherian groups \cite{sela:DioX} are all equationally noetherian. Some solvable groups, including free solvable groups and rigid groups were shown to be equationally noetherian by Gupta and Romanovski{\u\i} \cite{GR, Romanovskii}.  In work in preparation, the first author proves that the mapping class group of a surface of finite type is equationally noetherian.

It is well-known that every equationally noetherian group is Hopfian. Since being equationally noetherian is closed under taking subgroups, any group which contains a non-Hopfian subgroup is not equationally noetherian. In particular there exist acylindrically hyperbolic groups, such as $\Z\ast \mathrm{BS}(2, 3)$, which are not equationally noetherian.

Next we define $\mc{G}$--limit groups, where $\mc{G}$ is a family of groups. Other versions of these definitions are often stated only for \emph{stable} sequences of homomorphisms. There is no real loss of generality here, since one can always turn a sequence of homomorphisms into a stable sequence by passing to a subsequence. However, we prefer to use to language of ultrafilters instead of frequently passing to subsequences.

\begin{defn}
An \emph{ultrafilter} is a finitely additive probability measure $\omega\colon 2^{\mathbb N}\to\{0, 1\}$. An ultrafilter $\omega$ is called \emph{non-principal} if $\omega(F)=0$ for any finite $F\subset\N$.  We say a subset $A\subseteq \mathbb N$ is \emph{$\omega$--large} if $\omega(A)=1$ and \emph{$\omega$--small} if $\omega(A)=0$. Given a statement $P$ which depends on some index $i$, we say that $P$ holds \emph{$\omega$--almost surely} if $P$ holds for an $\omega$--large set of indices. 
\end{defn}
For the remainder of this paper, we fix a non-principal ultrafilter $\omega$. When $G$ is a group and $\mc{G}$ is a family of groups, we denote $\bigcup\limits_{\Gamma\in\mc{G}} \Hom(G, \Gamma)$ by $\Hom(G, \mc{G})$.

\begin{defn}
Let $\mc{G}$ be a family of groups, $G$ a finitely generated group, and $(\phi_i)$ a sequence from $\Hom(G, \mc{G})$. The {\em $\omega$--kernel} of the sequence $(\phi_i)$ is 
$$\ker^{\omega}(\phi_i)=\{g\in G\;|\; \phi_i(g)=1 \; \text{$\omega$--almost surely}\}.$$ 

A {\em $\mc{G}$--limit group} is a group of the form $L=G/\ker^{\omega}(\phi_i)$ for some sequence $(\phi_i)$ from $\Hom(G, \mc{G})$. Let $\phi_{\infty}\colon G\onto L= G / \ker^{\omega}(\phi_i)$ denote the natural quotient map. We refer to $\phi_\infty$ as the {\em limit map} associated to the sequence $(\phi_i)$. We refer to the sequence $(\phi_i)$ as a {\em defining sequence} of homomorphisms for the $\mc{G}$--limit group $L$.
\end{defn}
If $\mc{G} = \{ \Gamma \}$, a single group, then we talk about {\em $\Gamma$--limit groups}.

\begin{defn}
A $\mc{G}$--limit group $L = G / \ker^{\omega}(\phi_i)$ where $\phi_i \co G \to \Gamma_i$ is \emph{strict} if either $\omega$--almost surely the groups $\Gamma_i$ are distinct (i.e. non-isomorphic), or else $\omega$--almost surely the $\Gamma_i$ are equal to a fixed $\Gamma \in \mc{G}$ and the homomorphisms $\phi_i$ and $\phi_j$ are $\omega$--almost surely distinct modulo conjugation in $\Gamma$. 
\end{defn}

Note that non-strict $\mc{G}$--limit groups are isomorphic to subgroups of elements of $\mc{G}$. Some strict $\mc{G}$--limit groups may also be (isomorphic to) subgroups of elements of $\mc{G}$.

The follow equivalences are well-known, for example $(1)\implies (4)$ appears in \cite[Proposition 3]{BMR1}. However, we are not aware of a reference in the literature which includes all of them. 
\begin{thm}\label{eqn}
Let $\omega$ be a non-principal ultrafilter.The following are equivalent for any group $\Gamma$:
\begin{enumerate}
\item For any natural number $n$ and any $S\subseteq F_n$, there is a finite subset $S_0\subseteq S$ such that $V_\Gamma(S_0)=V_\Gamma(S)$.
\item\label{factors} For any finitely generated group $G$ and any sequence of homomorphisms $(\phi_i \colon G\to \Gamma)$, $\phi_i$ $\omega$--almost surely factors through the limit map $\phi_\infty$.
\item For any finitely generated group $G$ and any sequence of homomorphisms $(\phi_i\colon G\to \Gamma)$, some $\phi_i$ factors through the limit map $\phi_\infty$.
\item $\Gamma$ is equationally noetherian.
\end{enumerate}
\end{thm}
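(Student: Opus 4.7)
I would prove the cycle $(1) \Rightarrow (2) \Rightarrow (3) \Rightarrow (1)$ establishing equivalence of the coefficient-free conditions, and then close with $(1) \Leftrightarrow (4)$. The implications $(2) \Rightarrow (3)$ and $(4) \Rightarrow (1)$ are immediate: the former since every $\omega$-large set is non-empty, and the latter since $F_n \subseteq \Gamma \ast F_n$ makes (1) a special case of (4).

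For $(1) \Rightarrow (2)$, fix a generating set $x_1, \ldots, x_n$ of $G$ with induced surjection $\pi \colon F_n \twoheadrightarrow G$ and set $\tilde{\phi}_i := \phi_i \circ \pi$. The $\omega$-kernel $K := \ker^\omega(\tilde{\phi}_i) \subseteq F_n$ is precisely $\pi^{-1}(\ker(\phi_\infty))$, so $F_n/K \cong L$. Applying (1) to $K$ provides a finite $K_0 \subseteq K$ with $V_\Gamma(K_0) = V_\Gamma(K)$; since $K_0$ is finite and each $w \in K_0$ is killed by $\tilde{\phi}_i$ for $\omega$-almost every $i$, $\omega$-almost surely $\tilde{\phi}_i$ annihilates every element of $K_0$. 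The tuple $(\tilde{\phi}_i(x_j))_j$ therefore lies in $V_\Gamma(K_0) = V_\Gamma(K)$, forcing $\tilde{\phi}_i$ to vanish on all of $K$ and descend to $F_n/K \cong L$; equivalently, $\phi_i$ factors through $\phi_\infty$. For $(3) \Rightarrow (1)$: if $S \subseteq F_n$ witnesses the failure of (1), assume (after restricting to a countable subset with the same variety, which is possible when $\Gamma$ is countable) $S = \{W_1, W_2, \ldots\}$, set $S_i := \{W_1, \ldots, W_i\}$, and choose $(g_{i,1}, \ldots, g_{i,n}) \in V_\Gamma(S_i) \setminus V_\Gamma(S)$. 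The homomorphisms $\psi_i \colon F_n \to \Gamma$ defined by $x_j \mapsto g_{i,j}$ form a sequence from a finitely generated group, and each $W_k \in S$ lies in $\ker(\psi_\infty)$ since $\psi_i(W_k) = 1$ on the cofinite (hence $\omega$-large) set $\{i \geq k\}$. By (3), some $\psi_{i_0}$ factors through $\psi_\infty$ and annihilates all of $S$, forcing $(g_{i_0,j}) \in V_\Gamma(S)$ --- a contradiction.

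For $(1) \Rightarrow (4)$, assume first that $\Gamma$ is finitely generated by $t_1, \ldots, t_k$; the general case reduces to this by passing to the countable subgroup of $\Gamma$ containing the coefficients of $S$ and running a diagonal argument over finite sub-systems. Replacing each coefficient appearing in each $W \in S \subseteq \Gamma \ast F_n$ by a word in the $t_l$ produces a coefficient-free system $\hat{S} \subseteq F_{n+k}$, to which (1) associates a finite $\hat{S}_0 \subseteq \hat{S}$ with $V_\Gamma(\hat{S}_0) = V_\Gamma(\hat{S})$. The corresponding finite $S_0 \subseteq S$ then satisfies $V_\Gamma(S_0) = V_\Gamma(S)$: given $(g_j) \in V_\Gamma(S_0)$, the augmented tuple $(g_j, t_l)$ lies in $V_\Gamma(\hat{S}_0) = V_\Gamma(\hat{S})$, and unfolding recovers $(g_j) \in V_\Gamma(S)$. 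The main technical obstacle is this coefficient-encoding, in which the source free group must remain finitely generated while all coefficients of $S$ are represented; this is immediate when $\Gamma$ is finitely generated and requires a diagonal reduction in the general countable case.
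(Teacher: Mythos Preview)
Your cycle $(1) \Rightarrow (2) \Rightarrow (3) \Rightarrow (1)$ and the trivial $(4) \Rightarrow (1)$ essentially match the paper's arguments. One minor point: in your $(3) \Rightarrow (1)$ the parenthetical about restricting to a countable subset is unnecessary, since $S \subseteq F_n$ and $F_n$ is countable regardless of $\Gamma$.

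The genuine gap is in $(1) \Rightarrow (4)$. Your coefficient-encoding argument is correct whenever the coefficients of $S$ lie in a finitely generated subgroup of $\Gamma$, but your reduction to this case is not carried out and the sketch you give is circular. Passing to the countable subgroup $\Gamma_0 \leq \Gamma$ generated by the coefficients yields a countable group, not a finitely generated one. If one writes $\Gamma_0 = \bigcup_m \Gamma_m$ with each $\Gamma_m$ finitely generated and applies your encoding to $S^{(m)} := S \cap (\Gamma_m \ast F_n)$, one obtains finite $S_0^{(m)}$ with $V_\Gamma(S_0^{(m)}) = V_\Gamma(S^{(m)})$, but to finish one must still show that the chain $V_\Gamma(S_0^{(1)}) \supseteq V_\Gamma(S_0^{(1)} \cup S_0^{(2)}) \supseteq \cdots$ stabilizes --- which is precisely the with-coefficients noetherianity being proved. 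The paper avoids this by taking the route $(3) \Rightarrow (4)$ instead: given $S = \{W_1, W_2, \ldots\} \subseteq \Gamma \ast F_n$ violating (4), it builds $\Gamma$-retractions $\phi_i \colon \Gamma \ast F_n \to \Gamma$ with $S_i \subseteq \ker(\phi_i)$ and $S \not\subseteq \ker(\phi_i)$, applies (3) to the restrictions $\phi_i|_{F_n}$ --- so the finitely generated source is $F_n$, with no hypothesis on $\Gamma$ --- and then uses that $\phi_i$ and $\phi_\infty$ both restrict to the identity on $\Gamma$ to pass from factoring on $F_n$ to factoring on $\Gamma \ast F_n$.
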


\begin{proof}
$(2)\implies (3)$ and $(4)\implies (1)$ are trivial.

$(1)\implies (2)$: First, it suffices to assume $G=F_n$. Indeed, since $G$ is finitely generated we can fix some quotient map $h\colon F_n\twoheadrightarrow G$ and consider the sequence $(\eta_i)$ where $\eta_i =\phi_i\circ h$. If $\eta_i$ factors through $\eta_\infty$, then $\ker(\phi_\infty\circ h)=\ker(\eta_\infty)\subseteq\ker(\eta_i)=\ker(\phi_i\circ h)$, and hence $\ker(\phi_\infty)\subseteq \ker(\phi_i)$. 

Now assume $G=F_n$ and let $S=\ker^\omega(\phi_i)=\ker(\phi_\infty)$. By assumption, there exists finite subset $S_0\subseteq S$ such that $V_\Gamma(S_0)=V_\Gamma(S)$. But since $S_0$ is finite and $\omega$ is finitely additive, $\omega$--almost surely $S_0\subseteq \ker(\phi_i)$, hence $V_\Gamma(\ker(\phi_i))\subseteq V_\Gamma(S_0)=V_\Gamma(S)$ $\omega$--almost surely. For such $i$, $(\phi_i(x_1),...,\phi_i(x_n))\in V_\Gamma(\ker(\phi_i))\subseteq V_\Gamma(S)$, so if $W\in S=\ker(\phi_\infty)$, then $\phi_i(W)=W(\phi_i(x_1),...,\phi_i(x_n))=1$ in $\Gamma$, and hence $\ker(\phi_\infty)\subseteq \ker(\phi_i)$. Thus $\phi_i$ $\omega$--almost surely factors through $\phi_\infty$.

$(3)\implies (4)$: Let $S=\{W_1, W_2,...\}\subseteq \Gamma\ast F_n$, and let $S_i=\{W_1,...,W_i\}\subseteq S$. Suppose that $V_\Gamma(S_i)\neq V_\Gamma(S)$ for all $i$. Then there exists $\phi_i\colon \Gamma\ast F_n\to \Gamma$ such that $\phi_i$ restricts to the identity map on $\Gamma$ and $S_i\subseteq\ker(\phi_i)$ but $S\not\subset \ker(\phi_i)$. Notice that $S\subseteq \ker(\phi_\infty)=\ker^\omega(\phi_i)$ since $W_i\in \ker(\phi_j)$ for all $j\geq i$. By assumption, some $\phi_i|_{F_n}$ factors through $\phi_\infty|_{F_n}$, since $\phi_i$ and $\phi_\infty$ are both the identity on $\Gamma$ if follows that $\phi_i$ factors through $\phi_{\infty}$. But this implies that $S\subseteq \ker(\phi_\infty)\subseteq \ker(\phi_i)$, a contradiction. Therefore, $\Gamma$ is equationally noetherian.
\end{proof}

We now recall from the introduction how to extend these notions to define what it means for a family of groups $\mc{G}$ to be equationally noetherian.  Given $S\subseteq F_n$, let

\[
V_\mc{G}(S)=\{(g_1,...,g_n)\in\Gamma^n\;|\; \Gamma\in\mc{G} \text{ and } W(g_1,...,g_n)=1 \;\forall W\in S\}.
\]

\begin{repdefinition}{d:en family}
We say that a family $\mc{G}$ is an {\em equationally noetherian family of groups} if for any $n$ and any $S\subseteq F_n$, there exists a finite $S_0\subseteq S$ such that $V_\mc{G}(S_0)=V_\mc{G}(S)$.
\end{repdefinition}

\begin{rem}
In order to consider equations with coefficients for a family of groups, one would need to consider families of groups $\mc{G}$ such that each element of $\mc{G}$ is equipped with a fixed embedding of some fixed coefficient group $\Gamma$. We do not use this perspective in this paper.
\end{rem}

\begin{thm}\label{eqnfamily}
The following are equivalent for any family of groups $\mc{G}$:
\begin{enumerate}
\item  $\mc{G}$ is equationally noetherian.
\item  For any finitely generated group $G$ and any sequence of homomorphisms $(\phi_i)$ from $\Hom(G, \mc{G})$, $\phi_i$ factors through the limit map $\phi_\infty$ $\omega$--almost surely.
\item  For any finitely generated group $G$ and any sequence of homomorphisms $(\phi_i)$ from $\Hom(G, \mc{G})$, some $\phi_i$ factors through the limit map $\phi_\infty$.
\end{enumerate}
\end{thm}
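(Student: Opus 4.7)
The plan is to mirror the structure of the proof of Theorem \ref{eqn}, adapted to the coefficient-free setting of Definition \ref{d:en family}. The implication $(2)\Rightarrow(3)$ is immediate, so only $(1)\Rightarrow(2)$ and $(3)\Rightarrow(1)$ require work. Throughout, I would exploit the fact that $V_{\mc{G}}(S)$ is only defined for $S \subseteq F_n$, which actually makes the argument slightly cleaner than in Theorem \ref{eqn}.

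For $(1)\Rightarrow(2)$, first reduce to the case $G = F_n$: pick a surjection $h \co F_n \onto G$, set $\eta_i = \phi_i \circ h$, and observe that $\ker(\phi_\infty) \subseteq \ker(\phi_i)$ is equivalent to $\ker(\eta_\infty) \subseteq \ker(\eta_i)$. So assume $G = F_n$ and set $S = \ker^\omega(\phi_i) = \ker(\phi_\infty) \subseteq F_n$. By (1), choose a finite $S_0 \subseteq S$ with $V_{\mc{G}}(S_0) = V_{\mc{G}}(S)$. Finite additivity of $\omega$ gives $S_0 \subseteq \ker(\phi_i)$ $\omega$-almost surely. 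For any such $i$, writing $\phi_i \co F_n \to \Gamma_i \in \mc{G}$, the tuple $(\phi_i(x_1),\ldots,\phi_i(x_n))$ lies in $V_{\Gamma_i}(\ker(\phi_i)) \subseteq V_{\mc{G}}(S_0) = V_{\mc{G}}(S)$, so every $W \in S = \ker(\phi_\infty)$ satisfies $\phi_i(W) = 1$. Hence $\ker(\phi_\infty) \subseteq \ker(\phi_i)$, which is precisely the statement that $\phi_i$ factors through $\phi_\infty$.

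For $(3)\Rightarrow(1)$, argue by contradiction. Given $S = \{W_1, W_2, \ldots\} \subseteq F_n$, set $S_i = \{W_1,\ldots,W_i\}$ and suppose $V_{\mc{G}}(S_i) \neq V_{\mc{G}}(S)$ for every $i$. Choose $\Gamma_i \in \mc{G}$ and $(g_1^{(i)},\ldots,g_n^{(i)}) \in V_{\Gamma_i}(S_i) \setminus V_{\mc{G}}(S)$, and define $\phi_i \co F_n \to \Gamma_i$ by $x_j \mapsto g_j^{(i)}$. Then $S_i \subseteq \ker(\phi_i)$ but $S \not\subseteq \ker(\phi_i)$. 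For each fixed $j$, the relation $W_j \in \ker(\phi_i)$ holds for all $i \geq j$, which is cofinite hence $\omega$-large; so $S \subseteq \ker^\omega(\phi_i) = \ker(\phi_\infty)$. By (3), some $\phi_i$ factors through $\phi_\infty$, giving $S \subseteq \ker(\phi_\infty) \subseteq \ker(\phi_i)$, contradicting the choice of $\phi_i$. Therefore some $S_i$ witnesses $V_{\mc{G}}(S_i) = V_{\mc{G}}(S)$, establishing (1).

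No step presents a serious obstacle: the argument is essentially bookkeeping with ultrafilters. The only subtle point — and the one to get right — is that in $(3)\Rightarrow(1)$ one needs the $\phi_i$ to land in possibly \emph{different} groups $\Gamma_i \in \mc{G}$, whereas in the single-group case of Theorem \ref{eqn} they all land in $\Gamma$; this is exactly why the statement is phrased in terms of $\Hom(G,\mc{G}) = \bigcup_\Gamma \Hom(G,\Gamma)$ and why $\mc{G}$--limit groups are defined via sequences whose targets may vary with $i$.
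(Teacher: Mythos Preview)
Your proposal is correct and follows exactly the approach the paper takes: the paper's proof simply states that it is identical to that of Theorem~\ref{eqn} with $\Gamma$ replaced by groups $\Gamma_i$ from $\mc{G}$, and you have carried out precisely that adaptation, including the reduction to $G=F_n$ for $(1)\Rightarrow(2)$ and the enumeration-and-contradiction argument for $(3)\Rightarrow(1)$. Your closing remark about the $\phi_i$ landing in varying $\Gamma_i$ is exactly the point the paper is gesturing at.
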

\begin{proof}
The proof is identical to Theorem \ref{eqn} with $\Gamma$ replaced by groups $\Gamma_i$ from $\mc{G}$.
\end{proof}

The following are obvious.

\begin{lem}
 Any union of finitely many equationally noetherian families is equationally noetherian.
\end{lem}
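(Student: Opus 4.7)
The claim is that if $\mc{G} = \mc{G}_1 \cup \cdots \cup \mc{G}_k$ where each $\mc{G}_j$ is equationally noetherian, then $\mc{G}$ is equationally noetherian. The plan is the obvious one: take a system $S\subseteq F_n$, apply the EN property of each $\mc{G}_j$ separately to extract a finite adequate subset $S_j\subseteq S$, and let $S_0 = S_1\cup\cdots\cup S_k$, which is still finite.

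The verification rests on one elementary observation: directly from Definition \ref{d:en family}, the algebraic set $V_{\mc{G}}(T) = \bigcup_{j=1}^{k} V_{\mc{G}_j}(T)$ for every $T\subseteq F_n$, because solutions in $\mc{G}$ are by definition solutions in some $\Gamma\in\mc{G}_j$ for some $j$. So it suffices to show $V_{\mc{G}_j}(S_0) = V_{\mc{G}_j}(S)$ for each $j$, and then take the union over $j$.

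For each individual $j$, the inclusions $S_j \subseteq S_0 \subseteq S$ give (reversing, since $V$ is order-reversing in its argument)
\[
V_{\mc{G}_j}(S) \;\subseteq\; V_{\mc{G}_j}(S_0) \;\subseteq\; V_{\mc{G}_j}(S_j).
\]
But by the choice of $S_j$, the outer two sets are equal, forcing $V_{\mc{G}_j}(S_0) = V_{\mc{G}_j}(S)$. Taking the union over $j$ gives $V_{\mc{G}}(S_0) = V_{\mc{G}}(S)$, as required.

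There is no serious obstacle here; the only point worth being explicit about is that the decomposition $V_{\mc{G}}(T) = \bigcup_j V_{\mc{G}_j}(T)$ is immediate from the definition of $V_{\mc{G}}$, and that the finitely many finite sets $S_j$ can be amalgamated into a single finite $S_0$ without disturbing the equality for any individual $j$.
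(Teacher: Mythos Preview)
Your proof is correct and is exactly the obvious argument the paper has in mind; the paper states this lemma without proof (listing it among results that ``are obvious''), and your write-up simply makes that explicit.
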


\begin{lem}
If $\mc{G}$ is an equationally noetherian family of groups, then the collection of all subgroups of elements of $\mc{G}$ is equationally noetherian.
\end{lem}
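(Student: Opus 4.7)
The plan is to reduce directly to the equationally noetherian property of $\mc{G}$ itself. Let $\mc{G}'$ denote the collection of all subgroups of elements of $\mc{G}$, fix $n$ and $S \subseteq F_n$, and let $S_0 \subseteq S$ be a finite subset with $V_\Gamma(S_0) = V_\Gamma(S)$ for every $\Gamma \in \mc{G}$, supplied by the hypothesis. I claim that the same $S_0$ witnesses the equationally noetherian property for $\mc{G}'$.

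The key observation is that $S \subseteq F_n$ is coefficient-free, so for any inclusion $H \le \Gamma$ and any tuple $(g_1, \ldots, g_n) \in H^n$, the element $W(g_1, \ldots, g_n)$ computed in $H$ coincides with the one computed in $\Gamma$. Hence $V_H(T) = V_\Gamma(T) \cap H^n$ for any $T \subseteq F_n$. Given $H \in \mc{G}'$, pick any $\Gamma \in \mc{G}$ with $H \le \Gamma$; then
\[ V_H(S_0) = V_\Gamma(S_0) \cap H^n = V_\Gamma(S) \cap H^n = V_H(S), \]
using our choice of $S_0$ in the middle equality. This proves that $\mc{G}'$ is equationally noetherian.

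A slightly more conceptual alternative would be to apply the sequence characterization in Theorem \ref{eqnfamily}: any $\phi_i \co G \to H_i$ with $H_i \in \mc{G}'$ can be post-composed with inclusions $H_i \hookrightarrow \Gamma_i$ for some $\Gamma_i \in \mc{G}$, yielding a sequence in $\Hom(G, \mc{G})$ with the identical $\omega$--kernel and hence the identical limit map $\phi_\infty$. The hypothesis forces $\phi_i$ to factor through $\phi_\infty$ $\omega$--almost surely as a map into $\Gamma_i$, and equivalently as a map into $H_i$, so the reverse implication of Theorem \ref{eqnfamily} gives the conclusion. I do not expect any real obstacle: the statement is essentially immediate from the stability of coefficient-free word evaluation under subgroup inclusions. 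The genuine subtlety would arise only for equations with coefficients, where one would need each member of $\mc{G}$ to come equipped with a chosen embedding of a fixed coefficient group, a setting the remark after Definition \ref{d:en family} explicitly avoids.
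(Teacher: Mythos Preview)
Your proof is correct and is exactly the sort of direct verification the paper has in mind: the paper simply lists this lemma among ``the following are obvious'' and gives no proof at all. Your first argument (intersecting with $H^n$) is the canonical one-line justification, and your alternative via Theorem \ref{eqnfamily} is an equally valid rephrasing of the same triviality.
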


\begin{lem}
If $\mc{G}$ is equationally noetherian and $\mc{G}^\prime\subseteq \mc{G}$, then $\mc{G}^\prime$ is equationally noetherian.
\end{lem}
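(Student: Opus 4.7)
The plan is to unwind the definition: to show $\mc{G}'$ is equationally noetherian, I need, for every $n$ and every $S \subseteq F_n$, a finite $S_0 \subseteq S$ with $V_{\mc{G}'}(S_0) = V_{\mc{G}'}(S)$. The candidate $S_0$ will simply be the one supplied by the hypothesis that $\mc{G}$ is equationally noetherian, applied to the same set $S$.

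Concretely, fix $n$ and $S \subseteq F_n$, and choose finite $S_0 \subseteq S$ such that $V_{\mc{G}}(S_0) = V_{\mc{G}}(S)$. One inclusion $V_{\mc{G}'}(S) \subseteq V_{\mc{G}'}(S_0)$ is automatic since $S_0 \subseteq S$. For the reverse inclusion, suppose $(g_1, \ldots, g_n) \in V_{\mc{G}'}(S_0)$, so there exists $\Gamma \in \mc{G}'$ with $(g_1, \ldots, g_n) \in \Gamma^n$ satisfying all equations in $S_0$. Since $\mc{G}' \subseteq \mc{G}$, we have $\Gamma \in \mc{G}$, so the tuple also lies in $V_{\mc{G}}(S_0) = V_{\mc{G}}(S)$. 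Hence $W(g_1, \ldots, g_n) = 1$ for every $W \in S$, which together with $\Gamma \in \mc{G}'$ gives $(g_1, \ldots, g_n) \in V_{\mc{G}'}(S)$.

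There is no real obstacle here; the statement is purely set-theoretic once the definitions are in hand. The only point worth noting is that the ambient group $\Gamma$ witnessing a solution in $V_{\mc{G}'}(S_0)$ automatically witnesses the same tuple in $V_{\mc{G}}(S_0)$, because $\mc{G}'$ is a subfamily of $\mc{G}$ — no change of group is needed.
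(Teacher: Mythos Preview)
Your proof is correct and is exactly the direct verification from the definition that the paper has in mind; indeed, the paper simply declares this lemma (together with the two surrounding ones) to be ``obvious'' and omits any argument.
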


The Hilbert Basis Theorem implies that for any field $K$, the linear group $GL_n(K)$ is equationally noetherian (see \cite[Theorem B1]{BMR1}). Combining this observation with the previous lemmas, we get some natural geometric families of groups, such as the family of fundamental groups of finite volume hyperbolic $3$--manifolds, more generally the family of all Kleinian groups, and the family of all subgroups of a fixed (linear) Lie group.

Theorem \ref{thm:eqnhopf} below shows that when $\mc{G}$ is equationally noetherian, there is no infinite descending chain of $\mc{G}$--limit groups. When $\mc{G}$ consists of a single group, this is \cite[Theorem 2.7]{OuldHoucine}. The proof for a family of group is essentially the same.  We first record the following elementary observation.

\begin{lem}\label{l:newgenset}
Suppose $L=G/\ker^\omega(\phi_i)$ and $\{y_1,...,y_n\}$ is a finite generating set for $L$. Let $\{x_1,...,x_n\}$ be a free generating set for $F_n$. Then there exists $\phi_i^\prime\in \Hom(F_n, \mc{G})$ such that $L=F_n/\ker^\omega(\phi_i^\prime)$ and $\phi^\prime_\infty(x_j)=y_j$ for $1\leq j\leq n$. 
\end{lem}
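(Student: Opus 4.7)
The plan is to build the sequence $(\phi_i')$ by lifting the chosen generators of $L$ back to $G$ along the defining sequence, and then to verify that the resulting $\omega$-kernel is exactly what is needed.

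First I would choose, for each $j \in \{1,\ldots,n\}$, an element $g_j \in G$ with $\phi_\infty(g_j) = y_j$; such $g_j$ exist because $\phi_\infty \co G \onto L$ is surjective. Let $\Gamma_i \in \mc{G}$ denote the target of $\phi_i$. Since $F_n$ is free on $\{x_1,\ldots,x_n\}$, setting $\phi_i'(x_j) = \phi_i(g_j)$ extends uniquely to a homomorphism $\phi_i' \co F_n \to \Gamma_i$, so $(\phi_i')$ lies in $\Hom(F_n,\mc{G})$. A direct ultralimit computation then gives $\phi_\infty'(x_j) = y_j$: indeed, the limit map is defined coordinatewise via $\omega$-kernels, and by construction $\phi_i'(x_j) = \phi_i(g_j)$ for every $i$, so these agree modulo $\ker^\omega(\phi_i')$ and $\ker^\omega(\phi_i)$ respectively.

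Next I would identify $F_n/\ker^\omega(\phi_i')$ with $L$. Consider the surjective homomorphism $\psi \co F_n \to L$ sending $x_j \mapsto y_j$ (surjective because $\{y_j\}$ generates $L$). I would show $\ker(\psi) = \ker^\omega(\phi_i')$ by two inclusions. Given any word $w = w(x_1,\ldots,x_n) \in F_n$, the key observation is that $\phi_i'(w) = w(\phi_i(g_1),\ldots,\phi_i(g_n)) = \phi_i(w(g_1,\ldots,g_n))$ since $\phi_i$ is a homomorphism. Hence $w \in \ker^\omega(\phi_i')$ iff $w(g_1,\ldots,g_n) \in \ker^\omega(\phi_i)$ iff $\phi_\infty(w(g_1,\ldots,g_n)) = 1$ in $L$ iff $w(y_1,\ldots,y_n) = 1$ iff $w \in \ker(\psi)$. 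This gives the desired isomorphism $F_n/\ker^\omega(\phi_i') \cong L$ sending $x_j$ to $y_j$, which is exactly the limit map $\phi_\infty'$.

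There is no real obstacle here; the lemma is essentially bookkeeping. The only point requiring care is the interplay between evaluating a word in $F_n$ under $\phi_i'$ and evaluating the same word on $(g_1,\ldots,g_n) \in G^n$ under $\phi_i$, but this follows immediately from the fact that $\phi_i$ is a homomorphism together with the defining property $\phi_i'(x_j) = \phi_i(g_j)$. Finiteness of $n$ is not needed in the argument, but the hypothesis that $\{y_1,\ldots,y_n\}$ generates $L$ is what makes $\psi$ surjective and therefore lets the induced map on the quotient be an isomorphism rather than merely an embedding.
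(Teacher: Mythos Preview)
Your proof is correct and is essentially the same approach as the paper's: both define a homomorphism $\beta\colon F_n\to G$ sending $x_j$ to a chosen preimage of $y_j$, set $\phi_i'=\phi_i\circ\beta$, and then verify that the induced map $F_n/\ker^\omega(\phi_i')\to L$ is an isomorphism sending $x_j$ to $y_j$. The paper routes the choice of preimage through a fixed finite generating set of $G$, writing each $y_j$ as a word in $\phi_\infty(\tilde Z)$ and lifting letter by letter, but this extra bookkeeping is not needed and your direct choice of $g_j\in\phi_\infty^{-1}(y_j)$ is cleaner.
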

\begin{proof}
Let $\tilde{Z}$ be a finite generating set for $G$ and let $Z=\phi_\infty(\tilde{Z})$. Since $Z$ generates $L$, each $y_j$ can be written as a word $W_j$ in $Z$. Let $\widetilde{W_j}$ be the word obtained by replacing each letter $z$ in $W_j$ with some $\tilde{z}\in\phi_\infty^{-1}(z)\cap \tilde{Z}$. Define $\beta\colon F_n\to G$ by $\beta(x_j)=\widetilde{W_j}$, and let $\phi^\prime_i=\phi_i\circ\beta$. By definition, $\phi_\infty\circ\beta$ induces an injective map $F_n/\ker^\omega(\phi_i^\prime)\to L$, and this map is also surjective because its image contains the generating set $\{y_1,..., y_n\}$. Hence $F_n/\ker^\omega(\phi_i^\prime)$ is isomorphic to $L$. Furthermore, it follows from the construction that $\phi^\prime_\infty=\phi_\infty\circ \beta$, so $\phi^\prime_\infty(x_j)=W_j=y_j$.

\end{proof}

\begin{thm}\label{thm:eqnhopf}
Suppose $\mc{G}$ is an equationally noetherian family of groups $(\alpha_j\colon L_j\onto L_{j+1})$ is a sequence of surjective homomorphisms where each $L_j$ is a $\mc{G}$--limit group. Then $\alpha_j$ is an isomorphism for all but finitely many $j$.
\end{thm}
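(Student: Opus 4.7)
The plan is to argue by contradiction: assume that infinitely many $\alpha_j$ fail to be isomorphisms and, passing to a subsequence, suppose that every $\alpha_j \co L_j \onto L_{j+1}$ is a proper surjection. The compositions $\beta_j := \alpha_j \circ \cdots \circ \alpha_1 \co L_1 \onto L_{j+1}$ then give a strictly ascending chain of normal subgroups $K_1 \subsetneq K_2 \subsetneq \cdots$ in $L_1$, where $K_j = \ker(\beta_j)$. The goal is to encode this ascending chain as a system of equations in a single free group and then contradict the equationally noetherian hypothesis.

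Fix a generating tuple $y_1,\dots,y_n$ for $L_1$. Applying Lemma \ref{l:newgenset} to $L_1$ with this tuple produces a quotient $q\co F_n \onto L_1$ sending $x_\ell \mapsto y_\ell$ together with a defining sequence $\phi_{1,i}\co F_n \to \Gamma_{1,i}\in\mc{G}$ for $L_1$. Since each $\beta_j$ is surjective, $\beta_j(y_1),\dots,\beta_j(y_n)$ generate $L_{j+1}$, so a second application of Lemma \ref{l:newgenset} yields a defining sequence $\phi_{j+1,i}\co F_n \to \Gamma_{j+1,i}$ with limit map $\phi_{j+1,\infty} = \beta_j\circ q$. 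Setting $N_j := q^{-1}(K_j)$ for $j\geq 1$ and $N_0 := \ker(q) = \ker^\omega(\phi_{1,i})$, we obtain $N_j = \ker^\omega(\phi_{j+1,i})$ for every $j\geq 0$, and $N_0 \subsetneq N_1 \subsetneq N_2 \subsetneq \cdots$ is a strictly ascending chain in $F_n$.

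For each $j\geq 1$ choose $w_j \in N_j \setminus N_{j-1}$, and set $S = \{w_j : j\geq 1\} \subseteq F_n$. By Definition \ref{d:en family} there is a finite subset $S_0 \subseteq S$ with $V_\mc{G}(S_0) = V_\mc{G}(S)$; enlarging if necessary, assume $S_0 \subseteq \{w_1,\dots,w_k\}$. Consider the sequence $\phi_{k+1,i}$: each $w_j$ with $j\leq k$ lies in $N_j \subseteq N_k = \ker^\omega(\phi_{k+1,i})$, so $\omega$--almost surely $\phi_{k+1,i}$ kills every element of $S_0$. Hence $\omega$--almost surely the tuple $(\phi_{k+1,i}(x_1),\dots,\phi_{k+1,i}(x_n))$ lies in $V_\mc{G}(S_0) = V_\mc{G}(S)$. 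Since $w_{k+1}\in S$, this forces $\phi_{k+1,i}(w_{k+1}) = 1$ $\omega$--almost surely, so $w_{k+1} \in \ker^\omega(\phi_{k+1,i}) = N_k$, contradicting $w_{k+1} \notin N_k$.

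The only delicate point is the bookkeeping in the second paragraph: in order to compare the kernels of the various defining maps as subsets of a single group, one needs all of the limit groups $L_{j+1}$ to be presented as quotients of the same free group $F_n$ with compatible choices of free basis. Lemma \ref{l:newgenset} is exactly the tool that allows this, after which the rest of the argument is a direct application of the defining property of an equationally noetherian family.
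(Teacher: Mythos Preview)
Your proof is correct and follows essentially the same approach as the paper's: both use Lemma~\ref{l:newgenset} to arrange compatible defining sequences $F_n\to\mc{G}$ for all the $L_j$, then exploit the equationally noetherian property to find a finite $S_0$ whose vanishing forces vanishing of the larger system, and finally use the $\omega$--kernel of the defining sequence for $L_{k+1}$ to derive the contradiction. The only cosmetic differences are that the paper argues directly rather than by contradiction and takes $S$ to be the full union $\bigcup_j\ker(\phi^j_\infty)$ rather than your set of witnesses $\{w_j\}$; neither change affects the substance of the argument.
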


\begin{proof}
Let $(\phi_i^j\colon F_n\to\Gamma_i^j)$ be a defining sequence of maps for $L_j$, that is each $\Gamma_i^j\in\mc{G}$ and $F_n/\ker^\omega(\phi_i^j)=L_j$. We further assume that these maps are chosen such that the following diagram commutes for all $j\geq 1$:

\ 
\centerline{
\xymatrix{
F_n \ar@{>}[r] ^{\phi_\infty^j} \ar@{>}[dr]_{\phi^{j+1}_\infty} & L_j \ar@{>}[d]^{\alpha_j} \\
& L_{j+1}
}}\\\\

We can find such defining sequences by Lemma \ref{l:newgenset}. In particular, this means that $\ker(\phi_\infty^1)\subseteq\ker(\phi_\infty^2)\subseteq...$, that is the kernels of the maps $\phi^j_\infty$ form an increasing sequence. Let $S=\cup_{j=1}^\infty\ker(\phi_\infty^j)$, and let $S_0$ be a finite subset of $S$ such that $V_{\mc{G}}(S_0)=V_{\mc{G}}(S)$. Since $S_0$ is finite, there is some $k$ such that $S_0\subseteq \cup_{j=1}^k\ker(\phi_\infty^j)=\ker(\phi_\infty^k)$. We show that $S\subseteq\ker(\phi_\infty^k)$ which implies that for all $j\geq k$, $\alpha_j$ is an isomorphism.

Since $S_0\subseteq\ker(\phi_\infty^k)$ and $S_0$ is finite, $\omega$--almost surely $S_0\subseteq \ker(\phi_i^k)$. For each such $i$,
\[
V_\mc{G}(\ker(\phi_i^k))\subseteq V_\mc{G}(S_0)=V_{\mc{G}}(S)
\]
Thus, for such $i$ $(\phi_i^k(x_1),...,\phi_i^k(x_n))\in V_{\mc{G}}(S)$ hence if $W\in S$, $\phi_i^k(W)=W(\phi_i^k(x_1),...,\phi_i^k(x_n))=1$. Since this holds $\omega$--almost surely, $W\in\ker(\phi_\infty^k)$. Thus $S\subseteq\ker(\phi_\infty^k)$.
\end{proof}

Note that since each element of $\mc{G}$ is a $\mc{G}$--limit group, the above theorem also implies that there is no infinite descending chain of proper epimorphisms of the form 
\[
\Gamma_1\onto\Gamma_2\onto \cdots
\]
with each $\Gamma_i\in\mc{G}$. This can be viewed as a generalized Hopfian property for families of groups, hence the above generalizes the well-known fact that every equationally noetherian group is Hopfian. Similarly, applying Theorem \ref{thm:eqnhopf} in the case where each $L_i$ is isomorphic to a fixed group yields the following corollary (cf. \cite[Corollary 2.9]{OuldHoucine}).

\begin{cor}\label{c:limithopfian}
If $\mc{G}$ is an equationally noetherian family of groups, then every $\mc{G}$--limit group is Hopfian.  In particular, every element of $\mc{G}$ is Hopfian.
\end{cor}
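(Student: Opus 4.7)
The plan is to deduce this as an immediate application of Theorem \ref{thm:eqnhopf} by taking the descending chain of $\mc{G}$--limit groups to be constant. Specifically, suppose $L$ is a $\mc{G}$--limit group and $\alpha \co L \onto L$ is a surjective endomorphism; I want to show $\alpha$ is injective.

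First I would set $L_j := L$ for every $j \geq 1$ and $\alpha_j := \alpha$, so that we have a sequence
\[
L_1 \xrightarrow{\alpha_1} L_2 \xrightarrow{\alpha_2} L_3 \xrightarrow{\alpha_3} \cdots
\]
of surjective homomorphisms between $\mc{G}$--limit groups. Since $\mc{G}$ is equationally noetherian, Theorem \ref{thm:eqnhopf} applies and gives that $\alpha_j$ is an isomorphism for all but finitely many $j$. Because every $\alpha_j$ is the same map $\alpha$, this forces $\alpha$ itself to be an isomorphism, and in particular injective. Thus $L$ is Hopfian.

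For the ``in particular'' clause, I would observe that every element $\Gamma \in \mc{G}$ is itself a $\mc{G}$--limit group: take $G = \Gamma$ (or any finitely generated $\Gamma$, but more generally we may first reduce to a finitely generated quotient) and the constant defining sequence $\phi_i = \mathrm{id}_\Gamma$, whose $\omega$--kernel is trivial, so $G/\ker^\omega(\phi_i) \cong \Gamma$. Strictly speaking, the definition requires $G$ to be finitely generated, so if $\Gamma$ is not finitely generated one instead verifies the Hopfian property by testing it on finitely generated subgroups, or by noting that the argument applies to any quotient of a finitely generated free group that maps onto $\Gamma$. Either way, $\Gamma$ inherits the Hopfian property from the $\mc{G}$--limit group structure.

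There is essentially no obstacle here: the entire content is packaged inside Theorem \ref{thm:eqnhopf}, and the corollary is the special case of a constant chain. The only minor point to be careful about is verifying that elements of $\mc{G}$ genuinely qualify as $\mc{G}$--limit groups within the chosen conventions on finite generation, but this is a formality handled by the constant defining sequence.
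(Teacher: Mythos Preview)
Your argument is correct and is exactly the paper's approach: the paper states the corollary immediately after Theorem \ref{thm:eqnhopf} with the remark that it follows by ``applying Theorem \ref{thm:eqnhopf} in the case where each $L_i$ is isomorphic to a fixed group,'' which is precisely your constant-chain argument.

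One small caveat on the ``in particular'' clause: your proposed workaround for a non-finitely-generated $\Gamma \in \mc{G}$ --- checking the Hopfian property on finitely generated subgroups --- does not actually work, since Hopfianity is not a local property (there exist non-Hopfian groups all of whose finitely generated subgroups are Hopfian). The paper itself simply asserts that each element of $\mc{G}$ is a $\mc{G}$--limit group and does not address this issue, so in practice the clause should be read for finitely generated $\Gamma$, where your constant-sequence argument applies directly.
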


There are, however, examples of Hopfian groups which are not equationally noetherian, for example any simple group which contains $BS(2, 3)$. Similarly, there are families of groups which satisfy the above generalization of the Hopfian property but are not equationally noetherian families. One such example is the family of all finite groups.  We thank Henry Wilton for pointing out the following proof  (we had in mind a more complicated proof).

\begin{ex}\label{l:finite not en}
The family of all finite groups is not equationally noetherian.
\end{ex}
\begin{proof}
In an answer to \cite{YC-Mathoverflow}, Yves de Cornulier (user YCor) constructs a non-Hopfian group as a limit of virtually free groups.  It is straightforward to see, using the fact that virtually free groups are residually finite, that such a group is also a limit of finite groups. Hence the family of all finite groups cannot be equationally noetherian by Corollary \ref{c:limithopfian}.
\end{proof}

 It follows that any family of groups which contains all finite groups is not equationally noetherian; for example, the collection of all residually finite groups, the collection of all hyperbolic groups, and the collection of all linear groups are not equationally noetherian.

\begin{ex}
The collection of all torsion-free hyperbolic groups is not equationally noetherian.
\end{ex}
\begin{proof}
There is a sequence of torsion-free hyperbolic groups $\Gamma_1\onto\Gamma_2\onto...$ such that the direct limit of this sequence $\Gamma_\infty$ is an infinite, non-abelian group with the property that every proper, non-trivial subgroup of $\Gamma_\infty$ is infinite cyclic  \cite{gromov:hyp, Ols}. In particular, $\Gamma_\infty$ is not hyperbolic, thus the family of all torsion-free hyperbolic groups is not equationally noetherian by Theorem \ref{thm:eqnhopf}.
\end{proof}

These examples show that a family of groups being equationally noetherian is a much stronger property than each of the groups in the family being equationally noetherian. 

\begin{ex}  
Suppose that $G$ is a Hopfian group which is not equationally noetherian.  For example, $G$ might be an infinite simple group containing $BS(2,3)$, as mentioned above.  Simplicity clearly implies $G$ is Hopfian, while $G$ cannot be equationally noetherian since it contains non-equationally noetherian (even non-Hopfian) subgroups.

If $\Gamma=G\ast\mathbb Z$, then $\Gamma$ is hyperbolic relative to a Hopfian subgroup, but $\Gamma$ is not equationally noetherian. This example indicates that our techniques (inspired by Sela's) will not be able to prove that a group  hyperbolic relative to Hopfian subgroups is Hopfian. 
\end{ex}

\begin{ques}
Suppose that $G$ is a Hopfian group and $\Gamma = G \ast \mathbb Z$.  Is $\Gamma$ Hopfian?
\end{ques}

Next we make a few more observations about $\mc{G}$--limit groups in the case where $\mc{G}$ is equationally noetherian. A group $G$ is called \emph{fully residually $\mc{G}$} if for any finite subset $F\subseteq G\setminus\{1\}$, there exists $\phi\in \Hom(G, \mc{G})$ such that $\ker(\phi)\cap F=\emptyset$. The group $G$ is called \emph{residually $\mc{G}$} if this holds whenever $F$ consists of a single element of $G\setminus\{1\}$.

The following is well-known when $\mc{G}$ consists of a single group, and the proof is the same in general.
\begin{lem}
Let $\mc{G}$ be a family of groups.  Then any finitely generated fully residually $\mc{G}$ group is a $\mc{G}$--limit group. If $\mc{G}$ is an equationally noetherian family then any $\mc{G}$--limit group is fully residually $\mc{G}$.
\end{lem}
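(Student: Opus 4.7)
The plan is to prove both directions essentially directly from the definitions, using Theorem \ref{eqnfamily} for the second direction. Neither direction looks like it will involve a serious obstacle; the main task is to arrange the ultrafilter bookkeeping correctly.

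For the forward direction, let $G$ be a finitely generated fully residually $\mc{G}$ group. First I would enumerate the non-identity elements $G \setminus \{1\} = \{g_1, g_2, \ldots\}$ (note $G$ is countable since it is finitely generated). For each $i$, apply the full residual property to the finite set $F_i = \{g_1, \ldots, g_i\}$ to obtain a homomorphism $\phi_i \co G \to \Gamma_i$ with $\Gamma_i \in \mc{G}$ such that $\phi_i(g_j) \neq 1$ for all $j \leq i$. I claim $\ker^\omega(\phi_i) = \{1\}$: given any $g = g_k \in G \setminus \{1\}$, one has $\phi_i(g) \neq 1$ for every $i \geq k$, and since $\omega$ is non-principal the cofinite set $\{i : i \geq k\}$ is $\omega$--large, so $g \notin \ker^\omega(\phi_i)$. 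Hence $G \cong G/\ker^\omega(\phi_i)$ is a $\mc{G}$--limit group with defining sequence $(\phi_i)$.

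For the backward direction, assume $\mc{G}$ is equationally noetherian and let $L = G/\ker^\omega(\phi_i)$ be a $\mc{G}$--limit group, where $\phi_i \co G \to \Gamma_i$ and $\Gamma_i \in \mc{G}$. Given a finite subset $F \subseteq L \setminus \{1\}$, I must produce $\psi \in \Hom(L, \mc{G})$ with $\psi(f) \neq 1$ for all $f \in F$. For each $f \in F$, pick a lift $\tilde f \in G$ with $\phi_\infty(\tilde f) = f$. Since $f \neq 1$ in $L$, the element $\tilde f$ lies outside $\ker(\phi_\infty) = \ker^\omega(\phi_i)$, so the set $A_f := \{i : \phi_i(\tilde f) \neq 1\}$ is $\omega$--large (because $\omega$ takes values in $\{0,1\}$). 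By Theorem \ref{eqnfamily}(2), the set $B := \{i : \phi_i \text{ factors through } \phi_\infty\}$ is also $\omega$--large. Finite additivity of $\omega$ then gives that $B \cap \bigcap_{f \in F} A_f$ is $\omega$--large, hence non-empty.

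Picking any index $i_0$ in this intersection, there exists $\bar\phi_{i_0} \co L \to \Gamma_{i_0}$ with $\phi_{i_0} = \bar\phi_{i_0} \circ \phi_\infty$, and for every $f \in F$ one has $\bar\phi_{i_0}(f) = \phi_{i_0}(\tilde f) \neq 1$. Thus $\psi := \bar\phi_{i_0}$ is the required homomorphism. The only slightly delicate point to record in the writeup is that Theorem \ref{eqnfamily} is stated for sequences $(\phi_i)$ in $\Hom(G, \mc{G})$ with $G$ finitely generated, which matches our setting since $L$ (and therefore $G$, after possibly replacing it via Lemma \ref{l:newgenset}) is finitely generated; no further obstacle arises.
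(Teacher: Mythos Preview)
Your proof is correct and follows essentially the same approach as the paper's. The only unnecessary wrinkle is your final caveat: in the paper's definition of a $\mc{G}$--limit group the group $G$ is already assumed finitely generated, so there is no need to invoke Lemma \ref{l:newgenset} or worry about whether Theorem \ref{eqnfamily} applies.
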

\begin{proof}
If $L$ is finitely generated and fully residually $\mc{G}$, then there exists a sequence $(\phi_i)$ from $\Hom(L, \mc{G})$ such that $\phi_i$ is injective on the ball of radius $i$ in $L$, hence $\ker^{\omega}(\phi_i)=\{1\}$. Thus $L=L/\ker^{\omega}(\phi_i)$ is a $\mc{G}$--limit group.

Now suppose $L=G/\ker^\omega(\phi_i)$, for maps $\phi_i \co G \to \Gamma_i$ (with $\Gamma_i \in \mc{G}$). Then by Theorem \ref{eqn}.\eqref{factors} $\phi_i$ $\omega$--almost surely factors through $\phi_\infty$, hence there are maps $\eta_i \co L \to \Gamma_i$ such that $\phi_i=\eta_i\circ\phi_\infty$. Then $\ker(\phi_\infty)=\ker^\omega(\phi_i)=\ker^\omega(\eta_i\circ\phi_\infty)$, thus $\ker^\omega(\eta_i)=\{1\}$. It follows that for any finite subset $F\subseteq L\setminus\{1\}$, there exists some $\eta_i$ such that $F\cap \ker(\eta_i)=\emptyset$, hence $L$ is fully residually $\mc{G}$.
\end{proof}

In particular when $\mc{G}$ is an equationally noetherian family a finitely generated group $L$ is a $\mc{G}$--limit group if and only if there exists a sequence $(\phi_i)$ from $\Hom(L, \mc{G})$ with $\ker^\omega(\phi_i)=\{1\}$.

Lemma \ref{lem:relfpen} below is needed in Sections \ref{s:short quotient} and \ref{s:rel hyp}.
Given a group $G$ and subgroups $P_1,..., P_n$ of $G$, $G$ is said to be \emph{finitely generated relative to the subgroups $\{P_1,...,P_n\}$} if there is a finite set $S\subseteq G$ such that $S\cup P_1\cup...\cup P_n$ generates $G$. In this case, there is a natural surjective homomorphism $F(S)\ast P_1\ast...\ast P_n\onto G$ (where $F(S)$ is the free group on $S$) which is the identity when restricted to $S$ and to each $P_i$. If the kernel of this homomorphism is normally generated by a finite set, then $G$ is said to be \emph{finitely presented relative to subgroups $\{P_1,...,P_n\}$}.

\begin{lem}\label{lem:relfpen}
Suppose that the sequence $(\phi_i)$ from $\Hom(G, \mc{G})$ is such that the limit group $L=G/\ker^\omega(\phi_i)$ is finitely presented relative to subgroups $\{P_1,...,P_n\}$. Suppose further that for each $P_j$, there is subgroup $\widetilde{P}_j \le G$ which $\phi_\infty$  maps onto $P_j$ such that $\phi_i|_{\widetilde{P}_j}$ factors through $\phi_\infty|_{\widetilde{P}_j}$ $\omega$--almost surely. Then $\phi_i$ factors through $\phi_\infty$ $\omega$--almost surely.
\end{lem}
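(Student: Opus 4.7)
The plan is to exhibit a finite subset $F \subset \ker(\phi_\infty)$ such that $\ker(\phi_\infty)$ equals the normal closure in $G$ of $F \cup \bigcup_{j=1}^n K_j$, where $K_j := \ker(\phi_\infty|_{\widetilde{P}_j})$. Once such an $F$ is in hand, the conclusion is immediate: each $f \in F$ lies in $\ker^\omega(\phi_i)$, so by finite additivity of $\omega$ and finiteness of $F$ we have $F \subseteq \ker(\phi_i)$ $\omega$--almost surely; by hypothesis, $K_j \subseteq \ker(\phi_i)$ $\omega$--almost surely for each of the finitely many $j$; hence $\omega$--almost surely their whole normal closure, namely $\ker(\phi_\infty)$, is contained in $\ker(\phi_i)$, so that $\phi_i$ factors through $\phi_\infty$.

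To construct $F$, use the relative finite presentation hypothesis to choose a finite subset $S_L = \{s_1,\ldots,s_k\} \subset L$ and finitely many relators $r_1,\ldots,r_m \in F(S_L) * P_1 * \cdots * P_n$ with $L = \langle S_L, P_1,\ldots,P_n \mid r_1,\ldots,r_m \rangle$. Choose lifts $\tilde{s}_j \in G$ with $\phi_\infty(\tilde{s}_j) = s_j$, and for each element of any $P_j$ appearing as a letter in some $r_a$ choose a lift in $\widetilde{P}_j$. Substituting these lifts yields elements $\tilde{r}_1,\ldots,\tilde{r}_m \in G$ with $\phi_\infty(\tilde{r}_a) = 1$. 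Since $G$ is finitely generated, fix a finite generating set $X$ for $G$; for each $x \in X$, express $\phi_\infty(x) \in L$ as a word $w_x$ in $S_L \cup \bigcup_j P_j$, lift this word analogously to $\tilde{w}_x \in G$, and set $c_x := x\tilde{w}_x^{-1} \in \ker(\phi_\infty)$. Take $F := \{\tilde{r}_1,\ldots,\tilde{r}_m\} \cup \{c_x : x \in X\}$, a finite subset of $\ker(\phi_\infty)$.

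The main work is then to verify that $F \cup \bigcup_j K_j$ really does normally generate $\ker(\phi_\infty)$. Let $M$ denote this normal closure in $G$; clearly $M \subseteq \ker(\phi_\infty)$, and $\phi_\infty$ induces a surjection $G/M \twoheadrightarrow L$. For the reverse direction, note that since each $c_x \in M$, every $x \in X$ equals $\tilde{w}_x$ in $G/M$, so $G/M$ is generated by the images of $\{\tilde{s}_1,\ldots,\tilde{s}_k\} \cup \bigcup_j \widetilde{P}_j$. Since $K_j \subseteq M$, the map $\widetilde{P}_j \to G/M$ factors through $P_j = \widetilde{P}_j/K_j$, so there is a well-defined homomorphism $F(S_L) * P_1 * \cdots * P_n \to G/M$ sending $s_j$ to the image of $\tilde{s}_j$ and each $P_j$ into $G/M$ via the induced map. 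By construction each $\tilde{r}_a$ lies in $M$, so each $r_a$ is killed, yielding a surjection $L \twoheadrightarrow G/M$ which is a two-sided inverse on generators of the surjection $G/M \twoheadrightarrow L$; hence $M = \ker(\phi_\infty)$. This identification of $G/M$ with $L$ is the main obstacle, and it is precisely where the relative finite presentation of $L$ and the hypothesis on the $\widetilde{P}_j$ are both used.
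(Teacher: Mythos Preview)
Your proof is correct and follows essentially the same approach as the paper's: both show that $\ker(\phi_\infty)$ is normally generated by a finite set together with the kernels $K_j = \ker(\phi_\infty|_{\widetilde{P}_j})$, then use finite additivity of $\omega$ and the hypothesis on the $\widetilde{P}_j$ to conclude. The only difference is in packaging: the paper works in the auxiliary free product $F(S)\ast(\ast_j \widetilde{P}_j)$ and a commutative square, which avoids the need for your extra relators $c_x$, while you work directly in $G$ and compensate by adding those relators to rewrite the generators of $G$ in terms of the lifted relative generators.
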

\begin{proof}
Let $S$ be a finite generating set of $G$ and let $F(S)$ denote the free group on $S$. Then there are natural surjective maps $\alpha\colon F(S)\ast(\ast_{i=1}^n \widetilde{P}_i)\onto G$, $\beta\colon F(S)\ast(\ast_{i=1}^n P_i)\onto L$, and $\gamma\colon F(S)\ast(\ast_{i=1}^n \widetilde{P}_i)\onto F(S)\ast(\ast_{i=1}^n P_i)$ which fit into the commutative diagram:
\[
\begin{CD}
F(S)\ast(\ast_{j=1}^n \widetilde{P}_j) @>{\alpha}>>G\\
@V{\gamma}VV @VV{\phi_\infty}V\\
 F(S)\ast(\ast_{j=1}^n P_j)@>{\beta}>> L
\end{CD}
\]

The assumption that $L$ is finitely presented relative to $\{P_1,...,P_n\}$ means that $\ker(\beta)$ is normally generated by a finite set $R$. Let $\widetilde{R}$ be a finite subset of $F(S)\ast(\ast_{j=1}^n \widetilde{P}_j)$ which $\gamma$ maps onto $R$.  Also, $\ker(\gamma)$ is normally generated by $\bigcup\limits_{j=1}^n \ker(\phi_\infty|_{\widetilde{P}_j})$. Thus, $\ker(\beta\circ\gamma)=\ker(\phi_\infty\circ\alpha)$ is normally generated by $\widetilde{R}\bigcup\left(\bigcup\limits_{j=1}^n \ker(\phi_\infty|_{\widetilde{P}_j})\right)$

Clearly $\ker(\phi_\infty\circ\alpha)=\ker^\omega(\phi_i\circ\alpha)$.  Since $\widetilde{R}$ is finite, $\widetilde{R}\subseteq \ker(\phi_i\circ\alpha)$ $\omega$--almost surely. By assumption, $\omega$--almost surely $\ker(\phi_\infty|_{\widetilde{P}_j})\subseteq \ker(\phi_i|_{\widetilde{P}_j})$.

Since $\ker(\phi_\infty\circ\alpha)$ is normally generated by $\widetilde{R}\bigcup(\bigcup_{j=1}^n \ker(\phi_\infty|_{\widetilde{P}_j}))$, we see $\ker(\phi_\infty\circ\alpha)\subseteq\ker(\phi_i\circ\alpha)$ $\omega$--almost surely. Since $\alpha$ is surjective this implies that $\omega$--almost surely $\ker(\phi_\infty)\subseteq\ker(\phi_i)$. Hence $\omega$--almost surely $\phi_i$ factors through $\phi_\infty$.
\end{proof}

%%%%%%%%%%%%%%%%%%%%%%%%%%%%%%%%%%%%%%%%%%%%%%%%%%%%%%%%%%%%%%%%%%%%%%%%%%%%%%%%%%
\section{Limit actions and the Rips machine}\label{s:limits}

Throughout this section, let $\mc{G}$ be a fixed  uniformly acylindrically hyperbolic family of groups and for each $\Gamma\in\mc{G}$, let $X_\Gamma$ denote the associated hyperbolic metric space on which $\Gamma$ acts acylindrically. In this section we consider {\em divergent} $\mc{G}$--limit groups (see Definition \ref{def:divergent}), which we show come equipped with a limiting action on an $\R$--tree.  We investigate the basic properties of this action.

\subsection{Limiting $\R$--trees}

Recall that we fixed a non-principal ultrafilter $\omega$.  We next discuss some basic properties of $\omega$--limits.  See \cite{vdD-W} for more information.  Given a sequence of real numbers $(a_i)$, we define $\lim^\omega(a_i)=a$ if for all $\e>0$, $\omega \left( \{i\;|\; |a-a_i|<\e\} \right) = 1$. Similarly, $\lim^\omega(a_i)=\infty$ if for all $N>0$, $\omega \left( \{i\;|\; a_i>N\} \right) = 1$. It is easy to see that the $\omega$--limit of the sequence $(a_i)$ is equal to the ordinary limit of some subsequence.

Let $(X_i, d_i)$ be a sequence of metric spaces together with fixed base points $o_i\in X_i$.  On the product $\prod X_i$ we define an equivalence relation $\sim$ by saying $(x_i)\sim(y_i)$
if and only if $\lim^{\omega}d_i(x_i, y_i)=0$. Then the \emph{ultra-limit of the metric spaces $X_i$ with basepoints $o_i$} is defined as
\[
\lim\phantom{ }^\omega(X_i, o_i)= \frac{ \left\{(x_i)\in\prod X_i\;|\; \lim\phantom{ }^\omega d_i(o_i, x_i)<\infty \right\} }{\sim}	.
\]
This space has the metric $d$ defined by $d((x_i), (y_i))=\lim^\omega d_i(x_i, y_i)$. It is straightforward to check that this is a well-defined metric. For a sequence $(x_i)\in\prod X_i$, we say the sequence is \emph{visible} if $\lim^\omega d_i(o_i, x_i)<\infty$. In this case we often denote the image of $(x_i)$ in the ultra-limit by $\lim^\omega x_i$.

It is well-known that if each $X_i$ is a geodesic metric space then $\lim^\omega(X_i, o_i)$ is a geodesic metric space (see, for example, the proof of \cite[Proposition 4.2(b)]{vdD-W} which works in this generality).  The following lemma goes back in spirit to a construction of Paulin \cite{Paulin} (cf. \cite{bestvina:degenerations}, \cite{bridson-swarup}).  Using the modern construction involving asymptotic cones it is straightforward.

\begin{lem}\label{lem:hypultralimit}
Suppose each $X_i$ is a $\delta_i$--hyperbolic metric space and $\lim^\omega \delta_i<\infty$. Then $\lim^\omega (X_i, o_i)$ is a $\lim^\omega \delta_i$--hyperbolic metric space.
\end{lem}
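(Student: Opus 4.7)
The plan is to take an arbitrary geodesic triangle in $X_\omega := \lim^\omega(X_i, o_i)$, realize its sides as ultra-limits of geodesics in the individual $X_i$, and then push the $\delta_i$-thinness through to the limit. Set $\delta := \lim^\omega \delta_i$, which is finite by hypothesis.

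Given three points $A, B, C \in X_\omega$ with fixed representatives $(A_i), (B_i), (C_i)$, I would choose in each $X_i$ geodesic segments $\alpha_i$ from $A_i$ to $B_i$, $\beta_i$ from $B_i$ to $C_i$, and $\gamma_i$ from $C_i$ to $A_i$. Parametrizing each by arclength and passing to the ultra-limit, for instance by setting
\[
\alpha(t) := \lim\phantom{ }^\omega \alpha_i\bigl(\min(t, d_i(A_i, B_i))\bigr) \quad \text{for } t \in [0, d(A,B)],
\]
produces three geodesics $\alpha, \beta, \gamma$ in $X_\omega$ forming a triangle with vertices $A, B, C$ (one checks this is an isometric embedding using $\lim^\omega d_i(A_i, B_i) = d(A, B)$).

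Now take a point $x \in \alpha$ with representative $x_i \in \alpha_i$. Applying $\delta_i$-hyperbolicity of $X_i$ to the triangle with sides $\alpha_i, \beta_i, \gamma_i$ produces $y_i \in \beta_i \cup \gamma_i$ with $d_i(x_i, y_i) \leq \delta_i$. The sequence $(y_i)$ is visible because
\[
d_i(o_i, y_i) \leq d_i(o_i, x_i) + \delta_i,
\]
and both terms have finite $\omega$-limit. Hence $y := \lim^\omega y_i$ exists in $X_\omega$ and $d(x, y) = \lim^\omega d_i(x_i, y_i) \leq \delta$. Since $\{i : y_i \in \beta_i\} \cup \{i : y_i \in \gamma_i\}$ is $\omega$-large, exactly one of these sets is $\omega$-large; working on this set and taking $\omega$-limits of the arclength parameters locates $y$ on $\beta$ or on $\gamma$ accordingly. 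So the specific limit triangle $\alpha \cup \beta \cup \gamma$ is $\delta$-thin.

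The main obstacle will be upgrading this to an \emph{arbitrary} geodesic triangle at $A, B, C$, since a general geodesic in $X_\omega$ need not be realized as the ultra-limit of a sequence of geodesics in the $X_i$. To handle this I would invoke the standard fact that in a $\delta$-hyperbolic space any two geodesics sharing the same pair of endpoints lie in each other's closed $\delta$-neighborhood (a consequence of applying $\delta$-thinness to a degenerate triangle), and show that in fact every geodesic between $A$ and $B$ in $X_\omega$ can be approximated arbitrarily well by limits of geodesics in $X_i$ using a density/compactness argument on a countable dense subset of the domain. This, combined with closedness of the $\delta$-neighborhood condition, carries $\delta$-thinness from limit triangles to arbitrary triangles with the exact constant $\delta = \lim^\omega \delta_i$.
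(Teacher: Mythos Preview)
The paper does not actually supply a proof of this lemma; it just remarks that the statement is ``straightforward'' via asymptotic cone techniques and points to \cite{Paulin}, \cite{bestvina:degenerations}, \cite{bridson-swarup}. So there is no detailed argument to compare against, only the implicit standard one.

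Your argument for \emph{limit triangles}---those whose sides are ultra-limits of geodesics in the $X_i$---is correct and is exactly the standard mechanism. The gap is in your final paragraph. The claim that ``every geodesic between $A$ and $B$ in $X_\omega$ can be approximated arbitrarily well by limits of geodesics in $X_i$'' is not justified, and I do not believe it is true in general: an ultra-limit can contain geodesics that are not realized as limits of geodesics in the factors. Your appeal to the fact that two geodesics with common endpoints lie within $\delta$ of each other is circular if applied to $X_\omega$ (that fact \emph{is} $\delta$-slimness of a degenerate triangle, which is what you are trying to prove), and if applied to the $X_i$ it does not obviously transport to arbitrary geodesics in $X_\omega$. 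What one can extract from your setup is that any point $x$ on an arbitrary geodesic $\alpha'$ from $A$ to $B$ has representatives $x_i$ with $(A_i,B_i)_{x_i}\to 0$, hence $d_i(x_i,\sigma_i)\le 2\delta_i+o(1)$ for any geodesic $\sigma_i$ from $A_i$ to $B_i$; chaining this through the limit triangle yields slimness of arbitrary triangles only with a constant like $5\delta$, not $\delta$.

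The clean fix is to abandon the thin-triangle formulation and use Gromov's four-point inequality
\[
(x\cdot z)_w \ \ge\ \min\bigl((x\cdot y)_w,\ (y\cdot z)_w\bigr)-\delta,
\]
which is a purely metric statement in four points, involves no geodesics, and therefore passes to the ultra-limit with the exact constant $\lim^\omega\delta_i$ by just taking $\omega$-limits of the inequalities in $X_i$. This gives hyperbolicity of $X_\omega$; converting back to the slim-triangle definition costs a fixed multiplicative factor. For both applications in the paper this is harmless: in Theorem~\ref{t:limiting R-tree} the rescaled constants $\delta/\|\phi_i\|$ tend to $0$, so any multiple is still $0$ and the limit is an $\mathbb R$-tree; in the non-divergent case one only needs some finite hyperbolicity constant.
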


Suppose $G$ is generated by a finite set $S$. Let $(\phi_i\colon G\to \Gamma_i)$ be a sequence of homomorphisms where $\Gamma_i \in \mc{G}$, and suppose that $X_i := X_{\Gamma_i}$ has metric $d_i$.  Define a {\em scaling factor}
\[
\| \phi_i \|=\inf_{x\in X_i}\max_{s\in S} d_i(x, \phi_i(s)x).
\]

\begin{defn} \label{def:divergent}
The sequence $(\phi_i)$ is called {\em divergent} if $\lim^{\omega}\| \phi_i \|=\infty$. In this case, the associated $\mc{G}$--limit group $L=G / \ker^\omega(\phi_i)$ is called a \emph{divergent $\mc{G}$--limit group}.
\end{defn}

Note that if $\mc{G} = \{ \Gamma \}$, where $\Gamma$ is a single hyperbolic group acting on its Cayley graph, then a sequence of homomorphisms $(\phi_i \co G \to \Gamma)$ is divergent if and only if the $\phi_i$ do not lie in finitely many conjugacy classes.  Thus, in this case the notion of strict $\Gamma$--limit groups and divergent $\Gamma$--limit groups are the same.  On the other hand, in general the difference between strict and divergent $\mc{G}$--limit groups causes many complications.  We note that this is unavoidable, since for any group $H$ there is an acylindrically hyperbolic group $\Gamma$ which contains $H$ as an elliptic subgroup (for example, $\Gamma=\Z\ast H$). This means that every $H$--limit group $L$ occurs as a non-divergent $\Gamma$--limit group, and if $L$ is a strict $H$--limit group it is also be a strict $\Gamma$--limit group. Hence, without additional assumptions on $\mc{G}$ non-divergent $\mc{G}$--limit groups can be completely arbitrary.

Fix a divergent $\mc{G}$--limit group $L$, with defining sequence $(\phi_i \co G \to \Gamma_i)$.
 For an element $g\in L$, let $\widetilde{g}$ denote some element of $\phi_\infty^{-1}(g)$. The element $\widetilde{g}$ is not unique, but the statements which use this notation never depend on the choice. Similarly, for a subgroup $H\leq L$, let $\widetilde{H}$ be a subgroup of $G$ which maps surjectively onto $H$. When $H$ is finitely generated we also choose $\widetilde{H}$ to be finitely generated, by lifting a finite generating set and considering the subgroup generated by these lifts.

\begin{defn} \label{def:SE and LSE}
Given a subgroup $H\leq L$, we call $H$ \emph{stably elliptic} (with respect to the defining sequence $(\phi_i)$) if the action of $\phi_i(\widetilde{H})$ on $X_i$ is $\omega$--almost surely elliptic, and $H$ is \emph{locally stably elliptic} (with respect to $(\phi_i)$) if this holds for every finitely generated subgroup of $H$. 

Let $\mathcal{SE}(L)$ and $\mathcal{LSE}(L)$ denote the set of stably elliptic and locally stably elliptic subgroups of $L$  (with respect to $(\phi_i)$).  Let $\mathcal{NLSE}(L)$ denote the set of subgroups of $L$ which are not locally stably elliptic  (with respect to $(\phi_i)$).
\end{defn}
The sets $\mathcal{SE}(L)$, $\mathcal{LSE}(L)$, and $\mathcal{NLSE}(L)$ all depend on both $L$ and on the defining sequence $(\phi_i)$, but since the defining sequence is usually fixed we suppress this from the notation.

If $H$ is finitely generated then stably elliptic and locally stably elliptic are equivalent notions. However, for non-finitely generated groups stably elliptic implies locally stably elliptic, but there may be locally stably elliptic subgroups which are not stably elliptic.

The following theorem is a standard application of well-known methods which go back to the work of Paulin \cite{Paulin}.
\begin{thm} \label{t:limiting R-tree}
Suppose that $G$ is a finitely generated group and that $(\phi_i)$ is a divergent sequence from $\Hom(G, \mc{G})$.  Then $G$ admits a non-trivial, minimal action on an $\R$--tree T. Furthermore, the action of $G$ on $T$ induces a non-trivial, minimal action of the corresponding $\mc{G}$--limit group $L=G/\ker^\omega(\phi_i)$ on $T$.

\end{thm}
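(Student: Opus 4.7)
The plan is to follow Paulin's rescaling/ultralimit construction. For each $i$, choose a basepoint $o_i \in X_{\Gamma_i}$ that almost realizes the infimum defining $\|\phi_i\|$, say
\[
\max_{s\in S} d_i(o_i,\phi_i(s)o_i)\le \|\phi_i\|+\tfrac{1}{i}.
\]
Consider the rescaled pointed spaces $(X_{\Gamma_i},o_i,d_i/\|\phi_i\|)$, which are $(\delta/\|\phi_i\|)$--hyperbolic. Since $\lim^\omega\|\phi_i\|=\infty$ by divergence, Lemma~\ref{lem:hypultralimit}, together with the standard fact that ultralimits of geodesic spaces are geodesic, guarantees that
\[
T \;=\; \lim\phantom{ }^\omega\bigl(X_{\Gamma_i},\,o_i,\,d_i/\|\phi_i\|\bigr)
\]
is a $0$--hyperbolic geodesic space, hence an $\R$--tree.

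Next, define a $G$--action on $T$ by $g\cdot \lim^\omega x_i := \lim^\omega \phi_i(g)x_i$. A telescoping estimate starting from $\max_{s\in S} d_i(o_i,\phi_i(s)o_i)\le \|\phi_i\|+1/i$ shows that $(\phi_i(g)o_i)$ is visible for every $g\in G$ and that the action preserves visibility, so this is a well-defined isometric $G$--action on $T$. For non-triviality, let $o=\lim^\omega o_i$. The choice of basepoints, together with the lower bound $\max_{s\in S} d_i(x,\phi_i(s)x)\ge \|\phi_i\|$ which is valid at every $x\in X_{\Gamma_i}$ by definition of $\|\phi_i\|$, yields
\[
\max_{s\in S} d_T(o,s\cdot o) \;=\; 1,
\]
so no point of $T$ is fixed by all of $G$. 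Passing to the minimal $G$--invariant subtree --- which is non-empty because $G$ is finitely generated and acts without a global fixed point --- and renaming it $T$, gives the required non-trivial minimal $G$--action on an $\R$--tree.

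Finally, to descend to $L=G/\ker^\omega(\phi_i)$: if $g\in \ker^\omega(\phi_i)$ then $\phi_i(g)=1$ on an $\omega$--large set, so $g$ fixes every visible sequence and therefore acts trivially on $T$. Hence the $G$--action factors through $L$, and the quotient action is also non-trivial and minimal (since $G\onto L$). The only real obstacle is ensuring non-triviality of the limiting action after rescaling, which is exactly why the basepoints $o_i$ must be chosen to approximately minimize the maximal displacement of generators; once this is set up correctly, the verification that the maximal displacement at $o$ equals $1$ is immediate, and the remaining steps (minimality, descent to $L$) are routine.
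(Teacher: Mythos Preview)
Your proposal is correct and follows the same approach as the paper's proof: Paulin-style rescaled ultralimit via Lemma~\ref{lem:hypultralimit}, verify the action is well-defined and nontrivial using the definition of $\|\phi_i\|$, pass to the minimal invariant subtree, and observe that $\ker^\omega(\phi_i)$ acts trivially so the action descends to $L$. One small expository point: your displayed equation only shows that the basepoint $o$ is moved, whereas the conclusion ``no point of $T$ is fixed by all of $G$'' requires applying the lower bound you stated at an \emph{arbitrary} visible point $x=\lim^\omega x_i$ (and using finiteness of $S$ to find a single generator $s$ with $d_i(x_i,\phi_i(s)x_i)\ge\|\phi_i\|$ $\omega$--almost surely), which is exactly how the paper argues it.
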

\begin{proof}
Recall that $\| \phi_i \|=\inf\limits_{x\in X_i}\max\limits_{s\in S} d_i(x, \phi_i(s)x)$, where $S$ is a finite generating set for $G$. In most cases we are interested in, we can choose a point $o_i\in X_i$ which realizes this infimum. If no such point exists, we let $o_i\in X_i$ such that $\max\limits_{s\in S} d_i(o_i, \phi_i(s)o_i)\leq \|\phi_i\|+\frac1i$.

Since each $X_i$ is $\delta$--hyperbolic, after rescaling the metric by $\frac{1}{\|\phi_i\|}$ we get a space which is $\delta/\|\phi_i\|$--hyperbolic. We denote the rescaled space by $X_i/\|\phi_i\|$.  By Lemma \ref{lem:hypultralimit}, $\lim^\omega (X_i/ \|\phi_i \|, o_i)$ is $0$--hyperbolic and hence an $\mathbb R$--tree. The action of $G$ on $\lim^\omega (X_i/ \|\phi_i \|, o_i)$  is defined by $g\lim^\omega x_i=\lim^\omega\phi_i(g)x_i$; it is straightforward to check that this is a well-defined isometric action. Now given a point $x=\lim^\omega x_i\in \lim^\omega (X_i/ \|\phi_i \|, o_i)$ , for each $x_i$ there is a generator of $G$  $s_i\in S$ such that $d_i(x_i, \phi_i(s_i)x_i)\geq\|\phi_i\|$. Since $S$ is finite, $\omega$--almost surely $s_i$ is equal to some fixed $s\in S$. It follows that $d(x, sx)\geq 1$, hence $sx\neq x$. This shows that the action of $G$ on $\lim^\omega (X_i/ \|\phi_i \|, o_i)$  is non-trivial.

We now get the desired tree $T$ by choosing a minimal $G$--invariant subtree of $\lim^\omega (X_i/ \|\phi_i \|, o_i)$. (Note that unless $\lim^\omega (X_i/ \|\phi_i \|, o_i)$ is a line, the action of $G$ on $\lim^\omega (X_i/ \|\phi_i \|, o_i)$ cannot be minimal since $G$ is finitely generated and $\lim^\omega (X_i/ \|\phi_i \|, o_i)$ is an $\R$--tree of uncountable valence).  

Finally, note that each element of $\ker^\omega(\phi_i)$ acts trivially on $T$. Hence the action of $G$ on $T$ induces an action of $L=G/\ker^\omega(\phi_i)$ on $T$. 
\end{proof}

\begin{defn} \cite[Definition 3.1]{BF:stable} \label{def:stable}
When a group $G$ is acting on an $\R$--tree $T$, an arc of $T$ is \emph{stable} if the stabilizer of every non-degenerate subarc is equal to the stabilizer of the whole arc.  Otherwise, the arc is {\em unstable}.
\end{defn}

The following lemma is the key ingredient which allows us to apply the Rips machine to the actions of divergent limit groups on the associated $\R$--trees. Parts of it are proved in \cite[Theorem 2.2.1]{Val}, and our proof is similar to the proof of \cite[Theorem 1.16]{ReiWei}. 

\begin{convention}
In the following lemma, and for the remainder of this section, \emph{uniformly finite} means that it is bounded by a constant which depends only on $\delta$ and the acylindricity functions $N_\epsilon$ and $R_\epsilon$.  
\end{convention}
In the next section (see Standing Assumption \ref{sa:uf}), we make a choice of number $C$ so that all uniformly finite numbers from this section are less than $C$. 

\begin{lem}(Stability Lemma) \label{l:stability lemma}
Let $L$ be a divergent $\mc{G}$--limit group and $T$ the corresponding $\mathbb R$--tree given by Theorem \ref{t:limiting R-tree}. Then the action of $L$ on $T$ satisfies:
\begin{enumerate}
\item If $H\leq L$ stabilizes a non-trivial arc of T, then $H$ is (uniformly finite)-by-abelian.
\item If $H\leq L$ preserves a line in $T$ and fixes its ends, then $H$ is (uniformly finite)-by-abelian. 
\item Stabilizers of tripods are uniformly finite. Consequently if $T$ is not a line, then the kernel of the action of $L$ on $T$ is uniformly finite.
\item The stabilizer of an unstable arc is uniformly finite.
\item Every finitely generated element of $\mathcal{SE}(L)$ fixes a point in $T$.
\item If $H \in \mathcal{LSE}(L)$ fixes a non-degenerate arc in $T$ then $H$ is uniformly finite.
\end{enumerate}
\end{lem}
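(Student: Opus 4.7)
The plan is to transport each claim about the $L$--action on the limiting $\R$--tree $T$ to a claim about the uniformly acylindrical actions of $\phi_i(\widetilde{H}) \le \Gamma_i$ on $X_i$, invoke the acylindricity bounds (Definition \ref{acy} and Lemma \ref{acy2}), and pull conclusions back to $L$ via $\phi_\infty$. The basic translation mechanism is this: if $h \in L$ fixes a point $c \in T$ with representative $(c_i) \in \prod X_i$, then $d_i(c_i, \phi_i(\widetilde{h}) c_i) = o(\|\phi_i\|)$ in the original metric on $X_i$. Replacing $(c_i)$ by a quasi-fixed point of $\phi_i(\widetilde{h})$, which sits within $o(\|\phi_i\|)$ of $c_i$ by $\delta$--hyperbolicity together with Lemma \ref{lem:quasicenter}, arranges the displacement to be bounded by a uniform constant depending only on $\delta$, without altering the $\sim$--class.

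For parts (1) and (2), I apply a commutator and pigeonhole argument. Lift the two ends of the arc (or the two endpoints of the line, at infinity in $T$) to sequences $a_i^\ast, b_i^\ast \in X_i$ at distance $\Theta(\|\phi_i\|)$ in the original metric, and fix $\e = \e(\delta)$. For any finite subset $F \subseteq [H, H]$, the mechanism above, applied to each element of $F$ and coordinated via $\delta$--thinness along the geodesic $[a_i^\ast, b_i^\ast]$, produces an $\omega$--large index set on which each $\phi_i(\widetilde{h})$ with $h \in F$ has displacement at most $\e$ at $a_i^\ast$ and at most $d_i(a_i^\ast, b_i^\ast) + \e$ at $b_i^\ast$. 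By Lemma \ref{acy2} the number of such elements in $\Gamma_i$ is at most $N_\e$, so if $|F| > N_\e$ then by pigeonhole $\phi_i(\widetilde{h^{(j)}}) = \phi_i(\widetilde{h^{(l)}})$ for some $j \neq l$ on an $\omega$--large set, forcing $h^{(j)} = h^{(l)}$ in $L$, a contradiction. Hence $|[H, H]| \le N_\e$, and $H$ is (uniformly finite)-by-abelian.

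Parts (3)--(6) are variants. For (3), an element fixing a tripod has bounded displacement at three far-apart, non-collinear points, so applying Lemma \ref{acy2} to two of the three pairs yields a uniformly finite bound; the ``consequently'' clause follows since a minimal non-line action on an $\R$--tree has a branch point which every kernel element must fix. For (4), if $[a, b]$ is unstable with a sub-arc $[c, d]$ whose pointwise stabilizer properly contains $H$, then any extra element $h'$ sends $[a, b]$ off itself while fixing $[c, d]$, so $[a, b] \cup h'[a, b]$ contains a tripod whose stabilizer contains $H \cap h' H h'^{-1}$; by (3) this intersection is uniformly finite, and the standard Bestvina--Feighn-style argument (iterating over sufficiently many such elements $h'$) then uniformly bounds $H$ itself. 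For (5), a finitely generated stably elliptic subgroup has an $\omega$--almost sure quasi-fixed point in $X_i$ by Lemma \ref{lem:quasicenter}, whose ultralimit (after potentially re-basing $o_i$ onto the quasi-fixed point) gives a fixed point in the ultralimit space that descends to $T$ via the minimal invariant subtree. For (6), combine (1) and (5): a locally stably elliptic $H$ fixing a non-degenerate arc gives, for each finitely generated $H_0 \le H$, a common quasi-fixed point $y_i \in X_i$ together with approximate fixations at the arc endpoints, so Lemma \ref{acy2} bounds $|H_0|$ uniformly, and this bound is independent of $H_0$ and so bounds $|H|$.

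The principal technical obstacle is the coordinated choice of quasi-fixed representatives for several elements at once, which is most acute in parts (1) and (2) where the subgroup generated by a finite $F \subseteq [H,H]$ need not be stably elliptic in $X_i$. This is managed by treating elements of $F$ one at a time and adjusting each representative along hyperbolic geodesics via $\delta$--thinness, so that a single pair $(a_i^\ast, b_i^\ast)$ simultaneously realizes bounded displacement for every $h \in F$ on an $\omega$--large index set.
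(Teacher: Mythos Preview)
Your ``mechanism'' in the first paragraph is where the argument breaks. You claim that if $h \in L$ fixes $c \in T$, then one can replace $(c_i)$ by a nearby quasi-fixed point of $\phi_i(\widetilde{h})$ so that the displacement becomes uniformly bounded (in terms of $\delta$ only). But Lemma~\ref{lem:quasicenter} only applies when $\phi_i(\widetilde{h})$ has bounded orbits, and nothing prevents $\phi_i(\widetilde{h})$ from being loxodromic with translation length tending to infinity (as long as it is $o(\|\phi_i\|)$). In that case no point of $X_i$ is moved a uniformly bounded amount by $\phi_i(\widetilde{h})$, so your reduction fails already for a single element. This undermines your treatment of (1), (2), and (3): you cannot arrange that an arbitrary $h \in [H,H]$ (or an arbitrary tripod-stabilizing $h$) has displacement $\le \e$ at the chosen points.

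The paper does not attempt to bound displacements of elements of $[H,H]$ directly. For (1) and (2) it picks points $p_i, q_i$ well inside the approximating segment $[x_i, y_i]$ and uses the hyperbolic-geometry fact that if $g$ and $h$ each move $x_i, y_i$ by a small fraction of $d_i(x_i,y_i)$, then the \emph{commutator} $[g,h]$ moves $p_i$ and $q_i$ by at most $28\delta$ --- the approximate translations along a quasi-line commute up to $O(\delta)$. Acylindricity then bounds the number of distinct commutators $[g,h]$ with $g,h \in H$; this gives the BFC property, and Wiegold's theorem \cite{Wie} is invoked to pass from ``boundedly many commutators'' to ``$|H'|$ uniformly bounded''. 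For (3) the analogous input is that the approximate center $c_i$ of the triangle $(x_i,y_i,z_i)$ is moved by at most $10\delta$ by any element approximately fixing the three vertices (cf.\ \cite[Lemma~4.1]{RipsSela94}), so one gets a genuine uniform displacement bound at $c_i$ and at a suitable $p_i \in [x_i,c_i]$. These hyperbolic-geometry lemmas are the missing ingredient in your sketch; once you have them, your outlines for (4)--(6) are close to the paper's.
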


\begin{rem}
If the elements of $\mc{G}$ are torsion-free, then $L$ is torsion-free which allows one to replace ``uniformly finite" with ``trivial" everywhere in this lemma. 
\end{rem}

\begin{proof}
Throughout the proof, we suppose that $L = G / \ker^\omega (\phi_i)$ for some finitely generated group $G$ and some divergent sequence of homomorphisms $(\phi_i \co G \to \Gamma_i)$ with $\Gamma_i \in \mc{G}$. Let $(X_i,d_i)$ be the $\delta$--hyperbolic space upon which $\Gamma_i$ acts acylindrically. 

(1)  Let $H\leq L$ be a subgroup which stabilizes a non-trivial arc $[x, y]$ of $T$, where $x=\lim^\omega x_i$ and $y=\lim^\omega y_i$. Fix $\e_1=28\delta$, and let $N_1=N_{\e_1}$ and $R_1=R_{\e_1}$ be given by Definition \ref{acy}. Let $g_1,...,g_{N_1+1}, h_1,...,h_{N_1+1}$ be elements of $H$, and choose lifts $\widetilde{g_j}, \widetilde{h_j}$ to $G$. Let $F=\{g_j, h_j\;|\;1\leq j\leq N_1+1\}$ and $\tilde{F}=\{\tilde{g_j}, \tilde{h_j}\;|\;1\leq j\leq N_1+1\}$. 
We have
\[	\lim\phantom{ }^\omega \frac{d_i(x_i,y_i)}{\| \phi_i \|} \ne 0 .	\]
On the other hand, for each $j \in \{ 1 ,\ldots , N_1+1 \}$ we have
\[	\lim\phantom{ }^\omega \frac{d_i(x_i,\phi_i(\widetilde{g_j})x_i)}{\| \phi_i \|} =	\lim\phantom{ }^\omega \frac{d_i(x_i,\phi_i(\widetilde{h_j})x_i)}{\| \phi_i \|} = 0	.	\]
Therefore, for each $\tilde{f}\in\tilde{F}$, $\omega$--almost surely we have
\begin{align*}
d_i(x_i, \phi_i(\tilde{f})x_i) & < \frac{1}{100}   d_i(x_i, y_i), \\
d_i(y_i, \phi_i(\tilde{f})y_i) & < \frac{1}{100} d_i(x_i, y_i), \mbox{ and }\\
R_1 & < \frac{1}{100} d_i(x_i, y_i).
\end{align*}

Now choose points $p_i, q_i\in[x_i, y_i]$ such that for any $\tilde{f}\in \tilde{F}$, $\omega$--almost surely we have 
\begin{align*}
 d_i(p_i, x_i) &\geq d_i(x_i, \phi_i({\tilde{f}})x_i)+2\delta ,\\
 d_i(p_i, y_i) &>d_i(q_i, y_i)\geq d_i(y_i, \phi_i(\tilde{f})y_i)+2\delta, \mbox{ and }\\
 d_i(p_i, q_i) & \geq R_1.
\end{align*}
From this it follows that $\omega$--almost surely, $d_i(p_i, \phi_i([\widetilde{g}_j, \widetilde{h}_j])p_i)\leq \e_1$ and $d_i(q_i, \phi_i([\widetilde{g}_j, \widetilde{h}_j])q_i)\leq\e_1$ for all $1\leq  j\leq N_1+1$, hence by the definition of acylindricity the commutators are not all distinct. It follows that $H$ contains at most $N_1$ commutators.  Note that $g^h = g[g,h]$, so if there are at most $N_1$ commutators in $H$ then any element of $H$ has at most $N_1$ distinct conjugates.  Thus, $H$ is {\em $N_1$--BFC} in the language of \cite{Wie}, which means that by \cite[Theorem 4.7]{Wie} there is some $K$ depending only on $N_1$ so that $|H^\prime| \le K$.  The statement (1) follows immediately.

(2) Let $Y$ be the line in $T$ that is preserved by $H$. Choose a finite set $F\subset H$ as in the proof of (1). For such $F$, we can choose $x, y\in Y$ such that $d(x, y)$ is sufficiently large compared to the translation length of any element of $F$. The proof can then be completed in the same way as (1).

(3) Suppose $H$ stabilizes a non-trivial tripod with vertices $x=\lim^\omega x_i$, $y=\lim^\omega y_i$, and $z=\lim^\omega z_i$. Let $\e_2 = 14\delta$, and let $N_2 = N_{\e_2}$ and $R_2 = R_{\e_2}$ be the associated acylindricity constants from Definition \ref{acy}. Choose a geodesic $[x_i,y_i]$ in $X_i$ and let $c_i$ be the point on $[x_i, y_i]$ that minimizes the distance to $z_i$. Then for any $h \in H$ and any lift $\widetilde{h}$ of $h$, $\omega$--almost surely we have (see, for example, \cite[Lemma 4.1]{RipsSela94})
\[	d_i(c_i, \phi_i(\widetilde{h})c_i)\leq 10\delta.	\] 
As in the proof of (1), we can choose $p_i\in[x_i, c_i]$ such that $d(x_i, p_i)\geq d(x_i, \phi_i(\widetilde{h}x_i))+2\delta$ and $d(p_i, c_i)\geq \max\{R_2, 12\delta\}$. Then $d(\phi_i(\widetilde{h})p_i, [x_i, c_i])\leq 2\delta$ and $d_i(p_i, c_i)-10\delta\leq d_i(\phi_i(\widetilde{h})p_i, c_i)\leq d_i(p_i, c_i)+10\delta$. Hence $d_i(p_i, \phi_i(\widetilde{h})p_i)\leq 14\delta$.
Repeating this argument with $N_2+1$ elements of $H$, we find repetition amongst the elements and by acylindricity we have $|H|\leq N_2$.

(4) This is the same as \cite[Theorem 1.16(4)]{ReiWei} (which in turn is very similar to \cite[Proposition 4.2]{RipsSela94}). Suppose $[a, b]\subset[x, y]$ and $g\in \Stab_L([a,b])$ but $gx\neq x$. Then $(x, y, gx)$ are the endpoints of a non-trivial tripod in $T$. If $h\in \Stab_L([x, y])$, then $hgx=[h, g]ghx=[h, g]gx$. Now $[h, g]$ belongs to the commutator subgroup of $\Stab_L([a, b])$, which is uniformly finite by (1). Hence the $\Stab_L([x, y])$--orbit of $gx$ is uniformly finite, and so a uniformly finite index subgroup of $\Stab_L([x, y])$ fixes $gx$ and hence fixes the whole tripod spanned by $(x, y, gx)$. Thus $\Stab_L([x, y])$ is uniformly finite by (3).

(5) Let $H=\langle h_1,..., h_n\rangle $ be a finitely generated element of $\mathcal{SE}(L)$, and let $o_i\in X_i$ be the basepoint. Fix $\widetilde{h_j}\in\phi_\infty^{-1}(h_j)$ and let $\widetilde{H}=\langle\widetilde{h_1},...,\widetilde{h_n}\rangle$.  Let $M_j$ be the word length of $\widetilde{h_j}$ with respect to the given generators of $G$ and let $M=\max_{1\leq j\leq n}\{ M_j\}$. Since each generator of $G$ moves $o_i$ by at most $\|\phi_i\|+\frac1i$, by induction on word length we have $d_i(o_i, \phi_i(\widetilde{h_j})o_i)\leq M_j(\|\phi_i\|+\frac1i)\leq M(\|\phi_i\|+\frac1i)$.

Since each $\phi_i(\widetilde{H})$ is $\omega$--almost surely elliptic, the orbit $\phi_i(\widetilde{H})\cdot o_i$ is bounded and hence has a $1$--quasi-center $q_i$. Since $\phi_i(\widetilde{H})$ preserves the set of $1$--quasi-centers, Lemma \ref{lem:quasicenter} implies that the $\phi_i(\widetilde{H})$--orbit of $q_i$ is bounded by $4\delta+2$. If the sequence $(q_i)$ is visible in the ultra-limit, then the point $\lim^\omega q_i$ is fixed by $H$ (note that this is true even if $H$ is not finitely generated). In case  the sequence $(q_i)$ is not visible we work with another sequence $(p_i)$ constructed below.

Without loss of generality, we suppose the indices are chosen such that $\omega$--almost surely 
\[	d_i(o_i, \phi_i(\widetilde{h_1})o_i)=\max_{1\leq j\leq n}\{d_i(o_i, \phi_i(\widetilde{h_j})o_i)\}.\]
Let $p_i$ be the point on $[o_i, q_i]$ such that $d_i(o_i, p_i)=d_i(o_i, \phi_i(\widetilde{h}_1)o_i)+2\delta+1$ or set $p_i=q_i$ if no such point exists. In particular, $d_i(o_i, p_i)\leq M\|\phi_i\|+M+2\delta+1$, so $\lim^\omega \frac{d_i(o_i, p_i)}{\|\phi_i\|}\leq M$, thus $p=\lim^\omega(p_i)\in \lim^\omega(X/\|\phi_i\|, o_i)$. Now fix some $1\leq j\leq n$. Consider a geodesic quadrilateral in $X_i$ with vertices (in cyclic order) being $\{ o_i, q_i, \phi_i(\widetilde{h_j})q_i , \phi_i(\widetilde{h_j})o_i \}$.  Note that the side between $q_i$ and $\phi_i(\widetilde{h_j})q_i$ has length at most $4\delta + 2$.  Moreover, this geodesic quadrilateral is 
$2\delta$--slim, so there is a point $p_i^\prime \in [o_i,q_i]$ such that $d_i(p_i^\prime, \phi_i(\widetilde{h_j})p_i) \le 6\delta + 2$ (since $\phi_i(\widetilde{h_j})p_i$ cannot be near $[o_i,\phi_i(\widetilde{h_j})o_i]$ by choice of $p_i$).

Now, $d_i(q_i, \phi_i(\widetilde{h_j})q_i)\leq 4\delta+2$ and $d_i(p_i, q_i)=d_i(\phi_i(\widetilde{h_j})p_i, \phi_i(\widetilde{h_j})q_i)$, so combining these inequalities we get:
\[	\left| d_i(p_i,q_i) - d_i(p_i^\prime, q_i) \right| < 10\delta + 4	\]

Since $p_i$ and $p_i^\prime$ both belong to $[o_i, q_i]$, we get that $d_i(p_i, p_i^\prime)\leq 10\delta+4$, and hence $d_i(p_i, \phi_i(\widetilde{h_j})p_i)\leq 16\delta+6$. Therefore, the point $p$ is fixed by  each $h_j$, and thus fixed by $H=\langle h_1,...,h_n\rangle$. Finally, note that $T$ is invariant under the action of $H$, so since $H$ fixes $p$ it also fixes the unique geodesic from $p$ to $T$. In particular, the intersection of this geodesic and $T$ is a point of $T$ which is fixed by $H$.

(6)- Suppose that $H \in \mathcal{LSE}(L)$ and $H$ fixes a non-trivial arc $[x, y]$ in $T$. Let $F$ be a finite subset of $H$. Then $\langle F\rangle \in \mathcal{SE}(L)$, so by the proof of (5), $\omega$--almost surely we can find $p_i\in X_i$ such that $p=\lim^\omega p_i\in \lim^\omega(X/\|\phi_i\|, o_i)$ and $d_i(p_i, \phi_i(\widetilde{h})p_i)\leq 16\delta+6$ for all $h\in F$. Hence $\langle F\rangle$ fixes the point $p$, and also the unique arc in $\lim^\omega(X/\|\phi_i\|, o_i)$ from $p$ to $[x, y]$. At least one of the two arcs $[p, x]$ and $[p, y]$ must be non-degenerate, suppose it is $[p, y]$ and $y=\lim^\omega(y_i)$.

Let $\e_3 = 16\delta+6$ and let $R_3 = R_{\e_3}$ and $N_3 = N_{\e_3}$ be given by Lemma \ref{acy2}. Now as in (1), $\omega$--almost surely we can choose $q_i\in [p_i, y_i]$ such that for all $h\in F$, $d_i(q_i, p_i)\geq d_i(p_i, \phi_i(\widetilde{h})p_i)+2\delta$, $d_i(q_i, p_i)\geq d_i(q_i, y_i)\geq d_i(y_i, \phi_i(\widetilde{h})y_i)+2\delta$ and $d_i(p_i, q_i)\geq R_3$. For a fixed $h\in F$ and lift $\widetilde{h}$ of $h$,  $\phi_i(\widetilde{h})q_i\in \phi_i(\widetilde{h})[p_i, y_i]$. Let $q_i^\prime$ be the closest point on $[p_i, y_i]$ to $\phi_i(\widetilde{h})q_i$, so $d_i(q_i^\prime, \phi_i(\widetilde{h})q_i)\leq 2\delta$. Now 
\[
d_i(q_i, \phi_i(\widetilde{h})q_i)\leq d_i(q_i, q^\prime_i)+2\delta\leq d_i(q_i, p_i)+2\delta
\]
Since $d_i(p_i, \phi_i(\widetilde{h})p_i)\leq 16\delta+6$, we can apply Lemma \ref{acy2} to get that $|F|\leq N_3$. Finally, since the size of every finite subset of $H$ is bounded by $N_3$, we get that $|H|\leq N_3$.
\end{proof}

%%%%%%%%%%%%%%%%%%%%%%%%%%%%%%%%%%%%%%%%%%%%%%%%%%%%%%%%%%%%%%%%%%%%%%%%%%%
\subsection{From $\R$--trees to simplicial trees}
The {\em Rips machine} (see \cite{BF:stable,Guirardel:Rtrees,RipsSela94}) allows one to upgrade group actions on $\R$--trees to group actions on (related) simplicial trees.

The Stability Lemma \ref{l:stability lemma} allows us to apply Guirardel's version of  the Rips machine \cite{Guirardel:Rtrees}.  We begin by recalling some terminology from \cite{Guirardel:Rtrees}.

\begin{defn} \cite[Definition 1.2]{Guirardel:Rtrees}
A {\em graph of actions} of $\R$--trees is given by the following data:
\begin{itemize}
\item A group $G$, and an isometric $G$--action without inversions on a simplicial tree $S$ (the tree $S$ is called the {\em skeleton} of the graph of actions);
\item An $\R$--tree $Y_v$ (called a {\em vertex tree}) assigned to each vertex $v$ of $S$;
\item An attaching point $p_e \in Y_v$ for each oriented edge $e$ of $S$ with terminal vertex $v$.
\end{itemize}
Moreover, we require that this data is $G$--equivariant: $G$ acts on the disjoint union of the vertex trees so that $Y_v \mapsto v$ is $G$--equivariant, and moreover for every edge $e$ in $S$ and $g \in G$ we have $p_{g.e} = g. p_e$.
\end{defn}
To a graph of actions $\mathfrak{G}= \left( S, (Y_v), (p_e) \right)$ we can assign an $\R$--tree $T_{\mathfrak{G}}$, equipped with an isometric $G$--action,  see \cite{Gui04}.  Briefly, $T_\mathfrak{G}$ is obtained by taking the quotient of the disjoint union of the $Y_v$ by identifying the two attaching points of each edge of $S$.

\begin{defn}
Suppose that $G$ is a group, and that $T$ is an $\R$--tree equipped with an isometric $G$--action.  We say that $T$ {\em splits as a graph of actions $\mathfrak{G}$} if there is a $G$--equivariant isometry from $T$ to $T_\mathfrak{G}$.
\end{defn}

The tree $T_\mathfrak{G}$ associated to a graph of actions is covered by the vertex trees in a particularly nice way, as encoded in the next definition.

\begin{defn} \cite[Definition 1.4]{Guirardel:Rtrees}
A {\em transverse covering} of an $\R$--tree $T$ is a covering of $T$ by a family of subtrees ${\mc Y} = (Y_v)_{v \in V}$ such that
\begin{enumerate}
\item Each $Y_v$ is a closed subtree of $T$;
\item Each arc of $T$ is covered by finitely many subtrees from $\mc{Y}$; and
\item For $v_1 \ne v_2 \in V$, $|Y_{v_1} \cap Y_{v_2} | \le 1$.
\end{enumerate}
\end{defn}

\begin{lem} \cite[Lemma 4.7]{Gui04}\label{l:goa-tran}
Suppose that the group $G$ acts on the $\R$--tree $T$.  If $T$ splits as a graph of actions $\mathfrak{G}$ then the image in $T$ of the non-degenerate vertex trees from $T_{\mathfrak{G}}$ give a transverse covering of $T$.

Conversely, if $T$ has a $G$--invariant transverse covering then there is a graph of actions $\mathfrak{G}$ whose non-degenerate vertex trees consist of the (non-degenerate) subtrees of the transverse covering, so that $T \simeq T_{\mathfrak G}$.
\end{lem}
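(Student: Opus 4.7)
The plan is to prove both directions of the correspondence between splittings as graphs of actions and transverse coverings; these are essentially formal consequences of the definitions, but some care is needed to construct the skeleton in the second direction.

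For the forward direction, assume $T$ splits as a graph of actions $\mathfrak{G} = (S, (Y_v), (p_e))$, so there is a $G$--equivariant isometry $T \simeq T_\mathfrak{G}$. Under this identification each $Y_v$ embeds isometrically in $T$ as a closed subtree (it is an $\R$--tree, hence complete, hence closed), and together the $Y_v$ cover $T$ by construction. To verify the three properties of a transverse covering: (i) closedness is immediate; (ii) if $Y_{v_1}$ and $Y_{v_2}$ had two distinct common points $a \ne b$, then the arc $[a,b]$ would lie in both $Y_{v_1}$ and $Y_{v_2}$, but by construction the only way two vertex trees intersect in $T_\mathfrak{G}$ is through the identification of attaching points across a single edge $e$ of $S$, yielding at most one point; (iii) for any arc $[x,y] \subset T$, by the construction of $T_\mathfrak{G}$ as a quotient there is a finite path $v_0, e_1, v_1, \dots, e_n, v_n$ in $S$ such that $[x,y]$ is covered by the vertex trees $Y_{v_0}, \dots, Y_{v_n}$; finiteness of this path uses that $S$ is a simplicial tree and that $[x,y]$ is compact while each vertex tree meets $[x,y]$ in a subarc.

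For the reverse direction, assume $T$ has a $G$--invariant transverse covering $\mathcal{Y} = (Y_v)_{v \in V}$. Construct the skeleton $S$ as a bipartite graph with two types of vertices: type-one vertices are the trees $Y_v$ (one per element of $V$), and type-two vertices are the singleton intersection points $Y_{v_1} \cap Y_{v_2}$ for distinct $v_1, v_2 \in V$ (more precisely, the equivalence classes of such points, where two points are identified if they lie in the intersection of the same collection of $Y_v$'s). Place an edge between a type-one vertex $Y_v$ and a type-two vertex $p$ whenever $p \in Y_v$. The $G$--action on $T$ permutes the $Y_v$ and hence induces a $G$--action on $S$ without inversions (since $S$ is bipartite). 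Define the vertex tree at $Y_v$ to be $Y_v$ itself, and the attaching point $p_e$ for an edge $e$ joining $Y_v$ to a type-two vertex $p$ to be the point $p \in Y_v$.

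The main technical step is showing that $S$ is actually a simplicial tree (connected and acyclic) and that the resulting graph of actions $\mathfrak{G}$ gives $T_\mathfrak{G} \simeq T$. Connectedness follows from covering property (1) of the transverse covering together with property (3) (any two points of $T$ can be joined by an arc, which is covered by finitely many $Y_v$ by property (2), and consecutive pieces overlap in at most one point which gives the required type-two vertex). Acyclicity is the heart of the argument: a cycle in $S$ would yield two distinct arcs in $T$ between the same pair of points in different $Y_v$'s, contradicting the fact that $T$ is an $\R$--tree together with condition (3) that distinct subtrees meet in at most a point. Finally, the natural map $T_\mathfrak{G} \to T$ obtained by sending each $Y_v \subset T_\mathfrak{G}$ to its image in $T$ is $G$--equivariant, surjective by covering property (1), and injective because any identification forced in $T$ between points of distinct $Y_v$'s corresponds to an edge of $S$ by construction; it is an isometry on each vertex tree and the identifications match, so it is a global isometry.

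The step I expect to be the main obstacle is the acyclicity of $S$, since one must carefully rule out the possibility that a sequence of pairwise single-point intersections could loop back, using the $\R$--tree structure of $T$ in a nontrivial way; this is where the combination of properties (2) and (3) of the transverse covering is essential.
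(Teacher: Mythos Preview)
The paper does not give its own proof of this lemma; it is quoted directly from \cite[Lemma 4.7]{Gui04}, and the paper only recounts Guirardel's construction of the skeleton in the paragraphs that follow. Your sketch reproduces exactly that construction: the bipartite skeleton $S$ with $V_1(S)$ the subtrees of the covering and $V_0(S)$ the points lying in at least two subtrees, edges given by incidence, and attaching points the intersection points themselves. So your approach is the same as Guirardel's, and hence the same as what the paper invokes.

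A few small points worth tightening. Your description of type-two vertices as ``equivalence classes of such points'' is unnecessary and slightly muddled: they are simply the points of $T$ lying in at least two distinct $Y_v$, no equivalence relation needed. You should also say what the vertex trees at type-two vertices are (they are single points), so that the graph of actions is fully specified. Finally, when you appeal to ``covering property (1)'' for surjectivity of $T_{\mathfrak G}\to T$, you mean the fact that $\mathcal Y$ actually covers $T$, which is part of the definition of a transverse covering but not the numbered condition (1) (closedness); be careful with the indexing. None of these affect the correctness of the argument.
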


We are interested in finding graph of actions decompositions of actions of group on $\R$--trees, and in particular it is important to understand the different types of vertex trees that arise.

\begin{defn} [see \cite{Guirardel:Rtrees}]
Suppose that $G$ is a group acting isometrically on an $\R$--tree $T$, and that $Y$ is a subtree of $T$.  Let $G_Y$ be the (set-wise) stabilizer of $Y$.  

We say that $Y$ is of {\em Seifert type} if the action of $G_Y$ on $Y$ has kernel $K_Y$, and the group $G_Y/K_Y$ admits a faithful action on $Y$ which is dual to an arational measured foliation on a closed $2$--orbifold with boundary.

We say that $Y$ is of {\em axial type} if $Y$ is a line and the image of $G_Y$ in $\Isom(\R)$ is a finitely generated group with dense orbits in $Y$.

We say that $Y$ is {\em simplicial} if the action of $G_Y$ on $Y$ is simplicial.
\end{defn}

For a group $G$ acting on an $\R$--tree $T$, recall from Definition \ref{def:stable} that an arc $J$ of $T$ is called stable if for every subarc $I\subseteq J$, $\Stab_G(I)=\Stab_G(J)$. Arcs which are not stable are called \emph{unstable}.  
\begin{defn}
Let $G$ be a group acting on an $\R$--tree $T$. We say that $T$ satisfies the \emph{ascending chain condition} if for any nested sequence of arcs $I_1\supseteq I_2\supseteq...$ whose intersection is a point the corresponding sequence of stabilizers $\Stab_G(I_1)\subseteq \Stab_G(I_2)\subseteq...$ is eventually constant.
\end{defn} 

\begin{thm}\cite[Main Theorem]{Guirardel:Rtrees}\label{Rmach}
Let $G$ be a group acting non-trivially and minimally on an $\mathbb R$ tree $T$ by isometries. Assume that:
\begin{enumerate}
\item $T$ satisfies the ascending chain condition.
\item For any unstable arc $J$,
\begin{enumerate}
\item $\Stab_G(J)$ is finitely generated.
\item $\Stab_G(J)$ is not conjugate to a proper subgroup of itself.
\end{enumerate}
\end{enumerate}
Then either (i) $G$ splits over the stabilizer of an unstable arc or over the stabilizer of an infinite
 tripod; or (ii) $T$ has a decomposition into a graph of actions where each vertex action
is either simplicial, axial, or of Seifert type.
\end{thm}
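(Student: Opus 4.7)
The plan is to follow the general strategy of the Rips machine as developed by Bestvina--Feighn, Sela, and Levitt, with Guirardel's modifications needed to accommodate unstable arcs. First, since $G$ acts minimally on $T$, I would pick a finite generating set $S$ of $G$ and a compact subtree $K\subseteq T$ whose $G$--translates cover $T$; this yields a finite band complex $\Sigma=(K,\{\phi_s\}_{s\in S})$ of partial isometries on $K$ whose suspension encodes the $G$--action on $T$. The ascending chain condition is the key technical hypothesis that makes the Rips machine terminate: it rules out the pathological nested sequences of strictly increasing arc stabilizers that would obstruct the standard collapse/move procedure in the non-free, non-stable setting.

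After running the Rips process I would obtain a $G$--invariant transverse covering $\mc{Y}$ of $T$ whose pieces split into three kinds: simplicial arcs (the would-be edges of the skeleton of the resulting graph of actions), axial components on which the stabilizer is finitely generated and acts on a line with dense orbits, and indecomposable non-simplicial components. By Lemma \ref{l:goa-tran}, such a covering is equivalent to a graph of actions $\mathfrak{G}$ with $T\simeq T_{\mathfrak G}$, so the theorem reduces to the classification of the indecomposable non-simplicial pieces. The next step is then to show that each such piece is either of Seifert type --- dual to an arational measured foliation on a compact $2$--orbifold --- or else an \emph{exotic (Levitt-type)} component.

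The main obstacle, and precisely where hypotheses (2a) and (2b) enter, is ruling out the exotic components in favour of a splitting of $G$. Inside an exotic component one can extract a nested sequence of sub-arcs whose stabilizers, combined with the combinatorics of the band complex, give via Bass--Serre theory a splitting of the stabilizer over either an unstable arc stabilizer or an infinite tripod stabilizer. Hypothesis (2a) that $\Stab_G(J)$ is finitely generated allows one to actually realise this splitting as a graph-of-groups decomposition, and hypothesis (2b) --- that $\Stab_G(J)$ is not conjugate to a proper subgroup of itself --- is what prevents the folding/ping-pong step from collapsing overlapping arcs and killing the splitting. The upshot is that either one promotes the local splitting to a genuine splitting of $G$ over an unstable arc stabilizer or an infinite tripod stabilizer (conclusion (i)), or no exotic component exists and the transverse covering $\mc{Y}$ assembles via Lemma \ref{l:goa-tran} into the desired graph of actions whose vertex actions are simplicial, axial, or of Seifert type (conclusion (ii)).

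I expect the hardest step to be the structural analysis of the indecomposable pieces: showing that the only ways an indecomposable component can fail to be axial or Seifert are through the presence of an unstable arc or an infinite tripod. This requires a careful tracking of the Rips moves and the use of the ACC to control the behaviour of arc stabilizers under the limiting process --- essentially an adaptation of the Bestvina--Feighn stable decomposition to the present broader setting, which is the technical heart of Guirardel's paper.
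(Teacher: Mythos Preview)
The paper does not prove Theorem~\ref{Rmach}; it is quoted verbatim as the Main Theorem of \cite{Guirardel:Rtrees} and used as a black box. There is therefore no proof in the paper to compare your proposal against. Your outline is a reasonable (if vague) summary of the strategy behind Guirardel's argument, but since the authors of the present paper explicitly cite rather than reprove the result, no proof attempt is expected here.
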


When the $\R$--tree $T$ (equipped with a $G$--action) admits a graph of actions as in the conclusion of Theorem \ref{Rmach}, then the set of vertex trees forms a $G$--invariant transverse covering of $T$ by Lemma \ref{l:goa-tran}. Starting with a transverse covering, we next describe how to construct a particularly nice skeleton of the associated the graph of actions. Our construction is a variation of the one given by Guirardel in the proof of \cite[Lemma 4.7]{Gui04}, so we first describe Guirardel's construction and then our modification.

If $T$ has a transverse covering by a family of subtrees $\mc{Y}$, then Guirardel constructs the skeleton $S$ of the associated graph of actions as the tree with vertices $V(S)=V_0(S)\cup V_1(S)$, where $V_1(S)$ is the set of subtrees $Y\in\mc{Y}$ and $V_0(S)$ is the set of points of $T$ which lie in at least 2 distinct subtrees. The edges of $S$ are pairs of the form $(x, Y)$ where $x\in V_0(S)$, $Y\in V_1(S)$, and $x\in Y$ \cite[Lemma 4.7]{Gui04}.

Under this construction, when $Y\in \mc{Y}$ is a simplicial subtree of $T$, it gets collapsed to a point when passing to the skeleton $S$.  Additionally, the stabilizer of the edge $e = (x,Y)$ corresponds to the stabilizer of a single point in $Y$, and we do not have good control over such stabilizers when $Y$ is simplicial. Hence, we modify the above construction in order to both keep the simplicial trees intact and also collapse edges of the form $(x, Y)$ with $Y$ a simplicial tree.

First let us assume that all simplicial pieces of the transverse covering are `maximal' in the sense that if $Y_u$ and $Y_v$ are both simplicial for some $u$ and $v$ and $Y_u \cap Y_v \ne \emptyset$ then $Y_u = Y_v$ and $u = v$.  Moreover, let us assume that simplicial trees $Y_u$ are not points.

We now build the skeleton $S$ associated to a transverse covering ${\mc Y} = (Y_v)_{v \in V}$. The vertex set $V(S)=V_0(S)\sqcup V_1(S)\sqcup V_2(S)$, where $V_0(S)$ is the set of points which lie in the intersection of at least 2 distinct trees in $\mc{Y}$, $V_1(S)$ is the set of non-simplicial trees $Y\in\mc{Y}$ and $V_2(S)$ is the set of branch points of simplicial trees $Y\in\mc{Y}$. In addition to the ``internal" branch points of simplicial trees, here we consider any point which lies in the intersection of a simplicial tree $Y$ and a non-simplicial tree to be a branch point in $Y$. The edge set $E(S)=E_0(S)\sqcup E_1(S)\sqcup E_2(S)$, where $E_0(S)$ consists of edges of the form $(x,Y)$ where $x \in V_0(S)$, $Y \in V_1(S)$ and $x \in Y$, $E_1(S)$ consists of edges in simplicial trees $Y\in\mc{Y}$, $E_2(S)$ consists of edges of the form $(x,y)$ where $x \in V_0(S)$, $y \in V_2(S)$ and $x = y$.

The {\em collapsed skeleton} $S^c$ is obtained from $S$ by collapsing the edges in $E_2(S)$. If a vertex $v$ in $S^c$ is the image of a vertex from $V_1(S)$, there there is an associated non-simplicial subtree $Y_v$ of $T$ associated to $v$. For any other vertex $v$ of $S^c$, there is an associated point $p_v\in T$ which is either a branching point in a simplicial subtree of $T$ or is the intersection of 2 non-simplicial subtrees of $T$.

\begin{observation}
Suppose that the group $G$ acts on the $\R$--tree $T$ and that $T$ splits as a graph of actions $\mathfrak{G}$ with $S^c$ the associated collapsed skeleton constructed above. If $e = (x,Y)\in E_0(S)$ then $G_e = G_Y \cap \Stab(x)$, those elements of the group $G_Y$ which fix $x$. If $e\in E_1(S)$ is an edge in a simplicial tree $Y\in\mc{Y}$ with endpoints $x, y$, then $G_e=Stab_G([x, y])$.
\end{observation}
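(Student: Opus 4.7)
The plan is to verify these edge stabilizer descriptions directly from the combinatorial structure of the skeleton $S$ and its collapse $S^c$, relying on the fact that the $G$-action on $S$ (and hence on $S^c$) is without inversions, after a standard subdivision if necessary. For the first assertion, consider $e = (x, Y) \in E_0(S)$. Its endpoints in $S$ are a vertex $x \in V_0(S)$ and a vertex $Y \in V_1(S)$; these vertex types are preserved by the $G$-action because they are characterized by intrinsic $G$-equivariant data (points of $T$ lying in at least two distinct trees of $\mc{Y}$, versus non-simplicial subtrees of $\mc{Y}$), so no $g \in G$ can interchange the two endpoints. Therefore $g \in G_e$ if and only if $g\cdot x = x$ and $g\cdot Y = Y$, which is to say $G_e \subseteq \Stab_G(x) \cap G_Y$. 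The reverse inclusion is immediate since $x \in Y$ forces any such $g$ to preserve the incidence, giving $G_e = G_Y \cap \Stab_G(x)$.

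For the second assertion, suppose $e \in E_1(S)$ is an edge of a simplicial tree $Y \in \mc{Y}$ with endpoints $x,y$. These endpoints are points of $T$ lying in $V_0(S) \cup V_2(S)$. After subdivision if needed, the action has no inversions of $e$, so $g \in G_e$ if and only if $g$ fixes both $x$ and $y$ in $T$. Because $T$ is an $\R$--tree, fixing two points forces $g$ to fix the unique arc $[x,y]$ between them, and conversely any element of $\Stab_G([x,y])$ trivially fixes both endpoints and hence the edge $e$. This yields $G_e = \Stab_G([x,y])$.

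Finally, passing from $S$ to $S^c$ only collapses the edges in $E_2(S)$, each of which merely identifies an intersection point in $V_0(S)$ with the corresponding branch-point vertex in $V_2(S)$ representing the same point of $T$. This identification does not alter the incidences or stabilizers of the remaining edges from $E_0(S) \cup E_1(S)$, so the claimed descriptions persist in $S^c$. The only subtle point worth treating carefully, and essentially the sole potential obstacle, is verifying inversion-freeness; this is standard for such skeleton constructions but should be noted explicitly in order to pass cleanly from stabilizing the edge set-wise to fixing each endpoint.
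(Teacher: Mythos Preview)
The paper states this as an Observation and gives no proof, so there is nothing in the paper to compare against. Your direct verification from the construction is correct and is precisely the sort of unpacking that justifies labeling the statement an observation: the vertex types $V_0(S)$, $V_1(S)$, $V_2(S)$ are characterized by $G$--equivariant data, so for $e\in E_0(S)$ the endpoints cannot be swapped and $G_e=G_Y\cap\Stab(x)$ follows immediately; for $e\in E_1(S)$ the unique-arc property of $\R$--trees gives $G_e=\Stab_G([x,y])$ once both endpoints are fixed. Your remark about inversions is the right caveat to make explicit, and the standard subdivision (or simply noting that the graph-of-actions framework already assumes an action without inversions on the skeleton) disposes of it.
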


The following result is similar to \cite[Lemma 4.9]{Gui04}.
\begin{lem}
Suppose that the non-degenerate $\R$--tree $T$ is equipped with a nontrivial minimal action of the group $G$, and that $(Y_v)_{v \in V}$ is a $G$--equivariant transverse covering.  Let $S$ be the associated skeleton and $S^c$ the collapsed skeleton.  

The $G$--action on $S^c$ is minimal, and $S^c$ is a point if and only if the transverse covering consists of a single non-simplicial tree.
\end{lem}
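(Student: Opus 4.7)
The plan is to adapt Guirardel's proof of \cite[Lemma 4.9]{Gui04} to the modified collapsed skeleton $S^c$. To prove minimality, I would argue by contradiction: assume there is a proper nonempty $G$-invariant subtree $S_0 \subsetneq S^c$. From $S_0$ I would reconstruct a $G$-invariant subset $T_0 \subseteq T$ by ``undoing the graph of actions'': namely, $T_0$ is the union of the non-simplicial vertex trees $Y_v$ attached to vertices $v \in V(S_0) \cap V_1$, together with the segments of $T$ corresponding to edges of $E(S_0) \cap E_1$ (inside simplicial members of $\mc{Y}$), plus the attaching points of $T$ associated to the remaining vertices of $V(S_0)$. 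Equivariance of the transverse covering makes $T_0$ a $G$-invariant subset of $T$, and the transverse-covering axioms together with connectedness of $S_0$ ensure that $T_0$ is in fact a subtree.

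The crux is to show $T_0 \ne T$. Pick any vertex $v_* \in V(S^c) \setminus V(S_0)$. If $v_* \in V_1$, then $Y_{v_*}$ is non-simplicial, hence uncountable; but by the transverse-covering property $|Y_{v_*} \cap Y_v| \le 1$ for $v_* \ne v$, so only countably many points of $Y_{v_*}$ can possibly appear in $T_0$. Therefore $Y_{v_*} \not\subseteq T_0$, and hence $T_0 \subsetneq T$. If instead $v_*$ arises from $V_0 \cup V_2$, then the adjacency structure of $S^c$ (together with the observation that a minimal $G$-invariant subtree of $S^c$ cannot have leaves, which constrains which $V_1$-vertices or $E_1$-edges can accompany $v_*$) again exhibits a portion of $T$ missing from $T_0$. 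Either way, $T_0$ is a proper $G$-invariant subtree of $T$, contradicting minimality of the $G$-action on $T$. The main obstacle I foresee is the bookkeeping in this last case, where one has to track the combinatorics of the three vertex types $V_0, V_1, V_2$ and the three edge types $E_0, E_1, E_2$ to conclude properness of $T_0$.

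For the equivalence in the second assertion, one direction is immediate: if $\mc{Y} = \{Y\}$ with $Y$ non-simplicial, then $V_0 = V_2 = \emptyset$ and $V_1 = \{Y\}$, so $S$ is a single vertex and $S^c$ is a point. Conversely, suppose $S^c$ is a point. The collapse map $S \to S^c$ only identifies endpoints of $E_2$-edges, leaving $V_1$-vertices and $E_0$- and $E_1$-edges untouched. Hence $S^c$ having one vertex forces $|V_1| \le 1$, $E_0 = \emptyset$, $E_1 = \emptyset$, and therefore $V_0 = \emptyset$ (every $V_0$-vertex is joined by an $E_0$-edge to some $V_1$-vertex). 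The vanishing of $E_1$ excludes any non-degenerate simplicial member of $\mc{Y}$, while $T$ being non-degenerate and covered by $\mc{Y}$ forces at least one non-degenerate tree in $\mc{Y}$. Hence $|V_1| = 1$ and the covering consists of a single non-simplicial tree, as claimed.
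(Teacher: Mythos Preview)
Your overall strategy is the contrapositive of the paper's: the paper takes an arbitrary $G$--invariant subtree $S' \subseteq S^c$, builds the associated $T' \subseteq T$, uses minimality of $T$ to get $T' = T$, and then argues \emph{forward} that every vertex of $S^c$ must already lie in $S'$. The two directions are logically equivalent, but the paper's order makes the ``bookkeeping'' you anticipate disappear. Once $T' = T$ is established, one first shows every $V_1$--vertex lies in $S'$ (a non-simplicial $Y$ cannot be covered by the other pieces, since by axiom (2) of a transverse covering any arc of $Y$ meets only finitely many members of $\mc Y$, each in at most a point). With all $V_1$--vertices in hand, a $U_0$--vertex lying in the intersection of two non-simplicial trees is forced into $S'$ because it sits on the unique path in $S^c$ between two vertices already in $S'$; and a $U_0$--vertex that is a branch point of a simplicial piece is an endpoint of an $E_1$--edge $e \subseteq T' = T$, which can only be covered by that very edge, so both endpoints lie in $S'$. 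In your direction you would have to locate a \emph{specific} missing arc of $T$, and when $v_\ast \in U_0$ the associated point $p_{v_\ast}$ may well lie in $T_0$ (e.g.\ if it belongs to some $Y_v$ with $v \in S_0$), so the witness has to be found elsewhere; this is exactly the case analysis the paper's order avoids.

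One genuine correction: your $V_1$--case argument (``$Y_{v_\ast}$ is uncountable, but only countably many of its points can lie in $T_0$'') tacitly assumes $\mc Y$ is countable. The lemma as stated makes no countability hypothesis on $G$. The robust argument---and the one the paper uses---invokes the finite-covering axiom of a transverse covering: any arc of $Y_{v_\ast}$ is covered by finitely many members of $\mc Y$, each distinct from $Y_{v_\ast}$ meeting it in at most one point, so the arc cannot lie in $T_0$ unless $v_\ast \in S_0$.

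Your treatment of the second assertion is correct and matches the paper (which simply says it is clear from the construction); your use of the standing assumption that simplicial pieces are maximal, so that every $V_0$--vertex is incident to at least one $E_0$--edge, is exactly what is needed.
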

\begin{proof}
Suppose that $S' \subset S^c$ is a $G$--invariant subtree. Associated to each vertex of $S'$ is a subtree (possibly a point) of $T$. Let $T'$ be the union of these subtrees of $T$; additionally, if $u, v$ are adjacent vertices in $S^\prime$ which correspond to points $p_u$, $p_v$ in a simplicial subtree $Y\in\mc{Y}$, then add the edge $[p_u, p_v]$ to $T^\prime$. Note that $T^\prime$ is equal to a union of subtrees which belong to $\mc{Y}$ and arcs which are contained in elements of $\mc{Y}$.  Since $S'$ is connected and edges in $S^c$ imply either nonempty intersection of subtrees or adjacent vertices in a simplicial subtree of $T$, $T'$ is also connected and hence is a subtree of $T$.  It is clear that $T'$ is $G$--invariant, which by minimality of $T$ implies that $T' = T$.

Suppose that $v$ is a vertex in $S^c$. Then the subtree $Y$ of $T$ associated to $v$ is contained in $T' = T$. If $Y$ is a non-simplicial tree, then $Y\in\mc{Y}$ and hence $Y$ cannot be covered by other elements of $\mc{Y}$ by the definition of a transverse cover, so we must have $v\in S'$. If $Y$ is a point in the intersection of two non-simplicial subtrees of $T$, then the vertices corresponding to these non-simplicial subtrees both belong to $S'$ and are adjacent to $v$ in $S^c$, hence $v\in S'$. If $Y$ is a branch point in a simplicial tree belonging to $\mc{Y}$, then $Y$ is the endpoint of an edge $e$ of this simplicial tree (since we assumed that simplicial subtree in $\mc{Y}$ are not points). Since $e$ is contained in $T^\prime$, the vertices of $S^c$ corresponding to the endpoints of $e$ must belong to $S'$, hence $v\in S'$. The final assertion about when $S^c$ can be a point is clear from the construction.
\end{proof}

Recall that a divergent $\mc{G}$--limit group $L$ acts on a $\R$--tree $T$ and the stabilizers of unstable arcs are uniformly finite by the Stability Lemma \ref{l:stability lemma}. Clearly such an action satisfies the hypothesis of Theorem \ref{Rmach}, so we have the following.

\begin{thm} \label{t:Rmach applies}
Suppose that $G$ is a finitely generated group and that $(\phi_i)$ is a divergent sequence from $\Hom(G, \mc{G})$ with $L = G / \ker^\omega (\phi_i)$.  Then the associated action of $L$ on the limiting $\R$--tree $T$ satisfies the hypotheses of Theorem \ref{Rmach}.

Consequently, either 
\begin{enumerate}
\item $L$ splits over a uniformly finite subgroup which is the stabilizer of either an unstable arc  or of an infinite tripod in $T$; or
\item The $L$--action on $T$ admits a graph of actions decomposition where each vertex tree is either simplicial, of axial type, or of Seifert type.
\end{enumerate}
\end{thm}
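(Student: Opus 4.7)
The plan is to verify the two hypotheses of Theorem \ref{Rmach} for the action of $L$ on $T$; the dichotomy and the uniform finiteness assertions in the conclusion then follow directly from Theorem \ref{Rmach} together with parts (3) and (4) of the Stability Lemma \ref{l:stability lemma}. The content of the proof is entirely concentrated in the ascending chain condition; the conditions on unstable arcs are essentially immediate from the Stability Lemma.

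For the ascending chain condition, suppose $I_1 \supseteq I_2 \supseteq \cdots$ is a nested sequence of arcs in $T$ with intersection a single point, and suppose for contradiction that the chain $\Stab_L(I_1) \subseteq \Stab_L(I_2) \subseteq \cdots$ is not eventually constant. Then there is an infinite sequence of indices $n_1 < n_2 < \cdots$ at which strict inclusion $\Stab_L(I_{n_k}) \subsetneq \Stab_L(I_{n_k+1})$ occurs. For each such $k$, the arc $I_{n_k}$ contains a proper subarc $I_{n_k+1}$ whose stabilizer strictly contains $\Stab_L(I_{n_k})$, so $I_{n_k}$ is unstable. By Stability Lemma \ref{l:stability lemma} (4), $|\Stab_L(I_{n_k})|$ is bounded by a uniform constant $C$ depending only on $\delta$ and the acylindricity functions. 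However, the inclusions $\Stab_L(I_{n_k}) \subsetneq \Stab_L(I_{n_k+1}) \subseteq \Stab_L(I_{n_{k+1}})$ force the sequence of orders $|\Stab_L(I_{n_k})|$ to be strictly increasing in $k$, contradicting the uniform bound $C$. Hence the chain must be eventually constant.

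For hypothesis (2) of Theorem \ref{Rmach}, let $J$ be an unstable arc. By Stability Lemma \ref{l:stability lemma} (4), $\Stab_L(J)$ is uniformly finite. A finite group is automatically finitely generated, so (2)(a) holds; and a finite group cannot be isomorphic (and in particular cannot be conjugate inside $L$) to a proper subgroup of itself, so (2)(b) holds.

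Having verified the hypotheses, Theorem \ref{Rmach} gives the stated dichotomy. In case (i), the splitting is over the stabilizer of an unstable arc or of an infinite tripod; these are uniformly finite by parts (4) and (3) of the Stability Lemma respectively. Case (ii) is exactly the graph of actions conclusion of Theorem \ref{Rmach}. The only place where real work is done is the ACC argument, and once one notices that a strict inclusion in the chain forces the preceding arc to be unstable, the uniform finiteness from the Stability Lemma closes the argument without further difficulty.
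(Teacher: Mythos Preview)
Your proof is correct and follows essentially the same approach as the paper, which simply notes that uniformly finite stabilizers of unstable arcs (from the Stability Lemma) ``clearly'' yield the hypotheses of Theorem \ref{Rmach}. You have spelled out the ACC verification in detail---observing that a strict jump in the chain forces the earlier arc to be unstable, hence its stabilizer is uniformly bounded, and an infinite strictly increasing chain of finite groups of bounded order is impossible---which is exactly the implicit reasoning behind the paper's one-line justification.
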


In the first case above, the splitting is over a (uniformly) finite edge group, and we are able to ignore this case in this paper.  We now proceed to describe the splitting of $L$ obtained in the second case of this theorem in more detail, summarizing the above construction.

\begin{prop}\label{prop:splitting}
Let $L$ be a divergent $\mc{G}$--limit group, and $T$ the associated limiting $\R$--tree. Suppose that $L$ does not split over the  stabilizer of an unstable arc or over the stabilizer of an infinite tripod in $T$. Then the action of $L$ on the limit tree $T$ splits as a graph of actions with collapsed skeleton $S^c$ and the following hold:

\begin{enumerate}
\item $V(S^c)=U_0\sqcup U_1$ such that for each $v\in U_0$ there is an associated point $p_v\in T$ and for each $v\in U_1$ there is an associated non-simplicial subtree $Y_v$ of $T$.
\item If $v\in U_0$, then $\mathrm{Stab}_L(v)=\mathrm{Stab}_L(p_v)$.
\item If $v\in U_1$, then the action of $\mathrm{Stab}_L(v)=L_{Y_v}$ on $Y_v$ is either of axial type or of Seifert type. 
\item If $v\in U_1$ and the action of $L_{Y_v}$ on $Y_v$ is of Seifert type, the kernel of the action of $L_{Y_v}$ on $Y_v$ is uniformly finite and all infinite subgroups of $L_{Y_v}$ are either contained in $\mathcal{NLSE}(L)$ or they contain a cyclic subgroup of uniformly finite index. Moreover, those infinite subgroups that are not in $\mc{NLSE}(L)$ correspond to boundary components of the associated orbifold.
\item If $v\in U_1$ and the action of $L_{Y_v}$ on $Y_v$ is of axial type, then $L_{Y_v}$ has a subgroup of index at most 2 which is (uniformly finite)-by-abelian and contained in $\mathcal{NLSE}(L)$. 
\item Distinct vertices in $U_1$ are not adjacent in $S^c$.
\item If $e=(u, v)\in E(S^c)$ with $u, v\in U_0$, then $\mathrm{Stab}_L(e)=\mathrm{Stab}_L([p_u, p_v])$ is (uniformly finite)-by-abelian and contained in $\mathcal{NLSE}(L)$.
\item If $e=(u, v)\in E(S^c)$ with $u\in U_0$ and $v\in U_1$, then $\mathrm{Stab}_L(e)=\mathrm{Stab}_L(p_u)\cap L_{Y_v}$. Furthermore,
\begin{enumerate}
\item If the action of $L_{Y_v}$ is Seifert-type, then $\mathrm{Stab}_L(e)$ is either finite or it has a cyclic subgroup of uniformly finite index.
\item\label{eq:stab axial} If the action of $L_{Y_v}$ is axial-type, then $\mathrm{Stab}_L(e)$ is either uniformly finite or has a subgroup of  index at most 2 which is (uniformly finite)-by-abelian and contained in $\mathcal{NLSE}(L)$.
\end{enumerate}
\end{enumerate}
 Finally, if $H\leq L$ fixes a point in $T$ then $H$ fixes a point in $S^c$. In particular, every finitely generated member of $\mathcal{SE}(L)$ is elliptic with respect to the action of $L$ on $S^c$.
\end{prop}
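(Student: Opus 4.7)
My plan is to combine Theorem \ref{t:Rmach applies} with the transverse covering machinery described just before the statement, using the Stability Lemma \ref{l:stability lemma} to control all of the stabilizers that appear.

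First, since we are assuming that $L$ does not split over the stabilizer of an unstable arc or of an infinite tripod, Theorem \ref{t:Rmach applies} gives a graph of actions decomposition of $T$ in which every vertex tree is simplicial, axial, or of Seifert type. By Lemma \ref{l:goa-tran}, the non-degenerate vertex trees form an $L$--invariant transverse covering $\mathcal{Y}$ of $T$. After taking unions of adjacent simplicial pieces we may assume the simplicial members of $\mathcal{Y}$ are maximal and are not points, which puts us in the setting in which the skeleton $S$ and collapsed skeleton $S^c$ were built in the preceding discussion. The partition $V(S^c) = U_0 \sqcup U_1$ in (1) is then the obvious one: $U_1$ is the set of non-simplicial vertex trees $Y_v \in \mathcal{Y}$ and $U_0$ consists of the images of $V_0(S)$ (intersection points of two non-simplicial members of $\mathcal{Y}$) together with images of $V_2(S)$ (branch points of simplicial members after the collapses in $E_2(S)$). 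Property (2) is immediate from the definition, and property (6) holds because any edge whose endpoints both lay in $V_1(S)$ would correspond to two distinct non-simplicial vertex trees sharing more than a point, contradicting the definition of transverse covering.

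The work is then to verify the stabilizer conclusions (3)--(5), (7), (8). For (3) the action type is handed to us. For (4), the kernel of a Seifert action fixes a tripod in $Y_v$, so by Stability Lemma \ref{l:stability lemma}(3) it is uniformly finite; an infinite subgroup of $L_{Y_v}$ that does not correspond to a boundary component of the orbifold acts loxodromically on $Y_v$ so is not stably elliptic (its lifts cannot be elliptic $\omega$--almost surely, otherwise Stability Lemma \ref{l:stability lemma}(5) would force a global fixed point in $Y_v$), and boundary components give virtually cyclic subgroups with a cyclic subgroup of uniformly finite index, as in \cite{Guirardel:Rtrees}. For (5), an axial $L_{Y_v}$ has an index $\le 2$ subgroup fixing the two ends of the line, hence (uniformly finite)-by-abelian by Stability Lemma \ref{l:stability lemma}(2); its dense translation orbits again rule out local stable ellipticity via Stability Lemma \ref{l:stability lemma}(5). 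For (7), an edge between two $U_0$ vertices comes from an edge in a simplicial member $Y \in \mathcal{Y}$, so its stabilizer stabilizes a non-trivial arc and Stability Lemma \ref{l:stability lemma}(1) gives (uniformly finite)-by-abelian; non-trivial edge translations in the simplicial piece again preclude stable ellipticity. For (8), an edge $e=(u,v)$ with $v \in U_1$ has $\Stab_L(e) = L_{Y_v} \cap \Stab(p_u)$; in the Seifert case this is the stabilizer of a point of $Y_v$, which for an arational foliation on a $2$--orbifold is either finite or virtually cyclic coming from a boundary component, and in the axial case it is the kernel-by-(point stabilizer on $\mathbb{R}$), which after passing to an index $\le 2$ subgroup is either uniformly finite or fixes the line pointwise, giving \eqref{eq:stab axial} by the same argument as in (5).

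For the final claim, suppose $H \le L$ fixes a point $p \in T$. By property (2) of a transverse covering, only finitely many subtrees of $\mathcal{Y}$ contain $p$. Since $H$ fixes $p$ it permutes this finite set. If $p$ lies in the interior of a unique non-simplicial $Y_v$ then $H$ fixes the vertex $v \in U_1$ of $S^c$. If $p$ is an intersection point of two non-simplicial pieces, or a branch point / interior edge point of a simplicial piece, then $p$ corresponds (after the collapses in $E_2(S)$) to a vertex of $U_0$ fixed by $H$; when $p$ lies in the interior of an edge of a simplicial member, $H$ permutes the two endpoints of that edge and fixes the corresponding $V_0$--type point, which again gives a fixed vertex in $U_0$. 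In every case $H$ fixes a vertex of $S^c$. Combined with Stability Lemma \ref{l:stability lemma}(5), this gives the assertion about finitely generated members of $\mathcal{SE}(L)$.

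The main obstacle is bookkeeping rather than any one deep step: one must keep straight which stabilizers in $S^c$ come from which geometric objects in $T$ (non-degenerate arcs versus points versus tripods) and then thread each through the appropriate clause of the Stability Lemma, while simultaneously distinguishing finite, virtually cyclic, and non-locally-stably-elliptic possibilities.
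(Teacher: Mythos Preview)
Your overall strategy matches the paper's: invoke Theorem~\ref{t:Rmach applies} to get the graph of actions, build $S^c$ from the transverse covering, and read off each clause from the appropriate part of the Stability Lemma~\ref{l:stability lemma}. Items (1)--(6), (8a), and the final ellipticity claim are handled as in the paper.

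There is, however, a genuine gap in your justification of the $\mathcal{NLSE}(L)$ assertions in (7) and (8b). In (7) you write that ``non-trivial edge translations in the simplicial piece again preclude stable ellipticity,'' but $\Stab_L(e)$ fixes the arc $[p_u,p_v]$ pointwise and translates nothing; there is no hyperbolic element in sight to which Lemma~\ref{l:stability lemma}(5) could be applied. In (8b) you say the conclusion follows ``by the same argument as in (5),'' but your argument for (5) used that the index $\le 2$ subgroup has dense orbits on the line and hence contains a finitely generated subgroup without a global fixed point. The subgroup you produce in (8b) is exactly the opposite: it fixes the entire line $Y_v$ pointwise, so the reasoning from (5) does not transfer.

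The fix in both cases is the clause of the Stability Lemma you never invoke, namely~\ref{l:stability lemma}(6): an $H\in\mathcal{LSE}(L)$ fixing a non-degenerate arc is uniformly finite. Since both subgroups in question fix a non-degenerate arc of $T$, this immediately gives the dichotomy ``uniformly finite or in $\mathcal{NLSE}(L)$.'' The paper itself handles (8b) by a forward reference to Lemma~\ref{aa-finite}, which packages the same idea through the structure of virtually abelian subgroups; using Lemma~\ref{l:stability lemma}(6) directly is arguably cleaner and stays within the section, but either route works once you cite the correct input.
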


\begin{proof}
Let $U_0$ be the image of $V_0\cup V_2$ in $S^c$ and $U_1$ the image of $V_1$ in $S^c$. Properties (1), (2), (3), and (6) follow from the construction of $S^c$ and Theorem \ref{t:Rmach applies}; it is also clear from the construction of $S^c$ that if $H\leq L$ fixes a point in $T$ it also fixes a point in $S^c$. If  the action of $L_{Y_v}$ on $Y_v$ is of Seifert type, then the kernel $K$ of the action of $L_{Y_v}$ on $Y_v$ fixes a tripod in $T$ and hence is uniformly finite by Lemma \ref{l:stability lemma}(3).  Furthermore, every infinite subgroup of $L_{Y_v}$ corresponds either to a boundary component of the underlying orbifold and hence contain a cyclic subgroup of uniformly finite index, or it contains an infinite cyclic subgroup which has no fixed point in $T$, hence this subgroup belongs to $\mathcal{NLSE}(L)$ by Lemma \ref{l:stability lemma}(5). Hence (4) holds.

(5) follows from Lemma \ref{l:stability lemma}(2), \ref{l:stability lemma}(5), and the fact that $L_{Y_v}$ has a finitely generated subgroups with no fixed points in $T$ since the image of $L_{Y_v}$ in $\Isom(\R)$ is a finitely generated subgroup with dense orbits. (7) follows from Lemma \ref{l:stability lemma}(1). (8a) Follows from the fact that points in $Y_v$ which are fixed by non-trivial subgroups of $L_{Y_v}$ correspond to either boundary components in the corresponding orbifold or to cone points in the orbifold, that is points with finite stabilizer. Finally, (8b) follows from Lemma \ref{aa-finite} proved in the next section.
\end{proof}

%%%%%%%%%%%%%%%%%%%%%%%%%%%%%%%%%%%%%%%%%%%%%%%%%%%%%%%%%%%%%%%%%%%%%%%%
\section{JSJ decompositions and the Shortening Argument} \label{s:JSJ}

In the previous section, we showed how a divergent $\mc{G}$--limit group $L$ admits a natural action on an $\mathbb R$--tree which in turn produces a splitting of $L$ via the Rips machine. In this section, we prove the existence of a JSJ decomposition for $L$ (see Theorem \ref{t:JSJ}), a graph of groups decompositions which in a certain sense encodes all possible splittings for $L$ which can arise in this manner.
In Subsection \ref{ss:shortening moves}, we provide a collection of automorphisms of a divergent $\mc{G}$--limit group which are then used to make an adaptation of Sela's shortening argument to our setting.

\subsection{Virtually abelian subgroups}
For the rest of this section, fix a uniformly acylindrically hyperbolic family $\mc{G}$ and a $\mc{G}$--limit group $L$ with a defining sequence $(\phi_i)$ from $\Hom(G, \mc{G})$ for some finitely generated group $G$. As in Definition \ref{def:SE and LSE}, we let $\mathcal{SE}(L)$ and $\mathcal{LSE}(L)$ denote the set of stably elliptic and locally stably elliptic subgroups of $L$, {\em with respect to the defining sequence $(\phi_i)$}, and let $\mathcal{NLSE}(L)$ denote the set of subgroups which are not locally stably elliptic (with respect to $(\phi_i)$). We also use $K$ throughout this section as the constant from Lemma \ref{ufin} for the family $\mc{G}$. As before, uniformly finite means bounded by a constant which depends only on the hyperbolicity and acylindricity constants for the family $\mc{G}$.

The following proof is essentially the same as \cite[Lemma 1.21]{ReiWei}.
\begin{lem}\label{aasub}
Let $H\leq L$ be a subgroup so that $H \in \mathcal{NLSE}(L)$. 
Then the following are equivalent:
\begin{enumerate}
\item $H$ is virtually finite-by-abelian.
\item $\omega$--almost surely there exists loxodromic elements $g_i\in\Gamma_i$ such that for every finitely generated subgroup $J\leq H$ $\phi_i(\widetilde{J})\subseteq E_{\Gamma_i}(g_i)$.
\item $H$ has a (uniformly finite)-by-abelian subgroup $H^+$ of index at most 2 so that $[H^+: Z(H^+)]$ is uniformly finite.
\item $H$ is virtually abelian.
\item All finitely generated subgroups of $H$ are virtually abelian.
\end{enumerate}
\end{lem}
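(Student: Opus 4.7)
The plan is to prove the cyclic chain $(5) \Rightarrow (2) \Rightarrow (3) \Rightarrow (1) \Rightarrow (4) \Rightarrow (5)$. The steps $(3) \Rightarrow (1)$ and $(4) \Rightarrow (5)$ are immediate. The implication $(1) \Rightarrow (4)$ is purely group-theoretic: by a classical theorem of B.\,H.~Neumann, a group $G$ has finite commutator subgroup if and only if $Z(G)$ has finite index in $G$. Hence if $H_0 \leq H$ has finite index and is finite-by-abelian, then $[H_0, H_0]$ is finite, so $Z(H_0)$ has finite index in $H_0$, and $H$ is virtually abelian. The genuine geometric content is in $(5) \Rightarrow (2)$ and $(2) \Rightarrow (3)$.

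For $(5) \Rightarrow (2)$, the hypothesis $H \in \mc{NLSE}(L)$ provides a finitely generated $J_0 \leq H$ whose images $\phi_i(\widetilde{J_0})$ are $\omega$--almost surely not elliptic on $X_{\Gamma_i}$. By $(5)$, $J_0$ is virtually abelian, so each image $\phi_i(\widetilde{J_0})$ is virtually abelian. The acylindrical trichotomy excludes the non-elementary case (which contains nonabelian free subgroups), so $\omega$--almost surely $\phi_i(\widetilde{J_0})$ is virtually cyclic and is contained in $E_{\Gamma_i}(g_i)$ for some loxodromic $g_i$ via Lemma \ref{E(h)}. For any other finitely generated $J \leq H$, the subgroup $\langle J, J_0 \rangle$ is finitely generated in $H$ and hence virtually abelian by $(5)$; its image is virtually abelian and contains the loxodromic $g_i$, so by the same trichotomy and the uniqueness clause in Lemma \ref{E(h)}, it must sit inside $E_{\Gamma_i}(g_i)$, yielding $\phi_i(\widetilde{J}) \subseteq E_{\Gamma_i}(g_i)$.

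For $(2) \Rightarrow (3)$, first observe that $(2)$ implies $\phi_i(\widetilde{H}) \subseteq E_{\Gamma_i}(g_i)$ $\omega$--almost surely, since every element of $H$ lies in a cyclic, hence finitely generated, subgroup. Let $H^+$ be the set of $h \in H$ with $\phi_i(\widetilde{h}) \in E^+_{\Gamma_i}(g_i) = C_{\Gamma_i}(g_i^r)$ $\omega$--almost surely; this is a subgroup of index at most $2$ in $H$. Since $g_i^r$ is central in $E^+_{\Gamma_i}(g_i)$ and the finite part of $E_{\Gamma_i}(g_i)$ has order at most $K$ by Lemma \ref{ufin}, the index $[E^+_{\Gamma_i}(g_i) : Z(E^+_{\Gamma_i}(g_i))]$ is bounded by a uniform constant, and this bound is inherited by the subgroup $\phi_i(\widetilde{H^+})$. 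A quantitative form of Neumann's theorem then gives a uniform bound $N$ on $|[\phi_i(\widetilde{H^+}), \phi_i(\widetilde{H^+})]|$. To transfer this to $L$: if $[H^+, H^+]$ had more than $N$ elements, lifting $N+1$ distinct commutators to $\widetilde{H^+}$ and applying pigeonhole $\omega$--almost surely in each $\Gamma_i$ would force two of them to agree in $L$, a contradiction. Hence $[H^+, H^+]$ is uniformly finite, so $H^+$ is (uniformly finite)-by-abelian, and Neumann's theorem applied to $H^+$ itself yields that $[H^+ : Z(H^+)]$ is uniformly finite.

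The principal obstacle is bookkeeping of uniform constants: the bound $K$ from Lemma \ref{ufin} together with the hyperbolicity and acylindricity data of $\mc{G}$ must propagate through each application of Neumann's theorem and through the passage between the $\Gamma_i$ and the limit $L$, so that ``uniformly finite'' in the conclusion really means bounded by a constant depending only on $\mc{G}$. Care is also needed to ensure that the same loxodromic element $g_i$ works $\omega$--almost surely for all finitely generated subgroups at once in $(2)$, which is why the argument proceeds by absorbing an arbitrary finitely generated $J$ into $\langle J, J_0 \rangle$ rather than treating $J$ directly.
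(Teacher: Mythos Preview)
Your argument has a genuine gap stemming from a misstated theorem. You assert, and use twice, that ``a group $G$ has finite commutator subgroup if and only if $Z(G)$ has finite index in $G$,'' attributing this to B.~H.~Neumann. Only one direction is true, namely Schur's theorem: $[G:Z(G)]<\infty$ implies $|[G,G]|<\infty$. The converse fails: an infinite extraspecial $p$--group has $[G,G]\cong\mathbb Z/p$ but $[G:Z(G)]=\infty$. (Neumann's actual theorem characterizes finiteness of $[G,G]$ in terms of bounded conjugacy class size, not center index.)

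This breaks your cycle in two places. First, $(1)\Rightarrow(4)$ is \emph{not} purely group-theoretic: the same infinite extraspecial group is finite-by-abelian (hence satisfies $(1)$) yet is not virtually abelian, so the implication fails for abstract groups. In the paper this implication is obtained only by passing through the geometric condition $(2)$ and then $(3)$. Second, in the last step of your $(2)\Rightarrow(3)$ you correctly obtain a uniform bound on $|[H^+,H^+]|$, but then invoke the false converse to deduce that $[H^+:Z(H^+)]$ is uniformly finite; this does not follow. The paper avoids this by working with a finitely generated exhaustion $H_k$ of $H$: since there are only finitely many isomorphism types of groups $E_{\Gamma_i}(g_i)$ with finite subgroups of order at most $K$, one gets maps $\widetilde H_k\to E$ for a fixed elementary group $E$, pulls back $E^+$ and $Z(E^+)$ (using that a finitely generated group has only finitely many subgroups of a given finite index, so these preimages are $\omega$--almost surely constant), and then checks that the resulting central subgroups $Z(H_k^+)$ are nested, so their union gives a central subgroup of $H^+$ of uniformly bounded index. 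A direct pigeonhole argument for $[H^+:Z(H^+)]$, analogous to your commutator argument, could also be made to work, but that is not what you wrote.
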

\begin{proof}
We begin by establishing some notation that is used throughout the proof. Let $H=\{h_1,...\}$, and let $\widetilde{h}_i\in\phi_\infty^{-1}(h_i)$. Let $H_0$ be a finitely generated subgroup of $H$ such that $\phi_i(\widetilde{H}_0)$ is $\omega$--almost surely not elliptic where $\widetilde{H}_0$ be a finitely generated subgroup of $G$ such that $\phi_\infty(\widetilde{H}_0)=H_0$.  Thus $\omega$--almost surely we can choose a loxodromic element $g_i \in \Gamma_i$ with $g_i \in \phi_i(\widetilde{H}_0)$. Let $H_k=\langle H_0, h_1,...,h_k\rangle$ and let $\widetilde{H}_k=\langle \widetilde{H}_0, \widetilde{h}_1,...,\widetilde{h}_k\rangle$. Let $\widetilde{H}=\langle \widetilde{h}_1,...\rangle$. We clearly have $\phi_\infty(\widetilde{H})=H$.

Suppose $H$ is virtually finite-by-abelian. Then each $H_k$ is virtually finite-by-abelian, and since $H_k$ is also finitely generated we get that $H_k$ is finitely presented. It follows that $\omega$--almost surely, $\phi_i(\widetilde{H}_k)$ is a quotient of $H_k$ and hence virtually finite-by-abelian, which implies that  $\phi_i(\widetilde{H_k})\subseteq E_{\Gamma_i}(g_i)$. This shows that $(1)\implies (2)$.

 Now suppose that $\omega$--almost surely $\phi_i(\widetilde{H_k})\subseteq E_{\Gamma_i}(g_i)$. Note that there are finitely many isomorphism types of elementary groups with all finite subgroups of size at most $K$. Thus, $\omega$--almost surely there is a fixed elementary group $E$ and homomorphisms $\phi_i^\prime\colon \widetilde{H_k}\to E$ such that $H_k=\widetilde{H_k}/\ker^\omega(\phi_i^\prime)$. Now $E$ has a subgroup $E^+$ of index at most 2 which is finite-by-$\Z$. Let $\widetilde{H}^+_{k, i}=(\phi_i^\prime)^{-1}(E^+)$. This is a subgroup of index at most $2$ in $\widetilde{H}_k$, and since $\widetilde{H}_k$ has only finitely many subgroups of index at most 2 there is a fixed subgroup $\widetilde{H}_k^+$ which occurs $\omega$--almost surely. Let $H_k^+=\phi_\infty(\widetilde{H}_k^+)$, clearly $[H_k\;:\;H_k^+]\leq 2$. Since for each $i$ we have $\widetilde{H}^+_{k, i}\subseteq\widetilde{H}^+_{k+1, i}$, it follows that $H_k^+\leq H_{k+1}^+$. Hence we can define $H^+=\bigcup_{k=1}^\infty H_k^+$, which is a subgroup of $H$ of index at most 2. The size of the commutator subgroup of $\phi_i^\prime(\widetilde{H}^+_k)$ is uniformly finite, so the commutator subgroup of each $H^+_k$ is uniformly finite and hence the commutator subgroup of $H^+$ is uniformly finite. Thus $H^+$ is (uniformly finite)-by-abelian.

Let $M=[E^+\;:\;Z(E^+)]$ and let $\widetilde{Z}_{k, i}=(\phi_i^\prime)^{-1}(Z(E^+))\cap\widetilde{H}^+_k$, a subgroup of $H_k^+$ of index at most $M$. Again, since $H_k^+$ is finitely generated there are only finitely many such subgroups, so $\omega$--almost surely this is equal to a fixed subgroup which we denote $\widetilde{Z}_k$. As $\phi_i^\prime(\widetilde{Z}_k)$ is central $\omega$--almost surely, $Z_k:=\phi_\infty(\widetilde{Z}_k)$ is a central subgroup of $H^+_k$ of index at most $M$. Hence $[H_k^+\;:\; Z(H_k^+)]\leq M$. 

After passing to a subsequence of $k$ we can assume that $[H_k^+\;:\; Z(H_k^+)]=M_0$. for some fixed $M_0\leq M$.  $H^+_k\leq H^+_{k+1}$, so the the index of $Z(H^+_{k+1})\cap H^+_k$ in $H^+_k$ is at most $M_0$.  Also $Z(H^+_{k+1})\cap H^+_k\leq Z(H^+_k)$, and  $[H_k^+\;:\; Z(H_k^+)]=M_0$, so $Z(H^+_{k+1})\cap H^+_k=Z(H^+_k)$. Thus $Z(H^+_k)\leq Z(H^+_{k+1})$, so we can define the subgroup $Z=\bigcup_{k=1}^\infty Z_k$. Since $Z$ is a central subgroup of $H^+$ of index $M_0\leq M$, we have $[H^+\;:\; Z(H^+)]\leq M$. It is easy to see that one can bound $M$ in terms of $K$, so we have shown that $(2)\implies (3)$.

Since $(3)\implies (1)$ is trivial, we have proved that $(1)$, $(2)$ and $(3)$ are equivalent. Also the implications $(3)\implies(4)\implies(5)$ are obvious. Finally, to conclude we note that the same proof which shows that $(1)\implies (2)$ can be applied to show that $(5)\implies (2)$.
\end{proof}

\begin{lem}\label{aa-finite}
Suppose $F \le H$ are virtually abelian subgroups of $L$ such that $F \in \mathcal{LSE}(L)$ and $H \in \mathcal{NLSE}(L)$.  Then $|F| \le K$.
\end{lem}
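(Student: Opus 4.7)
The plan is to combine Lemma \ref{aasub} applied to $H$, a structural fact about virtually cyclic subgroups of $\Gamma_i$ that contain a loxodromic element, and Lemma \ref{ufin}, and then to conclude by a pigeonhole argument on lifts of elements of $F$.

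First I would apply Lemma \ref{aasub} to $H$: since $H$ is virtually abelian it is certainly virtually finite-by-abelian, and $H \in \mathcal{NLSE}(L)$, so condition (2) of that lemma provides loxodromic elements $g_i \in \Gamma_i$ such that $\omega$--almost surely $\phi_i(\widetilde{J}) \subseteq E_{\Gamma_i}(g_i)$ for every finitely generated $J \le H$. In particular this holds for every finitely generated $J \le F$. On the other hand, since $F \in \mathcal{LSE}(L)$, any such finitely generated $J$ is stably elliptic, so $\omega$--almost surely $\phi_i(\widetilde{J})$ acts elliptically on $X_{\Gamma_i}$.

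The key observation is then that any elliptic subgroup of $E_{\Gamma_i}(g_i)$ must be finite. Indeed, $E_{\Gamma_i}(g_i)$ is virtually cyclic and contains the loxodromic element $g_i$, so it has an infinite cyclic subgroup of finite index generated by a power of $g_i$, which is itself loxodromic. Any infinite subgroup of $E_{\Gamma_i}(g_i)$ meets this cyclic subgroup in an infinite cyclic subgroup, and hence contains a loxodromic element, so it cannot be elliptic. Thus $\phi_i(\widetilde{J})$ is a finite subgroup of $E_{\Gamma_i}(g_i)$, and by Lemma \ref{ufin} we conclude $|\phi_i(\widetilde{J})| \le K$ on an $\omega$--large set.

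To finish, I would argue by contradiction: suppose $|F| > K$, and pick $K+1$ distinct elements $f_1,\ldots,f_{K+1} \in F$ with lifts $\widetilde{f_j} \in G$. Let $J = \langle f_1,\ldots,f_{K+1}\rangle \le F$. For each pair $j \neq k$ we have $f_j f_k^{-1} \neq 1$ in $L$, so $\widetilde{f_j}\widetilde{f_k}^{-1} \notin \ker^\omega(\phi_i)$ and hence $\phi_i(\widetilde{f_j}) \neq \phi_i(\widetilde{f_k})$ on an $\omega$--large set. Since there are only finitely many pairs, by finite additivity of $\omega$ we get $\omega$--almost surely that the $\phi_i(\widetilde{f_j})$ are all distinct, so $|\phi_i(\widetilde{J})| \ge K+1$, contradicting the previous paragraph. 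Therefore $|F| \le K$.

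The argument is short; the only nontrivial point is the structural claim about elliptic subgroups of $E_{\Gamma_i}(g_i)$ being finite, which is where the assumption that $g_i$ is loxodromic (coming from $H \in \mathcal{NLSE}(L)$ via Lemma \ref{aasub}) is used in combination with the ellipticity coming from $F \in \mathcal{LSE}(L)$.
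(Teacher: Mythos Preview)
Your proof is correct and follows essentially the same route as the paper's: apply Lemma~\ref{aasub} to $H$ to place $\phi_i(\widetilde{J})$ inside $E_{\Gamma_i}(g_i)$ for finitely generated $J\le F$, use the ellipticity of $\phi_i(\widetilde{J})$ to force it to be finite, invoke Lemma~\ref{ufin} for the bound $K$, and conclude for $F$ via its finitely generated subgroups. The paper compresses your last two paragraphs into the single line ``Hence $|F_0|\le K$; since every finitely generated subgroup of $F$ has size at most $K$, $|F|\le K$,'' but the content is identical.
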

\begin{proof}
Let $F_0$ be a finitely generated subgroup of $F$. Then by Lemma \ref{aasub}, there exists $g_i\in\Gamma_i$ such that $\phi_i(\widetilde{F_0})\subseteq E_{\Gamma_i}(g_i)$ $\omega$--almost surely. Since $F_0$ is stably elliptic, $\phi_i(\widetilde{F_0})$ is elliptic $\omega$--almost surely, so by Lemma \ref{ufin} $\omega$--almost surely $|\phi_i(\widetilde{F_0})|\leq K$. Hence $|F_0|\leq K$. Since every finitely generated subgroup of $F$ has size at most $K$, $|F|\leq K$.
\end{proof}

\begin{lem}
Suppose $H \in \mathcal{NLSE}(L)$ and $H$ is virtually abelian. Then there exists $\widetilde{h}\in\widetilde{H}$ such that $\omega$--almost surely $\phi_i(\widetilde{h})$ is loxodromic. 
\end{lem}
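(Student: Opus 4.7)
Since $H$ is virtually abelian and belongs to $\mathcal{NLSE}(L)$, the implication $(1) \Rightarrow (2)$ of Lemma \ref{aasub} gives, $\omega$--almost surely, a loxodromic element $g_i \in \Gamma_i$ such that $\phi_i(\widetilde{J}) \subseteq E_{\Gamma_i}(g_i)$ for every finitely generated $J \le H$. Fix a finitely generated witness $H_0 \le H$ to $H \in \mathcal{NLSE}(L)$ together with a finitely generated lift $\widetilde{H}_0 \le \widetilde{H}$; then $\omega$--almost surely $\phi_i(\widetilde{H}_0)$ is a non-elliptic, and hence infinite, subgroup of the virtually cyclic group $E_{\Gamma_i}(g_i)$.

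Write $E_i^+ := E^+_{\Gamma_i}(g_i)$, a subgroup of index at most $2$ in $E_{\Gamma_i}(g_i)$. By Lemma \ref{E(h)}, $E_i^+ = C_{\Gamma_i}(g_i^r)$ for some $r$, so $g_i^r$ is central of infinite order in $E_i^+$; since $D_\infty$ has trivial centre, the virtually cyclic group $E_i^+$ must have an infinite cyclic (rather than $D_\infty$) quotient by its unique maximal finite normal subgroup, giving a homomorphism $\pi_i \co E_i^+ \twoheadrightarrow \Z$ whose kernel is precisely the torsion of $E_i^+$. In particular, an element of $E_i^+$ is loxodromic if and only if $\pi_i$ sends it to a nonzero integer. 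The preimage $\phi_i^{-1}(E_i^+) \cap \widetilde{H}_0$ has index at most $2$ in the finitely generated group $\widetilde{H}_0$; since $\widetilde{H}_0$ has only finitely many subgroups of index $\le 2$, finite additivity of $\omega$ pins this preimage down to a fixed subgroup $\widetilde{H}_0^\dagger \le \widetilde{H}_0$ $\omega$--almost surely. Then $\phi_i(\widetilde{H}_0^\dagger) = \phi_i(\widetilde{H}_0) \cap E_i^+$ has index at most $2$ in the infinite group $\phi_i(\widetilde{H}_0)$, so the composition $\pi_i \circ \phi_i \co \widetilde{H}_0^\dagger \to \Z$ is $\omega$--almost surely non-trivial.

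To extract a single element, fix a finite generating set $\{\widetilde{k}_1, \ldots, \widetilde{k}_m\}$ for $\widetilde{H}_0^\dagger$, which is finitely generated as a finite-index subgroup of $\widetilde{H}_0$. For each $i$ in the $\omega$--large set from the previous step, non-triviality of $\pi_i \circ \phi_i$ forces at least one $\widetilde{k}_j$ to satisfy $\pi_i(\phi_i(\widetilde{k}_j)) \ne 0$; let $j(i)$ be the smallest such index. Since $j \co \N \to \{1, \ldots, m\}$ takes finitely many values, it is $\omega$--almost surely constant, say equal to $j_0$. The element $\widetilde{h} := \widetilde{k}_{j_0} \in \widetilde{H}_0 \le \widetilde{H}$ is then $\omega$--almost surely mapped by $\phi_i$ to an element of infinite order in $E_i^+$, and is therefore $\omega$--almost surely loxodromic, as required.

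The main obstacle is the mismatch between the ``per-$i$'' data (each $\phi_i(\widetilde{H}_0)$ contains some loxodromic element, but different elements of $\widetilde{H}_0$ could realise this for different $i$) and the uniform conclusion that a single $\widetilde{h} \in \widetilde{H}$ works $\omega$--almost surely. Passing from $E_{\Gamma_i}(g_i)$ to $E_i^+$ linearises the loxodromic condition as ``$\pi_i(\cdot) \ne 0$'', after which two successive ultrafilter pigeonhole arguments — first on the finite family of index-$\le 2$ subgroups of $\widetilde{H}_0$, and then on a finite generating set of $\widetilde{H}_0^\dagger$ — produce the single element required.
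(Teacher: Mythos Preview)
Your proof is correct, but the paper's argument is considerably shorter and avoids the double pigeonhole entirely. The paper first observes that finite subgroups of $H$ have uniformly bounded order (this is exactly Lemma~\ref{aa-finite}: any finite $F\le H$ is automatically in $\mathcal{LSE}(L)$, so $|F|\le K$). Since $H$ is virtually abelian and infinite, this forces $H$ to contain an element $h$ of infinite order. Now pick any lift $\widetilde{h}\in\widetilde{H}$: by Lemma~\ref{aasub}(2) we have $\phi_i(\widetilde{h})\in E_{\Gamma_i}(g_i)$ $\omega$--almost surely, and since $h$ has infinite order in $L$ while finite-order elements of $E_{\Gamma_i}(g_i)$ have order at most $K$ (Lemma~\ref{ufin}), the image $\phi_i(\widetilde{h})$ must have infinite order---and hence be loxodromic---$\omega$--almost surely.

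The conceptual difference is that the paper locates the desired element \emph{in $L$} first (where ``infinite order'' is a single, $i$--independent condition) and only then lifts, whereas you work entirely at the level of the $\Gamma_i$ and must therefore synchronise across $i$ by hand. Your linearisation via $\pi_i\colon E_i^+\to\Z$ and the two ultrafilter pigeonholes are a perfectly valid way to do that synchronisation, and they have the minor virtue of not invoking Lemma~\ref{aa-finite}; but the paper's route makes clear that the ``mismatch'' you identify in your final paragraph is not a genuine obstacle once one passes to the limit group.
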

\begin{proof}
There is a uniform bound on the order of finite subgroups of $H$. Since $H$ is virtually abelian, this means that $H$ must contain an element of infinite order.  Let $h$ be such an element. Then $\phi_i(\widetilde{h})\in E_{\Gamma_i}(g_i)$ has infinite order and hence is loxodromic $\omega$--almost surely.
\end{proof}

Since subgroups of acylindrically hyperbolic groups can be arbitrary, we can not make any claims about the local algebraic structure of limit groups over acylindrically hyperbolic groups. This is an important technical difference from the hyperbolic case, where there is rather strict control on finite subgroups, virtually abelian subgroups, and elements of infinite order. In our situation many of these properties still go through if we restrict to subgroups of our limit group which ``see" the hyperbolicity in an essential way. For that we introduce the following definition, which characterizes those subgroups which never ``see" the hyperbolicity of $\mc{G}$.
\begin{defn}
Let $R$ be a $\mc{G}$--limit group. We say a subgroup $H\leq R$ is \emph{absolutely elliptic} if for all  finitely generated groups $G_0$,  and all defining sequences $(\psi_i)$ from $\Hom(G_0, \mc{G})$ such that $R=G_0/\ker^\omega(\psi_i)$,  $H$ is locally stably elliptic with respect to the sequence $(\psi_i)$. 
\end{defn}

\begin{lem}\label{aa-intersect}
Suppose $A$ and $B$ are virtually abelian subgroups of $L$ which are not absolutely elliptic.
\begin{enumerate}
\item If $A\cap B$ is not absolutely elliptic, then  $\langle A, B\rangle$ is virtually abelian.
\item If $A\cap B$ is absolutely elliptic, then $|A\cap B|\leq K$.
\end{enumerate}
\end{lem}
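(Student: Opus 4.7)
The strategy for both parts is to choose a defining sequence of $L$ which witnesses the failure of local stable ellipticity for the relevant subgroup, and then extract loxodromic elements via Lemma \ref{aasub} that can be played off against each other using the uniqueness of maximal virtually cyclic subgroups (Lemma \ref{E(h)}).

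For part (1), I would fix a finitely generated group $G$ and a defining sequence $(\psi_i \co G \to \Gamma_i)$ for $L$ with respect to which $A \cap B$ is not locally stably elliptic; such a sequence exists by the hypothesis that $A \cap B$ is not absolutely elliptic. Since $A \cap B \leq A$ and $A \cap B \leq B$, both $A$ and $B$ also lie in $\mathcal{NLSE}(L)$ with respect to $(\psi_i)$, as does $\langle A, B \rangle$. All four groups are virtually abelian (as subgroups of the virtually abelian $A$ or $B$), so applying Lemma \ref{aasub}(2) to $A$, $B$, and $A \cap B$ separately yields loxodromic elements $h_i, k_i, g_i \in \Gamma_i$ such that the $\psi_i$--images of finitely generated subgroups of $A$, $B$, and $A \cap B$ lie in $E_{\Gamma_i}(h_i)$, $E_{\Gamma_i}(k_i)$, and $E_{\Gamma_i}(g_i)$ respectively, $\omega$--almost surely. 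The key step is to observe that $A \cap B$ must contain an element $c$ of infinite order; otherwise $A \cap B$ would be finite and hence automatically locally stably elliptic with respect to every defining sequence, contradicting absolute non-ellipticity. Then $\psi_i(\tilde c)$ is an infinite-order element of the virtually cyclic group $E_{\Gamma_i}(g_i)$, hence loxodromic, and it lies simultaneously in all three of the $E_{\Gamma_i}(\cdot)$ subgroups, so the uniqueness clause of Lemma \ref{E(h)} forces the three maximal virtually cyclic subgroups to coincide. Consequently, every finitely generated subgroup of $\langle A, B \rangle$ has $\psi_i$--image inside the common $E_{\Gamma_i}(g_i)$ $\omega$--almost surely, and the implication (2) $\Rightarrow$ (4) of Lemma \ref{aasub} applied to $\langle A, B \rangle$ gives that $\langle A, B \rangle$ is virtually abelian.

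For part (2), the situation is simpler: I would choose a defining sequence with respect to which $A$ fails to be locally stably elliptic, which exists because $A$ is not absolutely elliptic. Then $A \cap B$ is locally stably elliptic with respect to this sequence, by the assumption that $A \cap B$ is absolutely elliptic. Both groups are virtually abelian, so Lemma \ref{aa-finite} applied to $F = A \cap B$ inside $H = A$ yields $|A \cap B| \leq K$.

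The main obstacle is the argument in part (1) that the three elementary closures $E_{\Gamma_i}(h_i)$, $E_{\Gamma_i}(k_i)$, $E_{\Gamma_i}(g_i)$ actually coincide: this rests on locating a common infinite-order element in their intersection and invoking uniqueness of the maximal virtually cyclic subgroup containing a given loxodromic. The subtlety that a non-absolutely-elliptic subgroup cannot be finite (used to produce the infinite-order element) is the only place where the definition of absolute ellipticity is exploited in an essential way; everything else is bookkeeping with Lemmas \ref{aasub}, \ref{aa-finite}, and \ref{E(h)}.
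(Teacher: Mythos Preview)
Your proof is correct and, for part~(2), identical to the paper's. For part~(1) your argument is also correct but takes a slightly more circuitous route than the paper's.

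The difference in part~(1) is this. You invoke Lemma~\ref{aasub}(2) three separate times---once each for $A$, $B$, and $A\cap B$---to produce three a~priori unrelated loxodromics $h_i,k_i,g_i$, and then you merge the three elementary closures by locating a common loxodromic $\psi_i(\tilde c)$ and appealing to the uniqueness clause of Lemma~\ref{E(h)}. The paper instead observes that the loxodromic $g_i$ can be chosen directly inside $\psi_i(\widetilde H)$ for a finitely generated non--stably-elliptic witness $H\leq A\cap B$; since $H\leq A$ and $H\leq B$, this single $g_i$ already serves in Lemma~\ref{aasub}(2) for both $A$ and $B$ (because in the proof of that lemma the loxodromic is produced from any finitely generated non-elliptic subgroup, and one may take the witness $H_0$ to contain $H$). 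This avoids introducing $h_i,k_i$ altogether and the subsequent merging step.

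One small point worth tightening: your claim that a torsion $A\cap B$ must be finite is not automatic for arbitrary virtually abelian groups, and your stated justification (``otherwise $A\cap B$ would be finite'') skips the reason. What makes it true here is that, by Lemma~\ref{aasub}(2) together with Lemma~\ref{ufin}, every finite subgroup of $A\cap B$ has order at most $K$, and a virtually abelian torsion group with a uniform bound on its finite subgroups is itself finite. Alternatively, you can bypass this entirely by citing the unnumbered lemma immediately preceding Lemma~\ref{aa-intersect}, which already furnishes an element of $A\cap B$ whose image is $\omega$--almost surely loxodromic. Also, when you conclude that every finitely generated $J\leq\langle A,B\rangle$ maps into the common $E_{\Gamma_i}(g_i)$, you should note (as the paper does) that any such $J$ lies in some $\langle A_0,B_0\rangle$ with $A_0\leq A$ and $B_0\leq B$ finitely generated; this is immediate but needed for the bookkeeping.
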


\begin{proof}
Suppose $A\cap B$ is not absolutely elliptic. Let $(\psi_i)$ be a defining sequence of homomorphisms for $L$ such that $A\cap B \in \mathcal{NLSE}(L)$ (with respect to this sequence). Clearly this implies that $A$ and $B$ are also contained in $\mathcal{NLSE}(L)$ with respect to $\psi_i$. Choose $H\leq A\cap B$ such that $H$ is finitely generated and there is a loxodromic element $g_i$ so that $\omega$--almost surely $g_i\in\psi_i(\widetilde{H})$. Then by Lemma \ref{aasub}, for every finitely generated $A_0\leq A$ and $B_0\leq B$, $\phi_i(\widetilde{A_0}\cup\widetilde{B_0})\subseteq E_{\Gamma_i}(g_i)$ and hence $\psi_i(\langle \widetilde{A_0}, \widetilde{B_0}\rangle)\subseteq E_{\Gamma_i}(g_i)$ $\omega$--almost surely. Since every finitely generated subgroup of $\langle A, B\rangle$ is contained in a subgroup of the form $\langle A_0, B_0\rangle$ with $A_0\leq A$ and $B_0\leq B$ both finitely generated, we get that for all finitely generated subgroups $J\leq\langle A, B\rangle$, $\psi_i(\widetilde{J})\leq E_{\Gamma_i}(g_i)$. Hence $\langle A, B\rangle$ is virtually abelian by Lemma \ref{aasub}.

Now suppose $A\cap B$ is absolutely elliptic and $A \in \mathcal{NLSE}(L)$ with respect to a defining sequence $(\psi_i)$. Since $A\cap B$ is absolutely elliptic, $A\cap B \in \mathcal{LSE}(L)$ with respect to $\psi_i$. Hence $|A\cap B|\leq K$ by Lemma \ref{aa-finite}.
\end{proof}

\begin{lem}\label{max-aasub}
Let $H\leq L$ be a virtually abelian subgroup which is not absolutely elliptic. Let 
$$M_H=\left\langle\{a\in L\;|\; \langle a, H\rangle \text{ is virtually abelian }\}\right\rangle .$$ 
Then $M_H$ is the unique, maximal, virtually abelian subgroup of $L$ containing $H$.
\end{lem}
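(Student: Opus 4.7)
The plan is to verify three things in turn: (a) $H \le M_H$, (b) $M_H$ is virtually abelian, and (c) every virtually abelian subgroup of $L$ containing $H$ is contained in $M_H$. Uniqueness of the maximal virtually abelian subgroup containing $H$ then follows formally from (b) and (c). Assertion (a) is immediate because, for any $a \in H$, the subgroup $\langle a, H\rangle = H$ is virtually abelian by hypothesis. Assertion (c) is also nearly tautological: if $A$ is virtually abelian and contains $H$, then for each $a \in A$ the subgroup $\langle a, H\rangle \le A$ is virtually abelian (virtual abelianness passes to subgroups), so $a \in M_H$.

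The entire work is in (b), and the main obstacle is the usual one: showing that the generating set of $M_H$, which is defined only by a \emph{pairwise} condition on each generator $a$ together with $H$, in fact yields a jointly virtually abelian group. I would handle this in two stages. First, I would prove that every finitely generated subgroup $J \le M_H$ is virtually abelian. Any such $J$ lies in some $\langle a_1, \dots, a_n, H\rangle$ where each $a_i$ is a generator of $M_H$, so it suffices to show that $\langle a_1, \dots, a_n, H\rangle$ is virtually abelian, which I would do by induction on $n$. The base case $n = 1$ is exactly the defining property of $M_H$'s generators. For the inductive step, set $A = \langle a_1, \dots, a_n, H\rangle$ and $B = \langle a_{n+1}, H\rangle$; both are virtually abelian, and both contain $H$, hence $A \cap B \supseteq H$. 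Since $H$ is not absolutely elliptic, neither is $A \cap B$, so Lemma \ref{aa-intersect}(1) applies and yields that $\langle A, B \rangle = \langle a_1, \dots, a_{n+1}, H\rangle$ is virtually abelian.

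Second, I would bootstrap from the local statement to the global one using Lemma \ref{aasub}. Since $M_H \supseteq H$ and $H$ is not absolutely elliptic, $M_H$ is also not absolutely elliptic: there is a defining sequence $(\psi_i)$ with respect to which some finitely generated subgroup of $H \le M_H$ fails to be stably elliptic, so $M_H \in \mathcal{NLSE}(L)$ with respect to $(\psi_i)$. The equivalence $(5) \Longleftrightarrow (4)$ of Lemma \ref{aasub} then upgrades the local virtual-abelian property of $M_H$ established in the previous paragraph to the statement that $M_H$ itself is virtually abelian.

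The hard part is the inductive step of (b). It rests entirely on the fact that $H$, being not absolutely elliptic, forces every intersection of the form $A \cap B \supseteq H$ to satisfy the hypothesis of the \emph{first} (rather than the second) case of Lemma \ref{aa-intersect}; otherwise one would only get that $A \cap B$ is uniformly finite and could conclude nothing about $\langle A, B\rangle$. Once (b) is in hand, maximality and uniqueness follow immediately: for any other maximal virtually abelian subgroup $M'$ of $L$ containing $H$, assertion (c) gives $M' \le M_H$, and since $M_H$ is itself a virtually abelian subgroup containing $H$ (hence containing $M'$), maximality of $M'$ forces $M' = M_H$.
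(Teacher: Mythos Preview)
Your proof is correct and follows essentially the same approach as the paper's: both argue by induction (using Lemma \ref{aa-intersect}(1) with the key observation that the intersection contains $H$ and hence is not absolutely elliptic) that $\langle a_1,\dots,a_n,H\rangle$ is virtually abelian for any finite collection of generators, and then invoke Lemma \ref{aasub} to pass from the local statement to the conclusion that $M_H$ is virtually abelian. Your write-up simply spells out in more detail the steps the paper condenses into two sentences.
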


\begin{proof}
If $a$ and $b$ belong to $M_H$, then $H\subseteq \langle a, H\rangle \cap \langle b, H\rangle$, and hence  $\langle a, b, H\rangle$ is virtually abelian by Lemma \ref{aa-intersect}. By induction it follows that all finitely generated subgroups of $M_H$ are virtually abelian, which implies that $M_H$ is virtually abelian by Lemma \ref{aasub}. 
The maximality and uniqueness are now clear.
\end{proof}

\begin{lem}\label{aa-conj}
If $M$ is a maximal virtually abelian subgroup which is not absolutely elliptic and $g^{-1}Mg\cap M$ is infinite, then $g\in M$.
\end{lem}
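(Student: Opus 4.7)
My plan has two phases: first show that $g$ normalizes $M$ (so $g^{-1}Mg=M$), and then leverage acylindricity to show that $\langle g,M\rangle$ is itself virtually abelian, whence the maximality statement of Lemma \ref{max-aasub} forces $g\in M$.

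For the first phase, I will begin by checking that $M\cap g^{-1}Mg$ is not absolutely elliptic. It is virtually abelian as a subgroup of $M$. If it were absolutely elliptic, then fixing a defining sequence $(\psi_i)$ for $L$ with respect to which $M\in\mathcal{NLSE}(L)$, Lemma \ref{aa-finite} applied with $F=M\cap g^{-1}Mg$ and $H=M$ would give $|M\cap g^{-1}Mg|\le K$, contradicting the hypothesis. Next, $g^{-1}Mg$ is also not absolutely elliptic, since for the same $(\psi_i)$ and any finitely generated $H\le M$ with $\psi_i(\widetilde{H})$ non-elliptic $\omega$--almost surely, the conjugate $\psi_i(\widetilde{g})^{-1}\psi_i(\widetilde{H})\psi_i(\widetilde{g})=\psi_i(\widetilde{g^{-1}Hg})$ is non-elliptic $\omega$--almost surely. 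Lemma \ref{aa-intersect}(1) now yields that $\langle M,g^{-1}Mg\rangle$ is virtually abelian, and Lemma \ref{max-aasub} forces $g^{-1}Mg\subseteq M$. Running the same argument with $g^{-1}$ in place of $g$ (using that $M\cap gMg^{-1}$ is a conjugate of $M\cap g^{-1}Mg$, hence also infinite), I obtain $gMg^{-1}\subseteq M$, and therefore $g^{-1}Mg=M$.

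For the second phase, fix $(\psi_i)$ as above and a finitely generated $H\le M$ not stably elliptic. By Lemma \ref{aasub}(1)$\Rightarrow$(2), $\omega$--almost surely $\psi_i(\widetilde{H})\subseteq E_{\Gamma_i}(g_i)$ for some loxodromic $g_i\in\Gamma_i$. Because $g^{-1}Hg\subseteq g^{-1}Mg=M$, the finitely generated subgroup $H'=\langle H,g^{-1}Hg\rangle\le M$ lies in $\mathcal{NLSE}(L)$, so Lemma \ref{aasub} again gives $\psi_i(\widetilde{H'})\subseteq E_{\Gamma_i}(g_i')$ for some loxodromic $g_i'$. Since both elementary subgroups contain the infinite set $\psi_i(\widetilde{H})$, a standard consequence of Lemma \ref{E(h)} — namely that two maximal elementary subgroups in an acylindrical action with infinite intersection must coincide — yields $E_{\Gamma_i}(g_i')=E_{\Gamma_i}(g_i)$. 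In particular $\psi_i(\widetilde{g})^{-1}\psi_i(\widetilde{H})\psi_i(\widetilde{g})\subseteq E_{\Gamma_i}(g_i)$, so $\psi_i(\widetilde{g})$ conjugates an infinite subgroup of $E_{\Gamma_i}(g_i)$ into itself; another consequence of Lemma \ref{E(h)} (the almost malnormality, and in particular self-normalization, of $E_{\Gamma_i}(g_i)$) then gives $\psi_i(\widetilde{g})\in E_{\Gamma_i}(g_i)$.

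To conclude, the same argument applied to any finitely generated subgroup $M_0\le M$ containing $H$ in place of $H$ gives $\psi_i(\widetilde{M_0})\subseteq E_{\Gamma_i}(g_i)$, and combined with $\psi_i(\widetilde{g})\in E_{\Gamma_i}(g_i)$ this yields $\psi_i(\langle\widetilde{g},\widetilde{M_0}\rangle)\subseteq E_{\Gamma_i}(g_i)$ $\omega$--almost surely. Since $\langle g,M_0\rangle\in\mathcal{NLSE}(L)$, Lemma \ref{aasub}((2)$\Rightarrow$(1)$\Leftrightarrow$(4)) makes $\langle g,M_0\rangle$ virtually abelian. Exhausting $\langle g,M\rangle$ by such subgroups and applying Lemma \ref{aasub}((5)$\Rightarrow$(4)) to $\langle g,M\rangle\in\mathcal{NLSE}(L)$, I conclude that $\langle g,M\rangle$ is virtually abelian, and Lemma \ref{max-aasub} finishes the proof. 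The main obstacle is the appeal to the two standard structural facts about maximal elementary subgroups under acylindrical actions; both are consequences of Lemma \ref{E(h)} via the finite-by-$\mathbb{Z}$ structure of $E^+_\Gamma(h)=C_\Gamma(h^r)$, but verifying them constitutes the principal technical content beyond what the paper has already established.
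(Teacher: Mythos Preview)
Your proof is correct, but it takes a longer path than the paper's. The paper skips your Phase~1 entirely and avoids the two auxiliary malnormality facts you invoke. Instead, having observed (via Lemma~\ref{aa-intersect}) that $g^{-1}Mg\cap M$ is not absolutely elliptic, the paper picks a single element $h\in g^{-1}Mg\cap M$ with $\psi_i(\widetilde{h})$ loxodromic $\omega$--almost surely. For any finitely generated $M_0\le M$ containing $h$, the conjugate $g^{-1}M_0g$ is virtually abelian and (using $ghg^{-1}\in M$) one gets $\psi_i(\widetilde{g}^{-1}\widetilde{M_0}\widetilde{g})\subseteq E_{\Gamma_i}(\psi_i(\widetilde{h}))$. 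Since $h\in M_0$, this puts $\psi_i(\widetilde{g})^{-1}\psi_i(\widetilde{h})\psi_i(\widetilde{g})$ inside $E_{\Gamma_i}(\psi_i(\widetilde{h}))$, and now the explicit equivalence in Lemma~\ref{E(h)} \emph{directly} yields $\psi_i(\widetilde{g})\in E_{\Gamma_i}(\psi_i(\widetilde{h}))$ --- no separate appeal to almost-malnormality or self-normalization is needed. From there the conclusion is the same as yours. What your approach buys is that Phase~1 gives the pleasant intermediate fact $g^{-1}Mg=M$; what the paper's buys is economy: one loxodromic witness in the intersection does all the work, and Lemma~\ref{E(h)} is applied once rather than twice through derived structural statements.
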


\begin{proof}
By Lemma \ref{aa-intersect}, since $g^{-1}Mg\cap M$ is infinite it must be not absolutely elliptic, hence there exist $\psi_i\in \Hom(G, \mc{G})$ defining $L$ and $h\in g^{-1}Mg\cap M$ such that $\psi_i(\widetilde{h})$ is loxodromic $\omega$--almost surely. Let $M_0\leq M$ be a finitely generated subgroup which contains $h$. Since $g^{-1}M_0g$ is virtually abelian, $\psi_i(\widetilde{g}^{-1}\widetilde{M}_0\widetilde{g})\subseteq E_{\Gamma_i}(\psi_i(\widetilde{h}))$, and since $h\in M_0$, it follows that $\psi_i(\widetilde{g}^{-1}\widetilde{h}\widetilde{g})\in  E_{\Gamma_i}(\psi_i(\widetilde{h}))$, thus $\psi_i(\widetilde{g})\in E_{\Gamma_i}(\psi_i(\widetilde{h}))$ $\omega$--almost surely by Lemma \ref{E(h)}. Since also $\psi_i(\widetilde{M_0})\subseteq  E_{\Gamma_i}(\psi_i(\widetilde{h}_i))$ $\omega$--almost surely, we get that $\langle g, M_0\rangle$ is virtually abelian. It follows that every finitely generated subgroup of $\langle g, M\rangle$ is virtually abelian, so $\langle g, M\rangle$ is virtually abelian by Lemma \ref{aasub}. Therefore $g\in M$ by maximality.

\end{proof}

\begin{sa}\label{sa:uf}
Given $\mc{G}$, there exists a number $C$ so that $C \ge K$ for $K$ the constant from Lemma \ref{ufin} so that everything which is shown to be ``uniformly finite" in Lemma \ref{l:stability lemma}, Theorem \ref{t:Rmach applies}, Proposition \ref{prop:splitting} and Lemma \ref{aasub} is bounded by $C$.  We fix this constant $C$ for the remainder of the section.
\end{sa}

\subsection{JSJ-decompositions}

We are now ready to start showing that $L$ admits a JSJ decomposition. We use the language of Guirardel--Levitt \cite{GuiLev2} for JSJ decompositions. 

\begin{defn}
Given a group $G$, let $\mathcal A$ and $\mathcal H$ be two families of subgroups of $G$ such that $\mathcal A$ is closed under conjugation and taking subgroups. An $(\mathcal A, \mathcal H)$--tree is a simplicial tree $T$ together with an action of $G$ on $T$ such that each edge stabilizer belongs to $\mathcal A$ and each $H\in\mathcal H$ fixes a point in $T$. Similarly, a graph of groups decomposition of $G$ is called an {\em $(\mc{A},\mc{H})$--splitting} if the corresponding Bass--Serre tree is an $(\mc{A}, \mc{H})$--tree. This is also referred to as a splitting over $\mc{A}$ relative to $\mc{H}$.

An $(\mathcal A, \mathcal H)$--tree $T$ is called \emph{universally elliptic} if the edge stabilizers of $T$ are elliptic in every other $(\mathcal A, \mathcal H)$--tree. Also, given trees $T$ and $T^\prime$, we say $T$ \emph{dominates} $T^\prime$ if every vertex stabilizer of $T$ is elliptic with respect to $T^\prime$. 
\end{defn}

\begin{defn}
A graph of groups decomposition $\mathbb A$ of $G$ is called a \emph{$(\mathcal A, \mathcal H)$--JSJ decomposition of $L$} if the corresponding Bass--Serre tree $T$ is an $(\mathcal A, \mathcal H)$--tree  which is universally elliptic and which dominates every other universally elliptic $(\mathcal A, \mathcal H)$--tree.
\end{defn}

Generally speaking, an $(\mathcal A, \mathcal H)$--JSJ decomposition is not unique, see \cite{GuiLev2}. 

A subgroup $H\leq G$ is called \emph{rigid in $(\mathcal A, \mathcal H)$--trees} if this subgroup is elliptic in every $(\mathcal A, \mathcal H)$--tree. Subgroups which are not rigid are called \emph{flexible (in $(\mathcal A, \mathcal H)$--trees)}. When $\mc{A}$ and $\mc{H}$ are understood we simply say the subgroup is \emph{rigid} or \emph{flexible}. Typically, a description of a JSJ decomposition also includes a description of the flexible vertex groups.

\begin{defn}
A vertex group $H = G_v$ of an $(\mc{A},\mc{H})$--graph of groups decomposition of a group $G$ is called a \emph{QH-subgroup} if there is a normal subgroup $N\leq H$ (called the \emph{fiber of $H$}) such that $H/N$ is isomorphic to the fundamental group of a hyperbolic $2$--orbifold and the image of any incident edge group in $H/N$ is finite or conjugate to the fundamental group of a boundary component of the orbifold. 

A {\em \QHAH--subgroup} is a QH--subgroup so that for every essential simple closed curve $\gamma$ on the underlying orbifold, the corresponding subgroup $Q_\gamma$ belongs to $\mc{A}$, and if furthermore the intersection of $H$ and any element of $\mathcal H$ has image in $H/N$ that is finite or conjugate to the fundamental group of a boundary component of the orbifold.
\end{defn}

In the case where $L$ is a divergent $\mc{G}$--limit group, we consider the following families of subgroups.

\begin{defn}
Let $\mc{A}_{\ufin}$ denote the collection of all finite subgroups of $L$ of order at most $2C$ and let $\mc{A}_\infty$ denote the collection of all virtually abelian subgroups which are not absolutely elliptic. Let $\mc{A}=\mc{A}_{\ufin}\cup\mc{A}_\infty$.  Let $\mc{H}$ denote the collection of all finitely generated absolutely elliptic subgroups of $L$.
\end{defn}

In this section we are mostly concerned with the case where $L$ does not split over $\mc{A}_{\ufin}$ relative to $\mc{H}$. As is explained in Section \ref{s:short quotient}, the general case can be reduced to this case using a Linnell decomposition, which is an $(\mc{A}_{\ufin}, \mc{H})$--JSJ decomposition. 

 In all situations we consider, the fiber $N$ of a QH--subgroup is finite and so $Q_\gamma$ is virtually cyclic. Hence in case $L$ is a $\mc{G}$--limit group, in the above definition of \QHAH we are requiring curves of the underlying orbifold to correspond to subgroups which are not absolutely elliptic.

The collection $\mc{A}$ is clearly closed under conjugation, and $\mc{A}$ is closed under taking subgroups by Lemma \ref{aa-finite}. Our next goal is to show that a divergent $\mc{G}$--limit group admits a non-trivial $(\mc{A}, \mc{H})$--JSJ decomposition. Since $L$ is not (necessarily) finitely presented, we need Weidmann's $(k,C)$--acylindrical accessibility \cite{Weid-access} (a generalization of Sela's acylindrical accessibility \cite{sela:acyl}) to show that these JSJ decompositions exists. In order to do this, we need to show that $(\mc{A}, \mc{H})$--splittings can be modified to make them $(2,C)$--acylindrical. This can be done with the methods of \cite{ReiWei}; instead we use the Guirardel--Levitt \emph{tree of cylinders} construction.

\begin{defn}
A tree $T$ is {\em $(k, C)$--acylindrical} if the point-wise stabilizer of every path of length $\geq k+1$ has order $\leq C$.
\end{defn}

For $A, B\in\mc{A}_\infty$, let $A\sim B$ if $\langle A, B\rangle$ is virtually abelian. The fact that this is an equivalence relation follows easily from Lemma \ref{aa-intersect}. Similarly, it is easy to check that this equivalence relation is \emph{admissible} in the sense of \cite[Definition 7.1]{GuiLev2}. Now, $L$ acts on $\mc{A}_\infty/\sim$ by conjugation, and the stabilizer of an equivalence class $[A]$ is equal to the maximal virtually abelian subgroup $M_A$ containing $A$ by Lemma \ref{aa-conj}. In particular, each such stabilizer is small (contains no nonabelian free subgroup) and belongs to $\mc{A}_\infty$. Thus we can build the tree of cylinders $T_c$ for any $(\mc{A}_\infty, \mc{H})$ tree $T$. The properties of this construction are summarized in the following result, which is an immediate consequence of \cite[Proposition 7.12]{GuiLev2}. Note that in our setting, the tree of cylinders and the collapsed tree of cylinders are the same, see \cite[Lemma 7.3]{GuiLev2}.

\begin{prop}\cite[Proposition 7.12]{GuiLev2}\label{prop:toc}
For any $(\mc{A}_\infty, \mc{H})$--tree $T$, there exists a $(\mc{A}_\infty, \mc{H})$ tree $T_c$ called the \emph{tree of cylinders} such that 
\begin{enumerate}
\item $T_c$ is $(2, C)$--acylindrical.
\item $T$ dominates $T_c$ and any group which is elliptic in $T_c$ but not in $T$ is virtually abelian and not virtually cyclic.
\end{enumerate}
\end{prop}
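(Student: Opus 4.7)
The claim is exactly the specialization of the Guirardel--Levitt tree of cylinders construction \cite[\S 7]{GuiLev2} to the equivalence relation $\sim$ defined on $\mc{A}_\infty$ just above the statement. The plan is: (i) check that $\sim$ is admissible in their sense; (ii) identify the cylinder stabilizers as the maximal virtually abelian subgroups $M_A$; (iii) feed this into their Proposition 7.12 and read off the two conclusions, using Lemmas \ref{aa-intersect}, \ref{aa-conj}, \ref{max-aasub} and \ref{aa-finite} to supply the content specific to our setting.

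For (i), conjugation-invariance of $\sim$ is immediate. The content is transitivity: if $A \sim B$ and $B \sim C$ with $A,B,C \in \mc{A}_\infty$, then $\langle A,B\rangle$ and $\langle B,C\rangle$ are virtually abelian and $B$ itself is not absolutely elliptic (being in $\mc{A}_\infty$). Applying Lemma \ref{aa-intersect}(1) to these two virtually abelian subgroups meeting in the non-absolutely-elliptic group $B$ gives that $\langle A,B,C\rangle$ is virtually abelian, so $A \sim C$. For (ii), Lemma \ref{max-aasub} identifies the $L$--stabilizer of the class $[A]$ under conjugation with the unique maximal virtually abelian subgroup $M_A$ containing $A$, and Lemma \ref{aa-conj} ensures this is well-defined; in particular every cylinder stabilizer lies in $\mc{A}_\infty$, so the construction stays inside the allowed class of edge groups. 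Then $T_c$ is built in the standard way as a bipartite tree whose type-$0$ vertices are the cylinders $[A]$ (with stabilizer $M_A$) and whose type-$1$ vertices are those vertices of $T$ lying in more than one cylinder (with stabilizer inherited from $T$).

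Conclusion (2) is then immediate: any group elliptic in $T_c$ but not in $T$ must be contained in some $M_A$, which is virtually abelian, and is not in general virtually cyclic. For the $(2,C)$--acylindricity in (1), any path of length $\ge 3$ in the bipartite tree $T_c$ passes through two distinct type-$0$ vertices $[A] \ne [B]$, so its pointwise stabilizer is contained in $M_A \cap M_B$. If this intersection were infinite, Lemma \ref{aa-intersect} would force one of two outcomes: either $M_A \cap M_B$ is not absolutely elliptic, in which case $\langle M_A, M_B\rangle$ is virtually abelian, contradicting the maximality of $M_A$ and $M_B$ together with $[A] \ne [B]$; or $M_A \cap M_B$ is absolutely elliptic, in which case Lemma \ref{aa-finite} bounds its order by $K$, hence by $C$ via Standing Assumption \ref{sa:uf}. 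Thus the pointwise stabilizer of any path of length $\ge 3$ has order at most $C$, which is exactly $(2,C)$--acylindricity.

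The main technical obstacle is the transitivity step in (i), since it is what connects the abstract admissibility framework of \cite{GuiLev2} to the geometry of our limit group; everything else is bookkeeping once Lemmas \ref{aa-intersect}--\ref{aa-conj} are in place. In particular, the two key algebraic facts that drive the proof are: distinct maximal virtually abelian subgroups in $\mc{A}_\infty$ have absolutely elliptic (hence uniformly bounded) intersection, and cylinder stabilizers coincide with maximal elements of $\mc{A}_\infty$.
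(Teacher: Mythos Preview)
Your proposal is correct and follows the same approach as the paper. The paper does not give a self-contained proof of this proposition; instead, in the paragraph immediately preceding it, the paper verifies that $\sim$ is an (admissible) equivalence relation via Lemma \ref{aa-intersect}, identifies the stabilizer of $[A]$ with $M_A$ via Lemma \ref{aa-conj}, observes that these stabilizers are small and belong to $\mc{A}_\infty$, and then simply cites \cite[Proposition 7.12]{GuiLev2} (together with \cite[Lemma 7.3]{GuiLev2} for the equality of the tree of cylinders and the collapsed tree of cylinders). You carry out exactly this setup and then go further, supplying a direct argument for the $(2,C)$--acylindricity via the bound $|M_A \cap M_B| \le K \le C$ from Lemma \ref{aa-intersect} when $[A] \ne [B]$.

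One small point: your justification of the ``not virtually cyclic'' clause in (2) is imprecise---saying $M_A$ ``is not in general virtually cyclic'' does not show that the particular subgroup $H$ (elliptic in $T_c$ but not in $T$) is itself not virtually cyclic. The paper does not justify this clause either, deferring entirely to \cite{GuiLev2}, so this is not a gap relative to the paper's treatment; but if you want a complete argument you should either invoke the relevant statement from \cite{GuiLev2} directly or note that for the application (Theorem \ref{thm:acyJSJ}) only the ``virtually abelian'' conclusion is needed.
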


A group is called \emph{$C$--virtually cyclic} if it maps onto either $\Z$ or $D_\infty$ with kernel of order at most $C$.

\begin{thm}\cite[First part of Theorem 8.7]{GuiLev2}\label{thm:acyJSJ}
Given $\mathcal A$ and $\mathcal H$, suppose there exist numbers $C$ and $k$ such that
\begin{enumerate}
\item $\mathcal A\cup\mc{H}$ contains all $C$--virtually cyclic subgroups.
\item $\mathcal A$ contains all subgroups of order $\leq 2C$.
\item Every $(\mathcal A, \mathcal H)$--tree $T$, dominates some $(k, C)$--acylindrical $(\mathcal A, \mathcal H)$ tree $T^\ast$ such that every subgroup which is elliptic in $T^\ast$ but not in $T$ is virtually abelian.
\end{enumerate}

Then the $JSJ$ deformation space of $G$ over $\mc{A}$ relative to $\mathcal H$ exists.
\end{thm}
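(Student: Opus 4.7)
\emph{Plan.} The statement is, as indicated, the first half of Guirardel--Levitt's \cite[Theorem 8.7]{GuiLev2}, and my approach is to cite that result directly. So the real content of any ``proof'' in this paper is to confirm that our formulation of the hypotheses matches theirs; in particular, $\mc A$ must be conjugation-invariant and closed under taking subgroups (we have this for $\mc A = \mc A_{\ufin}\cup\mc A_\infty$ by construction and by Lemma \ref{aa-finite}), and $\mc H$ must be a conjugation-invariant collection of finitely generated subgroups (clear from the definition of absolutely elliptic). Once that dictionary is set, the theorem is a black box.

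\emph{How the Guirardel--Levitt proof works.} The engine is Weidmann's $(k,C)$--acylindrical accessibility \cite{Weid-access}: for a finitely generated group $G$, any reduced $(k,C)$--acylindrical minimal $(\mc A,\mc H)$--tree has a number of vertex orbits bounded in terms of the rank of $G$, $k$ and $C$. Hypothesis~(3) of the theorem lets one replace an arbitrary $(\mc A,\mc H)$--tree $T$ by a dominated $(k,C)$--acylindrical one $T^\ast$; since the subgroups that become elliptic in $T^\ast$ but were not in $T$ are only virtually abelian, one can track the deformation space and transfer the accessibility bound. Hypotheses~(1) and~(2) ensure that standard refinement and collapse moves keep edge stabilizers inside $\mc A\cup\mc H$ at every stage.

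\emph{Producing the JSJ.} I would then build the JSJ by starting from a trivial universally elliptic $(\mc A,\mc H)$--tree and iteratively performing a universally elliptic refinement of some vertex group whenever one is available. Passing to the associated $(k,C)$--acylindrical tree after each step, the accessibility bound forces termination in finitely many steps. What one obtains is a universally elliptic $(\mc A,\mc H)$--tree that is maximal for this property in its deformation space, which by definition is a JSJ tree.

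\emph{Main obstacle.} Since $L$ is not assumed finitely presented (a constant theme throughout this paper), Dunwoody-style accessibility is unavailable and one truly needs acylindrical accessibility. The price is that one must first produce $(k,C)$--acylindrical models of arbitrary $(\mc A,\mc H)$--trees, which is exactly the content of hypothesis~(3): in our application this is supplied by the tree of cylinders (Proposition~\ref{prop:toc}) for splittings over $\mc A_\infty$, combined with the trivial fact that splittings over $\mc A_{\ufin}$ are already $(1,C)$--acylindrical. So the only work left for the application is to check hypothesis~(3) on these two kinds of splittings separately; the rest of the proof is quoted from \cite{GuiLev2}.
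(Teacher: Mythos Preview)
Your overall approach---cite \cite{GuiLev2} and check that the hypotheses match---is exactly what the paper does: there is no independent proof given, only a citation. However, you have missed the one point of actual content. In \cite{GuiLev2}, hypothesis~(1) requires that $\mc{A}$ (not $\mc{A}\cup\mc{H}$) contain all $C$--virtually cyclic subgroups; the version stated here is a genuine weakening. The paper addresses this in the remark immediately following the theorem: in the Guirardel--Levitt proof, the hypothesis is only ever invoked for $C$--virtually cyclic groups that act hyperbolically on some $(\mc{A},\mc{H})$--tree, and such a group cannot lie in $\mc{H}$ (since elements of $\mc{H}$ are elliptic in every such tree), so it must lie in $\mc{A}$. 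Your dictionary-checking paragraph verifies properties of $\mc{A}$ and $\mc{H}$ that are not in question, but does not confront this discrepancy, which is the only thing the paper actually justifies.

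A secondary remark: your ``main obstacle'' paragraph discusses how hypothesis~(3) is verified in the application (via the tree of cylinders), but that is not part of proving Theorem~\ref{thm:acyJSJ} itself---it belongs to the discussion around Proposition~\ref{prop:toc} and Theorem~\ref{t:JSJ}. For the present theorem, hypothesis~(3) is simply assumed.
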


\begin{rem}
In \cite{GuiLev2}, Theorem \ref{thm:acyJSJ} is proved with the hypothesis that $\mc{A}$ contains all $C$--virtually cyclic groups. However, in the proof this condition is only applied to $C$--virtually cyclic groups which act hyperbolically on some $(\mc{A}, \mc{H})$--tree. Such a group cannot belong to $\mc{H}$, and so they must belong to $\mc{A}$. Hence the proof given in \cite{GuiLev2} works as written to prove Theorem \ref{thm:acyJSJ}. 
\end{rem}

 The second part of \cite[Theorem 8.7]{GuiLev2} is a description of the flexible vertex groups.  This part is more complicated in our setting and the proof from \cite{GuiLev2} {\em does} use the assumption that $\mathcal A$ contains all $C$--virtually cyclic groups in an essential way.  We deal with the flexible vertex groups in the JSJ decomposition in Section \ref{ss:flexible} below.

\subsection{Flexible vertex groups} \label{ss:flexible}

For the rest of this section, we assume that $L$ admits no splitting over $\mc{A}_{\ufin}$ relative to $\mc{H}$. Combining Proposition \ref{prop:toc} and Theorem \ref{thm:acyJSJ}, we find that $L$ admits a $(\mc{A}, \mc{H})$--JSJ decomposition. Our next goal is to describe the flexible vertices of this JSJ decomposition. If $\mc{A}$ contained all $C$--virtually cyclic groups, then we could apply \cite[Theorem 8.7]{GuiLev2} directly to get that all flexible vertices are either virtually abelian or QH with finite fiber. However, this need not be the case in our situation. Nevertheless, we can get a description of the flexible vertices by applying essentially the same proof which appears in \cite{GuiLev2}. We start by briefly sketching the proof that flexible vertices are QH in the proof of \cite[Theorem 8.7]{GuiLev2}, then we explain how this proof can be modified in our situation.

Fix a flexible vertex $v$ of the JSJ decomposition. Since we can apply the tree of cylinders construction to make our splittings $(2,C)$--acylindrical, it suffices to assume that all splittings of $L_v$ that we consider are already $(2, C)$--acylindrical. Let $\mathrm{Inc}_v^{\mc{H}}$ denote the subgroups of $L_v$ which are conjugate into stabilizers of edges adjacent to $v$ or are contained in $\mc{H}$. Then any splitting of $L_v$ over $\mc{A}$ relative to $\mathrm{Inc}_v^{\mc{H}}$ extends to a $(\mc{A}, \mc{H})$--splitting of $L$. Since $v$ is a vertex of the JSJ decomposition, we may assume as in \cite[$\S$6.2]{GuiLev2} that $L_v$ is \emph{totally flexible}, which means that $L_v$ admits no splitting over $\mc{A}$ relative to $\mathrm{Inc}_v^{\mc{H}}$ with edge groups that are universally elliptic. This implies (by acylindricity) that every $(\mc{A}, \mathrm{Inc}_v^{\mc{H}})$--splitting of $L_v$ has $C$--virtually cyclic subgroups.

By Weidmann's acylindrical accessibility \cite[Theorem 1]{Weid-access}, $L_v$ has a ``maximal" $(2, C)$--acylindrical tree $U$, that is a tree with the maximal number of orbits of edges among all $(2,C)$--acylindrical splittings of $L_v$. By \cite[Proposition 6.28]{GuiLev2}, $L_v$ has another tree $V$ which is fully hyperbolic with respect to $U$. Let $R$ be the regular neighborhood of $U$ and $V$, described in \cite[Proposition 6.25]{GuiLev2}.

The tree $R$ is bipartite with vertices belonging to $\mc{S}\neq\emptyset$ and $\mc{V}$, with $\mc{V}$  possibly empty. Vertices in $\mc{S}$ have stabilizers which are QH while vertices in $\mc{V}$ have stabilizers which are elliptic in both $U$ and $V$. Furthermore, every subgroup in $\mathrm{Inc}_v^{\mc{H}}$ fixes a point in $R$, and edge stabilizers in $R$ are $C$--virtually cyclic. In the setting of \cite{GuiLev2}, this means that $R$ is a (minimal) $(\mc{A}, \mathrm{Inc}_v^{\mc{H}})$--tree. If $R$ contains an edge, then this produces a contradiction with the maximality of $U$. Hence $R$ is consists of a single vertex which belongs to $\mc{S}$, and so $L_v$ itself is QH.

In our setting, we obtain a contradiction if at least one of the edge stabilizers of $R$ belongs to $\mc{A}$. It follows that all edge stabilizers of $R$ are $C$--virtually cyclic and absolutely elliptic. Similarly, each vertex in $\mc{V}$ must have a stabilizer which does not split over $\mc{A}$ relative to $\mathrm{Inc}_v^{\mc{H}}$, otherwise we could refine $R$ at this vertex and obtain a contradiction with the maximality of $U$ as above. Hence these vertex groups are rigid in $(\mc{A}, \mc{H})$--trees. 

For each vertex $v\in\mc{S}$ and each essential simple closed geodesic $\gamma$ on the underlying orbifold, the corresponding subgroup $Q_\gamma\leq L_v$ belongs to $\mc{A}$. This follows from maximality of $U$.  Indeed, $U$ is obtained by refining $R$ at each $v\in\mc{S}$ and then collapsing all original edges of $R$. This refinement must correspond to a filling collection of geodesics on each orbifold by maximality of $U$, and hence the subgroup corresponding to each other geodesic acts hyperbolically on $U$ and hence does not belong to $\mc{H}$.

Thus we obtain the following.

\begin{lem}\label{l:flexible}
Let $v$ be a flexible vertex in the $(\mc{A}, \mc{H})$--JSJ decomposition of $L$ and suppose that $L_v$ is not virtually abelian. Then either $L_v$ is \QHAH with finite fiber or $L_v$ admits a splitting over $C$--virtually cyclic absolutely elliptic subgroups relative to  $\mathrm{Inc}_v^{\mc{H}}$ such that every vertex group in this splitting is either (i) rigid in $(\mc{A}, \mc{H})$--trees; or (ii) \QHAH with finite fiber.
\end{lem}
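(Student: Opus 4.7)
The plan is to essentially adapt the flexible-vertex part of \cite[Theorem 8.7]{GuiLev2} to our class $\mc{A}$, which, unlike in the classical setting, does not contain all $C$--virtually cyclic subgroups (only those which are not absolutely elliptic, together with finite subgroups of order at most $2C$). First I would invoke the tree of cylinders (Proposition \ref{prop:toc}) to restrict attention to $(2,C)$--acylindrical $(\mc{A},\mc{H})$--trees, and then pass to the vertex group $L_v$ and its family of incident/peripheral subgroups $\mathrm{Inc}_v^{\mc{H}}$. By the standard JSJ reduction \cite[\S 6.2]{GuiLev2}, I may assume $L_v$ is \emph{totally flexible}, i.e.\ admits no $(\mc{A},\mathrm{Inc}_v^{\mc{H}})$--splitting whose edge groups are universally elliptic; acylindricity then forces all edge stabilizers of such splittings to be $C$--virtually cyclic.

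Next I would apply Weidmann's $(2,C)$--acylindrical accessibility \cite{Weid-access} to obtain a maximal $(2,C)$--acylindrical $(\mc{A},\mathrm{Inc}_v^{\mc{H}})$--tree $U$ for $L_v$, and use \cite[Proposition 6.28]{GuiLev2} to produce a second such tree $V$ fully hyperbolic with respect to $U$. Forming the regular neighborhood $R$ of $U$ and $V$ as in \cite[Proposition 6.25]{GuiLev2} yields a bipartite tree $R$ with vertex set $\mc{S}\sqcup\mc{V}$, in which the stabilizer of a vertex in $\mc{S}$ is QH (with some orbifold $\Sigma$) while vertices in $\mc{V}$ are elliptic in both $U$ and $V$. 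All elements of $\mathrm{Inc}_v^{\mc{H}}$ fix a point of $R$ and every edge stabilizer of $R$ is $C$--virtually cyclic.

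The heart of the argument is now to reinterpret $R$ with respect to our restricted class $\mc{A}$. If some edge stabilizer $E$ of $R$ were in $\mc{A}_\infty$ (i.e.\ $C$--virtually cyclic and not absolutely elliptic), then $R$ would be a genuine $(\mc{A},\mathrm{Inc}_v^{\mc{H}})$--tree, and refining $U$ at an $\mc{S}$--vertex using $R$ would contradict the maximality of $U$; hence every edge stabilizer of $R$ is $C$--virtually cyclic and \emph{absolutely elliptic}. The same maximality argument, applied to any hypothetical $(\mc{A},\mathrm{Inc}_v^{\mc{H}})$--splitting of a $\mc{V}$--vertex group, shows that such vertex groups are rigid in $(\mc{A},\mc{H})$--trees. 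For an $\mc{S}$--vertex, the fact that $U$ is obtained by refining $R$ along a collection of essential simple closed curves on $\Sigma$ which must fill $\Sigma$ (again by maximality of $U$) implies that every essential simple closed curve on $\Sigma$ defines a subgroup acting hyperbolically on $U$ and therefore lying in $\mc{A}$. Together with the control on peripheral subgroups built into the construction, this upgrades each $\mc{S}$--vertex group to a \QHAH--subgroup with finite fiber. Finally, if $R$ has only one vertex then, since $\mc{V}$--vertices are rigid while $L_v$ is flexible, this vertex lies in $\mc{S}$ and $L_v$ itself is \QHAH; otherwise $R$ is the splitting demanded by the statement.

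The main obstacle is that the classical Guirardel--Levitt proof uses $\mc{A}\supseteq\{\text{all }C\text{--virtually cyclic}\}$ to conclude directly that $R$ is a $(\mc{A},\mathrm{Inc}_v^{\mc{H}})$--tree and derive an immediate contradiction with the maximality of $U$ unless $R$ is reduced to a single $\mc{S}$--vertex. In our setting this cleaner dichotomy fails, and instead one must perform a more delicate case analysis on each edge stabilizer of $R$: either it is absolutely elliptic (in which case the edge survives in the conclusion as part of the splitting of $L_v$), or it is in $\mc{A}_\infty$ (in which case the maximality argument recovers the classical contradiction). Verifying that all the refinement/collapse manipulations used to contradict maximality indeed produce \emph{genuine} $(\mc{A},\mathrm{Inc}_v^{\mc{H}})$--trees, rather than trees with forbidden edge stabilizers, is the technical point that requires most care.
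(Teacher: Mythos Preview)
Your proposal is correct and follows essentially the same approach as the paper's proof, which is itself a sketch adapting the flexible-vertex argument of \cite[Theorem~8.7]{GuiLev2}: pass to the tree of cylinders, reduce to $L_v$ totally flexible, take a maximal $(2,C)$--acylindrical tree $U$ via Weidmann accessibility, build the regular neighborhood $R$ of $U$ and a fully hyperbolic $V$, and then analyze the edge stabilizers of $R$ against the restricted class $\mc{A}$. One small imprecision: when you write that a single edge stabilizer of $R$ lying in $\mc{A}_\infty$ would make $R$ ``a genuine $(\mc{A},\mathrm{Inc}_v^{\mc{H}})$--tree'', that is not literally what happens---rather, the paper uses that $U$ is obtained by refining $R$ at the $\mc{S}$--vertices and then collapsing all original edges of $R$, so retaining any single edge of $R$ whose stabilizer lies in $\mc{A}$ already produces a $(2,C)$--acylindrical $(\mc{A},\mathrm{Inc}_v^{\mc{H}})$--tree with more edge orbits than $U$, contradicting maximality. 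This is exactly the ``delicate case analysis'' you flag at the end, so you have the right picture.
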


\begin{cor}\label{c:cut scc}
Let $v$ be a flexible vertex in the $(\mc{A},\mc{H})$--JSJ decomposition of $L$ and suppose that $L_v$ is not virtually abelian.  Suppose that $\Lambda$ is a nontrivial one-edge $(\mc{A},\mc{H})$--splitting of $L_v$ relative to $\mathrm{Inc}_v^{\mc{H}}$.  Then $\Lambda$ can be obtained from the splitting of $L_v$ in Lemma \ref{l:flexible} from cutting the surface corresponding to a \QHAH-vertex along a simple closed curve and then collapsing all other edges.
\end{cor}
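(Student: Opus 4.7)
The plan is to analyze the action of $L_v$ on the Bass--Serre tree $T_\Lambda$ of $\Lambda$, using the splitting $\Delta$ of $L_v$ provided by Lemma~\ref{l:flexible}, and to build a common refinement $\widehat\Delta$ of $\Delta$ and $\Lambda$ by blowing up the \QHAH-vertices of $\Delta$ according to the curves they inherit from $T_\Lambda$.

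First, I would verify that every edge group of $\Delta$ and every rigid vertex group of $\Delta$ acts elliptically on $T_\Lambda$. Rigid vertex groups are elliptic in every $(\mc{A},\mc{H})$-tree by definition. For an edge group $E$ of $\Delta$, which is $C$-virtually cyclic and absolutely elliptic, one of two possibilities occurs: either $E$ is finite of order at most $C$ (hence elliptic on any tree), or $E$ is infinite, in which case $E$ is a finitely generated absolutely elliptic subgroup of $L_v$ and therefore belongs to $\mc{H}$, which forces ellipticity on $T_\Lambda$.

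For each \QHAH-vertex group $Q$ of $\Delta$, I would then study the restriction of the $L_v$-action to the minimal $Q$-invariant subtree $T_Q \subseteq T_\Lambda$. The finite fiber of $Q$ and each boundary subgroup of $Q$ act elliptically on $T_\Lambda$: the fiber because it is finite, and each boundary subgroup because it is either an incident-edge stabilizer of $\Delta$ (handled above) or a subgroup in $\mathrm{Inc}_v^{\mc{H}}$ (elliptic by hypothesis on $\Lambda$). Modding out by the finite fiber, $Q$ becomes a $2$-orbifold fundamental group acting on $T_Q$ with all boundary subgroups elliptic and with edge stabilizers in $\mc{A} \cap Q$, which are virtually cyclic since virtually abelian subgroups of a $2$-orbifold group are virtually cyclic. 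By the classification of surface splittings, such an action is dual to a (possibly empty) multicurve of essential simple closed curves on the underlying orbifold. I would then define $\widehat\Delta$ by blowing up each \QHAH-vertex of $\Delta$ along its associated multicurve. By construction, $\widehat\Delta$ collapses to $\Delta$ and also dominates $T_\Lambda$, so $\Lambda$ is obtained from $\widehat\Delta$ by collapsing all but one orbit of edges.

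This surviving orbit cannot be an old edge of $\Delta$: an infinite stabilizer would be an absolutely elliptic virtually cyclic subgroup, which lies in $\mc{H}$ but not in $\mc{A}$, contradicting that $\Lambda$ splits over $\mc{A}$; a finite stabilizer would yield a splitting of $L$ over $\mc{A}_{\ufin}$ relative to $\mc{H}$ by pulling $\Lambda$ back into the JSJ of $L$, contradicting the standing assumption of this subsection. Hence the surviving orbit corresponds to a single essential simple closed curve on exactly one \QHAH-orbifold, and $\Lambda$ is obtained by cutting that orbifold along this curve and collapsing all other edges of $\widehat\Delta$, as required. The main obstacle I expect is the identification that the $Q$-action on $T_Q$ is dual to a multicurve; the key point is controlling edge stabilizers of this induced action as virtually cyclic, which relies on the $\mc{A}$-restriction of edge groups of $\Lambda$ together with the \QHAH-structure, after which standard surface-theoretic classification results (as used in \cite{Guirardel:Rtrees}) complete the step.
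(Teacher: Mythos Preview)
Your argument is correct and is precisely the standard route the paper has in mind; the paper states the corollary without proof, treating it as an immediate consequence of the construction in Lemma~\ref{l:flexible} together with the Guirardel--Levitt JSJ theory, and your write-up supplies exactly that argument. One step deserves a word of justification: from ``$\widehat\Delta$ dominates $T_\Lambda$'' you jump to ``$\Lambda$ is a collapse of $\widehat\Delta$'', but domination alone does not give a collapse map in general. Here it does, and the reason is implicit in your construction: the old edges of $\Delta$ have absolutely elliptic stabilizers, which cannot sit inside the edge group of $\Lambda$ (that group lies in $\mc{A}_\infty$, so by Lemma~\ref{aa-finite} any absolutely elliptic subgroup of it is uniformly finite, contradicting $C$--virtual cyclicity), hence these edges must map to points; and on each \QHAH-vertex you blew up along the multicurve \emph{dual to its action on $T_\Lambda$}, so the resulting subtree maps isomorphically to the minimal $Q$--subtree with no folding. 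Since the splitting $R$ of Lemma~\ref{l:flexible} is bipartite between $\mc{S}$ and $\mc{V}$, no two curve edges from distinct \QHAH-pieces share a vertex, so there are no folds between pieces either. This forces the equivariant map $\widehat T_\Delta\to T_\Lambda$ to be a genuine collapse, and the rest of your argument goes through. (Your treatment of the ``finite stabilizer'' case for old edges is harmless but vacuous: $C$--virtually cyclic groups are infinite.)
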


Now that we have a description of the flexible vertices, we can apply results of \cite{GuiLev2} to get that the tree of cylinders of the $(\mc{A}, \mc{H})$--JSJ tree is compatible with every $(\mc{A}, \mc{H})$--tree.
\begin{defn}
 Let $G$ be a group. Two $G$--trees $T_1$ and $T_2$ are {\em compatible} if they have a common refinement: if there exists a tree $\hat T$ with collapse maps $\hat T \to T_i$.
\end{defn}

Let $\mc{A}_{nvc}$ denote the groups which belong to $\mc{A}$ that are not virtually cyclic. The proof of the following is essentially contained in \cite{GuiLev2}.

 \begin{thm} \label{t:JSJ}
Let $L$ be a $\mc{G}$--limit group which does not split over $\mc{A}_{\ufin}$ relative to $\mc{H}$. Let $T_a$ be a $(\mc{A}, \mc{H})$ JSJ-tree, and let $(T_a)_c$ be the corresponding tree of cylinders. Then $(T_a)_c$ is an $(\mc{A}, \mc{H}\cup\mc{A}_{nvc})$--JSJ tree which is compatible with every $(\mc{A}, \mc{H})$--tree.
\end{thm}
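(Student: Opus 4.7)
The plan is to establish the three JSJ properties (that $(T_a)_c$ is an $(\mc{A}, \mc{H}\cup\mc{A}_{nvc})$-tree, universal ellipticity, and domination), and then establish compatibility with every $(\mc{A},\mc{H})$-tree separately. Throughout, I would follow the template of \cite[Sections 7-8]{GuiLev2}, checking each step carefully since our class $\mc{A}$ does not contain all $C$-virtually cyclic subgroups and so the \cite{GuiLev2} theorems do not apply verbatim.

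First I would verify that $(T_a)_c$ is an $(\mc{A},\mc{H}\cup\mc{A}_{nvc})$-tree. Any edge stabilizer $E$ of $(T_a)_c$ is the intersection of a vertex stabilizer of $T_a$ with a cylinder stabilizer $M_A$, where $A \in \mc{A}_\infty$ is an edge stabilizer of $T_a$ lying in the cylinder; since $E \supseteq A$, $E$ is infinite, virtually abelian, and inherits the non-absolutely-elliptic property via Lemma \ref{aa-intersect}, so $E \in \mc{A}$. Groups in $\mc{H}$ remain elliptic in $(T_a)_c$ by the domination from Proposition \ref{prop:toc}(2). For $A \in \mc{A}_{nvc}$, its maximal virtually abelian envelope $M_A$ (which exists by Lemma \ref{max-aasub}) stabilizes a cylinder vertex of $(T_a)_c$: either $M_A$ contains some edge stabilizer of $T_a$ (in which case it fixes the corresponding cylinder by construction), or the acylindricity of $(T_a)_c$ combined with the fact that $A$ is not virtually cyclic forces $A$ to be elliptic in $T_a$ already. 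Either way $A$ is elliptic in $(T_a)_c$.

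Next I would prove universal ellipticity and domination. Edge stabilizers of $(T_a)_c$ that are not virtually cyclic belong to $\mc{A}_{nvc}$ and hence are elliptic in every $(\mc{A},\mc{H}\cup\mc{A}_{nvc})$-tree by definition; virtually cyclic edge stabilizers are elliptic in any $(\mc{A},\mc{H})$-tree by the universal ellipticity of $T_a$ together with the commensurability control of Lemma \ref{aa-conj}. For domination, given any universally elliptic $(\mc{A},\mc{H}\cup\mc{A}_{nvc})$-tree $S$, note $S$ is in particular an $(\mc{A},\mc{H})$-tree, so $T_a$ dominates $S$ by its JSJ property. Vertex stabilizers of $(T_a)_c$ are either subgroups of vertex stabilizers of $T_a$ (hence elliptic in $S$), or are maximal virtually abelian groups $M_A$ which either lie in $\mc{A}_{nvc}$ or are virtually cyclic containing an edge stabilizer of $T_a$; either way they are elliptic in $S$. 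Thus $(T_a)_c$ dominates $S$.

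The main obstacle is compatibility with every $(\mc{A},\mc{H})$-tree, since in general such a tree need not lie in the JSJ deformation space of $T_a$. Here I would follow the strategy of \cite[Section 8]{GuiLev2}: it suffices to establish compatibility with every one-edge $(\mc{A},\mc{H})$-splitting, and by the JSJ property of $T_a$ any such splitting is obtained by refining a flexible vertex of $T_a$. The description of flexible vertices in Lemma \ref{l:flexible} and Corollary \ref{c:cut scc} then shows that the refinement is either along a simple closed curve in a QH-piece or along a virtually cyclic absolutely elliptic edge in the further decomposition of the flexible vertex; in both cases, the tree of cylinders construction produces an unchanged $(T_a)_c$ and the refinement admits a common refinement with $(T_a)_c$. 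The technical heart is to verify that the \cite{GuiLev2} compatibility arguments survive the replacement of their ``$C$-virtually cyclic'' edge groups by the smaller class $\mc{A}$, which is precisely what Lemma \ref{l:flexible} and Corollary \ref{c:cut scc} are designed to enable.
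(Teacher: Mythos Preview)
Your approach differs from the paper's in an interesting way. The paper gives a very short proof of compatibility by chaining three results from \cite{GuiLev2}: given any $(\mc{A},\mc{H})$--tree $T$, Lemma~2.8(1) produces a refinement $S$ of $T_a$ dominating $T$; Lemma~7.14 then yields a common refinement $R$ of $S_c$ and $T$; and Lemma~7.15 shows that $S_c$ is a refinement of $(T_a)_c$, so $R$ is the desired common refinement of $(T_a)_c$ and $T$. The only point the paper flags is that the proof of Lemma~7.15 in \cite{GuiLev2} uses that flexible vertices are small or QH, and Corollary~\ref{c:cut scc} is exactly what is needed to make that step go through here. The assertion that $(T_a)_c$ is an $(\mc{A},\mc{H}\cup\mc{A}_{nvc})$--JSJ tree is left implicit, presumably as a consequence of the general tree-of-cylinders machinery.

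Your route is more hands-on: you verify the JSJ axioms for $(T_a)_c$ directly and then argue compatibility separately. This is a legitimate alternative and arguably more transparent. However, your compatibility argument has a gap. You write that ``by the JSJ property of $T_a$ any [one-edge $(\mc{A},\mc{H})$--]splitting is obtained by refining a flexible vertex of $T_a$,'' and this is not correct. A one-edge splitting whose edge group is universally elliptic is \emph{dominated} by $T_a$ (so it is a collapse of $T_a$, not a refinement), and a general one-edge splitting need not be either a refinement or a collapse of $T_a$. The correct statement---which is precisely what the paper uses via \cite[Lemma~2.8(1)]{GuiLev2}---is that $T_a$ admits a \emph{refinement} $S$ which dominates $T$; the refining of $T_a$ occurs at flexible vertices, and $T$ is then obtained from $S$ by collapse. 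One must then argue that passing to trees of cylinders behaves well under this refinement (this is \cite[Lemma~7.15]{GuiLev2}), and that $S_c$ is compatible with $T$ (this is \cite[Lemma~7.14]{GuiLev2}). Your argument can be repaired along these lines, but once you do so you are essentially reconstructing the chain of lemmas the paper invokes directly.
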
 
\begin{proof}
Let $T$ be some $(\mc{A}, \mc{H})$--tree. Then by \cite[Lemma 2.8.(1)]{GuiLev2} $T_a$ has a refinement $S$ which dominates $T$.  By \cite[Lemma 7.14]{GuiLev2}, the tree of cylinders $S_c$ and $T$ have a common refinement $R$. By \cite[Lemma 7.15]{GuiLev2}, $S_c$ is a refinement of $(T_a)_c$, hence $R$ is a common refinement of $(T_a)_c$ and $T$.

The proof of \cite[Lemma 7.15]{GuiLev2} uses the fact that the flexible vertices of $(T_a)_c$ are either small in $(\mc{A}, \mc{H})$ trees or are QH with finite fiber. However, in our situation the ``flexible" part of flexible vertices are \QHAH by Corollary \ref{c:cut scc}, so the same proof works with only the obvious changes.
\end{proof}

\begin{defn} \label{d:JSJ}
 Suppose that $L$ is a $\mc{G}$--limit group which does not split over $\mc{A}_{\ufin}$ relative to $\mc{H}$.  The {\em JSJ-tree} of $L$ is the tree $(T_a)_c$ described in Theorem \ref{t:JSJ} above.  The graph of groups decomposition associated to the JSJ-tree is denoted $\AJSJ$ and referred to as the {\em JSJ-decomposition} of $L$. We also denote by $T_M$ the tree obtained from $(T_a)_c$ by taking the refinement of each non-(virtually abelian) flexible vertex group given by Lemma \ref{l:flexible}. We call $T_M$ the \emph{modular tree}, and the associated graph of groups decomposition is denoted by $\AMOD$ and referred to as the \emph{modular splitting} of $L$.
\end{defn}

Note that the vertex groups of the modular splitting may be characterized as \QHAH, virtually abelian and rigid  in the usual way.  Also note, however, that some rigid vertex groups of $\AMOD$ may in fact be subgroups of flexible vertex groups of $\AJSJ$. Additionally, it follows from the construction that every subgroup in $\mc{A}$ is either virtually cyclic or is elliptic with respect to the modular tree. 

\subsection{Modular automorphisms} \label{ss:shortening moves}

In this section, we introduce the automorphisms that are used in the shortening argument below and in Section \ref{s:short quotient}, which are called \emph{modular automorphisms}.  We prove that these automorphisms can all be ``seen" in the modular splitting defined in Definition \ref{d:JSJ} above.  This property justifies the use of the word ``modular" for this splitting. We start by describing modular automorphisms of splittings. 

We mostly follow the definitions and notation from \cite[Section 3]{ReiWei} here, since we reference their proof of the shortening argument.

In this subsection, we mostly consider graphs of groups instead of actions on trees. When we apply terminology from the previous section to a graph of groups we mean that the Bass--Serre tree dual to this graph of groups has the given property.

Suppose $G=A\ast_C B$ or $G=A\ast_C$ and $c\in Z_G(C)$. A \emph{Dehn twist by $c$} is an automorphism of $G$ which fixes $A$ and conjugates each element of $B$ by $c$ in the amalgamated product case, or an automorphism which fixes $A$ and sends the stable letter $t$ to $tc$ in the HNN--extension case. When $\mathbb A$ is a graph of groups decomposition and $e$ is an edge of $\mathbb A$, then a \emph{Dehn twist over $e$} is a Dehn twist in the one edge splitting corresponding to collapsing all edges of $\mathbb A$ other then $e$.

Let $G_v$ be a vertex group of a graph of groups decomposition of a group $G$. Then any automorphism of $G_v$ which fixes the adjacent edge group up to conjugacy can be extended to an automorphism of $G$ (see \cite[Definition 3.13]{ReiWei}). We call such an extension the {\em natural extension} of the automorphism. 

Let $L$ be a $\mc{G}$--limit group. Recall by Lemma \ref{aasub} that a virtually abelian subgroup $H$ of $L$ which is not absolutely elliptic contains a unique, maximal subgroup $H^+$ of index at most 2 such that $H^+$ is finite-by-abelian.

Suppose $\mathbb A$ is a splitting of $L$ and $L_v$ is a vertex group of $\mathbb A$ which is virtually abelian and not absolutely elliptic. Let $E(L_v)$ denote the subgroup of $L_v$ generated all edge groups adjacent to $v$, and let $E(L_v^+)=E(L_v)\cap L_v^+$. Let $\Hom_E(L^+_v, \Z)=\{\phi\in \Hom(L^+_v, \Z)\;|\; \phi(g)=0 \;\forall\; g\in E(L_v^+)\}$. Let $P^+_v=\{g\in L^+_v\;|\; \phi(g)=0 \;\forall\; \phi\in \Hom_E(L^+_v, \Z)\}$. Then $L_v^+/P^+_v$ is a finitely generated free abelian group.
Furthermore, if $L_v$ is finitely generated, then $P^+_v$ is the minimal direct factor of $L_v^+$ which contains $E(L_v^+)$ and the torsion subgroup of $L_v^+$.

\begin{defn}\cite[Definition 3.15]{ReiWei}
Let $L$ be a $\mc{G}$--limit group and let $\mathbb A$ be a virtually abelian splitting of $L$. The modular automorphism group of $L$ relative to $\mathbb A$, denoted $\Mod_{\mathbb A}(L)$, is the subgroup of $\Aut(L)$ generated by the following types of automorphisms:
\begin{enumerate}
\item Inner automorphisms.

\item Dehn twists over finite-by-abelian, non-(absolutely elliptic) edge groups $L_e$ of $\mathbb A$ by an element $c\in Z(M^+_{L_e})$.

\item Natural extensions of automorphisms of non-(absolutely elliptic) maximal virtually abelian vertex groups $L_v$ of $\mathbb A$ which fix $P^+_v$ and which restrict to conjugation on each subgroup $U\leq A_v$ with $U^+=P_v^+$.

\item Natural extensions of automorphisms of \QHAH-vertex groups of $\mathbb A$ induced by homeomorphisms of the underlying orbifold which fix the boundary. 
\end{enumerate}
\end{defn}

For the remainder of the section, fix a divergent $\mc{G}$--limit group $L$ which does not split over $\mc{A}_\ufin$ relative to $\mc{H}$, and let $\AJSJ$ and $\AMOD$ be the graph of groups decompositions of $L$ as in Definition \ref{d:JSJ}. By Theorem \ref{t:JSJ}, $\AJSJ$ is compatible with every $(\mc{A}, \mc{H})$--decomposition of $L$.

\begin{thm} \label{t:all the splittings}
For any $(\mc{A}, \mc{H})$--spitting $\mathbb A$ of $L$, $\Mod_{\mathbb A}(L)\leq \Mod_{\AMOD}(L)$. Moreover, the same is true if $\mathbb A$ is any splitting of $L$ obtained from a limiting action on $\R$--trees as in Proposition \ref{prop:splitting}.
\end{thm}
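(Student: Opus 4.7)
The plan is to first reduce the ``moreover'' clause to the main statement, then prove the main statement by analyzing each of the four generating types of $\Mod_{\mathbb A}(L)$ using the compatibility of $\AJSJ$ with all $(\mc{A},\mc{H})$-trees (Theorem \ref{t:JSJ}).  The argument would follow the outline of \cite[Theorem 3.16]{ReiWei}, adapted from the hyperbolic-with-torsion setting to our setting of (uniformly finite)-by-abelian edge and vertex stabilizers.  For the ``moreover'' clause, a splitting $\mathbb A$ arising from Proposition \ref{prop:splitting} has edge groups that are uniformly finite (hence in $\mc{A}_{\ufin}$) or (uniformly finite)-by-abelian and hence virtually abelian by Lemma \ref{aasub}, while each $H\in\mc{H}$ is finitely generated and absolutely elliptic and therefore fixes a point in the limiting $\R$-tree by Lemma \ref{l:stability lemma}(5), hence a point in $S^c$ by the last sentence of Proposition \ref{prop:splitting}.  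Such a splitting has the same structural features as an $(\mc{A},\mc{H})$-splitting, and the analysis below applies to it essentially verbatim.

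For the main statement, I would fix an $(\mc{A},\mc{H})$-splitting $\mathbb A$ of $L$ and a generator $\sigma$ of $\Mod_{\mathbb A}(L)$.  By Theorem \ref{t:JSJ} there is a common refinement of $\mathbb A$ with $(T_a)_c$, and since $T_M$ is obtained from $(T_a)_c$ only by refining non-(virtually abelian) flexible vertices (Lemma \ref{l:flexible}), there is a common refinement $\hat{\mathbb A}$ of $\mathbb A$ with $T_M$.  Inner automorphisms are clearly shared between $\Mod_{\mathbb A}(L)$ and $\Mod_{\AMOD}(L)$.  Suppose instead that $\sigma$ is a Dehn twist over an edge $e$ of $\mathbb A$ with edge group $L_e$ by an element $c\in Z(M_{L_e}^+)$: Lemma \ref{aasub} gives that $L_e$ is virtually abelian, so $L_e\in\mc{A}_\infty$; Lemma \ref{max-aasub} provides a unique maximal virtually abelian overgroup $M=M_{L_e}$; and Lemma \ref{aa-conj} together with the tree-of-cylinders construction shows that $M$ is a vertex stabilizer of $(T_a)_c$ and hence also of $T_M$.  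One then checks (as in \cite[Section 3]{ReiWei}) that $\sigma$ lies in $\Mod_{\AMOD}(L)$, either as a Dehn twist over a $T_M$-edge adjacent to $M$ with twist element $c$, or as a type-(3) natural extension of an automorphism of $M$ that fixes $P_v^+$ and restricts to conjugation by $c$ on an appropriate subgroup, depending on whether $L_e$ is itself a $T_M$-edge stabilizer.

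A parallel analysis handles the remaining two types.  For a type-(3) generator arising from an automorphism of a virtually abelian vertex $L_v$ of $\mathbb A$, the vertex $L_v$ is contained in a maximal virtually abelian vertex $M_v$ of $T_M$, and the hypotheses that the automorphism fixes $P_v^+$ and restricts to conjugation on subgroups $U\le L_v$ with $U^+=P_v^+$ ensure that it extends to a type-(3) natural extension on $M_v$ compatible with the incident $T_M$-edges.  For a type-(4) generator arising from a \QHAH-vertex $L_v$ of $\mathbb A$, Corollary \ref{c:cut scc} shows that $L_v$ is the fundamental group of an essential suborbifold of a \QHAH-vertex orbifold of $\AMOD$; any homeomorphism of this suborbifold fixing its boundary extends by the identity across the complementary pieces to a homeomorphism of the ambient orbifold fixing its boundary, yielding the required type-(4) generator of $\Mod_{\AMOD}(L)$.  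I expect the main obstacle to be the Dehn-twist case: one must verify that $Z(M_{L_e}^+)$ always lies in the centralizer appropriate for a Dehn twist (when $L_e$ is a $T_M$-edge stabilizer) or inside $P_v^+$ of the adjacent $T_M$-vertex (otherwise).  This reduces to careful bookkeeping tracking how the refinement of $\mathbb A$ into $T_M$ interacts with the defining equations of $P_v^+$ and $\Hom_E(L_v^+,\mathbb{Z})$, essentially the argument of \cite[Section 3]{ReiWei} translated to our (uniformly finite)-by-abelian setting.
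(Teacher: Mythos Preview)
Your approach is essentially the paper's: a case-by-case analysis of the four generator types, using the compatibility of $(T_a)_c$ with all $(\mc{A},\mc{H})$-trees (Theorem \ref{t:JSJ}).  Two differences are worth flagging.

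For type~(4), the paper takes a shorter route than your suborbifold-extension argument: since the mapping class group of the underlying orbifold is generated by Dehn twists along essential simple closed curves, and since by the very definition of \QHAH{} each such curve gives an edge group in $\mc{A}$, every type-(4) generator is a product of Dehn twists over one-edge $(\mc{A},\mc{H})$-splittings, reducing immediately to the type-(2) case.  Your route would also work, but the claim that a \QHAH-vertex of an arbitrary $(\mc{A},\mc{H})$-splitting $\mathbb A$ sits as a suborbifold of a \QHAH-vertex of $\AMOD$ is a nontrivial JSJ fact that does not follow from Corollary \ref{c:cut scc} as stated (that corollary is about one-edge splittings of a fixed flexible vertex of $\AJSJ$, not about locating an arbitrary \QHAH-subgroup of $L$).

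For type~(3), you should separate the virtually cyclic case: such an $L_v$ need not appear as a vertex of the tree of cylinders, and the paper handles it by observing that adjacent edge groups have finite index in $L_v$, so $P_v^+$ has index at most $2$ and the automorphism is actually a Dehn twist, again reducing to the type-(2) lemma.  For the ``moreover'' clause, the paper makes your observation precise by noting that the \emph{definition} of type-(2) and type-(3) generators already requires the relevant edge or vertex group to be non-(absolutely elliptic), so the associated one-edge collapses are automatically $(\mc{A},\mc{H})$-splittings whether or not every edge of the Proposition \ref{prop:splitting} splitting lies in $\mc{A}$; and the type-(4) lemma is stated and proved for arbitrary virtually abelian splittings.
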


The proof of Theorem \ref{t:all the splittings} is split into a number of smaller results, depending on the
type of modular automorphism.  However, we first make the following observation.

\begin{lem}
$\Mod_{\AJSJ}(L)\leq \Mod_{\AMOD}(L)$.
\end{lem}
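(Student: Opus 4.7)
The plan is to verify that every generator of $\Mod_{\AJSJ}(L)$ sits inside $\Mod_{\AMOD}(L)$, using the fact that $\AMOD$ is obtained from $\AJSJ$ by refining only those flexible vertices whose vertex group is not virtually abelian (per Lemma \ref{l:flexible}). The key structural observation is that this refinement is purely local at those composite flexible vertices: every edge of $\AJSJ$ survives as an edge of $\AMOD$ with the same edge group, and every vertex of $\AJSJ$ that is not refined (in particular every virtually abelian vertex and every vertex that is already \QHAH) appears as a vertex of $\AMOD$ with identical vertex group and identical set of incident edge groups.

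With that in mind, I would walk through the four types in the definition of $\Mod_{\AJSJ}(L)$. Inner automorphisms are generators of every modular group, so type (1) is immediate. For type (2), let $e$ be an edge of $\AJSJ$ whose edge group $L_e$ is finite-by-abelian and not absolutely elliptic; since $e$ is also an edge of $\AMOD$ and the properties ``finite-by-abelian'' and ``not absolutely elliptic'' are intrinsic to $L_e$, the Dehn twist by any $c\in Z(M^+_{L_e})$ is a type-(2) generator of $\Mod_{\AMOD}(L)$. For type (3), suppose $v$ is a non-(absolutely elliptic) maximal virtually abelian vertex of $\AJSJ$; by construction $v$ is \emph{not} refined in passing to $\AMOD$, and the collection of edges incident to $v$ in $\AMOD$ equals the collection incident to $v$ in $\AJSJ$. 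Consequently $E(L_v)$, $E(L_v^+)$, and hence $P_v^+$ all coincide in the two splittings, so the generating conditions in type (3) transfer verbatim, and the natural extensions of these automorphisms along $\AJSJ$ and along $\AMOD$ agree as automorphisms of $L$.

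For type (4), note first that the only way a QH--vertex appears in $\AJSJ$ is as a non-(virtually abelian) flexible vertex which already falls into the first alternative of Lemma \ref{l:flexible} (\QHAH\ with finite fiber); in that case the ``refinement'' produced by Lemma \ref{l:flexible} is the trivial one, so such a vertex is carried unchanged into $\AMOD$ with the same underlying orbifold and the same boundary/edge correspondence. Therefore any homeomorphism of the orbifold fixing the boundary yields a type-(4) generator of $\Mod_{\AMOD}(L)$. Combining the four cases gives the desired containment.

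The main point requiring care is not difficult but must be stated cleanly: one has to confirm that the passage $\AJSJ \leadsto \AMOD$ does not alter the adjacency data or the group-theoretic labels attached to the vertices and edges that are left alone, so that the algebraic conditions built into the definition of $\Mod_{\mathbb{A}}(L)$ (finite-by-abelianness of the edge group, maximality of a virtually abelian vertex group, \QHAH\ structure with boundary--edge correspondence) transport unchanged. Once this is spelled out, no genuine computation is needed; the inclusion is a matching of generating sets.
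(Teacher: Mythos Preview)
Your proof is correct and follows essentially the same approach as the paper's: both arguments rest on the observation that the collapse map from $\AMOD$ to $\AJSJ$ only collapses (absolutely elliptic) edges lying inside those flexible vertices of $\AJSJ$ which are neither virtually abelian nor already \QHAH, so every edge and every virtually abelian or \QHAH\ vertex of $\AJSJ$ persists unchanged in $\AMOD$. The paper states this in a single sentence, while you spell out the consequence for each of the four generator types explicitly.
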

\begin{proof}
This follows from the fact that the collapse map from $\AMOD$ to $\AJSJ$ only collapses absolutely elliptic edges of $\AMOD$ into the flexible vertices of $\AJSJ$ which are not virtually abelian or \QHAH-vertices of $\AJSJ$. 
\end{proof}
In general the inclusion in the above lemma may be strict, as the automorphisms arising from the \QHAH vertices of $\AMOD$ which arise as in Corollary \ref{c:cut scc} may not be modular automorphisms of $\AJSJ$, since these can lie in flexible vertices of $\AJSJ$ which are not \QHAH.

\begin{lem}\label{l:type 2}
Suppose $\alpha$ is a Dehn twist corresponding to a one edge $(\mc{A}, \mc{H})$--splitting of $L$. Then $\alpha\in \Mod_{\AMOD}(L)$. 
\end{lem}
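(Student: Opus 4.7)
Write $\Lambda$ for the given one-edge $(\mc{A},\mc{H})$--splitting, $C$ for its edge group, and $\alpha$ for the Dehn twist by $c \in Z_L(C)$. The first step is to observe that $C$ must lie in $\mc{A}_\infty$: by the standing assumption $L$ admits no $(\mc{A}_\ufin,\mc{H})$--splitting, so $C$ cannot be a finite group of order at most $2C$, which by the definition of $\mc{A}$ forces $C$ to be virtually abelian and not absolutely elliptic. Theorem~\ref{t:JSJ} provides a common refinement of $T_\Lambda$ and $T_\AJSJ$, and I would then split into two cases based on whether $C$ is elliptic in every $(\mc{A},\mc{H})$--tree.

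Suppose first that $C$ is elliptic in every $(\mc{A},\mc{H})$--tree, so $T_\Lambda$ is universally elliptic. By the JSJ property $T_\AJSJ$ dominates $T_\Lambda$; combined with compatibility, this forces $T_\Lambda$ to be a collapse of $T_\AJSJ$, so there is an edge $e$ of $\AJSJ$ (and hence of $\AMOD$) with stabilizer $C$, and $\alpha$ is the Dehn twist over $e$ by $c$ in $\AMOD$. Since $c$ commutes with a finite-index normal abelian subgroup of $C$, the group $\langle C,c\rangle$ is virtually abelian, and it is not absolutely elliptic (as it contains $C$), so $c \in M_C$ by Lemma~\ref{max-aasub}. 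The main technical work in this case is to decompose $\alpha$ into a type~(2) Dehn twist over $e$ by an element of $Z(M_C^+)$ together with a type~(3) natural extension of an automorphism of the virtually abelian vertex of $\AMOD$ containing $M_C$. This decomposition requires careful bookkeeping through the index-at-most-two subgroup $M_C^+ \le M_C$ and the subgroup $P_v^+$ entering the definition of type~(3) modular automorphisms; modulo inner automorphisms, it exhibits $\alpha$ as a product of modular automorphisms of $\AMOD$.

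Suppose instead that $C$ acts hyperbolically on some $(\mc{A},\mc{H})$--tree, so $C$ cannot be an edge stabilizer of $\AJSJ$. Passing through the common refinement produces a nontrivial $(\mc{A},\mc{H})$--splitting of a vertex group $L_v$ of $\AJSJ$ relative to $\mathrm{Inc}_v^{\mc{H}}$ with edge group $C \in \mc{A}_\infty$. The vertex $v$ is therefore flexible, and since $C$ is not absolutely elliptic, $L_v$ cannot be virtually abelian (otherwise every edge group would be absolutely elliptic in the resulting splitting). Corollary~\ref{c:cut scc} then identifies this splitting as arising by cutting a \QHAH-subsurface of $L_v$ along a simple closed curve $\gamma$, with $C$ conjugate to $Q_\gamma$. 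The Dehn twist by $c$ is realized, up to an inner automorphism, by the surface Dehn twist along $\gamma$, which is a boundary-fixing self-homeomorphism of the underlying orbifold and hence a type~(4) modular automorphism of $\AMOD$. The subtlety here is ensuring that the parameter $c \in Z_L(Q_\gamma)$ corresponds, up to an inner automorphism of $L$, to a power of the curve generator inside the QH-vertex group; this uses $c \in M_C = M_{Q_\gamma}$ and the structure of the incident edge groups of the QH-vertex, and is where I expect the heaviest technical lifting.
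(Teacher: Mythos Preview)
Your case split on whether $C$ is universally elliptic differs from the paper's, and your Case~2 contains a genuine gap.

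The paper works directly with the minimal common refinement $\mathbb A'$ of $\AJSJ$ and the one-edge splitting $\mathbb B$, noting $\alpha \in \Mod_{\mathbb A'}(L)$. Then either $\mathbb A' = \AJSJ$ (so $\alpha \in \Mod_{\AJSJ}(L) \le \Mod_{\AMOD}(L)$ immediately), or the collapse $\mathbb A' \to \AJSJ$ kills precisely the edge carrying $C$, placing $C$ inside a flexible vertex group $L_v$. In this second case $L_v$ is either virtually abelian, and $\alpha$ becomes a type~(3) automorphism of $\AJSJ$, or $L_v$ is as in Lemma~\ref{l:flexible}, and $\alpha$ is a type~(4) automorphism of $\AMOD$ via Corollary~\ref{c:cut scc}.

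Your Case~2 tries to exclude the virtually abelian possibility for $L_v$ with the parenthetical ``otherwise every edge group would be absolutely elliptic in the resulting splitting.'' This is wrong: the edge group of the induced splitting of $L_v$ is $C$ itself, which you have already placed in $\mc{A}_\infty$, so it is \emph{not} absolutely elliptic. Moreover, a maximal virtually abelian vertex group $L_v$ of $\AJSJ$ can certainly act hyperbolically on some $(\mc{A},\mc{H})$--tree (for instance by translations on a line), carrying any $C \le L_v$ along with it; the tree-of-cylinders construction only guarantees ellipticity of such $L_v$ in $(\mc{A}, \mc{H}\cup\mc{A}_{nvc})$--trees, not in all $(\mc{A},\mc{H})$--trees. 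So the virtually abelian flexible case genuinely survives into your Case~2 and must be handled---this is exactly where the paper appeals to type~(3) rather than type~(4).

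Your Case~1 is also heavier than the paper's treatment. The paper does not explicitly decompose the twist into a $Z(M_C^+)$--piece plus a type~(3) correction; once the edge is visible in $\AJSJ$ it simply records $\alpha \in \Mod_{\AJSJ}(L)$. Your concern about $c \in Z_L(C)$ versus $c \in Z(M_C^+)$ is legitimate, but it is orthogonal to the case division and arises equally in the paper's first case; the paper absorbs it into the assertion $\alpha \in \Mod_{\mathbb A'}(L)$ rather than unpacking it.
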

\begin{proof}
Let $\mathbb B$ denote the one edge splitting corresponding to $\alpha$. By Theorem \ref{t:JSJ}, there is a refinement $\mathbb A^\prime$ of $\AJSJ$ and a collapse map $\mathbb A^\prime\to\mathbb B$. Clearly $\alpha\in \Mod_{\mathbb A^\prime}(L)$.  If we choose $\mathbb A^\prime$ to be the minimal common refinement of $\AJSJ$ and $\mathbb B$ then either $\mathbb A^\prime = \AJSJ$ or else the collapse map from $\mathbb A^\prime$ to $\AJSJ$ collapses a single edge.  In the first case, we have already observed that $\alpha \in \Mod_{\AJSJ}(L) \le \Mod_{\AMOD}(L)$.

It remains to consider the case where the collapse map from $\mathbb A^\prime$ to $\AJSJ$ collapses a single edge, which is necessarily the edge along which $\alpha$ twists.  In this case, the edge group $L_e$ is a subgroup of a flexible vertex group $L_v$ of $\AJSJ$, so $L_v$ is either virtually abelian or else as described in Lemma \ref{l:flexible}.
In the first case $\alpha$ is a modular automorphism for $\AJSJ$ of type (3) and so $\alpha\in \Mod_{\AJSJ}(L) \le \Mod_{\AMOD}(L)$. In the second case, Corollary \ref{c:cut scc} implies that the refinement $\mathbb A^\prime$ corresponds to a splitting of a \QHAH-vertex of $\AMOD$, hence the corresponding Dehn twist $\alpha$ is an automorphism of type (4) in $\Mod_{\AMOD}(L)$.
\end{proof}

\begin{lem} \label{l:type 3}
Let $\mathbb A$ be an $(\mc{A}, \mc{H})$--splitting of $L$ and let $\alpha\in \Mod_{\mathbb A}(L)$ be a modular automorphism of type (3). Then $\alpha\in \Mod_{\AJSJ}(L)$.
\end{lem}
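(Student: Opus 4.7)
The plan is to recognize the maximal virtually abelian vertex group $L_v$ of $\mathbb A$ on which $\alpha$ is supported as a vertex group of $\AJSJ$, and then verify that the data defining $\alpha$ as a modular type (3) automorphism for $\mathbb A$ forces the same type (3) conditions for $\AJSJ$.

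First, I would identify $L_v$ with a vertex group of $\AJSJ$. Since $L_v$ is virtually abelian and not absolutely elliptic, it belongs to $\mc A$. By Theorem \ref{t:JSJ}, $\AJSJ = (T_a)_c$ is an $(\mc A, \mc H \cup \mc A_{nvc})$--JSJ tree, so $L_v$ is elliptic in $\AJSJ$---immediately when $L_v$ is not virtually cyclic, and via its own cylinder otherwise. Because $L_v$ contains elements that are not absolutely elliptic, the vertex group of $\AJSJ$ containing $L_v$ cannot be a rigid vertex and must instead be a cylinder vertex, whose stabilizer is the unique maximal virtually abelian subgroup of $L$ containing $L_v$ (Lemma \ref{max-aasub}); by maximality of $L_v$, this stabilizer equals $L_v$. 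Let $v'$ denote the resulting vertex of $\AJSJ$, so $L_{v'} = L_v$.

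Next, I would analyze the edge groups of $\AJSJ$ adjacent to $v'$. By the construction of the tree of cylinders, these edge groups have the form $L_v \cap \Stab(p)$ for adjacent ``intersection point'' vertices $p$; by Lemma \ref{aa-intersect} the intersection of two distinct maximal virtually abelian subgroups is absolutely elliptic of bounded order, and intersections of $L_v$ with rigid vertex groups of $T_a$ are absolutely elliptic by definition. Hence every edge group of $\AJSJ$ at $v'$ is absolutely elliptic in $L_v$. Using the common refinement of $\AJSJ$ and $\mathbb A$ provided by Theorem \ref{t:JSJ}, I would trace how the edges of $\mathbb A$ adjacent to $v$ relate to those of $\AJSJ$ adjacent to $v'$, with the goal of showing $P^+_{v'} \subseteq P^+_v$.

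Once the inclusion $P^+_{v'} \subseteq P^+_v$ is in hand, the type (3) conditions transfer to $\AJSJ$: $\alpha$ pointwise fixes $P^+_v$ and hence $P^+_{v'}$; and for any subgroup $U' \leq L_v$ with $(U')^+ = P^+_{v'}$, one enlarges $U'$ to a subgroup $U \leq L_v$ with $U^+ = P^+_v$ (using the direct factor decomposition $L_v^+ = P^+_v \oplus (L_v^+/P^+_v)$ and the fact that $P^+_v/P^+_{v'}$ is free abelian), so that the $\mathbb A$--conjugation condition on $U$ restricts to the required conjugation on $U'$. The main obstacle is establishing $P^+_{v'} \subseteq P^+_v$ itself: the absolutely elliptic edge groups of $\AJSJ$ at $v'$ do not arise directly from edges of $\mathbb A$ (they can appear only as intersections of $L_v$ with vertex groups in the common refinement), and so this step requires a careful analysis of how the tree of cylinders construction distributes the absolutely elliptic subgroups of $L_v$ and then tracing these back through the collapse map of the common refinement to show they lie inside the subgroup generated by $\mathbb A$--edge groups, modulo the direct factor structure of $L_v^+$.
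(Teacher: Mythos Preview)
Your overall strategy---identify $L_v$ with a vertex group $L_u$ of $\AJSJ$ and then show $P^+_u \subseteq P^+_v$---matches the paper's, but there are two genuine problems.

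First, your claim that the edge groups of $\AJSJ$ adjacent to $v'$ are absolutely elliptic is false. In the tree of cylinders $(T_a)_c$, the edge at a cylinder vertex $v'$ connecting to a vertex $w$ of $T_a$ has stabilizer $L_v \cap \Stab_{T_a}(w)$, and this contains the $T_a$--edge group that put $w$ into the cylinder. That edge group lies in $\mc{A}_\infty$, i.e.\ it is \emph{not} absolutely elliptic. Your justification (``intersections of $L_v$ with rigid vertex groups of $T_a$ are absolutely elliptic by definition'') confuses rigidity in $(\mc A,\mc H)$--trees with absolute ellipticity; a rigid vertex group can and typically does contain infinite, non--absolutely elliptic virtually abelian subgroups (indeed, its incident edge groups). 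This error undermines your proposed route to $P^+_{v'} \subseteq P^+_v$, which you yourself flag as the main obstacle and leave unproven.

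Second, you do not handle the virtually cyclic case. When $L_v$ is virtually cyclic, it need not appear as a cylinder stabilizer in $\AJSJ$, and the paper treats this case separately: since $L$ has no $(\mc A_{\ufin},\mc H)$--splitting, the adjacent edge groups are infinite and hence of finite index in $L_v$, forcing $P^+_v$ to have index at most $2$; then $\alpha$ is conjugation by a central element, which extends to a Dehn twist and lies in $\Mod_{\AJSJ}(L)$ via Lemma~\ref{l:type 2}.

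For the non--virtually cyclic case, the paper's argument is more direct than what you sketch. Take a \emph{minimal} common refinement $\mathbb A'$ of $\AJSJ$ and $\mathbb A$ (Theorem~\ref{t:JSJ}), with $u'$ the vertex over $v$. For the collapse $\mathbb A' \to \mathbb A$: any edge at $u'$ that collapses would produce adjacent virtually abelian vertex groups, which the tree-of-cylinders structure of $\AJSJ$ forbids, so minimality gives no such collapse and $P^+_{u'} = P^+_v$. For the collapse $\mathbb A' \to \AJSJ$: any edge at $u'$ that collapses has $L_w = L_e \le L_{u'}$, so the new incident edge groups after collapsing are contained in the old $L_e$; thus the peripheral structure can only shrink, giving $P^+_u \subseteq P^+_{u'} = P^+_v$. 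This replaces your unfinished ``careful analysis'' with a clean two-step collapse argument.
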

\begin{proof}
Let $L_v$ be the vertex group of $\mathbb A$ corresponding to $\alpha$. First, since we are assuming that $L$ does not split over $\mathbb A_\ufin$, so if $L_v$ is virtually cyclic then the edge groups adjacent to $v$ are infinite and hence finite index in $L_v$. It follows that $P_v^+$ is a subgroup of index at most 2 in $L_v$, so $\alpha$ must act as conjugation on $L_v$ by an element of the centralizer of $P_v^+$, which naturally extends to a Dehn twist of $L$, which belongs to $\Mod_{\AJSJ}$ by (the proof of) Lemma \ref{l:type 2}.

 Now suppose $L_v$ is not virtually cyclic. Since $L_v$ is a maximal virtually abelian subgroup of $L$, there is some vertex $u$ of $\AJSJ$ such that  $L_v=L_u$. Hence it suffices to show that $P^+_u\subseteq P^+_v$. 

Since $\AJSJ$ is compatible with every $(\mc{A}, \mc{H})$--splitting of $L$, there is a refinement $\mathbb A^\prime$ of $\AJSJ$ and a collapse map $\mathbb A^\prime\to\mathbb A$. We assume that $\mathbb A^\prime$ is the minimal such refinement. Let $u^\prime$ be a vertex of $\mathbb A^\prime$ which maps to $v$. 

First consider the collapse map $\mathbb A^\prime\to \mathbb A$. Suppose that there is an edge $e$ of $\mathbb A^\prime$ connecting some vertex $w$ to $u^\prime$ such that $e$ is collapsed under this map. Then the vertex group obtained from $L_{u^\prime}$ by collapsing $e$ is virtually abelian since it is contained in $L_v$, this means that $L_w=L_e$. However, it follows from the construction of the tree of cylinders that edges connecting virtually abelian vertex groups do not occur in $\AJSJ$, so $e$ is also collapsed under the map $\mathbb A^\prime\to\AJSJ$. But then the splitting obtained from $\mathbb A^\prime$ by collapsing $e$ is also a refinement of both $\mathbb A$ and $\AJSJ$, which contradicts the minimality of the refinement $\mathbb A^\prime$. Hence we can conclude that there are no edges adjacent to $u^\prime$ which are collapsed under the map $\mathbb A^\prime\to \mathbb A$, thus $P_{u^\prime}^+=P_v^+$.

Now consider the collapse map $\mathbb A^\prime\to\AJSJ$. Clearly $u^\prime$ has to map to $u$ since $L_{u^\prime}=L_v=L_u$. Suppose that is there is an edge $e$ connecting $u^\prime$ and some vertex $w$ such that $e$ gets collapsed under the map from $\mathbb A^\prime$ to $\AJSJ$. As before, this means that we must have $L_w=L_e\leq L_{u^\prime}$. After collapsing the edge $e$, the new edge groups which become attached to the vertex $u^\prime$ correspond to edges adjacent to $w$ other then $e$. So the corresponding edge groups are contained in $L_w=L_e$, which means that the peripheral structure of $L_{u^\prime}$ could only decrease under such a collapse. Furthermore, collapsing edges of $\mathbb A^\prime$ which are not adjacent to $u^\prime$ has no effect on the peripheral structure of $L_{u^\prime}$. Therefore, $P^+_{u} \subseteq P^+_{u^\prime}=P_v^+$.

\end{proof}

\begin{lem} \label{l:Surface short}
Let $\mathbb A$ be a virtually abelian splitting of $L$.  Then any $\alpha \in \Mod_{\mathbb A}(L)$ of type (4) belongs to $\Mod_{\AMOD}(L)$.
\end{lem}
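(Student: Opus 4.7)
The strategy is to decompose the defining homeomorphism of $\alpha$ as a composition of Dehn twists along essential simple closed curves on the underlying orbifold, and then apply Lemma \ref{l:type 2} to each of them.

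Let $L_v$ be the \QHAH-vertex group of $\mathbb A$ on which $\alpha$ is supported, let $O$ be the underlying 2-orbifold, and let $h$ be the self-homeomorphism of $O$ (fixing $\partial O$) whose induced automorphism of $L_v$ naturally extends to $\alpha$. It is classical that the mapping class group of a compact hyperbolic 2-orbifold with boundary, rel boundary, is generated by Dehn twists along essential simple closed curves; this follows from Lickorish's theorem on surfaces, applied after passing to a manifold finite cover of $O$ and averaging. Hence we may write $h = h_1 \circ \cdots \circ h_n$, where each $h_i$ is a Dehn twist along an essential simple closed curve $\gamma_i \subset O$.

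Each $h_i$ induces on $L_v$ a Dehn twist automorphism $\beta_i$ along the virtually cyclic subgroup $Q_{\gamma_i} \le L_v$ associated to $\gamma_i$. Let $\tilde\beta_i \in \Aut(L)$ be the natural extension of $\beta_i$ using $\mathbb A$. Then $\alpha$ and $\tilde\beta_1 \circ \cdots \circ \tilde\beta_n$ agree up to an inner automorphism of $L$. Cutting $O$ along $\gamma_i$ refines the vertex $v$ of $\mathbb A$ into a one-edge splitting of $L_v$ over $Q_{\gamma_i}$; combining this with $\mathbb A$ produces a one-edge splitting $\mathbb B_i$ of $L$ over $Q_{\gamma_i}$, and $\tilde\beta_i$ is precisely the Dehn twist associated to $\mathbb B_i$. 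Because $L_v$ is \QHAH, we have $Q_{\gamma_i} \in \mc{A}$; moreover, any element of $\mc{H}$ that lies in $L_v$ is, by the \QHAH definition, conjugate into a boundary component or a finite subgroup of $L_v$, and since $h$ fixes $\partial O$ pointwise these subgroups remain elliptic in $\mathbb B_i$. Thus $\mathbb B_i$ is an $(\mc{A},\mc{H})$--splitting of $L$.

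Applying Lemma \ref{l:type 2} to each $\mathbb B_i$ (which realizes $\tilde\beta_i$ as a Dehn twist of a one-edge $(\mc{A},\mc{H})$--splitting) gives $\tilde\beta_i \in \Mod_{\AMOD}(L)$. Since inner automorphisms are type (1) elements of $\Mod_{\AMOD}(L)$, we conclude $\alpha \in \Mod_{\AMOD}(L)$. The main technical step is the reduction of $h$ to a composition of Dehn twists along simple closed curves; once this is in hand, the verification that each refined splitting $\mathbb B_i$ is an $(\mc{A},\mc{H})$--splitting is immediate from the \QHAH hypothesis, and Lemma \ref{l:type 2} does the rest.
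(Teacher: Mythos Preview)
Your proof is correct and follows exactly the approach the paper takes: decompose the orbifold homeomorphism into Dehn twists along essential simple closed curves, use the \QHAH\ condition to see that each $Q_{\gamma_i}\in\mc{A}$ so that the associated one-edge splitting is an $(\mc{A},\mc{H})$--splitting, and then invoke Lemma~\ref{l:type 2} for each twist. The paper's own proof is a single sentence citing the \QHAH\ definition, Lemma~\ref{l:type 2}, and the fact that mapping class groups are generated by Dehn twists; you have simply unpacked that sentence.
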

\begin{proof}
This follows quickly from the definition of \QHAH-subgroups, Lemma \ref{l:type 2} and the fact that the mapping class group is generated by Dehn twists.
\end{proof}

We have now proved the first assertion of Theorem \ref{t:all the splittings}.

The second assertion of Theorem \ref{t:all the splittings}, where $\mathbb A$ is a splitting arising from a limiting action on an $\R$--tree is now straightforward, since automorphisms of type (2) and (3) are associated to $(\mc{A},\mc{H})$--splittings and Lemma \ref{l:Surface short} was proved for any virtually abelian splitting.

In light of Theorem \ref{t:all the splittings}, when $L$ is a divergent $\mc{G}$--limit group we define $\Mod(L):=\Mod_{\AMOD}(L)$.
\subsection{The Shortening Argument}\label{s:shortarg}
\begin{defn}
Let $L$ be a divergent $\mc{G}$--limit group. Then $\phi\in \Hom(L, \mc{G})$ is called \emph{short} if for all $\alpha\in \Mod(L)$, $\|\phi\|\leq\|\phi\circ\alpha\|$.
\end{defn}

\begin{thm}\label{shortarg}
Suppose $G$ is a finitely generated group which does not split over any finite subgroup of order at most $2C$. Let $\phi_i\colon G\to\Gamma_i$ be such that $\Gamma_i \in \mc{G}$,  $\ker^\omega(\phi_i)=\{1\}$, and $\lim^\omega \|\phi_i\|=\infty$. Then $\omega$--almost surely $\phi_i$ is not short.
\end{thm}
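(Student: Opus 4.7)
The hypotheses say that $G$ itself is a divergent $\mc{G}$--limit group with defining sequence $(\phi_i)$ (since $\ker^\omega(\phi_i)=\{1\}$), and the assumption that $G$ does not split over a subgroup of order $\le 2C$ rules out in particular any splitting over $\mc{A}_\ufin$, hence any $(\mc{A}_\ufin,\mc{H})$--splitting. I would therefore start by invoking Theorem \ref{t:limiting R-tree} to obtain a non-trivial minimal $G$--action on an $\R$--tree $T$, and then Proposition \ref{prop:splitting} (whose hypothesis on non-splitting over unstable-arc and tripod stabilizers is met, since those are uniformly finite by the Stability Lemma \ref{l:stability lemma} and Standing Assumption \ref{sa:uf}). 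This gives the transverse covering of $T$ by simplicial, axial and Seifert-type vertex trees and a collapsed skeleton $S^c$. Crucially, Theorem \ref{t:all the splittings} certifies that any modular automorphism coming from this limiting graph of actions lies in $\Mod(G)=\Mod_{\AMOD}(G)$, so any automorphism I produce geometrically in $T$ is legitimate.

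Next I would fix a finite generating set $S$ of $G$, and choose basepoints $o_i\in X_i$ with $\max_{s\in S}d_i(o_i,\phi_i(s)o_i)\le \|\phi_i\|+1/i$ exactly as in the proof of Theorem \ref{t:limiting R-tree}. After rescaling by $1/\|\phi_i\|$ and passing to the ultralimit, the point $o=\lim^\omega o_i\in T$ satisfies $\max_{s\in S}d_T(o,s\cdot o)=1$, with equality attained by at least one $s$. The goal is now to exhibit a single $\alpha\in\Mod(G)$ such that $\max_{s\in S}d_T(o,\alpha(s)\cdot o)<1$. Granting this, a standard ultralimit argument (as in \cite[Section 3]{ReiWei}) then gives $\omega$--almost surely $\max_{s\in S}d_i(o_i,\phi_i(\alpha(s))o_i)<\|\phi_i\|$, hence $\|\phi_i\circ\alpha\|<\|\phi_i\|$ and $\phi_i$ is not short, which is what we want.

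The construction of $\alpha$ proceeds by cases according to where $o$ and the orbit $S\cdot o$ sit in the transverse covering furnished by Proposition \ref{prop:splitting}. If $o$ is interior to a Seifert-type vertex tree $Y_v$ and some $s\cdot o$ lies in $Y_v$, I would use an automorphism of $L_{Y_v}$ induced by a homeomorphism of the underlying $2$--orbifold fixing the boundary, chosen (in the spirit of the Bestvina--Feighn--Sela shortening on surfaces) to bring the points of $S\cdot o\cap Y_v$ closer to $o$ while keeping the attaching points fixed; this is a type-(4) modular automorphism. If $Y_v$ is axial, I would use a natural extension of an abelian automorphism of $L_{Y_v}$ that translates along the line and fixes $P_v^+$; this is type-(3). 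If some $s\cdot o$ lies in a different vertex tree from $o$, so that the segment $[o,s\cdot o]$ crosses an edge $e$ of $S^c$ (or an edge of a simplicial vertex tree), I would use a Dehn twist over $e$ by an element of the centralizer of a suitable finite-index subgroup of $L_e$, shortening $d_T(o,s\cdot o)$ by the classical translate-along-an-edge estimate; this is type-(2), and by Lemma \ref{l:type 2} lies in $\Mod(G)$.

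The main obstacle, as in all shortening arguments, is the combinatorial bookkeeping needed to turn these per-generator shortening moves into a single $\alpha$ that simultaneously shortens every generator (or at least the longest one) without lengthening any other past the bound $1$. I would handle this by fixing a maximal-length generator $s_0$ whose segment $[o,s_0\cdot o]$ determines the displacement to be beaten, identifying the sequence of vertex trees and edges it traverses, and assembling $\alpha$ as a composition of the vertex and edge automorphisms described above, each supported on a different part of the decomposition so that they commute up to their supports. The required non-trivial inputs are (i) the ``shortening in a surface'' lemma that some element of the mapping class group (fixing the boundary) strictly shortens any finite point configuration not already minimized, and (ii) the Dehn-twist shortening estimate along an oriented edge, both of which are available from \cite[Section 3]{ReiWei} once one verifies that our \QHAH{} and abelian vertex groups satisfy the hypotheses needed there --- in particular, that the boundary/edge subgroups are preserved, which is guaranteed by the definition of modular automorphism. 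A final minor point is that $o$ may not lie in the minimal subtree, but then its projection does, and shortening the displacement of its projection shortens the displacement of $o$ by the same amount, so the argument reduces to the case where $o$ lies in the minimal subtree.
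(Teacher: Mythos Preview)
Your setup is correct and matches the paper's: $G$ is itself a divergent $\mc{G}$--limit group, the limiting $\R$--tree $T$ exists, Proposition~\ref{prop:splitting} applies because the excluded splittings would be over subgroups of order $\le 2C$, and Theorem~\ref{t:all the splittings} guarantees that any modular automorphism read off from the limiting graph of actions lies in $\Mod(G)$. Your treatment of the Seifert and axial pieces is also essentially the paper's: one finds a single automorphism of the appropriate type that strictly decreases $d_T(o,s\cdot o)$ for those generators whose segment $[o,s\cdot o]$ meets the piece non-degenerately, and leaves the others unchanged; convergence of the rescaled spaces to $T$ then yields shortening in $X_i$ $\omega$--almost surely.

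The gap is in the simplicial case, and it undermines your overall strategy of producing a \emph{single} $\alpha\in\Mod(G)$ that strictly shortens on the limit tree $T$. A Dehn twist over a simplicial edge $e$ is by an element $c$ in (the centre of a finite-index subgroup of) the stabilizer of $e$. In $T$, such $c$ fixes the arc $e$ pointwise, so its translation length along that arc is zero; the twist therefore does \emph{not} decrease $d_T(o,s\cdot o)$ for a generator $s$ whose segment crosses $e$. There is no ``classical translate-along-an-edge estimate'' available at the level of $T$ here. What actually happens (and what the paper does, following \cite[\S4.2.3]{ReiWei}) is that one works directly in the unrescaled spaces $X_i$: the edge $e$ corresponds to a long thin region in $X_i$, the twisting element $\phi_i(\widetilde{c})$ is loxodromic with positive (but $o(\|\phi_i\|)$) translation length there, and an $i$--dependent power of it gives a Dehn twist $\alpha_i$ that shortens $d_i(o_i,\phi_i(s)o_i)$ without lengthening the other generators. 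The automorphism must depend on $i$ because the required power does.

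Consequently, the correct conclusion of the argument is not a single $\alpha$ with $\|\phi_i\circ\alpha\|<\|\phi_i\|$ for $\omega$--almost all $i$, but rather, for an $\omega$--large set of $i$, an automorphism $\beta_i\in\Mod(G)$ (a product of one Seifert/axial automorphism and one $i$--dependent Dehn twist per relevant generator) with $\|\phi_i\circ\beta_i\|<\|\phi_i\|$. This still gives exactly the statement of the theorem, but your claimed intermediate step ``$\max_{s\in S}d_T(o,\alpha(s)\cdot o)<1$'' is in general false and should be replaced by the direct $X_i$--level argument in the simplicial case.
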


The proof of this theorem is essentially the same as the proof which appears in \cite[Proposition 4.6]{ReiWei}. The details of the proof are quite technical but they are carefully worked out in \cite[Section 4.2]{ReiWei}, so we only give a brief sketch of how the proof works and refer to \cite{ReiWei} for the details.

\begin{proof}[Sketch proof]

Suppose $\phi_i\colon G\to\Gamma_i$ such that $\Gamma_i \in \mc{G}$,  $\ker^\omega(\phi_i)=\{1\}$, and $\lim^\omega \|\phi_i\|=\infty$. Note that this means that $G$ itself is a divergent $\mc{G}$--limit group since $G=G/\ker^\omega(\phi_i)$. In particular, $\Mod(G)$ is well-defined. Throughout the proof, we denote $X_{\Gamma_i}$ by $X_i$. Let $T$ be the limiting $\R$--tree associated to the sequence $(\phi_i)$ as in Theorem \ref{t:limiting R-tree}. Recall that $T$ is a minimal subtree of $\lim^\omega (X_i/ \|\phi_i \|, o_i)$, where $o_i\in X_i$ such that $\max\limits_{s\in S} d_i(o_i, \phi_i(s)o_i)\leq \|\phi_i\|+\frac1i$. Let $o=\lim^\omega(o_i)$.

We let $\mathbb A$ be the splitting of $G$ associated to the action of $G$ on $T$ described in Proposition \ref{prop:splitting}. Then the modular automorphism we find which shorten the actions of $G$ are going to be elements of $\Mod_{\mathbb A}(G)$ which are hence contained in $\Mod(G)$ by Theorem \ref{t:all the splittings}.

Let $S$ be a finite generating set of $G$. Then for some $g\in S$, $[o, go]$ is a non-trivial arc in $T$. By Theorem \ref{t:Rmach applies} and Lemma \ref{l:goa-tran}, $T$ has a transverse covering by pieces of either Seifert, axial, or simplicial type. Then the interval $[o, go]$ must have non-degenerate intersection with at least one piece $Y$ of the transverse covering, and $Y$ is either Seifert, axial, or simplicial type. Each case is considered separately.

Suppose $Y$ is either a Seifert or an axial piece in the transverse cover of the tree $T$. Then there is a modular automorphism $\alpha$ such that for any $h\in S$, if $[o, ho]$ intersects $Y$ in a non-degenerate segment then $d(o, \alpha(h)o)<d(o, ho)$, and otherwise $d(o, \alpha(h)o)=d(o, ho)$. This allows us to shorten the action of $g$ on the limit tree $T$ without affecting the other generators in the cases where $[o, go]$ intersects a Seifert or an axial piece. Since the actions of $\phi_i(G)$ on $X_{i}$ are converging to the action on the limit tree, $\alpha$ $\omega$--almost surely also shortens the action of $g$ on the spaces $X_i$. If $Y$ is an axial piece, $\alpha$ is an automorphism of type (3) and if $Y$ is an Seifert piece, then $\alpha$ is an automorphism of type (4). Finding the automorphism in the axial case is described in \cite[Section 4.2.1]{ReiWei}, and finding the automorphism in the Seifert case is described in \cite[Section 4.2.2]{ReiWei}.

Suppose now that $Y$ is a simplicial piece. In this case, we do not shorten the action of $g$ on the limit tree $T$, but instead we directly shorten the action of $g$ on $X_i$ $\omega$--almost surely. That is, given an edge $e$ in $Y$, for an $\omega$--large set of $i$ we can find a modular automorphism of type (1), i.e. a Dehn twist $\alpha_i$ over the edge $e$ such that $\alpha_i$ shortens the action of each generator $h\in S$ on $X_i$ for which $e\subseteq [o, ho]$ and does not affect the actions of the other generators. Finding the Dehn twist in the simplicial case is in \cite[Section 4.2.3]{ReiWei}.

Now, for each generator $g\in S$ either $go=o$ or there is an $\omega$--large set of $i$ such that there is a modular automorphism $\alpha_i$ which shortens the action of $g$ on $X_i$ and which either shortens or does not affect the actions of the other generators. Since $S$ is finite, we can take $\beta_i$ to be the product of one such modular automorphism for each generator, and hence $\omega$--almost surely $\|\phi_i\circ\beta_i\|<\|\phi_i\|$.

Observe that in \cite{ReiWei} the definition of $\|\phi\|$ is slightly different. We defined $\| \phi_i \|$ to be $\inf_{x\in X_i}\max_{s\in S} d_i(x, \phi_i(s)x)$, while in \cite{ReiWei} they replace the maximum over the generators with a sum over the generators. The only difference this makes to the proof is that they can use an automorphism which shortens a single generator, while we take a product of one shortening automorphism for each generator which does not fix the basepoint .

It is also worth noting that we are only assuming that $L$ doesn't split over $\mc{A}_\ufin$ relative to $\mc{H}$, so it may be that $L$ is not one-ended.  However, the only time those splittings arise in our construction are times when finite edge stabilizers correspond to points in the limiting $\R$--tree, and so do not need to be shortened.   Therefore, we can safely ignore splittings of $L$ over finite groups.
\end{proof}

%%%%%%%%%%%%%%%%%%%%%%%%%%%%%%%%%%%%%%%%%%%%%%%%%%%%%%%%%%%%
\section{Shortening quotients and reduction to non-divergent limit groups} \label{s:short quotient}

In this section, we continue to fix $\mc{G}$, a uniformly acylindrically hyperbolic family of groups with hyperbolicity constant $\delta$ and acylindricity constants $R_\e$, $N_\e$.

We start by recording some properties of non-divergent $\mc{G}$--limit groups. When $\mc{G}$ consists of a single hyperbolic group $\Gamma$ acting on its Cayley graph, the non-divergent $\Gamma$--limit groups are all isomorphic to subgroups of $\Gamma$. This is no longer true in the acylindrically hyperbolic setting, but we can say a few things.

\begin{prop}
Suppose $L$ is a non-divergent $\mc{G}$--limit group. Then $L$ admits an acylindrical action on a hyperbolic metric space with the same acylindricity and hyperbolicity constants as $\mc{G}$. 
\end{prop}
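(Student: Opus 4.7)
The plan is to realize the space on which $L$ acts as an ultralimit of the spaces $X_{\Gamma_i}$ associated to a defining sequence, with basepoints chosen to almost realize the infimum in the definition of $\|\phi_i\|$. The non-divergence hypothesis, which bounds $\lim^\omega \|\phi_i\|$, will guarantee that every element of $G$ acts with finite displacement on this ultralimit, so that we get a genuine isometric action after factoring through the $\omega$-kernel. Hyperbolicity follows from Lemma \ref{lem:hypultralimit}, and acylindricity will follow by transferring the acylindricity of each $\Gamma_i$ through the ultralimit.

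In more detail, write $L = G/\ker^\omega(\phi_i)$ with $G$ finitely generated by a set $S$, and choose $o_i \in X_{\Gamma_i}$ so that $\max_{s \in S} d_i(o_i, \phi_i(s) o_i) \leq \|\phi_i\| + 1/i$. Non-divergence means $\lim^\omega \|\phi_i\| < \infty$, so for every $g \in G$ the triangle inequality gives $d_i(o_i, \phi_i(g) o_i) \leq |g|_S(\|\phi_i\| + 1/i)$, which has finite $\omega$-limit. Setting $X_\infty := \lim^\omega(X_{\Gamma_i}, o_i)$, Lemma \ref{lem:hypultralimit} (with the constant sequence $\delta_i = \delta$) produces a $\delta$-hyperbolic geodesic space.

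I would then define a $G$-action on $X_\infty$ by $g \cdot \lim^\omega x_i = \lim^\omega \phi_i(g) x_i$; the displacement bound above guarantees visibility of $\phi_i(g) x_i$, and since each $\phi_i$ acts by isometries this action is well-defined and isometric. Elements of $\ker^\omega(\phi_i)$ act trivially, so the action descends to $L$. For acylindricity, fix $\e > 0$ and points $x = \lim^\omega x_i,\, y = \lim^\omega y_i \in X_\infty$ with $d(x,y) > R_\e$ strictly, and suppose $g_1,\ldots,g_k \in L$ are pairwise distinct with $d(x, g_j x) < \e$ and $d(y, g_j y) < \e$. Choosing lifts $\tilde g_j \in G$, pairwise distinctness in $L$ forces $\tilde g_j \tilde g_l^{-1} \notin \ker^\omega(\phi_i)$ for $j \neq l$, so $\omega$-almost surely the elements $\phi_i(\tilde g_j) \in \Gamma_i$ are pairwise distinct. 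The strict inequalities, together with the ultralimit description of $d$, yield $\omega$-almost surely $d_i(x_i, y_i) \geq R_\e$ and $d_i(x_i, \phi_i(\tilde g_j) x_i) \leq \e$, $d_i(y_i, \phi_i(\tilde g_j) y_i) \leq \e$ for every $j$. Applying acylindricity of $\Gamma_i$ on $X_{\Gamma_i}$ with constants $R_\e, N_\e$ gives $k \leq N_\e$. A standard right-limit argument then upgrades this to acylindricity with non-strict inequalities, giving the same constants $R_\e$ and $N_\e$ (at continuity points, and their right-continuous envelopes otherwise).

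The main obstacle is precisely this delicate matching of strict versus non-strict inequalities: $\omega$-limits respect weak inequalities but only collapse to exact equality on strict ones, so to land on constants exactly equal to those of $\mc{G}$ one must either work with strict inequalities throughout and then pass to the boundary case, or replace $R_\e, N_\e$ by their right-continuous envelopes (which is harmless since the family's constants are defined only up to enlargement anyway). Apart from this cosmetic point, the construction is a direct application of the ultralimit machinery already developed in the proof of Theorem \ref{t:limiting R-tree}, with no rescaling of the metrics because the sequence is non-divergent.
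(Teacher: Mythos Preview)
Your proposal is correct and follows essentially the same route as the paper: form the unrescaled ultralimit $X_\infty=\lim^\omega(X_{\Gamma_i},o_i)$, invoke Lemma~\ref{lem:hypultralimit} for $\delta$--hyperbolicity, let $L$ act via $g\cdot\lim^\omega x_i=\lim^\omega\phi_i(\tilde g)x_i$, and verify acylindricity by pushing a finite set of distinct elements back to some $\Gamma_i$. The paper handles the strict/non-strict inequality issue exactly as you do (working with strict inequalities and transferring them $\omega$--almost surely), and indeed leaves the same cosmetic point about boundary cases implicit.
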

\begin{proof}
Suppose that $(\Gamma_i, X_i) \in \mc{G}$ for $i \ge 1$, such that $\phi_i \co G \to \Gamma_i$ and that $L = G / \ker^\omega(\phi_i)$.  Fix basepoints $o_i\in X_i$ as before. Then $\lim^\omega(\|\phi_i\|)<\infty$ implies that $L$ has a well-defined action on $X_\infty:=\lim^\omega(X_i, o_i)$, which is $\delta$--hyperbolic by Lemma \ref{lem:hypultralimit}. Note that in the non-divergent case, we are taking the ultralimit of the spaces $X_i$ without rescaling the metrics.

 Let $\e>0$ and let $x_\infty=\lim^\omega x_i$ and $y_\infty=\lim^\omega y_i$ be distinct points of $X_\infty$ with $d(x_\infty, y_\infty)> R_\e$. Then $\omega$--almost surely, $d_i(x_i, y_i)>R_\e$. If $g\in L$ and $\widetilde{g}\in\phi_\infty^{-1}(g)$ such that $d(gx_\infty, x_\infty)<\e$, then $\omega$--almost surely $d_i(\phi_i(\widetilde{g})x_i, x_i)<\e$. Now if $F$ is a finite set of distinct elements of $L$, then $\omega$--almost surely $\phi_i(\widetilde{g})\neq\phi_i(\widetilde{h})$ for all $g, h\in F$. It follows that from these observations that if $F\subseteq L$ is finite and 
\[
d(x_\infty, gx_\infty)<\e, d(y_\infty, gy_\infty)<\e
\]
for all $g\in F$, then there exists an index $i$ such that for all $g, h\in F$, $\phi_i(\widetilde{g})\neq\phi_i(\widetilde{h})$,  $d(x_i, \phi_i(\widetilde{g})x_i)<\e$, $d(y_i, \phi_i(\widetilde{g})y_i)<\e$, and $d_i(x_i, y_i)>R_\e$. By the acylindricity of the action of $\Gamma_i$ on $X_i$, we have $|F|\leq N_\e$.
\end{proof}

\begin{lem}\label{lem:non-div aasub}
Let $L$ be a $\mc{G}$--limit group defined by $L=G/\ker^\omega(\phi_i)$. If $\lim^\omega\|\phi_i\|<\infty$, then every virtually abelian subgroup in $\mathcal{NLSE}(L)$ is virtually cyclic. 
\end{lem}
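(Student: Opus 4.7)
The plan is to combine the acylindrical action of $L$ on $X_\infty = \lim^\omega (X_i, o_i)$ (provided by the preceding proposition in the non-divergent case) with the maximal elementary subgroup machinery of Lemma \ref{E(h)}. The idea is to extract a loxodromic element $\ell$ of $L$ acting on $X_\infty$ that already lies in $H$, and then to show that a finite-index abelian subgroup of $H$ centralizes a power of $\ell$, forcing $H$ into the virtually cyclic group $E_L(\ell)$.

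To produce $\ell$, choose a finitely generated subgroup $J_0 \le H$ that witnesses $H \in \mathcal{NLSE}(L)$, so $\phi_i(\widetilde{J}_0)$ is $\omega$--almost surely non-elliptic on $X_i$. I claim $J_0$ does not act elliptically on $X_\infty$. If it did, Lemma \ref{lem:quasicenter} (applied with $\e = 1$ to the inherited acylindrical action of $J_0$ on $X_\infty$) would give a point $y_\infty = \lim^\omega y_i$ with $d_{X_\infty}(y_\infty, h \cdot y_\infty) \le 4\delta + 2$ for every $h \in J_0$. Since $J_0$ is generated by finitely many elements $h_1, \dots, h_k$, $\omega$--almost surely we would have $d_i(y_i, \phi_i(\widetilde{h}_j) y_i) \le 4\delta + 3$ for all $j$ simultaneously, whence $\phi_i(\widetilde{J}_0)$ would have a bounded orbit at $y_i$ and be elliptic on $X_i$, contradicting the choice of $J_0$. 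Since $J_0$ is finitely generated virtually abelian, and since a virtually abelian subgroup of an acylindrically acting group cannot contain two independent loxodromic elements (commuting loxodromics share fixed points at infinity), Osin's trichotomy forces $J_0$ to be virtually cyclic with a loxodromic element $\ell \in J_0 \le H$. By Lemma \ref{E(h)}, the maximal elementary subgroup $E_L(\ell)$ of $L$ acting on $X_\infty$ is a well-defined virtually cyclic subgroup of $L$.

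Finally, let $H' \le H$ be a finite-index abelian subgroup. Since $\ell \in H$ has infinite order, $\langle \ell \rangle \cap H' = \langle \ell^n \rangle$ for some $n > 0$, so $\ell^n \in H'$. Every $h \in H'$ commutes with $\ell^n$, giving $h^{-1} \ell^n h = \ell^n$, and hence $h \in E_L(\ell^n) = E_L(\ell)$ by Lemma \ref{E(h)}. Thus $H' \le E_L(\ell)$ is virtually cyclic, and since $[H : H'] < \infty$, so is $H$. The main delicacy is the ellipticity argument in the second paragraph: it is essential to pass ellipticity on $X_\infty$ back to ellipticity of $\phi_i(\widetilde{J}_0)$ on $X_i$, which works precisely because $J_0$ is finitely generated and the generating displacements at the quasi-center point $y_\infty$ realize as $\omega$--small displacements at the $y_i$.
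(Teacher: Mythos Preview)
Your overall strategy is sound and, once patched, gives a cleaner proof than the paper's. The paper works entirely at the level of the approximating spaces $X_i$: it uses Lemma~\ref{aasub} to place $\phi_i(\widetilde{H}_k)$ inside $E_{\Gamma_i}(g_i)$, invokes Bowditch's uniform lower bound on the stable translation length of loxodromics to control word lengths in a fixed model group $E$, and then argues carefully about roots and torsion to force each $H_k$ (and finally $H$) to be virtually cyclic. Your approach --- pass to the acylindrical action of $L$ on $X_\infty$, extract a loxodromic $\ell\in H$, and trap a finite-index abelian subgroup of $H$ inside $E_L(\ell)$ --- avoids all of that bookkeeping.

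However, there is a genuine gap in your second paragraph. From $d_i(y_i,\phi_i(\widetilde{h}_j)y_i)\le 4\delta+3$ for each generator $h_j$ you conclude that $\phi_i(\widetilde{J}_0)$ has a bounded orbit at $y_i$. This is false in general: a single loxodromic element moves every point on its axis by a bounded amount, yet generates an unbounded orbit. Bounded generator displacement gives only $d_i(y_i,\phi_i(\widetilde{w})y_i)\le (4\delta+3)|w|$ for words $w$, which says nothing about ellipticity. So you have not ruled out that $J_0$ is elliptic on $X_\infty$ while $\phi_i(\widetilde{J}_0)$ is non-elliptic on $X_i$.

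The fix is to bypass the contradiction argument entirely and produce the loxodromic $\ell$ directly. Since $H$ is virtually abelian and lies in $\mathcal{NLSE}(L)$, the lemma immediately following Lemma~\ref{aa-finite} gives $\widetilde{h}\in\widetilde{H}$ with $\phi_i(\widetilde{h})$ loxodromic $\omega$--almost surely. By Bowditch's uniform lower bound (the constant $d$ from \cite[Lemma~2.2]{Bow2}, also invoked in the paper's proof), $\tau_{X_i}(\phi_i(\widetilde{h}))\ge d>0$, so $d_i(x_i,\phi_i(\widetilde{h})^n x_i)\ge nd$ for all $n$ and all $x_i$. Taking $\omega$--limits gives $d_{X_\infty}(x_\infty,h^n x_\infty)\ge nd$, hence $h$ is loxodromic on $X_\infty$. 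Now set $\ell=h$ and your final paragraph goes through verbatim.
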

\begin{proof}
Let $M=\lim^\omega \|\phi_i\|$. It suffices to prove that any abelian subgroup in $\mc{NLSE}(L)$ is virtually cyclic.  Let $H \in \mc{NLSE}(L)$ and let $H_1, H_2, \ldots$ be in increasing sequence of finitely generated subgroups of $H$ so that $H = \cap H_k$.  For each $k$, let $\widetilde{H}_k \le G$ be a lift of $H_k$ so that $\widetilde{H}_{k-1} \le \widetilde{H}_{k}$.

By Lemma \ref{aasub} $\omega$--almost surely there exist $g_i \in \Gamma_i$ so for each $k$ $\phi_i(\widetilde{H}_k) \le E_{\Gamma_i}(g_i)$.  By Lemma \ref{ufin} there are only finitely many isomorphism types of subgroups of the form $E_{\Gamma}(g)$ with $\Gamma \in \mc{G}$ and $g$ loxodromic.  Therefore, $\omega$--almost surely $E_{\Gamma_i}(g_i)$ is isomorphic to some fixed virtually cyclic group $E$.  Let $Z$ denote a finite generating set for $E$, let $E \to E_{\Gamma_i}(g_i)$ be some isomorphism, and let $Z_i$ be the image of $Z$ under this isomorphism.  Since $H_k$ is abelian, it must have abelian image in $E_{\Gamma_i}(g_i)$, so we may assume without loss of generality that $E$ has a finite normal subgroup with quotient infinite cyclic.

By  \cite[Lemma 2.2]{Bow2} there exists a constant $d$ which depends only on the hyperbolicity and acylindricity constants so that for any loxodromic element $g \in \Gamma \in \mc{G}$ and any $x\in X_\Gamma$ we have $\tau(g) := \lim_{n \to \infty} \frac{1}{n} d_{X_\Gamma}(x,g^nx) > d$.  In particular, we have
$d_{X_{\Gamma_i}}(o_i, g_i^no_i) \ge nd$.  If follows that there are constants $C$ and $D$ (depending only on the hyperbolicity and acylindricity constants) so that for any $h \in E_{\Gamma_i}(g_i)$ we have
$d_i(o_i,ho_i) \ge C|h|_{Z_i} - D$.

First consider elements of $H_k$ of finite order.  They map to elements of $E_{\Gamma_i}(g_i)$ of finite order, and there are a uniformly bounded number of these.  From this, it follows that there are a uniformly bounded number of elements of $H_k$ of finite order, and these form a finite normal subgroup of $H_k$.

Suppose $a,b \in H_k \le L$ have infinite order, and choose lifts $\widetilde{a}$ and $\widetilde{b}$ such that  $|\widetilde{a}|_S=|a|_{\phi_\infty(S)}$ and $|\widetilde{b}|_S=|b|_{\phi_\infty(S)}$. Then $d_i(o_i, \phi_i(\widetilde{a})o_i) \leq |\widetilde{a}|_S(\|\phi_i\|+1)$ and $d_i(o_i, \phi_i(\widetilde{b})o_i) \leq |\widetilde{b}|_S(\|\phi_i\|+1)$. 

 Therefore,
\[
|\phi_i(\widetilde{a})|_{Z_i}\leq \frac{1}{C} d_i(o_i, \phi_i(\widetilde{a})o_i) +D\leq \frac{M+1}{C}|a|_{\phi_\infty(S)}+D
\]
and
\[
|\phi_i(\widetilde{b})|_{Z_i}\leq \frac{1}{C} d_i(o_i, \phi_i(\widetilde{b})o_i) +D\leq \frac{M+1}{C}|b|_{\phi_\infty(S)}+D
\]

Since the bound on the word length of these elements in $E$ is independent of $i$, there exist some $r, s\in\mathbb Z$ such that $\omega$--almost surely  $\phi_i(\widetilde{a}^r)=\phi_i(\widetilde{b}^s)$, and hence $a^r=b^s$.  It follows that $a$ and $b$ belong to the same cyclic subgroup of $H_k$. Furthermore, if $x\in H_k$ such that $x^n=a$, then $\omega$--almost surely $\phi_i(\widetilde{x}^n)=\phi_i(\widetilde{a})$. However, such an $x$ can only exist for a bounded set of $n$, where the bound depends only on $E$ and $|\phi_i(\widetilde{a})|_{Z_i}$. It follows that  $a$ can have only finitely many roots in $H_k$, and hence $H_k$ must contain a primitive element $h$. Since any two elements of $H_k$ of infinite order belong to a common cyclic subgroup, we must have that all infinite order elements of $H_k$ belong to $\langle h\rangle$. Since there are a uniformly bounded number of elements of $H_k$ of finite order, and all infinite order elements lie in a cyclic subgroup, $H_k$ is virtually cyclic, with a finite normal subgroup of uniformly bounded rank.  

Now consider the sequence of groups $H_k \le H_{k+1} \le \ldots$.  This is an increasing sequence of virtually cyclic groups with finite normal subgroups of bounded rank.  Therefore, $\omega$--almost surely the finite normal subgroups are all the same subgroup $N$ and we have cyclic groups $H_k/N \le H_{k+1}/N \le \ldots$.  The same argument as above bounds the size of the set of finite order elements of $H_j$ independently of $j$, and also, for infinite order $a$ in $H_k$, bounds the index of a root of $a \in H_k$ in any $H_j$ for $j \ge k$.  This implies that the ascending sequence of cyclic groups stabilizes, which implies that $H$ is virtually cyclic, as required.
\end{proof}

If we knew that $\mc{G}$--limit groups were fully residually $\mc{G}$, then we could define a divergent $\mc{G}$--limit group $L$ via a sequence of homomorphisms $(\phi_i \co L \to \Gamma_i)$ with $\ker^\omega(\phi_i) = \{ 1 \}$.  We could then define an equivalence relation on homomorphisms from $L \to \mc{G}$ by precomposing with elements of $\Mod(L)$ and post-composing by conjugation in the element of $\mc{G}$.  If we take a sequence $(\eta_i \co L \to \Gamma_i)$ where each $\eta_i$ is shortest in its equivalence class, the shortening argument implies that $\ker^\omega(\eta_i) \ne \{ 1 \}$, i.e. $L/\ker^\omega(\eta_i)$ is a proper quotient of $L$, which we call a \emph{shortening quotient}.

However, we do not know that $\mc{G}$--limit groups are fully residually $\mc{G}$, so in order to construct shortening quotients in general we have to proceed in a more complicated way.

The following is essentially the same as \cite[Lemma 6.1]{ReiWei}, and the proof which appears there works in our situation without any changes.  The idea of this construction comes from \cite{sela:dio1}, and a similar construction also appears in \cite{Gro}.
\begin{lem} \label{l:ugly hack}
Let $G$ be finitely presented and let $L=G/\ker^\omega(\phi_i)$ be a divergent $\mc{G}$--limit group which does not split over $\mc{A}_{\ufin}$ relative to $\mc{H}$. Let $\AMOD$ be the modular splitting of $L$ from Definition \ref{d:JSJ}. Then there exists a sequence of finitely presented groups $G=W_0, W_1,...$ and epimorphisms $f_i\colon W_i\to W_{i+1}$ and $h_i\colon W_i\to L$ for $i\geq 0$ such that:
\begin{enumerate}
\item $\phi_\infty=h_0$.
\item $h_i=h_{i+1}\circ f_i$. for all $i\geq 1$
\item\label{eq:direct limit}  $L$ is the direct limit of the sequence $G\twoheadrightarrow W_1\twoheadrightarrow...$.  Equivalently, 
$$\ker^\omega(\phi_i)=\bigcup_{k=1}^\infty\ker(f_{k-1} \circ \ldots \circ f_0)$$
\item Each $W_i$ for $i\geq 1$ has a graph of groups decomposition $\mathbb A_i$ whose underlying graph is isomorphic to the underlying graph of $\AMOD$. 
\item If $V$ is a vertex group of $\AMOD$ and $V_i$ is the corresponding vertex group of $\mathbb A_i$, then $h_i(V_i)\subseteq V$. Furthermore,
\begin{enumerate}
\item $V=\bigcup\limits_{i=1}^\infty h_i(V_i)$
\item If $V$ is \QHAH--vertex group, then $h_i|_{V_i}$ is an isomorphism onto $V_i$
\item If $V$ is a virtually abelian vertex group, then $h_i|_{V_i}$ is injective.
\end{enumerate}

\item If $E$ is an edge group of $\AMOD$ and $E_i$ is the corresponding edge group of $\mathbb A_i$, then $h_i$ maps $E_i$ injectively into $E$ and $E=\bigcup_{i=1}^\infty h_i(E_i)$.
\end{enumerate}

\end{lem}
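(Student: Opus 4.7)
The construction follows the pattern of the analogous result \cite[Lemma 6.1]{ReiWei}; I sketch only the main ideas. Set $W_0 = G$ and $h_0 = \phi_\infty$. The idea is to write each vertex and edge group of $\AMOD$ as an ascending union of finitely generated subgroups, lift these to the finitely presented group $G$, and successively assemble finer graph-of-groups approximations of $L$ on the underlying graph of $\AMOD$.

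First I would set up the exhaustions. For each vertex $v$ of $\AMOD$ with vertex group $V = L_v$, fix an ascending sequence $V^{(1)} \subseteq V^{(2)} \subseteq \cdots$ of finitely generated subgroups with $\bigcup_k V^{(k)} = V$. At a \QHAH\ vertex take $V^{(k)} = V$, since such vertex groups are finitely presented. At a virtually abelian vertex choose each $V^{(k)}$ finitely generated (hence finitely presented) and containing the $k$-th approximations of all incident edge groups. At a rigid vertex take $V^{(k)}$ finitely generated and exhausting $V$, with no injectivity required. Do the analogous thing for edge groups $E^{(k)}$. For each such $V^{(k)}$ and $E^{(k)}$ also fix a finitely generated lift $\widetilde V^{(k)}, \widetilde E^{(k)}$ in $G$ whose $\phi_\infty$-image is $V^{(k)}, E^{(k)}$, chosen ascendingly and compatibly with the edge-to-vertex inclusions in $\AMOD$.

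For $i \ge 1$, define $\mathbb A_i$ as the graph of groups on the underlying graph of $\AMOD$ whose vertex group is $V$ (under a fixed finite presentation) at each \QHAH\ vertex, $V^{(i)}$ at each virtually abelian vertex, and $\widetilde V^{(i)}/N_i$ at each rigid vertex, where $N_i \subseteq \ker\phi_\infty \cap \widetilde V^{(i)}$ is a finite subset chosen inductively via a diagonal enumeration so that the union $\bigcup_i N_i$ normally generates $\ker\phi_\infty \cap \widetilde V$ inside $\widetilde V$, and so that the attaching maps of the incident edges remain well-defined. The edge groups are $E^{(i)}$ with inclusions inherited from $\AMOD$. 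Then $W_i := \pi_1(\mathbb A_i)$ is finitely presented, and the maps $h_i : W_i \to L$ and $f_i : W_i \to W_{i+1}$ are induced in the obvious way by the inclusions $V^{(i)} \hookrightarrow V^{(i+1)} \hookrightarrow V$ and by the rigid-vertex quotient maps. The map $f_0 : G \to W_1$ is obtained by sending each generator of $G$ to a preimage under $h_1$ of its $\phi_\infty$-image; by starting the induction at a sufficiently large index we may assume such preimages exist and the relations of $G$ are satisfied in $W_1$, possibly after absorbing finitely many further relations into the finite sets $N_1$.

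Properties (1), (2), (4), (5b), (5c), and (6) are immediate from the construction, and (5a) follows from $\bigcup_k V^{(k)} = V$ at each vertex. The direct-limit statement (3) is proved by verifying both inclusions of $\ker^\omega(\phi_i) = \bigcup_k \ker(f_{k-1} \circ \cdots \circ f_0)$: one inclusion from the commutativity $h_k \circ (f_{k-1} \circ \cdots \circ f_0) = \phi_\infty$, and the other from the exhaustion $\bigcup V^{(k)} = V$ together with the choice of the $N_i$ killing all of $\ker\phi_\infty \cap \widetilde V$ at the rigid vertices. The main technical obstacle is the treatment of rigid vertex groups, which need not be finitely presented; we handle them by finitely presented \emph{surjective} approximations (with no injectivity required), and must take care at each step that the edge attaching maps remain well-defined after quotienting the rigid vertex groups by $N_i$. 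Only finitely many relations are required for this at each step, and these can be absorbed into $N_i$. All remaining technical details are entirely parallel to those in \cite[proof of Lemma 6.1]{ReiWei}.
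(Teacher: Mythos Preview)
Your proposal is correct and takes essentially the same approach as the paper: the paper gives no argument at all beyond the sentence ``the following is essentially the same as \cite[Lemma 6.1]{ReiWei}, and the proof which appears there works in our situation without any changes,'' and your sketch is precisely an outline of that construction---exhaust vertex and edge groups by finitely generated pieces, take the whole group at \QHAH\ vertices, finitely generated subgroups at virtually abelian vertices, finitely presented surjective approximations at rigid vertices, and assemble via Bass--Serre theory on the fixed underlying graph. Your sketch in fact supplies more detail than the paper does.
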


Let $\xi_i\colon G\to W_i$ be the natural map, that is $\xi_i=f_{i-1} \circ...\circ f_0$. Note that for a fixed $j$, since $W_j$ is finitely presented $\ker(\xi_j)\subseteq \ker(\phi_i)$ for an $\omega$ large set of $i$. Hence after passing to a subsequence of $(W_j,\xi_j)$ and re-indexing, we can assume that for all $i$ we have $\ker(\xi_i)\subseteq\ker(\phi_i)$. This means that the map $\phi_i$ factors through $\xi_i$, that is 
\[
\phi_i=\lambda_i\circ\xi_i
\]
for some $\lambda_i\colon W_i\to \Gamma_i$.

Note that if a vertex group of $\AMOD$ is virtually abelian (respectively, a \QHAH--vertex group), then the corresponding vertex group of $\mathbb A_i$ is virtually abelian (respectively, \QHAH). We refer to all other vertex groups of $\mathbb A_i$ as rigid since they correspond to rigid vertex groups of $\AMOD$. Hence we can define an equivalence relation on $\Hom(W_i, \Gamma_i)$ using modular automorphism with respect to $\mathbb A_i$ in the same way as before.  Replacing each $\lambda_i$ with the shortest element of its equivalence class $\widehat{\lambda}_i$ gives a new sequence
\[
\eta_i\colon=\widehat{\lambda}_i\circ\xi_i.
\]
Define $Q=G/\ker^\omega(\eta_i)$. Since by Lemma \ref{l:ugly hack}.\eqref{eq:direct limit} and construction we have $\ker^\omega(\phi_i)=\ker^\omega(\xi_i)\subseteq\ker^\omega(\eta_i)$, there is a natural map $\pi\colon L\to Q$, and $Q$ is called a \emph{shortening quotient} of $L$.

\begin{lem}\label{lem:shortq1}
 Let $L$ be a divergent $\mc{G}$--limit group which does not split over $\mc{A}_{\ufin}$ relative to $\mc{H}$ and let $\pi\colon L\to Q$ be a shortening quotient, via the construction (and with the notation) described above. Then $\pi$ is injective on the rigid vertex groups of the modular splitting of $L$. Furthermore, if $\lim^\omega(||\eta_i||)=\infty$ (so $Q$ is a divergent $\mc{G}$--limit group), then $\pi$ is a proper (non-injective) quotient map. 

\end{lem}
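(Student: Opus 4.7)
The plan for part (1) exploits the fact that any $\alpha\in\Mod(\mathbb A_i)$ acts on each rigid vertex group $R_i$ of $W_i$ by conjugation inside $W_i$: type (1) is inner, type (2) Dehn twists conjugate one side of the corresponding one-edge splitting, and natural extensions of types (3) and (4) restrict to the identity on vertex groups other than the one they modify. Since $\widehat\lambda_i$ lies in the $\Mod(\mathbb A_i)$--orbit of $\lambda_i$ (up to conjugation in $\Gamma_i$), the restriction $\widehat\lambda_i|_{R_i}$ is therefore a $\Gamma_i$--conjugate of $\lambda_i|_{R_i}$. Given a rigid vertex group $R$ of $\AMOD$ and a nontrivial $r\in R$, Lemma \ref{l:ugly hack}(5a) produces $i_0$ and $r_0\in R_{i_0}$ with $h_{i_0}(r_0)=r$; choosing $\tilde r\in G$ with $\xi_{i_0}(\tilde r)=r_0$, one computes $\eta_i(\tilde r)=\widehat\lambda_i(r_i)$, which is conjugate in $\Gamma_i$ to $\lambda_i(r_i)=\phi_i(\tilde r)$ for $i\geq i_0$, where $r_i=f_{i-1}\circ\cdots\circ f_{i_0}(r_0)$. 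Since $r\neq 1$ in $L$ forces $\phi_i(\tilde r)\neq 1$ $\omega$--almost surely, we get $\pi(r)\neq 1$, proving injectivity of $\pi|_R$.

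For part (2) the strategy is to argue by contradiction: assume $\pi$ is injective, so that $\ker^\omega(\eta_i)=\ker(\phi_\infty)$ and the divergent sequence $(\eta_i)$ also defines $L$. Since $\lim^\omega\|\eta_i\|=\infty$, Theorem \ref{t:limiting R-tree} produces a nontrivial $L$--action on a limiting $\R$--tree $T_\eta$, and the Rips machine (Theorem \ref{t:Rmach applies} together with Proposition \ref{prop:splitting}) yields a graph-of-actions splitting $\mathbb B$ of $L$; Theorem \ref{t:all the splittings} then gives $\Mod_{\mathbb B}(L)\subseteq\Mod_{\AMOD}(L)$. The plan is to run the shortening argument as in the sketch proof of Theorem \ref{shortarg} on the sequence $(\eta_i)$: for each generator of $G$ whose image contributes a nontrivial arc at the basepoint of $T_\eta$, build a modular automorphism (a fixed $\alpha\in\Mod(L)$ in the axial and Seifert cases, or a sequence of Dehn twists $\alpha_i$ in the simplicial case) that strictly shortens that generator without lengthening the others. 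Taking the product over generators produces a shortening element which, after being lifted to an $\alpha_i\in\Mod(\mathbb A_i)$ as in the next paragraph, satisfies $\|\widehat\lambda_i\circ\alpha_i\|<\|\widehat\lambda_i\|$ $\omega$--almost surely, contradicting the minimality of $\widehat\lambda_i$ in its orbit.

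The main obstacle is transferring the shortening modular automorphism of $L$ into an element of $\Mod(\mathbb A_i)$ acting on $W_i$. For Dehn twists, the twisting element $c$ in an edge group of $\AMOD$ lifts to $c_i\in E_i$ via Lemma \ref{l:ugly hack}(6), giving a Dehn twist of $\mathbb A_i$; for \QHAH-type automorphisms, the lift uses the isomorphism $h_i|_{V_i}\colon V_i\to V$ of Lemma \ref{l:ugly hack}(5b); and for type (3) extensions on virtually abelian vertex groups, only the injection $h_i|_{V_i}\colon V_i\hookrightarrow V$ from Lemma \ref{l:ugly hack}(5c) is available, so one must argue that for $i$ sufficiently large the required automorphism restricts to $V_i$ in the correct form (its defining data is carried by finitely many elements of $V$, all present in $h_i(V_i)$ eventually). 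Once $\alpha_i$ is produced, the exhaustion $L=\varinjlim W_i$ from Lemma \ref{l:ugly hack}(3) ensures that the shortening achieved at the $L$ level is mirrored $\omega$--almost surely by $\alpha_i$ acting on $W_i$, closing the contradiction.
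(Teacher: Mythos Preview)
Your proof is correct and follows essentially the same approach as the paper. Part~1 is nearly identical to the paper's argument: both use that modular automorphisms act on rigid vertex groups by conjugation, so $\widehat\lambda_i$ and $\lambda_i$ agree up to conjugacy on these vertex groups and hence have the same kernel there. For part~2, the paper simply says that ``a natural adaptation of the shortening argument'' applied to the action of $L$ on the limiting tree for $(\eta_i)$ contradicts the shortness of $\widehat\lambda_i$; you have spelled out what this adaptation actually requires, namely lifting the shortening automorphisms from $\Mod_{\AMOD}(L)$ back to $\Mod(\mathbb A_i)$ using the structure provided by Lemma~\ref{l:ugly hack}(5b), (5c), and (6), which is precisely the content the paper leaves implicit.
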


\begin{proof}
Suppose $g\in L\setminus\{1\}$ belongs to a rigid vertex group of $L$ and $\widetilde{g}\in\phi_{\infty}^{-1}(g)$. Then $g_i:=\xi_i(\widetilde{g})$ belongs to the corresponding vertex group of $W_i$ for sufficiently large $i$. This means that $\widehat{\lambda_i}(g_i)$ is conjugate to $\lambda_i(g_i)$, since modular automorphisms act by conjugation in $W_i$ on rigid vertex groups. Since $\widetilde{g}\notin\ker^\omega(\phi_i)$ and $\phi_i=\lambda_i\circ\xi_i$, we get that $\omega$--almost surely $\lambda_i(g_i)\neq 1$ and hence $\omega$--almost surely $\widehat{\lambda_i}(g_i)\neq1$. Since $\eta_i=\widehat{\lambda}_i\circ\xi_i$, we get that $\eta_{\infty}(\widetilde{g})\neq 1$, hence $\pi(g)=\eta_{\infty}(\widetilde{g})\neq 1$.

If $L=Q=G/\ker^\omega(\eta_i)$ and $\|\eta_i\|$ is unbounded, then  a natural adaptation of the shortening argument (see Section \ref{s:shortarg}) can be applied to the action of $L$ on the limiting $\R$--tree coming from the sequence $(\eta_i)$, and this contradicts the shortness of the $\widehat{\lambda_i}$ for an $\omega$--large set of $i$.
\end{proof}

\begin{lem}\label{lem:shortq2}
 Let $\pi\colon L\to Q$ be a shortening quotient of a divergent $\mc{G}$--limit group $L$ which does not split over $\mc{A}_{\ufin}$ relative to $\mc{H}$, and suppose every non-(absolutely elliptic) virtually abelian subgroup of $L$ is finitely generated. If $\eta_i$ $\omega$--almost surely factors through $\eta_\infty$, then $\phi_i$ $\omega$--almost surely factors through $\phi_\infty$.
\end{lem}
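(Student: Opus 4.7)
The plan is to apply Lemma~\ref{lem:relfpen} with $\{P_1,\dots,P_n\}$ the vertex groups of the modular splitting $\AMOD$ of $L$, which is well-defined since $L$ does not split over $\mc{A}_{\ufin}$ relative to $\mc{H}$. Every edge group of $\AMOD$ is finitely generated: those in $\mc{A}_{\ufin}$ are finite, those in $\mc{A}_\infty$ are finitely generated by the hypothesis on non-(absolutely elliptic) virtually abelian subgroups, and the edges introduced by the flexible refinement of Lemma~\ref{l:flexible} are $C$--virtually cyclic. Hence $L$ is finitely presented relative to its vertex groups, and it suffices to exhibit, for each vertex group $V$ of $\AMOD$, a lift $\widetilde V\le G$ with $\phi_\infty(\widetilde V)=V$ such that $\phi_i|_{\widetilde V}$ factors through $\phi_\infty|_{\widetilde V}$ $\omega$--almost surely.

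If $V$ is \QHAH\ or virtually abelian then $V$ is finitely presented (using the hypothesis for the virtually abelian case); pick any finitely generated lift $\widetilde V$ surjecting onto $V$. Then $\ker(\phi_\infty|_{\widetilde V})$ is normally generated in $\widetilde V$ by a finite set $R\subseteq\ker^\omega(\phi_i)$, so finite additivity of $\omega$ gives $R\subseteq\ker(\phi_i)$ and therefore $\ker(\phi_\infty|_{\widetilde V})\subseteq\ker(\phi_i|_{\widetilde V})$ $\omega$--almost surely.

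The rigid case is the heart of the argument. Weidmann's $(k,C)$--acylindrical accessibility \cite{Weid-access} first guarantees that $V$ is finitely generated. Writing $V_i$ for the vertex group of $\mathbb A_i$ corresponding to $V$ in the construction of Lemma~\ref{l:ugly hack}, the transition maps $f_{k,i}\co W_k\to W_i$ carry $V_k$ into $V_i$, and $V=\bigcup_i h_i(V_i)$. I choose a finite generating set $v_1,\dots,v_m$ for $V$, for each $\ell$ an index $i_\ell$ with $v_\ell\in h_{i_\ell}(V_{i_\ell})$, a preimage $w_\ell\in V_{i_\ell}$, and a further preimage $\tilde v_\ell\in\xi_{i_\ell}^{-1}(w_\ell)\subseteq G$. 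Setting $\widetilde V:=\langle\tilde v_1,\dots,\tilde v_m\rangle$ and $i_0:=\max_\ell i_\ell$, one has $\phi_\infty(\widetilde V)=V$ and $\xi_i(\widetilde V)\subseteq V_i$ for every $i\ge i_0$.

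Three ingredients then combine. (i) By Lemma~\ref{lem:shortq1}, $\pi|_V$ is injective, so for $g\in\widetilde V$ we have $\eta_\infty(g)=\pi(\phi_\infty(g))=1$ if and only if $\phi_\infty(g)=1$, whence $\ker(\phi_\infty|_{\widetilde V})=\ker(\eta_\infty|_{\widetilde V})$. (ii) Every modular automorphism of $\mathbb A_i$ acts on the rigid vertex group $V_i$ as an inner automorphism of $W_i$ (types (3) and (4) fix $V_i$ pointwise, and Dehn twists act by conjugation by the twist element). Writing $\widehat\lambda_i=c_{g_i}\circ\lambda_i\circ\alpha_i$ with $\alpha_i\in\Mod_{\mathbb A_i}(W_i)$, we conclude that $\widehat\lambda_i|_{V_i}$ and $\lambda_i|_{V_i}$ differ by conjugation in $\Gamma_i$; since $\xi_i(\widetilde V)\subseteq V_i$ for $i\ge i_0$, this yields $\ker(\eta_i|_{\widetilde V})=\ker(\phi_i|_{\widetilde V})$ for all such $i$. (iii) By hypothesis, $\ker(\eta_\infty)\subseteq\ker(\eta_i)$ $\omega$--almost surely. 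Chaining these gives $\ker(\phi_\infty|_{\widetilde V})=\ker(\eta_\infty|_{\widetilde V})\subseteq\ker(\eta_i|_{\widetilde V})=\ker(\phi_i|_{\widetilde V})$ $\omega$--almost surely, so that Lemma~\ref{lem:relfpen} applies. The key obstacle is exactly this rigid case: Lemma~\ref{lem:relfpen} demands the factorisation property uniformly in $g\in\widetilde V$, so $\widetilde V$ must be chosen \emph{compatibly} with the graph-of-groups structure on every $W_i$ in order that modular shortening restricted to $\widetilde V$ genuinely reduce to conjugation in $\Gamma_i$.
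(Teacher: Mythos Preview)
Your argument is correct and follows essentially the same route as the paper. The only organisational difference is that the paper takes \emph{only} the rigid vertex groups as the peripheral family $\{P_1,\dots,P_n\}$: since the \QHAH\ and virtually abelian vertex groups are finitely presented, $L$ is already finitely presented relative to just the rigid ones, which absorbs your separate treatment of the non-rigid cases into the relative presentation itself. Your handling of the rigid case---lifting generators so that $\xi_i(\widetilde V)\subseteq V_i$ for large $i$, and then using that modular automorphisms restrict to conjugation on rigid vertex groups to get $\ker(\eta_i|_{\widetilde V})=\ker(\phi_i|_{\widetilde V})$---is exactly the paper's argument. One small remark: your appeal to Weidmann's accessibility for the finite generation of rigid vertex groups is misplaced; finite generation follows directly from the edge groups being finitely generated (your first paragraph) together with $L$ being finitely generated, via Bass--Serre theory. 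Also, in step (i) you prove more than you need: the inclusion $\ker(\phi_\infty)\subseteq\ker(\eta_\infty)$, which holds simply because $\eta_\infty$ factors through $\phi_\infty$, already suffices without invoking injectivity from Lemma~\ref{lem:shortq1}.
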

\begin{proof}
By assumption, all edge groups and hence all vertex groups of the modular decomposition of $L$ are finitely generated. Since \QHAH--vertex groups are finitely presented and finitely generated virtually abelian groups are finitely presented, if follows that $L$ is finitely presented relative to rigid vertex groups $P_1,...,P_n$. Since each $P_j$ is finitely generated, it has a lift $\widetilde{P_j}$ such that $\xi_i(\widetilde{P}_j)$ is contained in the corresponding vertex group of $\mathbb A_i$ for all sufficiently large $i$. Since $\phi_i$ and $\eta_i$ only differ by conjugation on these vertex groups, we see that $\omega$--almost surely $\ker(\eta_i|_{\widetilde{P}_j})=\ker(\phi_i|_{\widetilde{P}_j})$.

Now, $\ker(\phi_\infty)\subseteq \ker(\eta_\infty)$ by construction and by hypothesis $\omega$--almost surely we have $ \ker(\eta_\infty)\subseteq\ker(\eta_i)$. Hence $\ker(\phi_\infty)\subseteq\ker(\eta_i)$ $\omega$--almost surely. Thus, for each $P_j$, $\omega$--almost surely so $\ker(\phi_\infty|_{\widetilde{P}_j})\subseteq\ker(\eta_i|_{\widetilde{P}_j})=\ker(\phi_i|_{\widetilde{P}_j})$. Now Lemma \ref{lem:relfpen} implies that $\phi_i$ $\omega$--almost surely factors through $\phi_\infty$ .
\end{proof}

Suppose now that $L$ does split over $\mc{A}_\ufin$ relative to $\mc{H}$. In this case, we start by taking a Linnell decomposition $\mathbb D_L$, which is a $(\mc{A}_\ufin,\mc{H})$ JSJ-decomposition in the sense of Guirardel-Levitt (see \cite{Linnell} and \cite{GuiLev2}). In particular, each vertex group of $\mathbb D_L$ does not split over any subgroup of $\mc{A}_\ufin$ relative to $\mc{H}$. Hence we can apply the above construction of shortening quotients to each vertex group of $\mathbb D_L$, then ``glue" the shortening quotients together in the natural way to form a shortening quotient of $L$, and this shortening quotient of $L$ is a $\mc{G}$--limit group. This procedure is the same as that described after the proof of \cite[Proposition 6.2]{ReiWei}. 

The proof of the following lemma is contained in the proof of \cite[Proposition 6.3]{ReiWei}.
\begin{lem}\label{l:Linnell decomp}
Using the above description of shortening quotients, Lemmas \ref{lem:shortq1} and \ref{lem:shortq2} hold without the assumption that $L$ does not split over $\mc{A}_\ufin$ relative to $\mc{H}$.
\end{lem}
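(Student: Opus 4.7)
The plan is to prove both assertions separately, in each case reducing to the corresponding statements already proved for the vertex groups of the Linnell decomposition $\mathbb{D}_L$. Fix $\mathbb{D}_L$, so every edge group lies in $\mc{A}_\ufin$ and each vertex group $V$ admits no further $(\mc{A}_\ufin, \mc{H})$--splitting relative to its incident edge groups. For each $V$ lift to $\widetilde{V}\leq G$ and apply the shortening quotient construction preceding Lemma \ref{lem:shortq1} to the restricted sequence $\phi_i|_{\widetilde V}$ to obtain a shortening quotient $\pi_V\co V\to V^{*}$. Since every edge group of $\mathbb{D}_L$ is uniformly finite, hence a rigid subgroup of the modular decomposition of every adjacent Linnell vertex group, Lemma \ref{lem:shortq1} applied to each $V$ guarantees that the $\pi_V$ are injective on edge groups. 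Therefore the $\{\pi_V\}$ glue along the Bass--Serre structure of $\mathbb{D}_L$ to produce a well-defined shortening quotient $\pi\co L\to Q$, where $Q$ is the fundamental group of the graph of groups obtained from $\mathbb{D}_L$ by substituting each $V$ with $V^{*}$.

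To adapt Lemma \ref{lem:shortq1}, I would observe that in this general setting the modular decomposition of $L$ is the refinement of $\mathbb{D}_L$ obtained by subdividing each Linnell vertex group by its own modular splitting, so the rigid vertex groups of $L$ coincide with the rigid vertex groups inside the Linnell vertex groups. Injectivity of $\pi$ on these rigid vertex groups then follows immediately from the corresponding statement for each $\pi_V$. For the properness assertion, if $\lim^\omega\|\eta_i\|=\infty$, then (since only a bounded contribution to $\|\eta_i\|$ comes from Bass--Serre elements and the finitely many uniformly finite edge subgroups) at least one Linnell vertex group $V$ satisfies $\lim^\omega\|\eta_i|_{\widetilde V}\|=\infty$, and the vertex-level shortening argument then forces $\pi_V$ to be a proper quotient, which in turn makes $\pi$ proper.

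To adapt Lemma \ref{lem:shortq2}, the key point is that $\mathbb{D}_L$ has finitely many vertices and finite edge groups, so $L$ is finitely presented relative to the vertex groups $\{V\}$ of $\mathbb{D}_L$. The hypothesis that every non-(absolutely elliptic) virtually abelian subgroup of $L$ is finitely generated is inherited by each $V$, so the vertex-level Lemma \ref{lem:shortq2} applies: assuming $\eta_i$ factors through $\eta_\infty$ $\omega$--almost surely, the restriction yields that $\eta_i|_{\widetilde V}$ factors through $\eta_\infty|_{\widetilde V}$ $\omega$--almost surely, hence $\phi_i|_{\widetilde V}$ factors through $\phi_\infty|_{\widetilde V}$ $\omega$--almost surely. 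Applying Lemma \ref{lem:relfpen} to the relative finite presentation of $L$ with peripheral subgroups $\{V\}$ then gives the conclusion for $\phi_i$ and $\phi_\infty$.

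The main obstacle I anticipate is making the gluing step rigorous: one must verify that the $\pi_V$ indeed combine into a homomorphism on the whole of $L$, which uses injectivity of each $\pi_V$ on the uniformly finite edge groups and the fact that each $\pi_V$ is defined via a compatible choice of lifts; and one must also check that the interpretation of the ``modular decomposition'' and of ``rigid vertex group'' in the general case (where $L$ itself may split over $\mc{A}_\ufin$) matches the refinement of $\mathbb{D}_L$ by the vertex modular splittings, so that the conclusions of Lemma \ref{lem:shortq1} transfer cleanly from $V$ to $L$. Apart from these bookkeeping issues, the proof is a direct adaptation of the argument in \cite[Proposition 6.3]{ReiWei}.
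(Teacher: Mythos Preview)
Your proposal is aligned with the paper's own proof, which consists of a single sentence deferring to \cite[Proposition 6.3]{ReiWei}; your sketch is a reasonable unpacking of that reference, and you explicitly identify it as such. The outline for the analogue of Lemma \ref{lem:shortq2} is correct: finite edge groups make $L$ finitely presented relative to the Linnell vertex groups, the hypothesis descends to each $V$, and Lemma \ref{lem:relfpen} then reassembles the conclusion.

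One point deserves more care. In your properness argument for the analogue of Lemma \ref{lem:shortq1} you write that ``only a bounded contribution to $\|\eta_i\|$ comes from Bass--Serre elements and the finitely many uniformly finite edge subgroups,'' and conclude that global divergence forces some $\lim^\omega\|\eta_i|_{\widetilde V}\|=\infty$. This is not quite the right reason. The quantity $\|\eta_i|_{\widetilde V}\|$ is computed at the optimal basepoint for $\widetilde V$, while $\|\eta_i\|$ uses a single basepoint for all of $S$; even if every vertex sequence is non-divergent, the optimal basepoints for distinct Linnell vertices could drift apart in $X_i$, and the Bass--Serre generators can witness this. The way \cite{ReiWei} handles this is that inner automorphisms (conjugation in $\Gamma_i$) are part of the equivalence relation used to define ``short,'' so one may first conjugate each vertex-level map to align basepoints before comparing; equivalently, one argues via the limiting $\R$--tree for the global $(\eta_i)$ and observes that if no $\pi_V$ is proper then $L$ acts on this tree with all Linnell vertex groups elliptic, contradicting minimality once the finite edge groups are accounted for. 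Either route closes the gap, but the sentence as written does not.
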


We next want to understand when $\mc{G}$--limit groups satisfy the hypothesis of Lemma \ref{lem:shortq2}, that is when all non-absolutely elliptic virtually abelian subgroups are finitely generated.

\begin{lem}\label{l:infgen subgroup}
Suppose a $\mc{G}$--limit group $L$ contains a non-absolutely elliptic virtually abelian subgroup which is not finitely generated. Then there exists an infinite sequence of divergent $\mc{G}$--limit groups
\[
L\overset{\alpha_1}{\onto} S_1\overset{\alpha_2}{\onto} \cdots ,
\]
such that each $\alpha_j$ is a proper quotient map.
\end{lem}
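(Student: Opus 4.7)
The first step is to show that $L$ itself is a divergent $\mc{G}$--limit group. Since $A$ is non-absolutely elliptic, we may choose a defining sequence $(\phi_i \co G \to \Gamma_i)$ with respect to which $A \in \mc{NLSE}(L)$. If $(\phi_i)$ were non-divergent, Lemma \ref{lem:non-div aasub} would force every virtually abelian subgroup in $\mc{NLSE}(L)$ to be virtually cyclic; but $A$ is virtually abelian and not virtually cyclic, since it is not finitely generated. Thus $(\phi_i)$ must be divergent.

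Next I would enlarge $A$ to the unique maximal virtually abelian subgroup $M_A \supseteq A$ provided by Lemma \ref{max-aasub}; it remains non-absolutely elliptic and not finitely generated. By Lemma \ref{aa-conj}, $M_A$ is almost malnormal in $L$: $gM_Ag^{-1} \cap M_A$ has order at most $C$ whenever $g \notin M_A$. I would then write $M_A$ as a strictly increasing union of finitely generated subgroups $A_1 \subsetneq A_2 \subsetneq \cdots$, arranging (using that $M_A^+/\mathrm{torsion}$ is a non-finitely-generated torsion-free abelian group) that each $A_{k+1}/A_k$ contains an element of infinite order.

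To construct $S_k$, I would use that $M_A$ appears as a virtually abelian vertex of the modular splitting $\AMOD$ of $L$ (Definition \ref{d:JSJ}), so the modular automorphisms of type (3) give natural extensions of automorphisms of $M_A$ that fix the finitely generated subgroup $P^+$ (generated by the incident edge groups together with torsion). Since $M_A^+/P^+$ is not finitely generated, there is an abundance of such automorphisms in $\Mod(L)$. Combining them with the defining sequence $(\phi_i)$ produces new defining sequences $(\phi_i^{(k)} \co G \to \Gamma_i)$ whose $\omega$--limit map factors through a quotient $S_k$ of $L$ in which $M_A$ is collapsed onto $A_k$; divergence of $(\phi_i^{(k)})$ is preserved because the modifications occur only inside the $M_A$--vertex, while the unbounded translation length of $(\phi_i)$ is witnessed on the rigid portion of $\AMOD$. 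Properness of each $\alpha_k \co S_{k-1} \to S_k$ follows from almost-malnormality, since the intersection with $M_A$ of the kernel of $L \to S_k$ is essentially $A_k$ up to a uniformly finite error of order at most $C$, and no subgroup generated by $A_{k-1}$ together with finitely many elements of uniformly bounded order can contain the infinite-order representative chosen in $A_k / A_{k-1}$. The main obstacle is the explicit construction of the modified sequences $(\phi_i^{(k)})$: one must ensure that they genuinely define $\mc{G}$--limit groups (i.e.\ that the modular automorphisms on $L$ can be realised at the level of homomorphisms to $\Gamma_i$), that they factor through the intended quotients, and that they remain divergent.
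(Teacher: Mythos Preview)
Your opening paragraph (reducing to a divergent defining sequence via Lemma \ref{lem:non-div aasub}) is correct and matches the paper. After that, however, the strategy diverges from the paper's and contains genuine gaps.

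First, you assert that $M_A$ appears as a virtually abelian vertex group of $\AMOD$. This is not justified: Theorem \ref{t:JSJ} only tells you $M_A$ is \emph{elliptic} in the modular tree, so after conjugating it lies in some vertex group $L_v$, which may be rigid rather than virtually abelian. (The paper handles exactly this dichotomy: if $L_v$ is virtually abelian then, since $L$ is finitely generated, some incident edge group must be infinitely generated, and the opposite vertex of that edge is rigid; either way one locates an infinitely generated non--absolutely elliptic virtually abelian subgroup $B$ inside a \emph{rigid} vertex group.)

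Second, and more seriously, the mechanism you propose does not produce quotients. Modular automorphisms of type (3) are automorphisms of $L$ (or of the approximating $W_i$); precomposing a defining sequence with automorphisms cannot ``collapse $M_A$ onto $A_k$'', because automorphisms are bijections and do not enlarge the $\omega$--kernel in a targeted way. The shortening machinery does yield a proper quotient, but only by choosing the \emph{shortest} representative and then invoking the shortening argument to get a contradiction with $\ker^\omega=\{1\}$; it does not let you prescribe which elements die. Your claim that the limit of $(\phi_i^{(k)})$ is the quotient $L/\llangle M_A\setminus A_k\rrangle$ has no supporting argument, and your divergence claim (``witnessed on the rigid portion of $\AMOD$'') confuses translation length in $X_{\Gamma_i}$ with the combinatorics of the modular tree of $L$.

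The paper's route avoids all of this. Having placed the infinitely generated $B$ in a rigid vertex, it simply takes iterated shortening quotients $L\onto S_1\onto S_2\onto\cdots$ as constructed before Lemma \ref{lem:shortq1}. By Lemmas \ref{lem:shortq1} and \ref{l:Linnell decomp} the shortening quotient map is injective on rigid vertex groups, so $B$ embeds in each $S_j$; moreover, since modular automorphisms act by conjugation on rigid vertices, $B$ remains in $\mathcal{NLSE}(S_j)$ for the constructed defining sequence of $S_j$. Lemma \ref{lem:non-div aasub} then forces that defining sequence to be divergent, and divergence forces the quotient map to be proper (again by Lemmas \ref{lem:shortq1} and \ref{l:Linnell decomp}). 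Iterating gives the infinite chain. The point you were missing is that the rigid vertex, not the virtually abelian one, is where the action happens: injectivity there is what propagates the infinitely generated subgroup down the chain.
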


\begin{proof}
 Let $A$ be a non-(absolutely elliptic) virtually abelian subgroup of $L$ which is not finitely generated. We first show that some rigid vertex group in the modular splitting of $L$ contains an infinitely generated non-(absolutely elliptic) virtually abelian subgroup.  By Theorem \ref{t:JSJ} and the construction of the modular splitting,  after replacing $A$ with a conjugate we can assume that $A$ is contained in a vertex group $L_v$ of the modular splitting of $L$. \QHAH--vertex groups do not contain infinitely generated virtually abelian subgroups, so $L_v$ is either virtually abelian or rigid. If $L_v$ is virtually abelian, it must be infinitely generated since it contains an infinitely generated subgroup $A$. Since $L$ is finitely generated, $L_v$ is finitely generated relative to the adjacent edge groups, so there is some edge $e$ adjacent to $v$ such that $L_e$ is infinitely generated. By construction of the modular splitting, the other vertex $u$ adjacent to $e$ must correspond to a rigid vertex group $L_u$. Hence, in either case $L$ must have a rigid vertex group which contains a non-absolutely elliptic virtually abelian subgroup $B$ which is not finitely generated.

Let $(\phi_i)$ from  $\Hom(F_n, \mc{G})$ be a divergent defining sequence for $L$ such that $B\in\mathcal{NLSE}(L)$ with respect to the sequence $(\phi_i)$. Note that $(\phi_i)$ must be a divergent sequence by Lemma \ref{lem:non-div aasub}. Let $S_1$ be a shortening quotient of $L$ constructed as above starting with the defining sequence $(\phi_i)$. Let $\alpha_1\colon L\onto S_1$ be the corresponding quotient map. Then $B$ embeds in $S_1$ by Lemmas \ref{lem:shortq1} and \ref{l:Linnell decomp}. Furthermore, because shortening moves act by conjugation on rigid vertex groups, $B\in\mc{NLSE}(S_1)$ with respect to the constructed defining sequence for $S_1$.  By Lemma \ref{lem:non-div aasub}, this defining sequence must be divergent, and hence $\alpha_1$ is a proper quotient map by Lemmas \ref{lem:shortq1} and \ref{l:Linnell decomp}.

Now $B$ is a non-(absolutely elliptic) virtually abelian subgroup of $S_1$ which is not finitely generated, and so we can repeat the above argument inductively to obtain a sequence

\[
L\overset{\alpha_1}{\onto} S_1\overset{\alpha_2}{\onto}...
\]
where each $S_j$ is a divergent $\mc{G}$--limit group and each $\alpha_j$ is a proper quotient map.
\end{proof}

 In order to ensure that there are no infinite descending sequences of $\mc{G}$--limit groups we need an assumption about the non-divergent $\mc{G}$--limit groups. Such an assumption is required for our approach because techniques using limiting $\R$--trees cannot be applied to non-divergent $\mc{G}$--limit groups.

The next theorem is similar to \cite[Theorem 13]{JalSel}.  See also \cite[Theorem 1.12]{sela:dio7}.

\begin{thm}\label{dcc}
Suppose that for any finitely generated group $G$ and any non-divergent sequence $(\phi_i)$ from $\Hom(G, \mc{G})$, $\phi_i$ factors through the limit map $\phi_\infty$ $\omega$--almost surely. Then there is no infinite sequence of $\mc{G}$--limit groups
\[
L_1\overset{\alpha_1}{\onto} L_2\overset{\alpha_2}{\onto}...
\]
such that each $\alpha_i$ is a proper quotient map.
\end{thm}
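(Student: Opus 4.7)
The plan is to argue by contradiction, using the infinite descending chain to construct a diagonal sequence of homomorphisms whose limit map fails the factorization property, contradicting the hypothesis. Suppose such a chain $L_1 \overset{\alpha_1}{\onto} L_2 \overset{\alpha_2}{\onto} \ldots$ exists, set $G := L_1$, and let $K_i := \ker(G \onto L_i)$; by properness, $K_1 \subsetneq K_2 \subsetneq \cdots$ strictly, with union $K_\infty = \bigcup_i K_i$ and direct limit $L_\infty := G/K_\infty$. Using Lemma \ref{l:newgenset} to transfer any defining sequence of $L_i$ into one on $G$, I fix for each $i$ a defining sequence $(\phi^i_j)_j$ with $\phi^i_j \co G \to \Gamma^i_j \in \mc{G}$ and $\ker^\omega_j(\phi^i_j) = K_i$.

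For the diagonalization, enumerate $K_\infty = \{g_1, g_2, \ldots\}$ and $G \setminus K_\infty = \{h'_1, h'_2, \ldots\}$, and for each $k$ pick an index $i(k)$ with $g_1, \ldots, g_k \in K_{i(k)}$ and choose $h_k \in K_{i(k)+1} \setminus K_{i(k)}$ (non-empty since $\alpha_{i(k)}$ is proper). Each of the conditions
\[
\phi^{i(k)}_j(g_m)=1\ (m\le k),\qquad \phi^{i(k)}_j(h'_m)\ne 1\ (m\le k),\qquad \phi^{i(k)}_j(h_k)\ne 1
\]
holds for $j$ in an $\omega$--large set (the first because $g_m \in K_{i(k)} = \ker^\omega_j(\phi^{i(k)}_j)$, the latter two because $h'_m, h_k \notin K_{i(k)}$), so their intersection is $\omega$--large and I pick $j(k)$ in it, setting $\eta_k := \phi^{i(k)}_{j(k)}$. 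Then $\ker^\omega_k(\eta_k) = K_\infty$, the limit map is $\eta_\infty \co G \onto L_\infty$, and $\eta_k(h_k) \ne 1$ for every $k$ even though $h_k \in K_\infty$.

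If $(\eta_k)$ is non-divergent, the hypothesis gives $K_\infty = \ker(\eta_\infty) \subseteq \ker(\eta_k)$ $\omega$--almost surely, forcing $\eta_k(h_k) = 1$ $\omega$--almost surely and contradicting the construction. If $(\eta_k)$ is divergent, I apply the shortening-quotient construction of Section \ref{s:short quotient} to $(\eta_k)$ viewed as a defining sequence of $L_\infty$, invoking a Linnell decomposition as in Lemma \ref{l:Linnell decomp} to handle splittings of $L_\infty$ over finite subgroups of order at most $2C$. The shortened sequence $(\tilde\eta_k)$ defines a shortening quotient $Q$ of $L_\infty$. When $(\tilde\eta_k)$ is non-divergent, the hypothesis yields its factorization and Lemma \ref{lem:shortq2} propagates the factorization back to $(\eta_k)$, again forcing $\eta_k(h_k) = 1$ $\omega$--almost surely. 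When $(\tilde\eta_k)$ is still divergent, Lemma \ref{lem:shortq1} gives that $Q$ is a proper quotient of $L_\infty$, and I iterate the shortening-quotient step starting from $Q$.

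The main obstacle is ensuring the applicability of Lemma \ref{lem:shortq2} and the termination of the iteration. Lemma \ref{lem:shortq2} requires every non-(absolutely elliptic) virtually abelian subgroup of the ambient limit group to be finitely generated; whenever this fails, Lemma \ref{l:infgen subgroup} provides an alternative infinite chain of divergent $\mc{G}$--limit groups with proper quotients in which the finite-generation condition can be maintained at each stage, and the diagonalization above applies to that chain instead. Combined with the iterated shortening, this argument is driven into the non-divergent case after finitely many steps, yielding the desired contradiction.
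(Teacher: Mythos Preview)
Your diagonalization and the overall strategy match the paper's, but the final paragraph hides a genuine gap: you never justify why the iterated shortening terminates, nor why the finite-generation hypothesis of Lemma \ref{lem:shortq2} eventually holds. If the shortening quotients of $L_\infty$ form an infinite chain $L_\infty \onto S_1 \onto S_2 \onto \cdots$ of proper divergent quotients, you have produced another infinite descending chain and are back where you started; your appeal to Lemma \ref{l:infgen subgroup} does not help, because that lemma only \emph{produces} an infinite chain when finite generation fails --- it does not produce one with better properties, and re-diagonalizing along it just restarts the argument with no visible progress. Nothing in your proposal rules out an infinite regress of ``pass to a new chain, diagonalize again.''

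The paper closes this gap with an extra maximality trick that you are missing. Rather than taking the given chain, the paper rebuilds a chain $R_1 \onto R_2 \onto \cdots$ where at each stage $R_{n+1}$ is chosen so that $|\ker(\phi^{n+1}_\infty)\cap B_n|$ is \emph{maximal} among all $\mc{G}$--limit groups $S$ that can appear as the first term of an infinite descending chain below $R_n$. This choice forces the direct limit $R_\infty$ to admit \emph{no} infinite descending chain of proper $\mc{G}$--limit quotients whatsoever: any such chain would, at some finite stage, kill an element of some $B_n$ that survives in $R_{n+1}$, contradicting maximality. Once you know there is no infinite chain below $R_\infty$, both obstacles dissolve simultaneously: the shortening-quotient sequence from $R_\infty$ must be finite, and Lemma \ref{l:infgen subgroup} (applied to $R_\infty$ and to each $S_j$) guarantees the finite-generation hypothesis needed for Lemma \ref{lem:shortq2}. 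You should insert this maximality step before the diagonalization.
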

\begin{rem}
In Theorem \ref{t:EN up to non-div} below we prove that if $\mc{G}$ is a uniformly acylindrical family of groups satisfying the hypothesis of Theorem \ref{dcc} then $\mc{G}$ is an equationally noetherian family of groups.  The converse follows from Proposition \ref{eqnfamily}.
\end{rem}
\begin{proof}
Suppose towards a contradiction that some infinite descending sequence of $\mc{G}$--limit groups exists as above. By Lemma \ref{l:newgenset}, we can assume that there exists a sequence of $\mc{G}$--limit groups $R_1\overset{\beta_1}{\onto} R_2...$ with each $R_n=F_n/\ker^\omega(\phi_i^n)$ and $\phi^n_\infty\circ\beta_n=\phi^{n+1}_\infty$ for all $n$. We further assume that, for each $n\geq 1$, $R_{n+1}$ is chosen such that if $R_n\onto S\onto...$ is any infinite descending sequence of $\mc{G}$--limit groups with $S=F_n/\ker^\omega(\rho_i)$, then
\[
|\ker(\rho_\infty)\cap B_n|\leq |\ker(\phi^{n+1}_\infty)\cap B_n|
\]

where $B_n$ denote the ball of radius $n$ in $F_n$ with respect to the word metric.

Now choose a diagonal sequence $(\psi_i=\phi^i_{n_i})$, where $n_i$ is chosen such that $\ker(\psi_i)\cap B_i=\ker(\phi^i_\infty)\cap B_i$ and for some $g\in\ker(\phi^{i+1}_\infty)$, $g\notin\ker(\psi_i)$. Let $R_\infty=G/\ker^\omega(\psi_i)$; by construction, $\ker(\psi_\infty)=\bigcup_{j=1}^\infty\ker(\phi^j_\infty)$, that is $R_\infty$ is also the direct limit of the sequence  $F_n\onto R_1\onto R_2\onto...$.

Next we show that every descending sequence of $\mc{G}$--limit groups of the form 
\[
R_\infty\onto L_1\onto L_2...
\]
with proper quotient maps is finite. Indeed, if some element $g\in B_n$ maps trivially to $L_1$ but not to $R_\infty$, then $g$ must map non-trivially to $R_{n+1}$. But there is an infinite descending sequence $R_n\onto L_1\onto...$, so this contradicts our choice of $R_{n+1}$.

Since there is no infinite descending chain which starts at $R_\infty$, any sequence of proper shortening quotients of $R_\infty$ is finite. Thus, $R_\infty$ admits a finite sequence of proper shortening quotients
\[
R_\infty\twoheadrightarrow S_1...\twoheadrightarrow S_n.
\]
Clearly there can be no infinite descending chain starting at any $S_j$ for $1\leq j\leq n$, so by Lemma \ref{l:infgen subgroup} we get that all non-(absolutely elliptic) virtually abelian subgroups of $R_\infty$ and of each $S_j$ are finitely generated. Since $S_n=G/\ker^\omega(\eta_i^n)$ is non-divergent, by assumption $\eta^n_i$ $\omega$--almost surely factors through $\eta^n_\infty$. Hence we can inductively apply Lemma \ref{lem:shortq2} to get that $\psi_i$ $\omega$--almost surely factors through $\psi_\infty$. But then there exists $i$ such that $\ker(\psi_i)\subseteq\ker(\psi_\infty)\subseteq\ker(\phi^{i+1}_\infty)$, which contradicts our construction of $\psi_i$.

\end{proof}

As noted in the introduction, the following is our main technical theorem.

\begin{reptheorem}{t:EN up to non-div}
 Suppose that $\mc{G}$ is uniformly acylindrically hyperbolic family. Furthermore, suppose that whenever $G$ is a finitely generated group and $\eta_i \co G \to \mc{G}$ is a non-divergent sequence of homomorphisms, $\eta_i$ $\omega$--almost surely factors through the limit map $\eta_\infty$.  Then $\mc{G}$ is equationally noetherian.
\end{reptheorem}
\begin{proof}
Let $(\phi_i \co G \to \mc{G})$ be a divergent sequence of homomorphisms, and let $L$ be the associated limit group. By Theorem \ref{dcc}, any sequence of proper shortening quotients of $L$ eventually terminates in a limit group $S_n$ which is non-divergent by Lemmas \ref{lem:shortq1} and \ref{l:Linnell decomp}. If $(\eta_i)$ is the corresponding non-divergent sequence of homomorphisms, then $\eta_i$ factors through the limit map $\eta_\infty$ $\omega$--almost surely by assumption. Lemma \ref{l:infgen subgroup} together with Theorem \ref{dcc} imply that any non-(absolutely elliptic) virtually abelian subgroup of any $\mc{G}$--limit group is finitely generated. Hence we can inductively apply Lemmas \ref{lem:shortq2} and \ref{l:Linnell decomp} to the sequence of shortening quotients of $L$ to get that $\phi_i$ factors through $\phi_\infty$ $\omega$--almost surely, so $\mc{G}$ is equationally noetherian by Theorem \ref{eqnfamily}.   
\end{proof}

We begin with a straightforward application of Theorem \ref{t:EN up to non-div} to maps to free products.

\begin{defn}
Let $\mc{G}$ be a family of groups and let 
$\mc{G}^{\ast n}=\{\Gamma_1\ast...\ast\Gamma_n \mid \Gamma_1,...,\Gamma_n\in\mc{G}\}$.
 Let $\mc{G}^\ast=\bigcup\limits_{i=1}^\infty\mc{G}^{\ast n}$.
\end{defn}

By considering the action of an element of $\mc{G}^\ast$ on the Bass--Serre tree of the free splitting, we have the following obvious result.
\begin{lem} \label{lem:G^ast UAH}
The set $\mc{G}^\ast$ is a uniformly acylindrically hyperbolic family of groups.
\end{lem}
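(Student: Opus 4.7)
The plan is to equip each free product $\Gamma = \Gamma_1 \ast \cdots \ast \Gamma_n \in \mc{G}^{\ast n}$ with $n \geq 2$ and $\Gamma_i \in \mc{G}$ with the Bass--Serre tree $T_\Gamma$ of this free splitting as its associated hyperbolic space, while for $n = 1$ (so $\Gamma \in \mc{G}$) we retain the original $X_\Gamma$ from the UAH structure on $\mc{G}$. Since a simplicial tree is $0$-hyperbolic and $\mc{G}$ already has a uniform hyperbolicity constant $\delta$, the whole family $\mc{G}^\ast$ inherits $\delta$ as a uniform hyperbolicity constant.

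For uniform acylindricity, the key fact is that the Bass--Serre tree of a free product has trivial edge stabilizers: no nontrivial element of $\Gamma$ can fix an interior point of an edge of $T_\Gamma$. Given $\epsilon > 0$ and $x, y \in T_\Gamma$ with $d(x, y)$ chosen sufficiently large relative to $\epsilon$, I would argue that any $g$ with $d(x, gx), d(y, gy) \leq \epsilon$ is either the identity or loxodromic with translation length at most $\epsilon$ and axis $A$ passing within $\epsilon/2$ of both $x$ and $y$: nontrivial elliptics are excluded since their fixed vertex would have to lie near both $x$ and $y$, contradicting that $d(x,y)$ is large. Then, tracking where the midpoint $m$ of $[x, y]$ is sent, $gm$ lies on $[x,y]$ within distance $\epsilon$ of $m$, giving $O(\epsilon)$ candidates, and distinct such $g, g'$ cannot share the same image $gm = g'm$: otherwise $h = g^{-1}g'$ would be a nontrivial elliptic fixing $m$, forcing $d(x, hx) = d(x,y)$, which contradicts $d(x, hx) \leq 2\epsilon$ when $d(x,y) > 2\epsilon$. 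Setting $R'_\epsilon$ and $N'_\epsilon$ to the maximum of these bounds and the original constants for $\mc{G}$ then yields uniform acylindricity for $\mc{G}^\ast$.

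Uniform non-elementarity is immediate: the UAH hypothesis on $\mc{G}$ provides some $(\Gamma, X_\Gamma)$ with non-elementary action, and this pair lies in $\mc{G}^\ast$ via $\mc{G} = \mc{G}^{\ast 1} \subset \mc{G}^\ast$. The main subtlety is that vertex stabilizers of $T_\Gamma$ can be arbitrarily large (as the factors $\Gamma_i$ may be infinite), so a naive bound on local stabilizers would fail; the midpoint-tracking argument sidesteps this by combining the hypothesis that $g$ almost-fixes \emph{both} $x$ and $y$ with the triviality of edge stabilizers.
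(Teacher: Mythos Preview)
Your proof is correct and follows the same approach the paper gestures at in its one-line justification: equip each free product with its Bass--Serre tree and exploit triviality of edge stabilizers to get uniform acylindricity. The only step you leave implicit is why there are $O(\epsilon)$ candidates for $gm$ rather than a continuum of points on $[x,y]$ near $m$: this uses that the action on $T_\Gamma$ is simplicial, so translation lengths are positive integers, giving at most $2\lfloor\epsilon\rfloor$ possible images $gm$ along $[x,y]$.
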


The following result is very similar in nature to the results in the first part of \cite{JalSel}, though this result does not appear there.
\begin{repcorollary}{t:G^ast EN}
If $\mc{G}$ is an equationally noetherian family of groups, then $\mc{G}^{\ast}$ is an equationally noetherian family of groups.
\end{repcorollary}

\begin{proof}
 Let $G$ be a finitely generated group (with a fixed finite generating set).  As noted in Lemma \ref{lem:G^ast UAH} above, $\mc{G}^\ast$ is a uniformly acylindrically hyperbolic family.
Hence, we can apply Theorem \ref{t:EN up to non-div}, and we only need to deal with non-divergent sequences of homomorphisms from $\Hom(G,\mc{G}^\ast)$.  

Let $(\phi_i)$ be a non-divergent sequence from $\Hom(G,\mc{G}^\ast)$.  The space of (projectivized) $G$--trees is compact, and so there is a limiting action of $L = G/ \ker^\omega(\phi_i)$ on a tree.  Since $\phi_i$ is non-divergent, this action is simplicial, and it is clear that edge stabilizers are trivial.

This implies that there is an induced free splitting of $L$.  Since $L$ is finitely generated, the vertex groups of this splitting are finitely generated, and so they are $\mc{G}$--limit groups.  It is now clear that the $\phi_i$ must $\omega$--almost surely factor through $\phi_\infty \co G \to L$, as required.
\end{proof}

As suggested in \cite{JalSel}, it is tempting to try to generalize Corollary \ref{t:G^ast EN} to $k$--acylindrical actions with vertex stabilizers in an equationally noetherian family.  Theorem \ref{t:EN up to non-div} still applies in this setting. However, it is hard to control the edge groups in the limiting tree in the non-divergent case, and in particular we do not know how to prove they are finitely generated.

In the next section, we provide a much more involved application of Theorem \ref{t:EN up to non-div} to relatively hyperbolic groups.

%%%%%%%%%%%%%%%%%%%%%%%%%%%%%%%%%%%%%%%%%%%%%%%%%%%%%%%%%%%%%%%
\section{Relatively hyperbolic groups}\label{s:rel hyp}

\subsection{Definitions and properties}
Let $\Gamma$ be a group and $\{P_1,...,P_n\}$ a collection of subgroups of $\Gamma$, called \emph{peripheral subgroups}. Let $\mc{P}=\bigcup_{i=1}^n(P_i\setminus\{1\})$. Let $A\subseteq\Gamma$ such that $\langle A\cup\mc{P}\rangle=\Gamma$.

Let $F(A)$ be the free group on the set $A$, and let $F=F(A)\ast P_1\ast...\ast P_n$ then there is a natural surjective homomorphism
\[
F\onto\Gamma
\]
which is the identity when restricted to $A$ and to each $P_i$. If the kernel of this homomorphism is normally generated by a set $R\subseteq F$, then we say that
\begin{equation}\label{relpres} \tag{$\dagger$}
\langle A, P_1,...,P_n\;|\; R\rangle
\end{equation}
is a \emph{presentation for $\Gamma$ relative to $\{P_1,...,P_n\}$}, or simply a relative presentation for $\Gamma$ if the peripheral subgroups are understood. We say that $\Gamma$ is \emph{finitely generated relative to  $\{P_1,...,P_n\}$} if there exists a finite set $A\subseteq \Gamma$ such that $\langle A\cup\mc{P}\rangle=\Gamma$. We say that $\Gamma$ is \emph{finitely presented relative to  $\{P_1,...,P_n\}$} if $\Gamma$ has a relative presentation of the form (\ref{relpres}) with both $A$ and $R$ finite.

Note that the sets $A$, $P_1$,...,$P_n$ are not necessarily disjoint in $\Gamma$, but they are disjoint when considered as subsets of $F$. We use the notation $A\sqcup\mc{P}$ when we are considering $A$ and $\mc{P}$ as subsets of $F$ to emphasize this disjointness. In particular, when considering words in the alphabet $A\sqcup\mc{P}$, we consider elements which belong to $A$ as distinct letters from elements that belong to any peripheral subgroup even when they represent the same element in $\Gamma$. Similarly, elements which belong to distinct peripheral subgroups in $F$ are considered as distinct letters in $A\sqcup\mc{P}$ even then they represent the same element of $\Gamma$.

Suppose $\Gamma$ has a relative presentation of the form (\ref{relpres}). Let $W$ be a word in the alphabet $A\sqcup\mc{P}$ such that $W=_\Gamma 1$. Then by the definition of $R$, there exists $r_1,...,r_k\in R$ and $f_1,...,f_k\in F$ such that
\[
W=_F\prod\limits_{i=1}^kf_i^{-1}r_if_i.
\]
The \emph{relative area of $W$}, denoted $\Area^{\rel}(W)$, is defined as the minimal $k$ such that $W$ can be expressed as above.
\begin{defn}\cite{Osi06a}
Let $\Gamma$ be a group and $\{P_1,...,P_n\}$ be a collection of subgroups of $\Gamma$. Then $\Gamma$ is {\em hyperbolic relative} to $\{P_1,...,P_n\}$ if $\Gamma$ has a finite relative presentation of the form (\ref{relpres}) and there exists a constant $K$ such that for any word $W$ in $A\sqcup\mc{P}$ with $W=_\Gamma 1$,
\[
\Area^{\rel}(W)\leq K|W|,
\]
where $|W|$ denotes the word length of $W$.
\end{defn}

Given a word $W$ in the alphabet $A\sqcup\mc{P}$, $W=_\Gamma 1$ if and only if there exists a \emph{van Kampen diagram} for $W$ over (\ref{relpres}), that is an oriented, connected, simply connected, planar $2$--complex $\Delta$ such that each edge of $\Delta$ is labeled by an element of $A\sqcup\mc{P}$, $\Lab(\partial\Delta)\equiv W$, and for each $2$--cell $\Pi$  of $\Delta$ either $\Lab(\Pi)$ belongs to $R$ or there exists $1\leq i\leq n$ such that each edge of $\partial\Pi$ is labeled by an element of $P_i$ and $\Lab(\partial\Pi)=_{P_i}1$. The cells whose labels belongs to $R$ are called $R$--cells. It is easy to see that $\Area^{\rel}(W)$ is equal to the minimal number of $R$--cells in a van Kampen diagram for $W$.

\begin{lem}\cite[Lemma 2.15]{Osi06a}\label{lem:mindiagram}
For any word $W$ in $A\sqcup\mc{P}$ with $W=_\Gamma 1$, there exists a van Kampen diagram $\Delta$ over (\ref{relpres}) such that every internal edge of $\Delta$ belongs to the boundary of an $R$--cell and the number of $R$--cells of $\Delta$ is equal to $\Area^{\rel}(W)$. 
\end{lem}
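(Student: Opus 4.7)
The plan is to start with a van Kampen diagram $\Delta_0$ for $W$ over the relative presentation \eqref{relpres} that has the minimal possible number of $R$-cells; such a diagram exists precisely because $W=_\Gamma 1$, and by definition this minimal number equals $\Area^{\rel}(W)$. I will then modify $\Delta_0$, without ever altering its $R$-cells, to produce a van Kampen diagram $\Delta$ for $W$ in which every internal edge lies on the boundary of some $R$-cell. Since only the $P_i$-cells will be touched, the total number of $R$-cells will remain equal to $\Area^{\rel}(W)$, giving both conclusions simultaneously.

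The key move is as follows. Suppose $e$ is an internal edge of the current diagram that does not lie on the boundary of any $R$-cell. Then each of the two 2-cells incident to $e$ must be a $P_i$-cell, and moreover for the \emph{same} index $i$, because the label of $e$ is a single letter from $A\sqcup\mc{P}$ and must appear on the boundary of each adjacent cell. Assuming these two $P_i$-cells are distinct, I delete $e$ and amalgamate them into a single 2-cell. The boundary label of the merged cell is a concatenation of two words over $P_i$ each representing $1$ in $P_i$, so it still represents $1$ in $P_i$; hence the merged cell is again a legitimate $P_i$-cell. This operation strictly decreases the number of 2-cells, leaves $W = \Lab(\partial\Delta)$ and the set of $R$-cells untouched, and so preserves the relative area.

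There are two degenerate cases to address. First, $e$ may be incident to the same $P_i$-cell $\Pi$ on both sides (i.e.\ $e$ occurs twice in $\partial \Pi$). In this case one cuts along $e$: the boundary word of $\Pi$ factors as $w_1 e^{\e_1} w_2 e^{\e_2}$, and one replaces $\Pi$ by two $P_i$-cells with boundaries $w_1$ (or $w_1 e^{\e_1} w_2 e^{\e_1}$, depending on orientation) and $w_2$, together with the deletion of $e$; this again decreases the total number of internal edges without changing $R$-cells. Second, $e$ may be a boundary edge of a $P_i$-cell that meets $\partial\Delta$; but such an edge is by definition external, not internal, so this case does not arise. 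Iterating the merging move strictly reduces the number of internal edges not on an $R$-cell boundary at each step, so the process terminates.

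The main obstacle I anticipate is the careful topological bookkeeping of the degenerate case in which an internal edge $e$ bounds the same $P_i$-cell on both sides, since naively removing $e$ would disconnect the diagram or create a non-simply-connected region. Handling this requires splitting the $P_i$-cell into smaller $P_i$-cells so that the simply connected planar structure of the van Kampen diagram is preserved, and verifying that the resulting pieces still have boundary labels lying in $P_i$ and equal to the identity there; this is purely combinatorial and follows from the closure of $P_i$ under subwords of its relators in the free-factor presentation $F = F(A)\ast P_1 \ast \dots \ast P_n$.
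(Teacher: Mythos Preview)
The paper does not prove this lemma; it is quoted verbatim from Osin's memoir and used as a black box. Your outline---start from a diagram realising $\Area^{\rel}(W)$ and repeatedly merge $P_i$-cells across internal edges not on any $R$-cell---is exactly the standard argument, and is essentially how Osin proves it.

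That said, your treatment of the degenerate case is not right as written. First, the formula ``$w_1 e^{\e_1} w_2 e^{\e_1}$'' still contains $e$ after you claim to have deleted it, and in a planar diagram the same-sign case $\e_1=\e_2$ cannot occur anyway (that gluing is non-orientable). Second, your termination argument switches complexity measures midstream: you use the number of $2$-cells for the generic merge but the number of internal edges for the degenerate case, and since the degenerate move as you describe it may \emph{increase} the cell count, neither measure alone works.

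The clean fix is this. If $\Pi$ lies on both sides of $e$ then $\partial\Pi \equiv w_1 e w_2 e^{-1}$ with $w_1,w_2$ closed loops. If one of $w_1,w_2$ is empty, $e$ is a spur: erase it and cancel $ee^{-1}$ in $\partial\Pi$. Otherwise one of the loops, say $w_2$, bounds a subdiagram $\Delta'$ lying inside the slit; since $w_2$ is a word in $P_i$ representing $1$ in $\Gamma$, and $\Delta'$ can contain no $R$-cells (else replacing $\Delta'$ by a single $P_i$-cell would lower the $R$-cell count, contradicting minimality), you may replace $\Delta'$ by a single $P_i$-cell and then merge it with $\Pi$ across $e$. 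Taking the number of internal edges not on an $R$-cell as the complexity gives a uniform strictly-decreasing measure.
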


Let $\Cay(\Gamma, A\sqcup\mc{P})$ denote the Cayley graph of $\Gamma$ corresponding to the alphabet $A\sqcup\mc{P}$. Each edge of $\Cay(\Gamma, A\sqcup\mc{P})$ is labeled by an element of $A\sqcup\mc{P}$, and we denote the label of an edge $e$ by $\Lab(e)$.  Note that this graph may have multiple edges (with different labels) when distinct elements of $A\sqcup\mc{P}$ represent the same element of $\Gamma$. The first part of the following theorem is \cite[Theorem 1.7]{Osi06a}, and is part of proving that the above definition of relative hyperbolicity is equivalent to Farb's definition from \cite{Farb}.  The second part of the following theorem is \cite[Proposition 5.2]{Osi13}.

\begin{thm}\cite{Osi06a,Osi13} \label{t:RH AH}
Suppose $\Gamma$ is hyperbolic relative to $\{P_1,...,P_n\}$ and $A\subseteq \Gamma$ is a finite relative generating set. Then
\begin{enumerate}
\item $\Cay(\Gamma, A\sqcup\mc{P})$ is a hyperbolic metric space.
\item The action of $\Gamma$ on $\Cay(\Gamma, A\sqcup\mc{P})$ is acylindrical.
\end{enumerate}
\end{thm}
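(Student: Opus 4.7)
For Part (1), the plan is to leverage the linear relative isoperimetric inequality to produce a presentation of $\Gamma$ on the alphabet $A\sqcup\mc{P}$ whose relators all have bounded length, and then invoke the standard fact that a linear isoperimetric inequality over a presentation with bounded relator length implies hyperbolicity of the Cayley graph (see e.g. the classical theorems of Bowditch, Olshanskii, or Short). Given a word $W$ in $A\sqcup\mc{P}$ with $W=_{\Gamma}1$, apply Lemma \ref{lem:mindiagram} to obtain a relative van Kampen diagram $\Delta$ containing at most $K|W|$ $R$--cells and satisfying the property that every internal edge lies on the boundary of some $R$--cell. Since $R$ is finite, each $R$--cell has perimeter bounded by $M := \max_{r\in R}|r|$, so the total internal edge count of $\Delta$ is $O(|W|)$, and hence so is the total boundary length of all peripheral cells. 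For any peripheral cell $\Pi$, the boundary labels spell the trivial element of some $P_i$; because every element of $P_i\setminus\{1\}$ appears as an edge-label of $\Cay(\Gamma, A\sqcup\mc{P})$, the boundary $\partial\Pi$ is an $n$--gon inside a single $P_i$--coset and may be retriangulated using $n-2$ cells, each of perimeter three with label of the form $p_1p_2p_3=_{P_i} 1$. After performing this replacement cell-by-cell, the resulting van Kampen diagram for $W$ has all cells of perimeter at most $\max\{M,3\}$ and total cell count $O(|W|)$, which yields a classical linear isoperimetric inequality over the alphabet $A\sqcup\mc{P}$ and hence the desired hyperbolicity.

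For Part (2), the plan is to combine hyperbolicity from Part (1) with a bounded coset penetration argument on van Kampen diagrams to verify acylindricity directly. Given $\e > 0$, choose $R_\e$ sufficiently large (in terms of $\e$, $\delta$, and $M$) and suppose $x,y \in \Cay(\Gamma,A\sqcup\mc{P})$ satisfy $d(x,y)\geq R_\e$. For any $g$ with $d(x,gx)\leq\e$ and $d(y,gy)\leq\e$, standard thin-quadrilateral arguments in the $\delta$--hyperbolic graph produce a long middle subsegment $[u,v]$ of any geodesic $[x,y]$ that fellow-travels $g\cdot[x,y]$ at distance $O(\e+\delta)$. Fitting this fellow-travelling into a van Kampen diagram in the bounded-perimeter presentation from Part (1), each letter of $[u,v]$ must be matched with a letter of $g\cdot[u,v]$ along a short path of bounded length; for each matched pair of peripheral edges $e$ and $g\cdot e'$ with labels $p\in P_i$ and $p'\in P_j$, the short path forces $i=j$ and produces a constraint $w^{-1}g w' \in P_i$ where $w,w'$ are the basepoints. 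Iterating this along the subsegment forces $g$ to lie in a set whose size depends only on $\e$, $\delta$, and $M$, and this bound is the desired $N_\e$.

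The main obstacle lies in Part (2), specifically in extracting sharp combinatorial constraints from the coarse fellow-travelling produced by hyperbolicity. It is precisely here that the finite relative presentation is indispensable: without it, the peripheral cells in the comparison van Kampen diagram could have unbounded perimeter, so long geodesics could be perturbed inside infinite peripheral cosets in uncontrolled ways, and one would have no mechanism to bound $|F|$ uniformly in $x, y$. By contrast, Part (1) is essentially routine once the triangulation trick for peripheral cells is in place, reducing to a count of cells via Lemma \ref{lem:mindiagram}.
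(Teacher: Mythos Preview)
The paper does not supply a proof of this theorem; it simply cites \cite[Theorem 1.7]{Osi06a} for Part (1) and \cite[Proposition 5.2]{Osi13} for Part (2).

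Your outline for Part (1) is correct and is essentially Osin's argument: triangulating each peripheral cell into $P_i$--triangles (legitimate because every nontrivial element of $P_i$ is a generator) converts the relative linear isoperimetric inequality into an ordinary one over a presentation with relators of length at most $\max\{M,3\}$, and hyperbolicity follows from the standard characterisation.

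Your sketch for Part (2), however, has a genuine gap. The missing ingredient is the \emph{properness of the relative metrics} $\hat{d}_j$ --- that is, the finiteness of $\{p\in P_j : |p|_j \le n\}$ for each $n$ --- which is itself a nontrivial consequence of the linear relative isoperimetric inequality and is the engine of Osin's acylindricity proof in \cite{Osi13}. Your argument produces, for certain peripheral edges, a constraint of the form $w^{-1}gw'\in P_i$; but $P_i$ is infinite, so this alone does not bound the number of such $g$. One must additionally argue that the element of $P_i$ so obtained has bounded $\hat{d}_i$--length, and then invoke properness. Relatedly, the step ``each letter of $[u,v]$ must be matched with a letter of $g\cdot[u,v]$ along a short path of bounded length'' is not justified: a diagram with linearly many bounded-perimeter cells can still have isolated thick spots, and ruling these out amounts to proving a Bounded Coset Penetration property, which again rests on properness of $\hat{d}_j$. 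Without this, the final sentence --- that iterating forces $g$ into a set of bounded size --- is unsupported.
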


When $\Gamma$ is hyperbolic relative to $\{P_1,...,P_n\}$, we can define an associated metric $\hat{d}_j\colon P_j\to[0, \infty]$ on each $P_j$, called the \emph{relative metric}.  Note that we allow these metrics to take infinite values. For each $P_j$, the Cayley graph $\Cay(P_j, P_j \smallsetminus \{ 1 \})$ is naturally embedded as a complete subgraph of $\Cay(\Gamma, A\sqcup\mc{P})$. For $f, g\in P_j$, let $\hat{d}_j(f, g)$ equal the length of the shortest path from $f$ to $g$ in $\Cay(\Gamma, A\sqcup\mc{P})$ which contains no edges in the subgraph $\Cay(P_j, P_j \smallsetminus \{ 1 \})$, or $\hat{d}_j(f, g)=\infty$ if no such path exists. For $g\in P_j$, we let $|g|_j=\hat{d}_j(1, g)$.

Fix a sufficiently large constant $D$ (see \cite[Equation 29]{Osi13}). Let $p$ be a geodesic in $\Cay(\Gamma, A\sqcup\mc{P})$. We say that $p$ \emph{penetrates} a coset $xP_j$ if $p$ decomposes as $p=p_1ep_2$ where $e$ is an edge such that $\Lab(e)\in P_j$ and $\Lab(p_1)=_\Gamma x$. In this case we say that $e$ is \emph{inside} the coset $xP_j$. Furthermore, we say that $p$ \emph{essentially penetrates} the coset $xP_j$ if in addition $|\Lab(e)|_j\geq D$. 

Given $g\in\Gamma$, let $S(g)$ be the set of cosets of peripheral subgroups which are essentially penetrated by some geodesic in $\Cay(\Gamma, A\sqcup\mc{P})$ from $1$ to $g$. Cosets in $S(g)$ are called \emph{separating cosets}. 

The following is  proved in the more general context of hyperbolically embedded subgroups as \cite[Lemma 4.8]{Osi13}.
\begin{lem}\label{sepco}
For any $g\in\Gamma$, the set $S(g)$ can be ordered as $S(g)=\{C_1,..., C_m\}$ such that for any geodesic $p$ in $\Cay(\Gamma, A\sqcup\mc{P})$ from $1$ to $g$, $p$ decomposes as \[
p=p_1e_1p_2e_2...e_mp_{m+1}
\]
where each $p_j$ is a (possibly trivial) subpath of $p$ which essentially penetrates no cosets and each $e_j$ is an edge of $p$ which is inside $C_j$.
\end{lem}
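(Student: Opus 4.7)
The plan is to follow the proof of \cite[Lemma 4.8]{Osi13}, adapted to the relatively hyperbolic setting. The key assertions to establish are: (i) every geodesic from $1$ to $g$ essentially penetrates exactly the cosets in $S(g)$; and (ii) this essential penetration occurs in the same order along each such geodesic. Once both are in hand, the ordering $C_1,\ldots,C_m$ is the one induced by any single geodesic, and the decomposition $p=p_1e_1\cdots e_m p_{m+1}$ is obtained by cutting $p$ at the successive essentially penetrated edges $e_j \in C_j$; the subpaths $p_j$ then essentially penetrate no coset by construction.

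For (i), I would use the $\delta$-hyperbolicity of $\Cay(\Gamma, A\sqcup\mathcal{P})$ supplied by Theorem \ref{t:RH AH}(1), so that any two geodesics $p,q$ from $1$ to $g$ are pairwise $\delta$-close as point sets. The constant $D$ (as specified in \cite[Equation 29]{Osi13}) is calibrated so that whenever $p$ contains an edge $e$ inside a coset $xP_j$ with $|\Lab(e)|_j \ge D$, any other geodesic $q$ with the same endpoints as $p$ must contain an edge $e'$ whose endpoints also lie in $xP_j$. Using the triangle inequality for the relative metric $\hat{d}_j$ together with the $\delta$-closeness of $p$ and $q$, one then shows that $|\Lab(e')|_j$ is bounded below by $D$ minus a controlled error depending only on $\delta$; choosing $D$ sufficiently large ensures $|\Lab(e')|_j \ge D$ as well, so that $q$ essentially penetrates $xP_j$. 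This gives claim (i).

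For (ii), suppose a geodesic $p$ essentially penetrates $xP_j$ strictly before $yP_k$, via edges $e_p$ and $f_p$, and let $e_q$, $f_q$ be the edges of another geodesic $q$ essentially penetrating the same cosets (as provided by (i)). The four endpoints of $e_p, e_q$ lie within $\delta$ of each other in $\Cay(\Gamma,A\sqcup\mathcal{P})$, and similarly for $f_p, f_q$. If the order along $q$ were reversed, concatenating subpaths of $p$ and $q$ with short connecting segments produces a closed loop whose failure to close inside $xP_j$ or $yP_k$ can be bounded in terms of $\delta$; on the other hand, the reversed order forces a detour through the coset that is measurable in $\hat{d}_j$ or $\hat{d}_k$ of size at least on the order of $D$, which contradicts the bound provided $D$ was chosen large enough in the initial calibration.

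The main obstacle is precisely the calibration of $D$: one must verify that the single constant works simultaneously to upgrade ``passage through a coset'' to ``essential penetration'' (needed for (i)) and to rule out order reversals (needed for (ii)). This is the content built into the choice in \cite[Equation 29]{Osi13}, and in the relatively hyperbolic setting it is essentially Farb's Bounded Coset Penetration property; apart from invoking this calibration, the remaining steps are straightforward consequences of $\delta$-hyperbolicity and the definitions.
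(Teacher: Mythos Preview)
Your approach is essentially the same as the paper's: the paper does not give its own proof of this lemma but simply cites \cite[Lemma 4.8]{Osi13}, noting that the relatively hyperbolic case is a special instance of the hyperbolically embedded setting treated there. Your sketch correctly outlines the strategy of that argument, so there is nothing to add.
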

The point is that any two geodesics with the same endpoints essentially penetrate the same cosets in the same order. Note also that for any $p_j$ in the above decomposition of $p$, the label of each edge of $p_j$ belongs to a finite set $B=A\sqcup(\sqcup_{i=1}^n\{g\in P_i\;|\; |g|_i\leq D\}$, where $D$ is the constant fixed above.

\subsection{Equationally noetherian}
In this section we prove Theorem \ref{thm:rel hyp eqn}.  For the convenience of the reader, we recall the statement.

\begin{reptheorem}{thm:rel hyp eqn}
If $\Gamma$ is hyperbolic relative to equationally noetherian groups, then $\Gamma$ is equationally noetherian. 
\end{reptheorem}
\begin{proof}

Suppose $\Gamma$ is hyperbolic relative to $\{P_1,...,P_n\}$ and that each $P_j$ is equationally noetherian. Suppose $\Gamma$ has a finite relative presentation of the form (\ref{relpres}), and let $X=\Cay(\Gamma, A\sqcup\mc{P})$. For $g\in\Gamma$, let $|g|=d_X(1, g)$ and for $g\in P_j$, recall that $|g|_j=\hat{d}_j(1, g)$. Let $B=A\sqcup\left(\sqcup_{j=1}^n\{g\in P_j \mid |g|_j\leq D\}\right)$.  By Theorem \ref{t:RH AH}, $X$ is $\delta$--hyperbolic for some $\delta$ and the action of $\Gamma$ on $X$ is acylindrical.  Thus, consider the set $\mc{G} = \left\{ ( \Gamma, X ) \right\}$.

 Let $G$ be a finitely generated group and $(\phi_i)$ a sequence from $\Hom(G, \Gamma)$ with $\lim^\omega(\|\phi_i\|)<\infty$.  By Theorem \ref{t:EN up to non-div}, to prove Theorem \ref{thm:rel hyp eqn} it suffices to show that $\omega$--almost surely $\phi_i$ factors through $\phi_\infty$. Let $L=G/\ker^\omega(\phi_i)$ be the associated (non-divergent) $\Gamma$--limit group.
 
Let $S=\{s_1,...,s_m\}$ be a finite generating set for $G$. After conjugating each $\phi_i$, we can assume that $\| \phi_i \|=\max_{s\in S}|\phi_i(s)|$. Fix $1\leq j\leq m$. Since $\omega$--almost surely we have $|\phi_i(s_j)|\leq\|\phi_i\|\leq\lim^\omega(\|\phi_i\|)<\infty$, there is some fixed $0\leq k_j\leq\lim^\omega(\|\phi_i\|)$ such that $\omega$--almost surely $|\phi_i(s_j)|=k_j$. Let $p^j_i$ be a geodesic in $X$ from $1$ to $\phi_i(s_j)$. Since $\omega$--almost surely the number of separating cosets $|S(\phi_i(s_j))|\leq\l(p_i^j)=k_j$, we see that $|S(\phi_i(s_j))|$ is $\omega$--almost surely constant.  Denote this ($\omega$--almost surely) constant value for $|S(\phi(s_j))|$ by $m_j$. Now $p_i^j$ has a decomposition as in Lemma \ref{sepco}, so we can write

\[
p_i^j=q^j_{i,1}e^j_{i,1}q^j_{i,2}e^j_{i,2}...e^j_{i,m_j}q^j_{i,m_j+1}
\]
where each $q^j_{i,k}$ is a (possibly trivial) subpath of $p_i^j$ with each edge labeled by an element of $B$ and each $e^j_{i,k}$ is an edge of $p^i_j$ which is inside a separating coset. Since $\omega$--almost surely $\sum_{k=1}^{m_j+1}\l(q^j_{i, k})\leq k_j$, there are $\omega$--almost surely only finitely many possible configurations of the list $(\l(q^j_{i,1}), \l(q^j_{i,2}),...,\l(q^j_{i,m_j+1}))$.  Therefore each $\l(q^j_{i, k})$ is fixed $\omega$--almost surely, and since its label belongs to the finite set $B$, in fact the label of each $q^j_{i, k}$ is $\omega$--almost surely independent of $i$. Similarly, for each $e^j_{i, k}$, the label of $e^j_{i, k}$ belongs to a subgroup $P_{t^j_k}$ which is $\omega$--almost surely independent of $i$; however, there are infinitely many choices for the label of $e^j_{i, k}$, so this stills depend on $i$.

In particular, $\omega$--almost surely we have $\phi_i(s_j)=_\Gamma W_1^jg^j_{i, 1}W^j_2...g^j_{i, m_j}W^j_{m_j+1}$, where each $W^j_k$ is a word in $B$ and each $g^j_{i, k}$ is an element of $P_{t^j_{k}}$.

We now define a set of abstract letters for each $1\leq j\leq m$, $\{h_1^j,...,h_{m_j}^j\}$ and also an abstract set of letters $\overline{B}$ together with a fixed bijection $\overline{B}\to B$. Let $\overline{A}\subseteq \overline{B}$ be the subset corresponding to $A\subseteq B$. Let $Z$ be the set of all such abstract letters, that is $Z=\overline{B}\sqcup\{h_k^j\;|\;1\leq j\leq m, 1\leq k\leq m_j\}$. For each $W^j_k$, let $U^j_k$ be the corresponding word in $\overline{B}$. There is a function $S\to F(Z)$, defined by sending each $s_j$ to $U_1^jh^j_{1}U^j_2...h^j_{m_j}U^j_{m_j+1}$. Let $N\leq F(Z)$ be the normal subgroup generated by the image of the relations in $G$.  Then we get a homomorphism $\rho\colon G\to H$, where $H = F(Z)/N$.  

Recall that for each $e^j_{i, k}$, there is a corresponding peripheral subgroup which contains the label of $e^j_{i, k}$ and which is $\omega$--almost surely independent of $i$. Let $J_t$ be the subgroup of $H$ which is generated by the image of all abstract letters corresponding to $P_t$. More precisely, $J_t$ is generated by the image of all abstract letters $\bar{b}\in\overline{B}\setminus\overline{A}$  such that the corresponding element $b\in B\setminus A$ belongs to $P_t$ together with the image of all abstract letters of the form $h^j_k$ such that the peripheral subgroup corresponding to $e^j_{i, k}$ is $\omega$--almost surely equal to $P_t$.

 Now we define a sequence of homomorphisms $(\hat{\phi_i}\colon H\ast(\ast_{t=1}^n P_t)\to \Gamma)$ which are the identity on each $P_t$ and which send each element of $\overline{B}$ to the corresponding element of $B$ and each $h_k^j$ to $\Lab(e^j_{i, k})$.  It follows easily from the construction that $\phi_i=\hat{\phi}_i\circ\rho$.

We now define the $\Gamma$--limit group $W=(H\ast(\ast_{t=1}^n P_t))/\ker^\omega(\hat{\phi_i})$, and note that $\hat{\phi}_\infty\circ\rho\colon G\to W$. Since $\phi_i=\hat{\phi}_i\circ\rho$, we get
\[
\ker(\phi_\infty)=\ker^\omega(\phi_i)=\ker^\omega(\hat{\phi_i}\circ\rho)=\ker(\hat{\phi}_\infty\circ\rho)
\]

Thus, there is a natural injective homomorphism $\iota\colon L\hookrightarrow W$. 

\[\begin{tikzcd}
 & H\ast(\ast_{t=1}^n P_t) \arrow{d}{\hat{\phi}_i}\arrow[twoheadrightarrow]{dr}{\hat{\phi}_\infty} \\
G \arrow{ur}{\rho} \arrow{r}{\phi_i}\arrow[twoheadrightarrow]{dr}[swap]{\phi_\infty}& \Gamma \arrow[dashleftarrow]{r} \arrow[dashleftarrow]{d} & W\\
& L\arrow[hookrightarrow]{ur}[swap]{\iota} &
\end{tikzcd}
\]

Since $J_t\leq H$, there is a natural inclusion $J_t\ast P_t\hookrightarrow H\ast(\ast_{t=1}^n P_t)$. Define $Q_t:=\hat{\phi}_\infty(J_t\ast P_t)$. Note that each $Q_t$ is a $P_t$--limit group, since $J_t\ast P_t$ is finitely generated and $\hat{\phi}_i|_{(J_t\ast P_t)}\colon (J_t\ast P_t)\to P_t$ is a defining sequence of maps for $Q_t$. Also the restriction of $\hat{\phi_i}$ to $P_t$ is injective for all $i$, so we can identify $P_t$ with its image in $Q_t$. Our next goal is to argue that $W$ is hyperbolic relative to $\{Q_1,..., Q_n\}$. Let $\mc{Q}=\sqcup_{t=1}^n(Q_t\setminus\{1\})$. Note that by identifying $P_t$ with its image in $Q_t$ we get a natural inclusion $\mc{P}\subseteq\mc{Q}$.

Recall that $\Gamma$ has the finite relative presentation 
\begin{equation} \label{Gamma pres}
\langle A, P_1,...,P_n\;|\; R\rangle.  
\end{equation}
For each $r\in R$, let $\overline{r}$ be the word in $\overline{A}\sqcup\mc{Q}$ obtained by replacing each $A$--letter in $r$ with the corresponding letter of $\overline{A}$ and considering each $\mc{P}$ letter as a letter in $\mc{Q}$ via the inclusion of $\mc{P}$ into $\mc{Q}$. Since $\overline{r}$ is a word in $\overline{A}\sqcup\mc{P}$, it defines an element of $H\ast(\ast_{t=1}^n P_t)$. Clearly $\hat{\phi}_i(\overline{r})=1$ $\omega$--almost surely, so $\overline{r}=_W1$.

Let $\overline{R}=\{\overline{r}\;|\; r\in R\}$. Then we claim that 
\[
\langle \overline{A}, Q_1,...,Q_m\;|\;\overline{R}\rangle
\]
is a finite relative presentation for $W$.

First observe that $\overline{A}\sqcup\mc{Q}$ generates $W$. This is because $Z\sqcup\mc{P}$ generates $H\ast(\ast_{t=1}^n P_t)$, $\mc{P}\subseteq \mc{Q}$ and for all $Z\in Z\setminus\overline{A}$, $\hat{\phi}_\infty(z)\in Q_t$ for some $1\leq t\leq n$.

Suppose $U$ is a word in $\overline{A}\sqcup\mc{Q}$ such that $U=_W1$. Let $\widetilde{U}$ be the word in $\overline{A}\sqcup\left(\sqcup_{1\leq t\leq n}(J_t\ast P_t) \right)$ obtained by replacing each $Q_t$--letter $h$ in $U$ with $\widetilde{h}\in\hat{\phi}_\infty^{-1}(h)$. Let $U_i$ be the image of $\widetilde{U}$ under $\hat{\phi}_i$, that is the word obtained by applying $\hat{\phi}_i$ to each letter of $\widetilde{U}$; equivalently, $U_i$ is obtained from $U$ by replacing each $\overline{A}$--letter is by the corresponding element of $A$ in $\Gamma$ and replacing each $Q_t$ letter $h$ by $\hat{\phi}_i(\widetilde{h})$. Note that $U_i$ is a word in $A\sqcup\mc{P}$ and $U_i=_\Gamma 1$ $\omega$--almost surely. 

Now, by definition of relative hyperbolicity, $\omega$--almost surely there exists van Kampen diagrams $\Delta_i$ over \eqref{Gamma pres} such that $\Lab(\partial\Delta_i)\equiv U_i$ and the number of $R$--cells of $\Delta_i$ is bounded by $K\|U_i\|=K\|U\|$. By Lemma \ref{lem:mindiagram}, each $\Delta_i$ can be chosen such that every internal edge of $\Delta_i$ belongs to the boundary of an $R$--cell. Since there is a uniform bound on the number of $R$--cells in each $\Delta_i$ and a uniform bound on the number of edges in the boundary of any $R$--cell, we get a uniform bound on the total number of edges in $\Delta_i$. Since there are only finitely many (unlabeled) $2$--complexes with a fixed number of edges, $\omega$--almost surely the $\Delta_i$'s all have a fixed isomorphism type as (unlabeled) $2$--complexes. 

Now let $\overline{\Delta}$ be an abstract $2$--complex which $\omega$--almost surely has the same isomorphism type as $\Delta_i$. Let $\Pi$ be a $2$--cell of $\overline{\Delta}$, and let $\Pi_i$ be the corresponding $2$--cell of $\Delta_i$. Then there are two possibilities for $\Pi_i$. The first is that $\omega$--almost surely $\Pi_i$ is an $R$--cell in $\Delta_i$. In this case, since $R$ is finite, there is $\omega$--almost surely a fixed $r\in R$ such that $\Lab(\partial \Pi_i)\equiv r$. In particular, since every internal edge of $\Delta_i$ belongs to the boundary of an $R$--cell, this means that $\omega$--almost surely the label of every internal edge of $\Delta_i$ is fixed. The second possibility is that $\omega$--almost surely $\Pi_i$ is not an $R$--cell. In this case, there is $\omega$-almost surely a fixed $1\leq t\leq n$ such that, each edge of $\Pi_i$ is labeled by an element of $P_t$ and $\Lab(\partial \Pi_i)=_{P_t}1$.

For each internal edge $e$ of $\overline{\Delta}$, $e$ is $\omega$--almost surely labeled by a fixed element of $A\sqcup\mc{P}$ in $\Delta_i$, so we label $e$ in $\overline{\Delta}$ by the corresponding element of $\overline{A}\sqcup\mc{Q}$. For each external edge $e$, the label of $e$ occurs in a fixed position in each $U_i$, and we label $e$ in $\overline{\Delta}$ by the corresponding letter in $U$. 

Suppose $\Pi$ is a $2$--cell of $\overline{\Delta}$. If $\Pi_i$ $\omega$--almost surely occurs as an $R$--cell in $\Delta_i$, then the label of $\Pi$ in $\overline{\Delta}$ belongs to $\overline{R}$. If $\Pi_i$ $\omega$--almost surely occurs as an $P_t$--cell in $\Delta_i$, then the label of $\Pi$ in $\overline{\Delta}$  is a word in $Q_t$. Indeed, for each edge $e$ of $\Pi$ which is internal in $\overline{\Delta}$, the label of $e$ is an element of $P_t\subseteq Q_t$. For each external edge $e$, with label $h$, we have that $\omega$--almost surely $\hat{\phi}_i(\widetilde{h})\in P_t$, hence $\widetilde{h}\in J_t\ast P_t$ and $h\in Q_t$. 

Thus, in the second case the label of $\partial\Pi$ is a word in $Q_t$ whose image in $\Gamma$ is $\omega$--almost surely trivial since it maps to the boundary of $2$--cell in $\Delta_i$. Therefore, $\overline{\Delta}$ is a van Kampen diagram for $U$ over the relative  presentation $\langle \overline{A}, Q_1,...,Q_n\;|\;\overline{R}\rangle$. Hence this is a presentation for $W$, and furthermore $\Area^{\rel}(U)\leq \Area^{\rel}(\overline{\Delta})\leq K\|U\|$, which shows that $W$ is hyperbolic relative to $\{Q_1,..., Q_n\}$.

In particular, we have shown that $W$ is finitely presented relative to $\{Q_1,..., Q_n\}$. Since each $P_t$ is equationally noetherian, $\omega$--almost surely the maps $\hat{\phi}_i|_{(J_t\ast P_t)}\colon (J_t\ast P_t)\to P_t$ factor through the limit map $\hat{\phi}_\infty|_{(J_t\ast P_t)}$. Thus, by Lemma \ref{lem:relfpen}, $\hat{\phi}_i$ $\omega$--almost surely factors through $\hat{\phi}_\infty$.

Finally, we show that $\phi_i$ $\omega$--almost surely factors through $\phi_\infty$, concluding the proof. Recall that $\phi_i=\hat{\phi}_i\circ\rho$. Hence $\ker(\hat{\phi}_\infty)\subseteq\ker(\hat{\phi}_i)$ which implies that $\ker(\phi_\infty)=\ker(\hat{\phi}_\infty\circ\rho)\subseteq\ker(\hat{\phi}_i\circ\rho)=\ker(\phi_i)$. Therefore, $\phi_i$ $\omega$--almost surely factors through $\phi_\infty$.
\end{proof}

The above proof also shows the following.
\begin{cor}
Suppose $\Gamma$ is hyperbolic relative to $\{P_1,...,P_n\}$ and $L$ is a non-divergent $\Gamma$--limit group. Then there exists a group $W$ which is hyperbolic relative to $\{D_1,...,D_n\}$ such that $L$ embeds into $W$ and each $D_i$ is a $P_i$--limit group.
\end{cor}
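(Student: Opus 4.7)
The plan is to extract the construction from the proof of Theorem \ref{thm:rel hyp eqn}, noting that everything needed for this corollary was already built there; the equationally noetherian hypothesis on the peripherals was only invoked at the very last step (applying Lemma \ref{lem:relfpen}), which is not needed here. So I would essentially re-run the first three quarters of that proof verbatim and then take $W$ and the $D_i$ to be the groups constructed along the way.

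More concretely, let $L = G/\ker^\omega(\phi_i)$ with $(\phi_i)$ a non-divergent sequence from $\Hom(G,\Gamma)$, and fix a finite generating set $S = \{s_1,\dots,s_m\}$ of $G$. After conjugating each $\phi_i$ so that $\|\phi_i\| = \max_{s \in S} |\phi_i(s)|$, non-divergence implies $|\phi_i(s_j)|$ is $\omega$-almost surely a fixed constant $k_j$. I would then decompose a geodesic $p_i^j$ from $1$ to $\phi_i(s_j)$ in $\Cay(\Gamma, A \sqcup \mc{P})$ via Lemma \ref{sepco} into bounded ``non-peripheral'' subpaths $q_{i,k}^j$ (with labels in the finite set $B$) separated by edges $e_{i,k}^j$ inside essentially-penetrated cosets. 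The uniform bound on $k_j$ forces the number $m_j$ of separating cosets, the lengths and labels of the $q_{i,k}^j$, and the peripheral subgroup $P_{t_k^j}$ containing $\Lab(e_{i,k}^j)$ to all be $\omega$-almost surely independent of $i$ (only the specific element of $P_{t_k^j}$ labeling $e_{i,k}^j$ varies with $i$).

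Next, I would introduce abstract letters $\overline{B}$ (in bijection with $B$) and formal letters $h_k^j$ for the varying peripheral edges, let $H = F(Z)/N$ where $Z = \overline{B} \sqcup \{h_k^j\}$ and $N$ is the normal closure of the image of the relators of $G$, and consider the map $\rho \co G \to H$ sending $s_j$ to the word reading off the decomposition above. This gives a sequence $\hat\phi_i \co H \ast (\ast_{t=1}^n P_t) \to \Gamma$, identity on each $P_t$, sending $\bar b \mapsto b$ and $h_k^j \mapsto \Lab(e_{i,k}^j)$, and satisfying $\phi_i = \hat\phi_i \circ \rho$. Set $W = (H \ast (\ast_{t=1}^n P_t))/\ker^\omega(\hat\phi_i)$ and $D_t := Q_t = \hat\phi_\infty(J_t \ast P_t)$, where $J_t \le H$ is generated by the images of the abstract letters whose corresponding edges $\omega$-almost surely lie in $P_t$-cosets. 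Because $J_t \ast P_t$ is finitely generated and $\hat\phi_i|_{J_t \ast P_t}$ takes values in $P_t$, the group $D_t$ is a $P_t$-limit group by definition. The identity $\ker(\phi_\infty) = \ker(\hat\phi_\infty \circ \rho)$ yields an injection $\iota \co L \hookrightarrow W$ exactly as in the original proof.

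Finally, I need to verify that $W$ is hyperbolic relative to $\{D_1,\dots,D_n\}$. This is done by the van Kampen diagram argument in the proof of Theorem \ref{thm:rel hyp eqn}: a word $U$ in $\overline{A} \sqcup \mc{Q}$ that is trivial in $W$ corresponds $\omega$-almost surely to a trivial word $U_i$ in $A \sqcup \mc{P}$ over $\Gamma$, which by relative hyperbolicity of $\Gamma$ bounds a minimal van Kampen diagram of area at most $K\|U\|$ with at most a uniformly bounded total number of edges; passing to $\omega$-almost sure isomorphism type of the underlying $2$-complex and reading off labels produces a van Kampen diagram for $U$ over the relative presentation $\langle \overline{A}, D_1, \dots, D_n \mid \overline{R}\rangle$ with area $\leq K\|U\|$. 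This gives the desired linear relative isoperimetric inequality. The main obstacle is ensuring this last verification transfers cleanly without the equationally noetherian assumption, but inspection shows that step never uses it -- it is purely a ``diagram chase'' using finiteness of $R$, finiteness of $B$, and the uniform bound on diagram size. Thus $W$ and the $D_i$ satisfy all the required properties.
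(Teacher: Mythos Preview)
Your proposal is correct and matches the paper's approach exactly: the paper simply states that ``the above proof also shows the following,'' and you have correctly identified that every ingredient for the corollary (the construction of $W$, the embedding $\iota\colon L\hookrightarrow W$, the $P_t$--limit groups $Q_t$, and the van Kampen diagram argument establishing relative hyperbolicity of $W$) is already present in the proof of Theorem \ref{thm:rel hyp eqn}, while the equationally noetherian hypothesis on the peripherals is invoked only at the final step (applying Lemma \ref{lem:relfpen}), which is irrelevant here.
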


%%%%%%%%%%%%%%%%%%%%%%%%%%%%%%%%%%%%%%%%%%%%%%%%%%%%%%%%%%%%%%%%%%%
\bibliographystyle{plain} 
\def\cprime{$'$}

\end{document}